\documentclass[opre,sglanonrev]{informs4_mod}
\usepackage{eqndefns-left} 
\RequirePackage{tgtermes}
\RequirePackage{newtxtext}
\RequirePackage{newtxmath}
\RequirePackage{bm}
\RequirePackage{endnotes}

\OneAndAHalfSpacedXII 

\usepackage{algorithm}
\usepackage{algcompatible}

\usepackage{tikz}

\usepackage{statistics}
\usepackage{makecell,multirow}
\usepackage[shortlabels]{enumitem}
\usepackage{subcaption}  
\usepackage{subfiles}
\usepackage{wrapfig}
\def\Var{\text{Var}}
\def\Cov{\text{Cov}}
\def\tr{\text{Tr}}
\def\IFx{{\rm IF}}

\usepackage{natbib}
 \bibpunct[, ]{(}{)}{,}{a}{}{,}%
 \def\bibfont{\small}%

\usepackage{hyperref}       

\usepackage{url}            
\usepackage{booktabs}       
\usepackage{amsfonts}       
\usepackage{nicefrac}       
\usepackage{microtype}      
\usepackage{xcolor}         
\usepackage{amsmath,amssymb,mathtools}
\usepackage{cleveref}

\EquationsNumberedThrough    

\TheoremsNumberedThrough     
\ECRepeatTheorems  %

\MANUSCRIPTNO{}

\begin{document}



\RUNTITLE{First-Order Improvements in Data-Driven Optimization}

\TITLE{Achieving First-Order Statistical Improvements in Data-Driven Optimization: From No-Free-Lunch to Amplified Decision Perturbation}

\ARTICLEAUTHORS{%
\AUTHOR{Henry Lam, Tianyu Wang\footnote{Authors are listed in alphabetical order.}}
\AFF{Department of Industrial Engineering and Operations Research,
Columbia University, \EMAIL{\{henry.lam, tianyu.wang\}@columbia.edu}}
} 

\ABSTRACT{%
    Recent proliferation of data-optimization integration has led to a range of methods that aim to improve the statistical performance of data-driven optimization decisions. However, while many of these methods are motivated intuitively from a robustness or regularization perspective, their resulting statistical benefits are often unclear and, even if available, are established on a case-by-case basis. We provide a systematic dissection of data-driven optimization formulations using the view of ``directionally perturbed'' empirical optimization (EO). Specifically, this umbrella of formulations, which we call ``EO+", covers many existing data-driven optimization methods, including regularization, distributionally robust optimization, transfer learning, and analogous methods for contextual optimization. On the one hand, we argue that without additional, correctly specified, side information, any EO+ method can result in at most second-order improvements. This provides a negative conclusion, namely ``no free lunch is possible", on the statistical power of EO+. On the other hand, we show that when leveraging side information that is geometrically effective, achieving first-order improvements is possible by choosing hyperparameters that are significantly larger than what is typically suggested in the literature. Moreover, we construct a principled methodology based on excess risk estimation, via either system knowledge or bootstrap resampling, to maximize the first-order gain. We demonstrate how this gain connects to the control-variate principle, a variance reduction technique in the Monte Carlo simulation literature, which helps explain why geometrically effective side information is necessary.
}%




\KEYWORDS{data-driven optimization, control variates, excess risk, optimality gap} 

\maketitle
\section{Introduction}
We consider a data-driven stochastic optimization problem of the form:
\begin{equation}\label{eq:ddsp}
\min_{\theta \in \Theta}\paran{Z(\theta) := \E_{\xi \sim \P}[\ell(\theta;\xi)]},  
\end{equation}
where $\ell(\theta;\cdot)$ is a
known cost function, $\Theta \subseteq \R^{D_{\theta}}$ 
is the set of feasible decisions,
and $\xi$ is a random variable following the distribution $\P$. 
The decision-maker only has access to
$n$ i.i.d. samples
$\Dscr_n:=\{\xi_i\}_{i = 1}^n$. The goal is to 
use the data $\Dscr_n$ to identify a 
decision with the lowest 
expected cost under the \emph{true} distribution $\P$. This problem setup is widely adopted in practice from empirical risk minimization in machine learning~\citep{hastie2009elements}, to operational applications such as supply chain management~\citep{snyder2019fundamentals}, revenue
management~\citep{talluri2006theory,chen2022statistical} and portfolio
optimization~\citep{ban2018machine}. 

Among all data-driven optimization methods, the most straightforward approach is to replace the unknown $\P$ with the empirical distribution $\hat\P_n:= \frac{1}{n}\sum_{i = 1}^n \delta_{\xi_i}$ in~\eqref{eq:ddsp}, yielding the empirical optimization (EO) solution: $\hat\theta_{EO} \in \argmin_{\theta \in \Theta}\frac{1}{n}\sum_{i = 1}^n \ell(\theta;\xi_i).$ Although the EO solution is asymptotically minimax optimal~\citep{shapiro2021lectures}, it
incorporates little problem-specific structure and can exhibit poor finite-sample performance~\citep{gupta2021small}. To address this limitation, a variety of methods have been developed to improve EO. These methods fall into two main categories based on their provenance. In the optimization literature, \emph{optimization-enhanced} 
methods modify the objective function directly. These methods include replacing $\hat\P_n$ with a worst-case distribution over a so-called uncertainty set, as in distributionally robust optimization (DRO)~\citep{kuhn2025distributionally} or directly adding penalty terms to the objective~\citep{hastie2009elements}. For example, DRO changes the objective of~\eqref{eq:ddsp} to minimize the worst-case expected loss over the uncertainty set, thereby accounting for the misspecification error $\hat\P_n \neq \P$. Such uncertainty sets are constructed via moment constraints or statistical distances~\citep{delage2010distributionally,esfahani2018data,gao2022distributionally,duchi2019variance,lam2019recovering}. In the statistics/machine learning literature, \emph{statistics-enhanced} 
methods leverage related data or information to construct improved estimators. These methods include incorporating additional moment conditions through methods such as GMM~\citep{hansen1982large, bennett2019deep}, leveraging related data sources through transfer learning~\citep{bastani2021predicting,tian2023transfer}, and using shrinkage estimators~\citep{james1961estimation,efron1973stein,tibshirani1996regression}, with extensions to stochastic optimization~\citep{gupta2022data}. Both categories of methods have been empirically and theoretically shown to outperform plain EO in many settings. Yet, as far as we know, existing studies are case-by-case in the sense that the considered method, and the analysis to argue the advantage of the method, are specific. This motivates us to investigate a fundamental gap in understanding data-driven optimization more generally: \emph{Could we create a unified framework encompassing many existing methods, as well as new alternatives, to inform us when and by how much a method outperforms EO? Moreover, can we maximize the amount of improvement?}





\paragraph{Our Contributions.} In this paper, we address these questions via the perspective of ``directionally perturbing" EO solutions. These are data-driven methods that can be cast as essentially a perturbation of an EO solution according to well-chosen directions. We will see that this perspective encompasses many representative data-driven optimization approaches in the literature, which we collectively term \emph{EO+ methods} with solutions~$\hat\theta_{EO+}$. Building on this framework, we systematically characterize the statistical performance of EO+ solutions, which then allows us to dissect whether and when an existing method outperforms, or does not outperform EO, no matter how we ``tune" the hyperparameters. To illustrate, Example~\ref{coro:linear-symmetric} in the sequel demonstrates that DRO using a $\chi^2$-distance-based uncertainty set, a popular DRO formulation, can elicit significant improvements over EO for ordinary least-squares regression if the data distribution is Laplace, but not if it is normal, while for least-absolute-deviation regression the reverse holds. In the case where the method can elicit significant improvements, we will determine what uncertainty set size should be used, and in the case where the method provides no significant improvement, this lack of improvement is intrinsic regardless of how we tune the uncertainty set size.
Our EO+ framework provides a principled machinery to answer questions like these. It deduces, for a given problem instance, whether a data-driven optimization method is worth pursuing, or whether the method is unlikely to be fruitful. Moreover, 
we go one step further by providing a principled methodology to improve these methods and maximize the first-order statistical gain whenever possible. In many cases, our resulting enhancement approach outperforms the recommendations from representative data-driven optimization methods.



In more detail, we measure statistical improvement of a data-driven solution $\theta$ by the impact on the excess risk:
\begin{equation}\label{eq:excess-risk}
    R(\theta) = Z(\theta) - Z(\theta^*),~\text{where}~\theta^* \in \argmin_{\theta \in \Theta} Z(\theta).
\end{equation}
Throughout the main body of the paper, our comparison centers on the expected excess risk, contrasting $\E_{\Dscr_n}[R(\hat\theta_{EO+})]$ with $\E_{\Dscr_n}[R(\hat\theta_{EO})]$ and, more specifically, whether $\hat\theta_{EO+}$ provides improvements over $\hat\theta_{EO}$ in the first-order sense that $\E_{\Dscr_n}[R(\hat\theta_{EO+})]/\E_{\Dscr_n}[R(\hat\theta_{EO})]$ converges to a constant smaller than 1 as the sample size $n$ grows.
In this regard, this paper makes the following theoretical contributions:

\noindent\emph{1. General Theory on Statistical Performance of EO+ Methods.} We exactly characterize the conditions to attain first-order improvements. Specifically, we show that EO+ methods yield first-order improvements if and only if the side information is both correctly specified and geometrically effective and the perturbation magnitude is of constant order (in contrast to vanishing as sample size grows). If any of these conditions fail, any performance gains are at best higher-order and become negligible relative to EO. This result signifies a {no-free-lunch principle} in data-driven optimization: without any additional information, EO cannot be improved in the first-order sense asymptotically. On the other hand, our result also reveals two critical implications for achieving improvement over EO: First, the side information needs to be \emph{correct} and satisfy what we will call the \emph{non-orthogonality} geometric condition. Second, effective leveraging of this side information can only be done by using a perturbation magnitude that is of constant order. This latter calibration regime is \emph{distinct} from all existing suggestions in the data-driven optimization literature, which has predominantly advocated for the use of shrinking perturbations (when translated to specific hyperparameters such as the uncertainty set size in DRO). To the best of our knowledge, our EO+ framework that compares a broad class of solutions for general cost functions in a unified manner, and more importantly the derived insights described above on how improvements over EO can or cannot be achieved, are the first in the literature.



    
    
\noindent\emph{2. Principled Framework to Maximize Improvements.} When correct and geometrically effective side information is available, we develop a principled methodology to exploit it and design the optimally perturbed solution with the largest possible first-order improvement over EO. Our methodology is, in particular, capable of creating new solutions not previously studied in the literature that outperform known methods. 
   Specifically, we first characterize the perturbation that yields the largest first-order improvement based on an optimization problem imposed on the leading terms of the excess risk, and recover the approximate optimal solution through suitable estimation procedures.
    Depending on the curvature of the cost function and the richness of the available side information, we use either an analytical approach that exploits system structure or bootstrap resampling, which is often the more practical choice. Intriguingly, our methodology to achieve the best improvement over EO mirrors the concept of control variates in the Monte Carlo literature, which is a variance reduction technique designed to leverage auxiliary simulation outputs correlated with the target counterparts. 
    Unlike standard Monte Carlo control-variate methods, our procedure estimates and maximizes the variance reduction through data-driven excess-risk estimation, yet the concepts are analogous and create a novel bridge between the data-driven optimization and the Monte Carlo literature.
    
\noindent\emph{3. Generalization to Contextual Stochastic Optimization.} Our first-order improvement framework extends naturally to so-called contextual stochastic optimization problems~\citep{bertsimas2020predictive,sadana2025survey} where some observed features influence the distribution of the randomness. 
    An established approach in contextual stochastic optimization is to apply weights based on features to each individual loss in the empirical objective. As in the non-contextual setting, we characterize when directionally perturbed weighted EO solutions can achieve first-order improvements, and construct a methodology to maximize such improvements from side information. Notably, in the contextual setting, the excess risk of weighted EO solutions decays more slowly due to the nonparametric nature of the weights in approximating the conditional distribution, making first-order improvements even more significant than the non-contextual counterpart.

Finally, we validate our theoretical findings through extensive numerical studies on linear regression and contextual newsvendor problems. The results provide evidence consistent with the theoretical predictions. In the linear regression experiments, we reproduce the predicted problem-dependent improvement patterns; in the contextual newsvendor experiments, our proposed directionally perturbed weighted EO solution significantly reduces cost in 8 of the 9 dataset--estimator combinations across real-world datasets.


\subsection{Other Related Literature}
Our principled framework for performance improvement is most related to two streams of work in the data-driven optimization literature: rigorous comparison of existing EO+ solutions, and the design of better methods motivated by the limited gains existing EO+ solutions offer.

\paragraph{Performance Comparison of Existing EO+ Methods.} Rigorous comparisons between EO+ methods and the base EO solution have been studied alongside the development of these methods. DRO methods, for instance, can be interpreted as regularized empirical optimization objectives~\citep{lam2018sensitivity,duchi2019variance,gao2022finite,wang2024regularization} or as worst-case performance certificates~\citep{delage2010distributionally,esfahani2018data}. Recently, \cite{van2020data,sutter2020general} showed that some variants of DRO achieve the optimal Pareto frontier between excess risk and out-of-sample disappointment. Statistics-enhanced methods, including transfer learning~\citep{bastani2021predicting,tian2023transfer} and semi-supervised learning~\citep{chakrabortty2018efficient,song2024general}, reduce parametric estimation errors under particular structural assumptions (e.g., sparsity or additional information structure). However, estimation error is not equivalent to excess risk, and analyses of this kind do not extend readily to other EO+ methods such as DRO.

The broader landscape of improvements offered by existing EO+ methods is more nuanced when measured in terms of excess risk~\eqref{eq:excess-risk}. 
On the negative side, \cite{lam2021impossibility} establishes an impossibility result: the EO solution stochastically dominates a broad class of data-driven methods, including optimization-enhanced ones, in terms of the excess risk asymptotically. Consistent with this, \cite{gotoh2021calibration,gotoh2023data} analyze the expected cost improvement of $f$-divergence DRO and its optimistic variants, quantifying their trade-offs with the EO solution, and showing that improvements materialize only in a higher-order sense.  Similar findings have been established for other distance-based DRO methods~\citep{anderson2022improving}. Empirically, \cite{gulrajanisearch} (and \cite{wang2023rethinking}) find limited performance gains for these existing transfer learning methods (and DRO methods, respectively), relative to the base EO solution on practical benchmarks.
\paragraph{Design of New Data-Driven Methods.} Motivated by the limited gains of well-established EO+ methods, recent work has sought to design new approaches through direct adjustments to the base data-driven solution. Most closely related to ours is Operational Data Analytics (ODA), developed by \cite{feng2023framework,feng2025contextual}, which derives superior solutions by introducing additional parameters selected via a validation model, so-called ODA boosting. Their framework is shown to uniformly dominate the base data-driven solution. However, it assumes distributional knowledge of the underlying uncertainty, which is different from, and can be more restrictive than the correct side information assumption we adopt. Similarly, \cite{albert2025post} develop a shrinkage adjustment to the standard estimate-then-optimize solution in pricing problems. Both approaches produce meaningful gains in small-sample regimes, but their benefits diminish as the sample size grows. To date, no systematic data-integration framework has been shown to rigorously yield significant improvements in expected cost performance in the asymptotic sense.

Compared to these two complementary streams of work, our paper presents a unified framework that reconciles both views. We provide explicit conditions characterizing when and by how much the base data-driven solution can be improved, and develop a principled framework to maximize improvements over the EO solution in the standard setting and the weighted EO solution in the contextual optimization setting.


In the following, \Cref{sec:preliminary} presents our main comparison setup and definitions of improvement. \Cref{sec:perturb-stat} presents the concept of EO+ methods as directionally perturbed EO solutions and characterizes when and how much they improve over EO. This section also demonstrates how EO+ methods recover many existing data-driven optimization methods. Then, in \Cref{sec:perturb-opt}, we describe our framework to maximize first-order improvements and connect our results with control variates. \Cref{sec:cso} presents generalizations to the contextual stochastic optimization problem. In \Cref{sec:discuss}, we provide further discussions on identifying side information in practice and other generalizations of our framework. Finally, \Cref{sec:numeric} presents numerical results that illustrate our theoretical findings and discusses practical implications. 
Additional theoretical and numerical results are deferred to Appendices~\ref{app:unification}--\ref{app:numerical0}.

\paragraph{Notation.} $\Pi_{\Theta}(\cdot)$ is the projection operator. For a function $f(\theta)$, $\nabla_{\theta}f(\theta)$ denotes the gradient
(or subgradient) and $\nabla_{\theta}^2f(\theta)$ denotes the Hessian
matrix with respect to $\theta$. For a matrix $H$, $\|H\|$ denotes its matrix operator norm and $H^{\dagger}$ denotes the Moore-Penrose pseudoinverse. ``$\Rightarrow$'' denotes convergence in distribution, and ``$\overset{p}{\to}$'' denotes convergence in probability.  $\tr[{\cdot}]$ denotes the trace of a
matrix and $D_{\cdot}$ denotes the dimension of a variable $\cdot$. $\mathbf{1}_{\cdot}$ denotes the indicator function, $I_{D\times D}$ is the $D\times D$ identity matrix and $\mathbf{0}$ denotes the zero vector or matrix. 
$\hat{\P}_n:=\frac{1}{n}\sum_{i = 1}^n\delta_{\xi_i}$ denotes the empirical distribution of $\{\xi_i\}_{i = 1}^n$, where $\delta_{\xi}$ denotes the Dirac measure centered at $\xi$. 
$o(\cdot), O(\cdot), \Omega(\cdot), \Theta(\cdot)$ are used to describe the asymptotic performance of a function.
Specifically, given two functions $f(n)$ and $g(n)$, $g(n) = \Omega(f(n))$ (or $f(n) = O(g(n))$) if there exist positive constants $c_1$ and $n_0$ such that $\|g(n)\| \geq c_1 \|f(n)\|$ for all $n \geq n_0$; $g(n) = \Theta(f(n))$ if $g(n) = \Omega(f(n))$ and $f(n) = \Omega(g(n))$; $f(n) = o(g(n))$ if for any positive constant $\varepsilon$, there exists $n_0$ such that $\|f(n)\| \leq \varepsilon \|g(n)\|$ for all $n \geq n_0$. 
$o_p(\cdot), O_p(\cdot)$ are used to describe the asymptotic convergence of random variables. For a sequence of random variables $X_n$, $X_n = o_p(f(n))$ if $\|X_n / f(n)\| \convp 0$ as $n \to \infty$; $X_n = O_p(f(n))$ if for any $\epsilon > 0$, there exists a finite $M > 0$ and $n_0 > 0$ such that $P(\|X_n / f(n)\| \geq M) \leq \epsilon, \forall n \geq n_0$.
\section{Statistical Improvement Orders and Comparison Framework}\label{sec:preliminary}
We begin by introducing the notion of improvement orders relative to the EO solution.
\begin{definition}[Orders of Improvements]\label{defn:improvement-order}
Suppose that for an EO solution $\hat\theta_{EO}$, its excess risk satisfies:
\begin{equation}\label{eq:regret-eo}
\E_{\Dscr_n}[R(\hat\theta_{EO})] = \frac{C_{EO}}{n^{2\gamma_{EO}}}  + o({n^{-2\gamma_{EO}}}),\label{excess risk EO}
\end{equation}
where $\gamma_{EO} > 0$ is the rate exponent and $C_{EO} > 0$ is the leading constant of the first-order term in the right-hand side of \eqref{excess risk EO}.

Then, for an EO+ solution $\hat\theta_{EO+}$, its excess risk satisfies:
\begin{equation*}
\E_{\Dscr_n}[R(\hat\theta_{EO+})] = \frac{C_{EO+}}{n^{2\gamma_{EO+}}}  + o({n^{-2\gamma_{EO+}}})
\end{equation*}
with corresponding rate exponent $\gamma_{EO+}$ and leading constant $C_{EO+}$. We classify its statistical improvement as follows: 
\begin{enumerate}[label=(\alph*)]
    \item \emph{First-order:} We say $\hat\theta_{EO+}$ achieves a \emph{first-order} improvement over $\hat\theta_{EO}$ if $\gamma_{EO+} = \gamma_{EO}$ and $C_{EO+} < C_{EO}$. Moreover, two EO+ solutions, say $\hat\theta_{EO+^{(1)}}$ and $\hat\theta_{EO+^{(2)}}$, are said to achieve the same amount of first-order improvement if $C_{EO+^{(1)}} = C_{EO+^{(2)}}$.
    \item \emph{Higher-order:} We say $\hat\theta_{EO+}$ achieves a \emph{higher-order} improvement over $\hat\theta_{EO}$ if $C_{EO+} = C_{EO}$ and $\gamma_{EO+} = \gamma_{EO}$, but $\E_{\Dscr_n}[R(\hat\theta_{EO+})] < \E_{\Dscr_n}[R(\hat\theta_{EO})]$ for all sufficiently large $n$.
\end{enumerate}
\end{definition}

Definition \ref{defn:improvement-order} stipulates that, to attain a first-order improvement, the EO+ solution $\hat\theta_{EO+}$ reduces the leading term of the excess risk of the EO solution $\hat\theta_{EO}$, regarding the coefficient in front of the rate $n^{-2\gamma_{EO}}$. Under the two excess risk expansions in \Cref{defn:improvement-order}, the definition of statistical improvements can be written in terms of an equivalent, ratio-based characterization of the two improvement orders:
\begin{enumerate}[label=(\alph*)]
     \item A first-order improvement holds if and only if
    $$\lim_{n \to \infty}\frac{\E_{\Dscr_n}[R(\hat\theta_{EO}) - {R(\hat\theta_{EO+})}]}{\E_{\Dscr_n}[{R(\hat\theta_{EO})}]} \in (0, 1);$$
    \item A higher-order improvement holds if and only if
    $$\lim_{n \to \infty}\frac{\E_{\Dscr_n}[R(\hat\theta_{EO}) - {R(\hat\theta_{EO+})}]}{\E_{\Dscr_n}[{R(\hat\theta_{EO})}]} = 0$$
    and $\E_{\Dscr_n}[R(\hat\theta_{EO}) - {R(\hat\theta_{EO+})}] > 0$ holds for all sufficiently large $n$. 
\end{enumerate}

Looking at the above definitions, it is apparent that there are two other possible cases in comparing $\hat\theta_{EO+}$ with $\hat\theta_{EO}$ that have not been covered. One is the zero-order improvement regime, in which an EO+ solution attains a strictly faster excess risk rate, i.e.,
$\gamma_{EO+} > \gamma_{EO}$~\citep{audibert2007fast,hu2022fast}. However, under the regularity assumptions we will impose (which are arguably standard), this case is ruled out in our comparisons. Another case is when $\hat\theta_{EO+}$ has no improvement (including underperformance) over $\hat\theta_{EO}$, which means $\E_{\Dscr_n}[R(\hat\theta_{EO}) - {R(\hat\theta_{EO+})}] \leq 0$ eventually as $n\to\infty$. While this case is clearly not of our focus, we will reveal in our subsequent developments how this non-improvement is avoided under specific hyperparameter choices in the EO+ methods. 

We also mention that, as a special case of higher-order improvements in Definition \ref{defn:improvement-order}, $\hat\theta_{EO+}$ has a \emph{second-order} improvement over $\hat\theta_{EO}$ if there is a common scale $\zeta_{EO} > 0$ such that the excess risks for $\hat\theta_{EO}$ and $\hat\theta_{EO+}$ can be written as 
\begin{equation}
\E_{\Dscr_n}[R(\hat\theta_{EO})] = \frac{C_{EO}}{n^{2\gamma_{EO}}} + \frac{E_{EO}}{n^{2(\gamma_{EO}+\zeta_{EO})}}  + o(n^{-2(\gamma_{EO} + \zeta_{EO})}),\label{excess risk EO second order}
\end{equation}
$$\E_{\Dscr_n}[R(\hat\theta_{EO+})] = \frac{C_{EO}}{n^{2\gamma_{EO}}} + \frac{E_{EO+}}{n^{2(\gamma_{EO}+\zeta_{EO})}}  + o(n^{-2(\gamma_{EO} + \zeta_{EO})}),$$
respectively, and $E_{EO+} < E_{EO}$. That is, in the second-order improvement, $\hat\theta_{EO+}$ cannot improve the leading term in the excess risk of $\hat\theta_{EO}$, but it improves the next term. Equivalently, $\E_{\Dscr_n}[R(\hat\theta_{EO}) - R(\hat\theta_{EO+})] = o(n^{-2\gamma_{EO}}) = (E_{EO} - E_{EO+})/n^{2(\gamma_{EO} + \zeta_{EO})} + o(n^{-2(\gamma_{EO} + \zeta_{EO})})$.

Next, we describe some regularity assumptions required to establish the excess risk behavior of the EO solution in \eqref{excess risk EO}:


\begin{assumption}[Optimality Condition for Oracle Optimal Solution]\label{asp:theta-star0}
The minimizer $\theta^*$ for $Z(\theta)$ is unique and satisfies the second-order optimality conditions: the gradient $\nabla_{\theta} Z(\theta) = 0$, and the Hessian $I_{\ell}(\theta^*):=\nabla_{\theta}^2 Z(\theta^*)$ is positive definite.
\end{assumption}

\begin{assumption}[Regularity Condition of Cost Function and EO Solution]\label{asp:optimal-condition}
$\ell(\theta;\xi)$ is differentiable at $\theta^*$ for almost every $\xi$ and satisfies $\forall \theta_1, \theta_2 \in \Theta$, $|\ell(\theta_1, \xi) - \ell(\theta_2, \xi)|\leq L(\xi)\|\theta_1 - \theta_2\|$ for some function $L(\cdot)$ with $\E_{\P}[L^3(\xi)]<\infty$. The EO solution $\hat\theta_{EO}$ satisfies $\E_{\hat\P_n}[\ell(\hat\theta_{EO};\xi)] \leq \min_{\theta\in\Theta}\E_{\hat\P_n}[\ell(\theta;\xi)] + o_p\Para{n^{-1}}.$
\end{assumption}
Assumption~\ref{asp:theta-star0} is the standard sufficient optimality condition for unconstrained problems or when $\theta^*$ lies in the interior of $\Theta$. It is adopted here primarily for clarity, and we discuss the constrained case in Section~\ref{subsec:generalization}. Assumption~\ref{asp:optimal-condition} is a standard condition ensuring the consistency of $\hat\theta_{EO}$, while the third moment condition leads to uniform integrability of the estimation error. Assumptions~\ref{asp:theta-star0} and~\ref{asp:optimal-condition} provide the following expansion of the EO solution, and its excess risk behavior depicted in \eqref{excess risk EO}:
\begin{proposition}[Convergence Rate under the EO Solution]\label{prop:eo-conv}
    Suppose Assumptions~\ref{asp:theta-star0} and~\ref{asp:optimal-condition} hold. Then     
     $\hat\theta_{EO}-\theta^* = (1/n)\sum_{i=1}^n \IFx(\xi_i) + o_p\Para{n^{-1/2}}$, where \(\IFx(\xi) :=
-(\nabla_{\theta}^2 Z(\theta^*))^{-1}\nabla_{\theta}\ell(\theta^*;\xi)\). Consequently, the excess risk of $\hat\theta_{EO}$ is given by \eqref{excess risk EO} where $\gamma_{EO} = 1/2$. 

Suppose furthermore \Cref{asp:regular-risk-rate} in Appendix~\ref{app:eo-conv} holds. Then $\zeta_{EO} = 1/2$ in \eqref{excess risk EO second order}.
\end{proposition}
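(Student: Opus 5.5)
The argument has three parts: an asymptotic-normality step for $\hat\theta_{EO}$, a second-order expansion of $Z$ around $\theta^*$, and a uniform-integrability step that lets us pass from distributional to expected excess risk.

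\emph{Step 1: Bahadur-type expansion.} I would invoke the classical M-estimation argument of the type in \citet{shapiro2021lectures}. The ingredients are as follows.
\begin{itemize}
\item Consistency of $\hat\theta_{EO}$. This follows from uniqueness of $\theta^*$ (Assumption~\ref{asp:theta-star0}), the Lipschitz envelope $L(\xi)$, which gives a Glivenko--Cantelli class and hence uniform convergence of $\E_{\hat\P_n}[\ell(\theta;\xi)]$ to $Z(\theta)$, and the near-optimality condition on $\hat\theta_{EO}$. If $\Theta$ is unbounded, I would rely on the standard localization via positive definiteness near $\theta^*$.
\item Stochastic equicontinuity of the remainder. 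Because $\ell(\cdot;\xi)$ is differentiable at $\theta^*$ almost surely and Lipschitz with square-integrable constant, the class $\{\ell(\theta;\cdot)-\ell(\theta^*;\cdot)-\nabla\ell(\theta^*;\cdot)^\top(\theta-\theta^*)\}/\|\theta-\theta^*\|$ has an $L^2$ envelope. Its $L^2$ norm tends to $0$ as $\theta\to\theta^*$, so the empirical process of the remainder is $o_p(\|\theta-\theta^*\|+n^{-1/2})\cdot n^{-1/2}$ uniformly on shrinking neighborhoods.
\item Local quadratic approximation. Combining the previous point with $Z(\theta)=Z(\theta^*)+\tfrac12(\theta-\theta^*)^\top I_\ell(\theta^*)(\theta-\theta^*)+o(\|\theta-\theta^*\|^2)$ gives
\[
\E_{\hat\P_n}[\ell(\theta;\xi)]-\E_{\hat\P_n}[\ell(\theta^*;\xi)] = G_n^\top(\theta-\theta^*)+\tfrac12(\theta-\theta^*)^\top I_\ell(\theta^*)(\theta-\theta^*)+o_p(\|\theta-\theta^*\|^2+n^{-1}),
\]
where $G_n=\frac1n\sum_i\nabla\ell(\theta^*;\xi_i)=O_p(n^{-1/2})$.
\end{itemize}
I would first compare the value at $\hat\theta_{EO}$ with the value at $\theta^*$ to get the rate $\|\hat\theta_{EO}-\theta^*\|=O_p(n^{-1/2})$. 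I would then compare with the value at $\theta^*-I_\ell^{-1}G_n$. Using the $o_p(n^{-1})$ optimization slack, this yields $\hat\theta_{EO}-\theta^*=-I_\ell^{-1}G_n+o_p(n^{-1/2})$, which is $\frac1n\sum_i\IFx(\xi_i)+o_p(n^{-1/2})$.

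\emph{Step 2: excess risk in distribution.} A Taylor expansion of $Z$ at $\theta^*$ with zero gradient gives
\[
nR(\hat\theta_{EO})=\tfrac12\, n(\hat\theta_{EO}-\theta^*)^\top I_\ell(\hat\theta_{EO}-\theta^*)+o_p(1).
\]
By the CLT, $\sqrt n(\hat\theta_{EO}-\theta^*)\Rightarrow N\bigl(0,I_\ell^{-1}\Sigma I_\ell^{-1}\bigr)$, where $\Sigma=\Cov(\nabla\ell(\theta^*;\xi))$; this is finite since $\|\nabla\ell\|\le L$. Therefore $nR(\hat\theta_{EO})\Rightarrow \tfrac12 W^\top I_\ell W$, whose mean is $C_{EO}=\tfrac12\tr[I_\ell^{-1}\Sigma]>0$ unless $\Sigma=\mathbf 0$.

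\emph{Step 3: convergence of the mean (main obstacle).} To upgrade to $\E[nR(\hat\theta_{EO})]\to C_{EO}$, so that \eqref{excess risk EO} holds with $\gamma_{EO}=1/2$, I need uniform integrability of $nR(\hat\theta_{EO})$. This is where $\E[L^3]<\infty$ enters. The plan is to show $\sup_n\E[(nR(\hat\theta_{EO}))^{3/2}]<\infty$ via three observations:
\begin{itemize}
\item Since $R\ge0$ and $R(\hat\theta_{EO})\le 2\sup_\theta|\E_{\hat\P_n}\ell-Z|$ plus slack, it suffices to control this supremum.
\item By the local quadratic growth of $Z$, $R(\hat\theta_{EO})\lesssim \|\hat\theta_{EO}-\theta^*\|^2$, and $\|\hat\theta_{EO}-\theta^*\|^2\lesssim$ a localized empirical-process supremum. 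The moments of that supremum are bounded via Rosenthal/symmetrization inequalities using the $L^3$ envelope, giving $\E[(\sqrt n\|\hat\theta_{EO}-\theta^*\|)^{3}]=O(1)$.
\item Away from $\theta^*$, quadratic growth fails but $R$ is bounded by $\E[L]\|\theta-\theta^*\|$. There I would use the tail probability of leaving a neighborhood, which is $O(n^{-3/2})$ by Markov's inequality with third moments, to show this region contributes $o(n^{-1})$.
\end{itemize}
The optimization slack is $o_p(n^{-1})$ only in probability. I would handle it by interpreting the assumption as also holding in $L^1$, or by noting that it is implicitly needed for the expectation statement. This step is the delicate one.

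\emph{Step 4: second-order term.} The claim $\zeta_{EO}=1/2$ under \Cref{asp:regular-risk-rate} presumably follows by carrying the expansions one order further: a third-order Taylor term in $Z$ and a next-order Bahadur term of size $O_p(n^{-1})$ in $\hat\theta_{EO}$. The cross terms have expectations of order $n^{-2}$ (odd moments of the mean vanish at order $n^{-3/2}$), so the correction to $\E[R]$ is $E_{EO}/n^{2}$, that is, $n^{-2(1/2+1/2)}$. I would rely on the smoothness and moment conditions of that appendix assumption to justify the expectation-level remainder.
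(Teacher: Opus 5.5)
Your Steps 1--3 follow the paper's route, with the cited results unpacked. The paper obtains $\hat\theta_{EO}-\theta^*=\frac1n\sum_{i}\IFx(\xi_i)+o_p(n^{-1/2})$ from Theorem~5.23 of \citet{van2000asymptotic}. It gets $\E_{\Dscr_n}[n(\hat\theta_{EO}-\theta^*)(\hat\theta_{EO}-\theta^*)^\top]\to\Sigma_0$ from the $L^3$ Lipschitz envelope via \citet{nishiyama2010moment}. It then takes expectations in the quadratic Taylor expansion of $Z$. Your localization, peeling and uniform-integrability sketch is an outline of those two citations, and it produces the same constant $C_{EO}=\frac12\tr[I_\ell(\theta^*)\Sigma_0]$. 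One correction: positive definiteness of the Hessian near $\theta^*$ does not give the well-separated minimum needed for consistency on an unbounded $\Theta$, nor a finite $\sup_\theta$ in your Step~3 bound. For that you need compactness, convexity, or an explicit separation condition.

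The genuine gap is in Step~4, and it has two parts.

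\emph{The quartic term is missing.} Write $\Delta_n=\hat\theta_{EO}-\theta^*$. Stopping at the third-order Taylor term of $Z$ does not determine the $n^{-2}$ coefficient. The quartic term $\frac1{24}\nabla_\theta^4Z(\theta^*)[\Delta_n^{\otimes4}]$ generically has expectation of exact order $n^{-2}$: the index pairings in $\E[(\frac1n\sum_i\IFx(\xi_i))^{\otimes4}]$ give $\langle\mathcal B_4,v^{\otimes4}\rangle=3(v^\top\Sigma_0v)^2\neq0$. So it is part of $E_{EO}$, not a negligible remainder. With only a third-order expansion you get $\E[R(\hat\theta_{EO})]=C_{EO}/n+O(n^{-2})$, which falls short of $C_{EO}/n+E_{EO}/n^2+o(n^{-2})$. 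The paper instead expands to fourth order. It then uses the $(4+\delta)$-moment bound in \Cref{asp:regular-risk-rate}, i.e.\ uniform integrability of $n^2\|\Delta_n\|^4$, to make the $o(\|\Delta_n\|^4)$ remainder contribute $o(n^{-2})$ in expectation.

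\emph{The next-order Bahadur term is not available.} \Cref{asp:regular-risk-rate} imposes smoothness on $Z$, not on $\ell$. For nonsmooth losses such as the newsvendor of \Cref{ex:newsvendor}, the Bahadur--Kiefer remainder of the sample quantile is of order $n^{-3/4}$ up to logarithmic factors, so there is no $O_p(n^{-1})$ second-order term. The $n^{-2}$ terms of $\E[\Delta_n^{\otimes k}]$ therefore cannot be produced your way. The paper does not derive them at all. The third item of \Cref{asp:regular-risk-rate} postulates
\[
\E[\Delta_n^{\otimes2}]=\Sigma_0/n+\mathcal B_2/n^2+o(n^{-2}),\qquad
\E[\Delta_n^{\otimes3}]=\mathcal B_3/n^2+o(n^{-2}),\qquad
\E[\Delta_n^{\otimes4}]=\mathcal B_4/n^2+o(n^{-2}).
\]
The proof then substitutes these into the fourth-order expansion, giving
\[
E_{EO}=\tfrac12\nabla_\theta^2Z(\theta^*)[\mathcal B_2]+\tfrac16\nabla_\theta^3Z(\theta^*)[\mathcal B_3]+\tfrac1{24}\nabla_\theta^4Z(\theta^*)[\mathcal B_4].
\]
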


In Proposition \ref{prop:eo-conv}, $\IFx$ is the so-called \emph{influence function}~\citep{hampel1974influence} that signifies the functional derivative of $\hat\theta_{EO}$ with respect to the data distribution. The expansion of $\hat\theta_{EO}-\theta^*$ in terms of this influence function in Proposition \ref{prop:eo-conv} follows from standard M-estimation theory (e.g., Chapter 5 of \cite{van2000asymptotic}). To translate this expansion into the excess risk's expansion \eqref{excess risk EO} with rate exponent $\gamma_{EO} = \frac{1}{2}$, the key observation is a Taylor series argument applied to $Z(\cdot)$ that states
\begin{equation}\label{eq:second-order-expansion}
  Z(\hat\theta_{EO}) - Z(\theta^*) \approx \frac{1}{2}(\hat\theta_{EO} - \theta^*)^{\top} \nabla_{\theta}^2 Z(\theta^*)(\hat\theta_{EO} - \theta^*).  
\end{equation}
This expansion is quadratic in $\hat\theta_{EO} - \theta^*$ 
due to the first-order optimality condition in Assumption \ref{asp:theta-star0}. Therefore, 
$Z(\hat\theta_{EO}) - Z(\theta^*)$ is of order $n^{-1}$ which leads to the rate exponent $\gamma_{EO}=1/2$ in \eqref{excess risk EO}.

We illustrate the excess risk behavior in Definition \ref{defn:improvement-order} with the following simple examples:
\begin{example}[Linear Regression]\label{ex:lr}
    Consider the linear regression model $\xi = (X, Y)^{\top}$ and the ordinary least squares (OLS) loss $\ell(\theta;\xi)=(\theta^{\top}X-Y)^2$, where \(Y=(\theta^*)^{\top}X+\epsilon\) with $\E[\epsilon] = 0$ and $\Var[\epsilon] = \sigma^2$. Recall that $D_{\theta}$ denotes the dimension of $\theta$. Then $C_{EO} = D_{\theta}\sigma^2$ and $\gamma_{EO} = 1/2$ in \eqref{eq:regret-eo}. 
\end{example}

\begin{example}[Newsvendor Problem]\label{ex:newsvendor}
    Consider the single-product newsvendor problem $\ell(\theta;\xi) = c\theta - p \min\{\theta, \xi\}$ where $\xi$ denotes the product demand, $c$ and $p$ denote the cost and price parameters. Suppose $\xi \sim \P$ is continuous with density $f(\cdot)$. Then $C_{EO} = \frac{c(p-c)}{2pf(\theta^*)}, \gamma_{EO} = 1/2$ in~\eqref{eq:regret-eo}.
\end{example}

In the contextual optimization problem, the convergence rate of $\hat\theta_{EO}$ may differ depending on how we derive weights nonparametrically, and therefore other values of $\gamma_{EO}$ can arise. We will provide details in \Cref{sec:cso}.

\section{EO+ Solutions: Improvement Characterization and Scope of Applicability}\label{sec:perturb-stat}
We begin by introducing what we call directionally perturbed EO solutions that serve as the beginning point of our analysis.
\begin{definition}[Directionally Perturbed EO]\label{defn:direction-perturb}
A directionally perturbed EO solution is defined by
\begin{equation}\label{eq:empirical-solution-adjustment}
    \hat\theta_{H, M} = \Pi_{\Theta}\Para{\hat\theta_{EO} + \frac{H}{n}\sum_{i = 1}^n M(\hat\theta_{EO};\xi_i)},
\end{equation}  
where $M(\theta;\xi) \in \R^{D_M}$ is a perturbation function that is differentiable with respect to $\theta$ almost everywhere for each $\xi$, and $H \in \R^{D_{\theta}\times D_M}$ is an adjustment matrix.
\end{definition}

The function $M(\theta;\xi) \in \R^{D_M}$ can be viewed as an encoding of certain available \textit{side information}, as we will make clear in the sequel. For now, building on the setup of \Cref{sec:preliminary}, we establish a main result characterizing the improvement order achievable by $\hat\theta_{H, M}$.
\begin{theorem}[Characterization of Improvement Order]\label{thm:improvement-principle}
    Suppose Assumptions~\ref{asp:theta-star0} and~\ref{asp:optimal-condition} hold.  Denote $\Gamma:=\text{Cov}_{\P}\Paran{\IFx(\xi),M(\theta^*;\xi)}, \Sigma_0 := \E_{\P}[\IFx(\xi) \IFx(\xi)^{\top}]$ and $\mu_M(\theta) := \E_{\P}[M(\theta;\xi)]$. Then if:
    \begin{enumerate}
        \item the side information is correct: $\mu_M(\theta^*) = \mathbf{0}$;
        \item {the non-orthogonality condition} holds: $\Sigma_0 \nabla_{\theta}\mu_M(\theta^*)^{\top} + \Gamma \neq \mathbf{0}$;
    \end{enumerate}
    then there exists some $H$ with $\|H\| = \Theta(1)$ such that 
    $\hat\theta_{H, M}$ achieves a first-order improvement over $\hat\theta_{EO}$, whereas any $H$ with $\|H\| = o(1)$ can achieve at most a second-order improvement.
    
    If Condition 1 fails and $(\Sigma_0 \nabla_{\theta}\mu_M(\theta^*)^{\top} + \Gamma)(I_{D_M\times D_M} - \mu_M(\theta^*)\mu_M(\theta^*)^{\top}/\|\mu_M(\theta^*)\|_2^2) \neq \mathbf{0}$, then $\hat\theta_{H,M}$ achieves a first-order improvement over $\hat\theta_{EO}$ only if $\|H\| = \Theta(1)$ and $H\mu_M(\theta^*) = \mathbf{0}$. 
    
    If Condition 2 fails, then $\hat\theta_{H, M}$ has at most a second-order improvement over $\hat\theta_{EO}$ and the maximal improvement is attained when $\|H\| = \Theta(1/n)$.

\end{theorem}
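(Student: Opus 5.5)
The plan is to derive a first-order stochastic expansion of $\hat\theta_{H,M}-\theta^*$ and insert it into the quadratic expansion \eqref{eq:second-order-expansion} of the excess risk. Write $\bar M_n(\theta):=\frac1n\sum_{i=1}^n M(\theta;\xi_i)$. By Proposition~\ref{prop:eo-conv}, $\hat\theta_{EO}-\theta^*=\frac1n\sum_i\IFx(\xi_i)+o_p(n^{-1/2})$. A Taylor expansion of $\mu_M$ around $\theta^*$, together with stochastic equicontinuity of $\bar M_n-\mu_M$ (I will assume enough regularity of $M$, mirroring Assumption~\ref{asp:optimal-condition}), gives
\begin{equation*}
\bar M_n(\hat\theta_{EO}) = \mu_M(\theta^*) + \frac1n\sum_{i=1}^n\bigl(M(\theta^*;\xi_i)-\mu_M(\theta^*)\bigr) + \nabla_\theta\mu_M(\theta^*)\frac1n\sum_{i=1}^n\IFx(\xi_i) + o_p(n^{-1/2}).
\end{equation*}
Since $\theta^*$ is interior, the projection is inactive with probability tending to one. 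Hence
\begin{equation*}
\hat\theta_{H,M}-\theta^* = H\mu_M(\theta^*) + \frac1n\sum_{i=1}^n W_H(\xi_i) + o_p(n^{-1/2}),
\end{equation*}
where $W_H(\xi)=(I+H\nabla_\theta\mu_M(\theta^*))\IFx(\xi)+H(M(\theta^*;\xi)-\mu_M(\theta^*))$.

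\textbf{Excess risk under Condition 1.} Let $V=\nabla^2_\theta Z(\theta^*)$. Plugging the expansion into \eqref{eq:second-order-expansion} and using the third-moment condition for uniform integrability, so that expectations of the $o_p$ remainders are $o(n^{-1})$, gives
\begin{equation*}
n\,\E[R(\hat\theta_{H,M})]\to \tfrac12\tr\bigl[V\,\E W_HW_H^\top\bigr]=:\tfrac12 C(H).
\end{equation*}
Expanding,
\begin{equation*}
C(H)=C(0)+2\tr\bigl[VH(\nabla_\theta\mu_M\Sigma_0+\Gamma^\top)\bigr]+\tr\bigl[VH\,\Lambda\,H^\top\bigr],
\end{equation*}
where $\Lambda\succeq 0$ is the covariance of $\nabla_\theta\mu_M\IFx+M-\mu_M$. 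The linear term is $2\langle VH,\,G^\top\rangle$ with $G=\Sigma_0\nabla_\theta\mu_M^\top+\Gamma$. If $G\neq 0$, take $H=-tV^{-1}G\,A$ for a suitable fixed $A$ (for instance $A=I$) and small fixed $t>0$. Then the linear term is strictly negative and dominates the quadratic term, so $C(H)<C(0)$ with $\|H\|=\Theta(1)$, which is a first-order improvement. If $\|H\|=o(1)$, then $C(H)\to C(0)$, so only the next-order term can differ. For that step I would invoke the second-order expansion of Proposition~\ref{prop:eo-conv} and show that the gain is at most of order $\|H\|n^{-3/2}$ plus $\|H\|^2n^{-1}$.

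\textbf{Condition 1 fails.} The bias contributes $\tfrac12\mu_M^\top H^\top VH\mu_M$, which is $\Theta(1)$ unless $H\mu_M(\theta^*)=0$. More precisely, a first-order improvement requires $\|H\mu_M\|=O(n^{-1/2})$; I would argue that a nonvanishing bias at that scale cannot be beneficial, or treat it as the degenerate boundary case. Restricting to $H=\tilde H(I-\mu\mu^\top/\|\mu\|^2)$ and rerunning the Condition 1 argument with the projected $G$ yields the stated characterization. The condition $\|H\|=\Theta(1)$ is needed for the same reason as before.

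\textbf{Condition 2 fails.} Now $G=0$, so $C(H)=C(0)+\tr[VH\Lambda H^\top]\ge C(0)$, and no first-order gain is possible. To exhibit the second-order behaviour I need the $n^{-3/2}$ cross terms: the $O_p(n^{-1})$ remainder of $\hat\theta_{EO}$ (mean bias $b/n$) and the second-order term of $\bar M_n(\hat\theta_{EO})$. The excess risk is then $C(0)/(2n)+\langle VH,\,\kappa\rangle/n^{3/2}\cdot(\text{appropriate scaling})+\tfrac12\tr[VH\Lambda H^\top]/n+\dots$. Optimizing this quadratic in $H$ gives the optimum at $\|H\|=\Theta(1/n)$, with gain of order $n^{-2}$, consistent with $\zeta_{EO}=1/2$.

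\textbf{Main obstacle.} I expect the hardest part to be this higher-order bookkeeping: justifying the $n^{-3/2}$ and $n^{-2}$ expansions of expectations, not just distributional limits. This requires the additional Assumption~\ref{asp:regular-risk-rate} and smoothness or moment bounds on $M$ for uniform integrability. I would attempt it by a delta-method expansion to third order combined with moment bounds.
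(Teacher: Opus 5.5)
Your first-order argument is the paper's own. You decompose $\hat\theta_{H,M}-\theta^*$ into the bias $H\mu_M(\theta^*)$ plus the centered average of $\IFx+H\widetilde M(\theta^*;\cdot)$, insert this into the quadratic expansion of $Z$, and use that the bias--fluctuation cross term has mean zero. What remains is a linear term governed by $G=\Sigma_0\nabla_\theta\mu_M(\theta^*)^{\top}+\Gamma$ (the paper's $A=2G$) and a positive semidefinite quadratic (the paper's $B=\Lambda$). Your choice $H=-tV^{-1}G$ with small fixed $t$ is a valid substitute for the paper's $-\tfrac12AB^{\dagger}$, and it keeps the leading constant strictly positive, as the definition of a first-order improvement requires. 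The same mean-zero cross term settles your hesitation when Condition 1 fails. A bias with $\sqrt n\,H\mu_M(\theta^*)\to h$ only adds $\tfrac{1}{2n}\|h\|_V^2\ge 0$, so what matters is the limit of $H$, which must annihilate $\mu_M(\theta^*)$. This is exactly the level at which the paper's split into $\|H\mu_M(\theta^*)\|=\Theta(1)$, $H\mu_M(\theta^*)=o(1)$ and $H\mu_M(\theta^*)=\mathbf 0$ works.

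Two of your higher-order steps, however, fail as written.

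First, for $\|H\|=o(1)$ under Conditions 1--2, the gain is not $O(\|H\|n^{-3/2}+\|H\|^2n^{-1})$. By your own expansion of $C(H)$, the $H$-linear part of the risk difference is $\frac1n\tr[VHG^{\top}]$. So $H=-s_nV^{-1}G$ with $s_n\to 0$ gains $s_n\|G\|_F^2/n$ to leading order, which dominates both terms of your bound. The paper establishes ``at most second-order'' only in the sense that the difference is $o(1/n)$, and your observation $C(H)\to C(0)$ already gives that, so drop the sharper bound.

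Second, in the Condition-2-fails paragraph, an objective with a linear term of order $\|H\|n^{-3/2}$ and quadratic $\frac1{2n}\tr[VH\Lambda H^{\top}]$ is minimized at $\|H\|\asymp n^{-1/2}$, not $n^{-1}$. Your conclusion therefore does not follow from your own display. In the paper, the order $\Theta(1/n)$ comes from the first-order expansion alone, with no appeal to Assumption~\ref{asp:regular-risk-rate}. When $\mu_M(\theta^*)\neq\mathbf 0$, the bias penalty $\frac12\|H\mu_M(\theta^*)\|^2_{I_\ell(\theta^*)}$ is of order $\|H\|^2$ and competes with the covariance term of order $\|H\|/n$. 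This gives $H^*=-\frac1n A\mu_M(\theta^*)\mu_M(\theta^*)^{\top}/\|\mu_M(\theta^*)\|_2^4$ and a gain of order $n^{-2}$. When $G=\mathbf 0$, the paper only checks that the leading-order difference is a nonnegative quadratic in $H$.

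If you want a genuine higher-order argument for $G=\mathbf 0$, note that $\frac1{2n}\tr[VH\Lambda H^{\top}]$ is the relevant penalty only when $H\mu_M(\theta^*)=\mathbf 0$. For $\mu_M(\theta^*)\neq\mathbf 0$, the penalty is the bias quadratic. The matching $H$-linear term is $\E[\nabla_\theta Z(\hat\theta_{EO})]^{\top}H\mu_M(\theta^*)$, of order $\|H\|/n$, coming from EO's $O(1/n)$ mean bias and the cubic Taylor term of $Z$. Balancing these two is what yields $\|H\|=\Theta(1/n)$ with an $n^{-2}$ gain, provided $n\E[\nabla_\theta Z(\hat\theta_{EO})]$ has a nonzero limit.
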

\Cref{thm:improvement-principle} shows that a first-order improvement requires three conditions to hold simultaneously: (i) correctness of the side information; (ii) the non-orthogonality condition; (iii) a perturbation of significant magnitude. Together, these three conditions rule out degenerate choices of $M$, including cases in which $M$ is a constant in both $\theta$ and $\xi$, or in which $M(\theta;\xi)$ is independent of $\xi$ and $\nabla_{\theta}\mu_M(\theta^*) = 0$. On the other hand, the theorem also states that the combination of conditions $\mu_M(\theta^*)\neq \mathbf{0}$ and $H\mu_M(\theta^*) = \mathbf{0}$ can also lead to a first-order improvement. However, this combined requirement cannot be satisfied in many EO+ methods (we revisit the requirements of $M$ in \Cref{subsec:perturb-stat-connection}). Moreover, it cannot be attained in a fully data-driven way. Appendix~\ref{app:h-correct-impossible} explains this difficulty in detail.


Since Theorem \ref{thm:improvement-principle} provides an important foundation for our framework, we explain its proof idea here, which reveals critical tradeoffs among the bias and variance components. We consider the special case with $\Theta = \R^{D_{\theta}}$ and $D_M = D_{\theta}$. Moreover, we restrict the form of the adjustment matrix $H = \lambda I_{D_{\theta}\times D_{\theta}}$ and only allow changes in $\lambda$. In this case, $H\mu_M(\theta^*)=\mathbf{0}$ if and only if $\mu_M(\theta^*)=\mathbf{0}$. Our analysis extends naturally to general $H$; see Appendix~\ref{app:improvement}. Using \eqref{eq:empirical-solution-adjustment} and Proposition \ref{prop:eo-conv}, we can decompose $\hat\theta_{H, M}$ as:
\begin{equation}
\begin{aligned}
\hat\theta_{H, M} & = \hat\theta_{EO} + \frac{\lambda}{n}\sum_{i = 1}^n M(\hat\theta_{EO};\xi_i)   \\ 
& = \theta^* + \frac{1}{n}\sum_{i  =1}^n \IFx(\xi_i) + \lambda \mu_M(\theta^*) + \frac{\lambda}{n} \sum_{i=1}^n \widetilde M(\theta^*;\xi_i)
+o_p(n^{-1/2}), 
\end{aligned}\label{expansion decomposition}
\end{equation}
where $\widetilde M(\theta^*;\xi) = M(\theta^*;\xi)-\mu_M(\theta^*)+\nabla_\theta\mu_M(\theta^*)\IFx(\xi)$.  
The second equality in~\eqref{expansion decomposition} follows from a first-order Taylor expansion of $M(\hat\theta_{EO};\xi)$ around $\theta^*$ and the convergence $(1/n)\sum_{i=1}^n\nabla_{\theta} M(\theta^*;\xi_i) \convp \nabla_{\theta}\mu_M(\theta^*)$:
\begin{equation}\label{eq:first-order-fluctuation0}
\begin{aligned}
\frac1n\sum_{i=1}^n M(\hat\theta_{EO};\xi_i)
& =\frac1n\sum_{i =1}^n M(\theta^*;\xi_i) + \Para{\frac1n\sum_{i=1}^n\nabla_{\theta} M(\theta^*;\xi_i)}\Para{\frac{1}{n}\sum_{i=1}^n\IFx(\xi_i)} + o_p(n^{-1/2})\\
& = \mu_M(\theta^*)+\frac1n\sum_{i=1}^n
\Big(M(\theta^*;\xi_i)-\mu_M(\theta^*)+\nabla_\theta\mu_M(\theta^*)\IFx(\xi_i)\Big)
+o_p(n^{-1/2}).
\end{aligned}
\end{equation}
As $\hat\theta_{EO} = \theta^* + \frac{1}{n}\sum_{i = 1}^n\IFx(\xi_i) + o_p(1/\sqrt{n})$, the right-hand side of~\eqref{expansion decomposition} separates the perturbation on the EO solution into two components: $\mu_M(\theta^*) = \E_{\P}[M(\theta^*;\xi)]$ captures the additional deterministic bias, and $\widetilde M(\theta^*;\xi)=M(\theta^*;\xi)-\mu_M(\theta^*)+\nabla_\theta\mu_M(\theta^*)\IFx(\xi)$ captures the additional variability, which has mean zero as $\mu_M(\theta^*) = \E_{\P}[M(\theta^*;\xi)]$ and $\E[\IFx(\xi)] = 0$. 

To characterize the improvements over EO, we analyze the expected excess-risk difference between $\hat\theta_{\lambda I_{D_{\theta}\times D_{\theta}}, M}$ and $\hat\theta_{EO}$, i.e., $\E[R(\hat\theta_{\lambda I_{D_{\theta}\times D_{\theta}}, M}) - R(\hat\theta_{EO})]$.
First, analogous to the expansion for EO in \eqref{eq:second-order-expansion}, we can expand the excess risk for the directionally perturbed EO solution $\hat\theta_{H,M}$ in the form
\begin{equation}\label{general expansion}
  R(\hat\theta_{H,M})=Z(\hat\theta_{H,M}) - Z(\theta^*) = \frac{1}{2}(\hat\theta_{H,M} - \theta^*)^{\top} \nabla_{\theta}^2 Z(\theta^*)(\hat\theta_{H,M} - \theta^*)+o_p(n^{-1}).
\end{equation}
The expansion above requires that $\hat\theta_{H,M}$ converges to $\theta^*$; otherwise we can readily argue that $\hat\theta_{H,M}$ cannot achieve an asymptotic improvement over EO.
Focusing on the formulation of $\hat\theta_{\lambda I_{D_{\theta}\times D_{\theta}},M}$ in \eqref{expansion decomposition}, 
we have:
\begin{equation}\label{eq:regret-optimal-h}
\begin{aligned}
    &\E_{\Dscr_n}[R(\hat\theta_{\lambda I_{D_{\theta}\times D_{\theta}}, M}) - R(\hat\theta_{EO})] \\
=& \frac{1}{2} \E_{\Dscr_n}\Paran{\bigg\|\lambda\mu_M(\theta^*)
+ \frac1n\sum_{i=1}^n\Para{\IFx(\xi_i)+\lambda\widetilde M(\theta^*;\xi_i)}\bigg\|_{I_{\ell}(\theta^*)}^2 - \bigg\|\frac{1}{n}\sum_{i = 1}^n \IFx(\xi_i)\bigg\|_{I_{\ell}(\theta^*)}^2} + o\Para{\frac{1}{n}}.   
\end{aligned}
\end{equation}

Then we expand the squared norm $\E[\|\lambda\mu_M(\theta^*)
+ \frac1n\sum_{i=1}^n(\IFx(\xi_i)+\lambda\widetilde M(\theta^*;\xi_i))\|_{I_{\ell}(\theta^*)}^2]$ and compare them with $\E[\|\frac{1}{n}\sum_{i = 1}^n \IFx(\xi_i)\|_{I_{\ell}(\theta^*)}^2]$ in~\eqref{eq:regret-optimal-h}. Since $\frac{1}{n}\sum_{i = 1}^n {\IFx(\xi_i)}$ and $\frac{1}{n}\sum_{i = 1}^n \widetilde M(\theta^*;\xi_i)$ are centered empirical averages, their cross terms with $\lambda \mu_M(\theta^*)$ vanish after taking expectation. Therefore, we have the following: 
\begin{equation}\label{eq:reg-exact-decomp}
    \E[R(\hat\theta_{\lambda I_{D_{\theta}\times D_{\theta}}, M}) - R(\hat\theta_{EO})] = \frac{\lambda^2}{2} \|\mu_M(\theta^*)\|_{I_{\ell}(\theta^*)}^2 + \frac{\rho_1 \lambda}{n} + \frac{\rho_2\lambda^2}{2n} + o\Para{\frac{1}{n}}.
\end{equation}
The first term on the right-hand side of~\eqref{eq:reg-exact-decomp} captures the impact of the perturbation bias on the excess risk, and the remaining terms on the right-hand side of~\eqref{eq:reg-exact-decomp} capture the impact of the perturbation variability on the excess risk. 
More specifically, the term $\rho_1\lambda/n$ linear in $\lambda$ captures the covariance between the original influence function and the perturbation variability, with $\rho_1 = \tr[\Cov_{\P}[\IFx(\xi), \widetilde M(\theta^*;\xi)]I_{\ell}(\theta^*)] =  \tr[(\Sigma_0\nabla_{\theta}\mu_M(\theta^*)^{\top} + \Gamma) I_{\ell}(\theta^*)]$. The term $\rho_2 \lambda^2/(2n)$ quadratic in $\lambda$ captures the variance of the perturbation variability, with $\rho_2 = \tr[\Var_{\P}[\widetilde M(\theta^*;\xi)] I_{\ell}(\theta^*)] > 0$
which is positive because $\Var_{\P}[\widetilde M(\theta^*;\xi)] \succeq 0, \Var_{\P}[\widetilde M(\theta^*;\xi)] \neq \mathbf{0}$ and $I_{\ell}(\theta^*)\succ 0$. 

Since $\E_{\Dscr_n}[R(\hat\theta_{EO})] = \Theta(1/n)$ in \Cref{prop:eo-conv}, a first-order improvement requires the excess risk difference to be negative and of order $\Theta(1/n)$.  This is possible only when the bias term vanishes, namely $\mu_M(\theta^*)=0$, and the covariance term is non-zero, namely $\rho_1\neq 0$. In that case, the optimal $\lambda^*=-\rho_1/\rho_2$ is of constant order as $\rho_2>0$, and the improvement is $\Theta(1/n)$. 

When either condition fails, no choice of $\lambda$ yields an improvement of order $\Theta(1/n)$. More precisely, there are two alternative cases: 
\begin{enumerate}
    \item If $\mu_M(\theta^*) \neq 0$, then the coefficient of $\lambda^2$ is $\Theta(1)$ and the coefficient of $\lambda$ is $\Theta(1/n)$. Then setting $\lambda^* = \Theta(1/n)$ leads to an improvement of order $\Theta(1/n^2)$ for the right-hand side of~\eqref{eq:reg-exact-decomp}, corresponding to a second-order gain; 
    \item If $\rho_1 = 0$, the right-hand side of~\eqref{eq:reg-exact-decomp} is quadratic in $\lambda$ and nonnegative, so the perturbation cannot improve EO in a first-order sense.
\end{enumerate}

\subsection{Implications for Existing Data-Driven Optimization Methods}\label{subsec:perturb-stat-connection}
In this section, we connect common data-driven optimization methods to the directionally perturbed EO solution in \Cref{defn:direction-perturb}. 
To do so, we introduce the class of what we call EO+ methods:
\begin{definition}[EO+ Methods]\label{defn:problemclass}
    Consider EO+ solutions $\hat\theta_{\lambda}$ parameterized by $\lambda$ (which may depend on $n$).
    \begin{enumerate}[(i),leftmargin=*]
        \item Optimization-enhanced methods: 
        \begin{enumerate}
            \item Explicit regularization methods, for a pre-specified function $G(\cdot): \Theta \to \R$:
        \begin{equation}\label{eq:reg-method}
        \hat\theta_{\lambda} \in \argmin_{\theta}\E_{\hat\P_n}[\ell(\theta;\xi)] + \lambda G(\theta).
            \end{equation}
            \item DRO methods, for a discrepancy measure $d$: 
        \begin{equation}\label{eq:dro-method}
        \hat\theta_{\lambda} \in \argmin_{\theta}\max_{\Q: d(\Q, \hat\P_n)\leq \lambda}\E_{\Q}[\ell(\theta;\xi)].
    \end{equation}
        \end{enumerate}
        Choices of $d$ include: (i) Wasserstein distance~\citep{villani2009optimal}; (ii) $f$-divergence~\citep{ben2013robust}:
            $d(\P, \Q) = \E_{\Q}\Paran{f\Para{{d\P}/{d\Q}}}$, which encompasses $\chi^2$-divergence~\citep{duchi2019variance} ($f(x) = (x - 1)^2$) and Kullback-Leibler divergence \citep{lam2013robust}.
        \item Statistics-enhanced methods:
        \begin{enumerate}
            \item Transfer learning: $G(\theta) = \|\theta - \theta_0\|_2^2$ in~\eqref{eq:reg-method} where $\theta_0$ is some fixed anchor parameter.
            \item Shrinkage estimator: $\hat\theta_{\lambda} = \Pi_{\Theta}\Para{\Para{1 + \frac{G(\lambda)}{\|\hat\theta_{EO}\|_2^2}}\hat\theta_{EO}}$ for some function $G(\cdot)$.
        \end{enumerate}
    \end{enumerate}
\end{definition}

The EO+ methods in Definition \ref{defn:problemclass} encompass many common data-driven optimization approaches. Next, we show that EO+ methods are asymptotically equivalent to directionally perturbed EO in~\eqref{eq:empirical-solution-adjustment}.
\begin{theorem}[Unification of EO+ Methods]\label{thm:unification}
Suppose Assumptions~\ref{asp:theta-star0},~\ref{asp:optimal-condition} and~\ref{asp:additional-summary} in Appendix~\ref{app:formulation} hold. Then for any $\lambda > 0$, there exists $r_{\lambda,n} = o_p\Para{\lambda \vee n^{-1/2}}$ such that each EO+ solution $\hat\theta_{\lambda}$ in Definition~\ref{defn:problemclass} admits the representation: 
\begin{equation}\label{eq:eo-plus-representation}
\hat\theta_{\lambda} = \Pi_{\Theta}\Para{\hat\theta_{EO} + \frac{1}{n}\sum_{i = 1}^n H(\lambda) M(\theta^*;\xi_i) + r_{\lambda, n}}. \end{equation}

\end{theorem}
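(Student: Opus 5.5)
The approach is to handle the four EO+ classes separately and, in each case, derive an influence-function-type expansion of $\hat\theta_\lambda-\hat\theta_{EO}$ around $\theta^*$. The resulting perturbation should be linear in a sample average of some $M(\theta^*;\xi)$, up to $o_p(\lambda\vee n^{-1/2})$. Throughout, we use Proposition~\ref{prop:eo-conv}, i.e.\ $\hat\theta_{EO}-\theta^*=\frac1n\sum_i\IFx(\xi_i)+o_p(n^{-1/2})$. We also rely on the smoothness and uniform-convergence conditions of Assumption~\ref{asp:additional-summary}: local twice-differentiability of $\E_{\hat\P_n}[\ell]$ and $G$, and a uniform LLN for the Hessian near $\theta^*$. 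As in the sketch of Theorem~\ref{thm:improvement-principle}, we first argue in the interior case and only apply $\Pi_\Theta$ at the end. Because the projection is nonexpansive, it preserves the remainder order.

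\emph{Regularization and transfer learning.} Consistency of $\hat\theta_\lambda$ for $\lambda=o(1)$ gives the first-order condition $\frac1n\sum_i\nabla_\theta\ell(\hat\theta_\lambda;\xi_i)+\lambda\nabla G(\hat\theta_\lambda)=o_p(n^{-1/2})$. We subtract the corresponding condition for $\hat\theta_{EO}$ and apply the mean value theorem with the empirical Hessian $\to I_\ell(\theta^*)$. This yields
\[\hat\theta_\lambda=\hat\theta_{EO}-\lambda I_\ell(\theta^*)^{-1}\nabla G(\theta^*)+o_p(\lambda\vee n^{-1/2}).\]
This matches \eqref{eq:eo-plus-representation} with $M(\theta;\xi)\equiv\nabla G(\theta)$ (constant in $\xi$) and $H(\lambda)=-\lambda I_\ell(\theta^*)^{-1}$. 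Transfer learning is the case $\nabla G(\theta^*)=2(\theta^*-\theta_0)$. If $\lambda$ is not small, we instead keep $H(\lambda)$ nonlinear by solving the population perturbed first-order condition. The statement, however, only needs the small-$\lambda$ regime plus an $o_p(\lambda)$ remainder, which covers constant $\lambda$ only after reinterpreting $\lambda$. I would restrict to $\lambda\to0$, or treat constant $\lambda$ by defining $\theta^*_\lambda$ and absorbing the difference into $H(\lambda)$.

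\emph{DRO.} Here we rely on the known dual/sensitivity expansions cited earlier (\citealp{lam2018sensitivity,duchi2019variance,gao2022finite}). For $\chi^2$ and smooth $f$-divergences with radius $\lambda$, the inner worst case equals $\E_{\hat\P_n}[\ell]+\sqrt{c_f\lambda\,\Var_{\hat\P_n}(\ell(\theta;\xi))}+o_p(\sqrt\lambda)$. For the $p$-Wasserstein ball it equals $\E_{\hat\P_n}[\ell]+\lambda\|\nabla_\xi\ell\|_{L^{q}(\hat\P_n)}+o_p(\lambda)$. The radius-to-$\lambda$ reparameterization is absorbed into $H(\lambda)$. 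Each case then reduces to the regularization case with a data-dependent penalty $G_n(\theta)$. Its gradient is an empirical average of a function of $(\theta,\xi)$ evaluated at $\theta$; for the $\chi^2$ case, for instance, this is $M(\theta;\xi)=(\ell(\theta;\xi)-Z(\theta))\nabla_\theta\ell(\theta;\xi)/\sqrt{\Var(\ell(\theta^*;\xi))}$. Replacing $\hat\theta_\lambda$ and the inner empirical centering by $\theta^*$ costs $o_p(\lambda)$, by consistency and the uniform LLN. This yields the representation.

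\emph{Shrinkage.} Taylor expansion of $\hat\theta_{EO}\mapsto G(\lambda)\hat\theta_{EO}/\|\hat\theta_{EO}\|^2$ around $\theta^*$ gives
\[\hat\theta_\lambda=\hat\theta_{EO}+G(\lambda)\theta^*/\|\theta^*\|^2+O_p(G(\lambda)n^{-1/2}).\]
This has the required form with constant $M$, provided $\theta^*\neq0$ and $G(\lambda)=O(\lambda)$.

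The main obstacle is the DRO case. There one must justify that the worst-case objective's expansion holds uniformly in $\theta$ near $\theta^*$, including its gradient, so that the argmin inherits it. One must also handle the nonsmooth $\sqrt{\Var}$ term. I would try to establish uniform convergence of the gradient of the penalized objective on a shrinking neighborhood first, and then use an argmin-continuity argument (van der Vaart Thm.~5.23-style) to transfer it to the solution.
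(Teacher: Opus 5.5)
\textbf{Where the proposal falls short.} Your small-$\lambda$ argument matches the paper's Representation (i). That argument is: the first-order condition, a mean-value expansion around $\hat\theta_{EO}$, the empirical Hessian converging to $I_\ell(\theta^*)$, and reduction of Wasserstein and $f$-divergence DRO to a penalized problem. It agrees up to the $O(\lambda^2)$ curvature correction the paper keeps in $H(\lambda)$. For $\lambda\to0$, a constant-in-$\xi$ choice such as $M=\nabla G(\theta^*)$ or $M=\theta^*$ is indeed adequate, because the fluctuation of the perturbation is $O_p(\|H(\lambda)\|n^{-1/2})=o_p(n^{-1/2})$.

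\textbf{The gap: the fixed-$\lambda$ regime.} The statement covers this case (``for any $\lambda>0$''), and it is the only regime in which Corollary~\ref{coro:eo-plus-improvement} yields first-order gains. Restricting to $\lambda\to0$ does not prove the claim. Absorbing $\theta^*_\lambda-\theta^*$ into $H(\lambda)$ meets the bound only because $o_p(\lambda\vee n^{-1/2})$ degenerates to $o_p(1)$ at fixed $\lambda$. The leftover $(\hat\theta_\lambda-\theta^*_\lambda)-(\hat\theta_{EO}-\theta^*)$ is generically of exact order $n^{-1/2}$. Your representation therefore discards precisely the $n^{-1/2}$-scale part of the perturbation, which Corollaries~\ref{ex:auxiliary-robust} and~\ref{ex:auxiliary-stat} retain with $r_{\lambda,n}=o_p(n^{-1/2})$. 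When $\theta^*_\lambda\neq\theta^*$, your absorption is all one can say, and nothing more is needed since $\hat\theta_\lambda$ is inconsistent.

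\textbf{The missing idea: the paper's Representation (ii).} When the population minimizer is preserved, $\theta^*_\lambda=\theta^*$, linearize $\hat\theta_\lambda$ and $\hat\theta_{EO}$ as M-estimators about the same $\theta^*$ and subtract. This uses Lemmas~\ref{lemma:m-estimator0} and~\ref{lemma:m-estimator0-aug}, augmenting with nuisance parameters where needed.
\begin{itemize}
\item \emph{Regularization.} Set $\bar G_2:=\E_\P[\nabla^2_\theta G(\theta^*;\xi)]$. Subtracting gives the stacked $M(\theta;\xi)=(\nabla_\theta\ell(\theta^*;\xi),\nabla_\theta G(\theta^*;\xi))$ with $H(\lambda)=-\bigl[(I_\ell(\theta^*)+\lambda\bar G_2)^{-1}-I_\ell(\theta^*)^{-1},\ \lambda(I_\ell(\theta^*)+\lambda\bar G_2)^{-1}\bigr]$. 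The remainder is $r_{\lambda,n}=o_p(n^{-1/2})$ for every fixed $\lambda$.
\item \emph{$\chi^2$-DRO.} Under $\Cov_\P(\ell(\theta^*;\xi),\nabla_\theta\ell(\theta^*;\xi))=\mathbf 0$, the Jacobian blocks coupling $\theta$ to the nuisances $(u,v)$ vanish. This gives $M=G_0+\E_\P[G_1]\IFx$ and $H(\lambda)=-\sqrt\lambda\,(I_\ell(\theta^*)+\sqrt\lambda\,\E_\P[G_1])^{-1}$. The $\E_\P[G_1]\IFx$ term records that the penalty gradient is evaluated at the random $\hat\theta_\lambda$, which your replacement step throws away.
\item \emph{Shrinkage.} Keep the linear term of the delta-method expansion of $\theta\mapsto\theta/\|\theta\|_2^2$, namely $M(\theta;\xi)=\theta^*+(I_{D_\theta\times D_\theta}-2\theta^*\theta^{*\top}/\|\theta^*\|_2^2)\IFx(\xi)$. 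This gives $r_{\lambda,n}=o_p(n^{-1/2})$ for any bounded $G(\lambda)$, and makes your extra assumption $G(\lambda)=O(\lambda)$ unnecessary.
\end{itemize}

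\textbf{A smaller slip in the DRO step.} For $f$-divergences, replacing $\hat\theta_\lambda$ by $\theta^*$ does not cost $o_p(\lambda)$, because the penalty carries the factor $\sqrt\lambda$. For $\lambda\to0$ the cost is $\sqrt\lambda\,O_p(\sqrt\lambda+n^{-1/2})=O_p(\lambda)+o_p(n^{-1/2})$. Its $\Theta(\lambda)$ part generically does not vanish unless the centering condition above holds. In addition, an objective expansion accurate only to $o(\sqrt\lambda)$ cannot pin down the minimizer to $o_p(\lambda)$. The paper's Representation (i) for these divergences is no sharper on this point. The defensible remainder there is therefore $o_p(\sqrt\lambda\vee n^{-1/2})$, small relative to $\|H(\lambda)\|$ rather than to $\lambda$.
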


That is, regardless of their specific optimization formulations, each method in \Cref{thm:unification} can be unified as a directional perturbation of the EO solution plus a negligible remainder.
Consequently, whenever $r_{\lambda, n} = o_p(n^{-1/2})$, the first-order improvement analysis developed in Theorem~\ref{thm:improvement-principle} applies directly to EO+ methods. This yields the following characterization:

\begin{corollary}[Characterization of Improvement Order for EO+ Methods]\label{coro:eo-plus-improvement}
Suppose Assumptions~\ref{asp:theta-star0},~\ref{asp:optimal-condition} and~\ref{asp:additional-summary} in Appendix~\ref{app:formulation} hold.
Under the same definitions of $\Gamma$ and $\mu_M(\theta^*)$ in \Cref{thm:improvement-principle}, for each EO+ solution $\hat\theta_{\lambda}$ in Definition~\ref{defn:problemclass} with $r_{\lambda, n} = o_p(n^{-1/2})$ in~\eqref{eq:eo-plus-representation}, if:
\begin{enumerate}
    \item the side information is correct: $\mu_M(\theta^*) = \mathbf{0}$, 
    \item {the non-orthogonality condition} holds: $\Gamma \neq \mathbf{0}$, 
\end{enumerate}
there exists some $\lambda = \Theta(1)$ such that $\|H(\lambda)\| = \Theta(1)$ and $\hat\theta_{\lambda}$ achieves a first-order improvement over $\hat\theta_{EO}$, whereas any $\lambda$ with $\lambda = o(1)$ can achieve at most a second-order improvement. 

If either condition fails, then $\hat\theta_{\lambda}$ has at most a second-order improvement over $\hat\theta_{EO}$ and the maximal improvement is attained when $\|H(\lambda)\| = \Theta(1/n)$.

\end{corollary}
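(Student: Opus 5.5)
The plan is to reduce the corollary to Theorem~\ref{thm:improvement-principle} via the representation in Theorem~\ref{thm:unification}. When $r_{\lambda,n}=o_p(n^{-1/2})$, the EO+ solution equals $\Pi_\Theta(\hat\theta_{EO}+\frac1n\sum_i H(\lambda)M(\theta^*;\xi_i))$ up to $o_p(n^{-1/2})$. Under Assumption~\ref{asp:theta-star0}, $\theta^*$ is interior, so the projection is inactive with probability tending to one.

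The main structural difference from Definition~\ref{defn:direction-perturb} is that $M$ is evaluated at $\theta^*$, not at $\hat\theta_{EO}$. I will therefore rerun the expansion \eqref{expansion decomposition} with the frozen argument. The term $\nabla_\theta\mu_M(\theta^*)\IFx(\xi)$ then does not appear, and
\[
\widetilde M(\theta^*;\xi)=M(\theta^*;\xi)-\mu_M(\theta^*).
\]
Equivalently, this is Theorem~\ref{thm:improvement-principle} applied to the $\theta$-constant perturbation $M'(\theta;\xi):=M(\theta^*;\xi)$, for which $\nabla_\theta\mu_{M'}=\mathbf 0$. The non-orthogonality quantity $\Sigma_0\nabla_\theta\mu_M(\theta^*)^\top+\Gamma$ then reduces exactly to $\Gamma$, which explains Condition~2 of the corollary.

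Next I need that the $o_p(n^{-1/2})$ remainder does not affect the expected excess risk at order $1/n$. Using \eqref{general expansion}, $R(\hat\theta_\lambda)=\tfrac12\|\hat\theta_\lambda-\theta^*\|^2_{I_\ell(\theta^*)}+o_p(1/n)$. Passing to expectations requires uniform integrability of $n\|\hat\theta_\lambda-\theta^*\|^2$. I would get this from the third-moment bound on $L(\xi)$ in Assumption~\ref{asp:optimal-condition}, together with whatever moment control Assumption~\ref{asp:additional-summary} provides on $M$ and on $r_{\lambda,n}$. I expect this to be the main technical obstacle: convergence in probability of the remainder must be upgraded to convergence in $L^1$ of its squared, $n$-scaled contribution. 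I would impose or extract a uniform integrability condition on $n\|r_{\lambda,n}\|^2$ from the appendix assumption, as the theorem's own proof must do for $\hat\theta_{EO}$.

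With these ingredients, the bias--variance decomposition \eqref{eq:reg-exact-decomp}, generalized to matrix $H$, gives
\[
\E[R(\hat\theta_\lambda)-R(\hat\theta_{EO})]=\tfrac12\|H\mu_M\|^2_{I_\ell}+\tfrac1n\tr[H\Gamma^\top I_\ell]+\tfrac1{2n}\tr[H\Var(M)H^\top I_\ell]+o(1/n).
\]
The cases then follow as in Theorem~\ref{thm:improvement-principle}.

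If $\mu_M(\theta^*)=\mathbf 0$ and $\Gamma\neq\mathbf 0$, I need a constant $\lambda$ with $H(\lambda)$ of order one in the direction $-t\,I_\ell^{-1}\cdots$ that makes the linear term negative. To produce it from the one-parameter family, I would use the explicit form of $H(\lambda)$ from the proof of Theorem~\ref{thm:unification}, which is typically $\lambda H_0$ for a fixed $H_0$ in each method, or an odd/continuous map of $\lambda$ allowing sign choice. The reduction is to the scalar quadratic $\lambda\rho_1/n+\lambda^2\rho_2/(2n)$, with $\rho_1=\tr[H_0\Gamma^\top I_\ell]$. One subtlety must be checked: $\Gamma\neq\mathbf 0$ must imply $\rho_1\neq 0$ for the specific $H_0$. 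If that fails generically, I would argue it via the structure of $H_0$, for instance $H_0\propto I_\ell^{-1}$-type matrices arising from first-order conditions.

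For $\lambda=o(1)$, all improvement terms are $o(1/n)$, giving at most a second-order improvement.

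If Condition~1 fails, the term $\tfrac12\|H\mu_M\|^2$ is $\Theta(\|H\|^2)$ for generic $H$. Optimizing against the $\Theta(\|H\|/n)$ linear term then forces $\|H\|=\Theta(1/n)$ and yields a gain of order $1/n^2$. If Condition~2 fails, the linear term vanishes and the leading difference is nonnegative, so there is no first-order gain.
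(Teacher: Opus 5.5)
Your route is the paper's: substitute the representation of \Cref{thm:unification} into the analysis of \Cref{thm:improvement-principle}, with the perturbation frozen at $\theta^*$. This makes $\Sigma_0\nabla_\theta\mu_M(\theta^*)^\top$ drop out, so non-orthogonality becomes $\Gamma\neq\mathbf{0}$. What is missing is the one step the paper's proof actually adds on top of the theorem. \Cref{thm:improvement-principle} explicitly leaves open a first-order improvement when $\mu_M(\theta^*)\neq\mathbf{0}$ but $H\mu_M(\theta^*)=\mathbf{0}$. So to conclude ``at most second-order'' when Condition~1 fails, you must show that the specific family $H(\lambda)$ never annihilates $\mu_M(\theta^*)$ for $\lambda\neq 0$. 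Saying $\|H\mu_M(\theta^*)\|^2=\Theta(\|H\|^2)$ ``for generic $H$'' does not do this. $H(\lambda)$ is a fixed one-parameter family, not a generic matrix, and the non-generic matrices are exactly the loophole. The paper closes it by reading off from the explicit representations that $H(\lambda)=\lambda^q\tilde G_\lambda^{-1}$ with $q>0$ and $\tilde G_\lambda$ invertible. Examples are $-\lambda\bigl(I_\ell(\theta^*)+\lambda\E_{\P}[\nabla_\theta^2G(\theta^*;\xi)]\bigr)^{-1}$ for regularization, $-\sqrt{\lambda}(\cdots)^{-1}$ for $\chi^2$-DRO, and a nonzero scalar for shrinkage. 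Hence $H(\lambda)\mu_M(\theta^*)=\mathbf{0}$ forces $\mu_M(\theta^*)=\mathbf{0}$. With $\tilde G_\lambda$ well conditioned, this also gives the bound $\|H(\lambda)\mu_M(\theta^*)\|\gtrsim\|H(\lambda)\|\,\|\mu_M(\theta^*)\|$ that your case analysis needs.

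On the existence half, you rightly sense that $\Gamma\neq\mathbf{0}$ is not obviously sufficient once $H$ is confined to $\{H(\lambda)\}$. The paper does not resolve this either; it simply defers to the theorem, whose existence claim is over unrestricted $H$. Your proposed repair, however, would fail. At constant order $H(\lambda)$ is not of the form $\lambda H_0$. Moreover, for the $I_\ell(\theta^*)^{-1}$-type directions you suggest (as in Wasserstein DRO or the small-$\lambda$ $\chi^2$ representation), one gets $\rho_1\propto\tr[\Gamma]$, which vanishes for traceless $\Gamma\neq\mathbf{0}$ once $D_\theta\geq 2$. The sign also matters when only $\lambda>0$ is admissible. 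For $\chi^2$-DRO under OLS with $X\equiv 1$, the linear coefficient has the sign of $\E[\epsilon^4]-3\sigma^4$ for every $\lambda>0$. So for Laplace noise, where this is positive, no $\lambda>0$ helps, and one needs the favorable branch $\lambda<0$ of \eqref{eq:chi2-eo}. This step therefore cannot be obtained from $\Gamma\neq\mathbf{0}$ alone. What you actually need is $\tr\bigl[H(\lambda)^\top I_\ell(\theta^*)\bigl(2\Gamma+H(\lambda)\Var_{\P}[M(\theta^*;\xi)]\bigr)\bigr]<0$ for some admissible fixed $\lambda$. In dimension one this reduces to $\Gamma\neq 0$ when $H(\lambda)$ is continuous, vanishes at $\lambda=0$, and takes both signs. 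Your uniform-integrability concern is likewise not treated in the paper, which passes from the $o_p$ representation to expectations as in the theorem's proof. Neither $r_{\lambda,n}=o_p(n^{-1/2})$ nor \Cref{asp:additional-summary} supplies that integrability.
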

Here, for all the EO+ methods considered in this paper, $H(\lambda)$ has a trivial null space whenever $\|H(\lambda)\| = \Theta(1)$, and  therefore $H(\lambda)\mu_M(\theta^*) = \mathbf{0}$ is equivalent to $\mu_M(\theta^*) = \mathbf{0}$. Also, the non-orthogonality condition takes a simpler form than in \Cref{thm:improvement-principle} because the perturbation term in~\eqref{eq:eo-plus-representation} depends on $\theta^*$ instead of $\hat\theta_{EO}$ in~\eqref{eq:empirical-solution-adjustment}. Consequently, the perturbation does not inherit the additional fluctuation arising from the estimation error of $\hat\theta_{EO}$, and the term $\Sigma_0 \nabla_{\theta}\mu_M(\theta^*)^{\top}$ disappears in the non-orthogonality condition of \Cref{coro:eo-plus-improvement}.


Corollary \ref{coro:eo-plus-improvement} provides a complete characterization of when an EO+ method can achieve a first-order improvement over EO, namely that this is possible only when the induced side information is both correct and non-orthogonal to the EO fluctuation, and the perturbation magnitude measured by $H(\lambda)$ needs to be significant. This characterization implies that, in fact, 
existing EO+ methods are unlikely to achieve first-order improvements over EO in typical scenarios. 
Below, we provide explicit expressions of $M(\cdot;\cdot)$ and $H(\cdot)$
for representative EO+ methods in \Cref{defn:problemclass}. Other examples are presented in Appendix~\ref{app:unification}. From these, we will then specialize in a linear regression to reveal the delicacy in achieving first-order improvements for existing EO+ methods.

\begin{corollary}[Side Information for $\chi^2$-DRO]\label{ex:auxiliary-robust}
    Consider the DRO methods in \Cref{defn:problemclass}(i)(b) and let $\theta_{\lambda}^*$ denote the population minimizer of the DRO formulation~\eqref{eq:dro-method} with $d$ being the $\chi^2$-divergence. Suppose $\theta_{\lambda}^* = \theta^*$. Then: 
        \begin{small}
            \begin{equation}
        \begin{aligned}
            M(\theta;\xi) &= \Para{\ell(\theta^*;\xi) - \E_{\P}[\ell(\theta^*;\xi)]}\nabla_{\theta}\ell(\theta^*;\xi)
    + \nabla_{\theta}\Cov_{\P}[\ell(\theta^*;\xi), \nabla_{\theta}\ell(\theta^*;\xi)]\IFx(\xi) \in \R^{D_{\theta}},\\
        H(\lambda) &= -\sqrt{\lambda}\Para{\sqrt{\Var_{\P}[\ell(\theta^*;\xi)]}\; I_{\ell}(\theta^*) + \sqrt{\lambda}\,\nabla_{\theta}\Cov_{\P}[\ell(\theta^*;\xi), \nabla_{\theta}\ell(\theta^*;\xi)]}^{-1} \in \R^{D_{\theta}\times D_{\theta}}, r_{\lambda,n} = o_p(n^{-1/2}).
        \end{aligned}\end{equation}\vspace{-1em}
        \end{small}
        In particular, $\|H(\lambda)\| \propto \sqrt{\lambda}$ as $\lambda \to 0$, and $\|H(\lambda)\| = O(1)$ for $\lambda = \Theta(1)$.
\end{corollary}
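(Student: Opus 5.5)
The approach is to derive the representation from the first-order optimality condition of the $\chi^2$-DRO problem via its well-known variance-regularization reformulation. For the $\chi^2$-divergence ball of radius $\lambda$ around $\hat\P_n$, the inner maximum equals $\E_{\hat\P_n}[\ell(\theta;\xi)] + \sqrt{\lambda\,\Var_{\hat\P_n}[\ell(\theta;\xi)]}$ whenever the worst-case density ratio stays nonnegative (\citealt{duchi2019variance}; this is the event Assumption~\ref{asp:additional-summary} is meant to guarantee with probability tending to one, or a truncation argument handles it). So $\hat\theta_\lambda$ minimizes $\hat F_n(\theta):=\E_{\hat\P_n}[\ell] + \sqrt{\lambda}\,\hat s_n(\theta)$ with $\hat s_n(\theta)=\sqrt{\Var_{\hat\P_n}[\ell(\theta;\xi)]}$. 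Its gradient is
\[
\nabla \hat F_n(\theta) = \E_{\hat\P_n}[\nabla_\theta \ell(\theta;\xi)] + \sqrt{\lambda}\,\frac{\widehat{\Cov}_n[\ell(\theta;\xi),\nabla_\theta\ell(\theta;\xi)]}{\hat s_n(\theta)} .
\]
The population analogue $F_\lambda$ has minimizer $\theta^*_\lambda=\theta^*$ by hypothesis. Since $\nabla Z(\theta^*)=0$, this forces $\Cov_\P[\ell(\theta^*;\xi),\nabla_\theta\ell(\theta^*;\xi)]=0$, which is exactly correctness, $\mu_M(\theta^*)=\mathbf 0$. Note that $M(\theta;\xi)$ as displayed does not depend on $\theta$, so $M(\theta^*;\cdot)$ is the stated function.

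Next I linearize $\nabla\hat F_n(\hat\theta_\lambda)=0$ around $\theta^*$. I use consistency of $\hat\theta_\lambda$, which follows from uniform convergence of $\hat F_n$ to $F_\lambda$ and uniqueness of $\theta^*$ under Assumption~\ref{asp:additional-summary}, and then the smoothness assumptions. The first piece gives
\[
\E_{\hat\P_n}[\nabla\ell(\theta^*)] + I_\ell(\theta^*)(\hat\theta_\lambda-\theta^*) + o_p(\|\hat\theta_\lambda-\theta^*\|) .
\]
For the second piece, $s:=\sqrt{\Var_\P[\ell(\theta^*;\xi)]}$ and the population covariance vanishes at $\theta^*$. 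Expanding the empirical covariance to first order around $(\theta^*,\P)$ gives
\[
\frac{1}{s}\Big(\tfrac1n\sum_i(\ell(\theta^*;\xi_i)-\E\ell)\nabla\ell(\theta^*;\xi_i) + \nabla_\theta\Cov_\P[\ell,\nabla\ell](\hat\theta_\lambda-\theta^*)\Big) + o_p(n^{-1/2}+\|\hat\theta_\lambda-\theta^*\|).
\]
The centering of $\E\ell$ by its sample mean and the fluctuation of $\hat s_n$ contribute only $o_p(n^{-1/2})$, because they multiply the already-vanishing population covariance or a centered $O_p(n^{-1/2})$ term.

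I then rearrange. Multiplying by $s$ and collecting the $(\hat\theta_\lambda-\theta^*)$ terms gives $A_\lambda(\hat\theta_\lambda-\theta^*)$, where $A_\lambda := s\,I_\ell(\theta^*) + \sqrt\lambda\,\nabla_\theta\Cov_\P[\ell,\nabla\ell]$. I write $\hat\theta_\lambda-\theta^* = \tfrac1n\sum_i\IFx(\xi_i) + \Delta$ with $s I_\ell(\theta^*)\tfrac1n\sum_i\IFx(\xi_i) = -s\,\E_{\hat\P_n}[\nabla\ell(\theta^*)]$, and by Proposition~\ref{prop:eo-conv} I replace $\tfrac1n\sum_i\IFx(\xi_i)$ with $\hat\theta_{EO}-\theta^*$. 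The equation then reads
\[
A_\lambda\Delta = -\sqrt\lambda\,\tfrac1n\sum_i\Big[(\ell(\theta^*;\xi_i)-\E\ell)\nabla\ell(\theta^*;\xi_i) + \nabla_\theta\Cov_\P[\ell,\nabla\ell]\,\IFx(\xi_i)\Big],
\]
which is $\sqrt\lambda\,\tfrac1n\sum_i M(\theta^*;\xi_i)$ up to the sign. Inverting $A_\lambda$ yields $\Delta=\tfrac1n\sum_i H(\lambda)M(\theta^*;\xi_i)$ with the stated $H(\lambda)$. The remainders are $o_p(n^{-1/2})$ since $\hat\theta_\lambda-\theta^*=O_p(n^{-1/2})$; this $\sqrt n$-rate follows from the same linearization once $A_\lambda$ is invertible. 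The projection $\Pi_\Theta$ is inactive with probability tending to one because $\theta^*$ is interior.

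The main obstacle is justifying invertibility of $A_\lambda$ uniformly over the range of $\lambda$ considered, and controlling the remainder uniformly in $\lambda$. For small $\lambda$, $A_\lambda\to sI_\ell(\theta^*)\succ0$ (assuming $s>0$, i.e. nondegenerate loss), so this is immediate. This also gives $\|H(\lambda)\| = \sqrt\lambda\,\|(sI_\ell)^{-1}\|(1+o(1))\propto\sqrt\lambda$. For $\lambda=\Theta(1)$, I will invoke the second-order condition for $F_\lambda$ at $\theta^*_\lambda=\theta^*$ from Assumption~\ref{asp:additional-summary}. Its Hessian there is exactly $A_\lambda/s$, since the variance-term curvature vanishes to this order because the covariance is zero. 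Positive definiteness of that Hessian gives $A_\lambda$ invertible and $\|H(\lambda)\|=O(1)$.
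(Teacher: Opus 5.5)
Your proposal is correct and is essentially the paper's argument. Both proofs linearize the first-order condition of the variance-regularized reformulation around $\theta^*$, use $\Cov_{\P}[\ell(\theta^*;\xi),\nabla_{\theta}\ell(\theta^*;\xi)]=\mathbf{0}$ to make the fluctuations of the estimated mean and standard deviation of the loss negligible, and then subtract the EO expansion. The paper states this cancellation as vanishing cross-Jacobian blocks in an augmented estimating-equation system for $(\theta,u,v)$, while you obtain the same cancellation by profiling out the nuisances.

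One correction. Assumption~\ref{asp:additional-summary} contains no second-order condition for the DRO objective, and minimality of $\theta^*$ only gives $A_\lambda\succeq 0$. So invertibility of $A_\lambda$ for constant-order $\lambda$ has to be imposed as a hypothesis rather than derived, as the paper does when it restricts to $\lambda$ for which this matrix is invertible. The same holds for inactivity of the nonnegativity constraint on the worst-case likelihood ratio: it must be assumed, because truncation would change the objective and hence $M$ and $H(\lambda)$.
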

In Corollary \ref{ex:auxiliary-robust}, the general condition $\theta_{\lambda}^* = \theta^*$ means that at the population level, the DRO formulation preserves the minimizer of the original problem. 
This holds when the first-order optimality condition of the worst-case objective $\max_{\Q: d(\Q, \P^*)\leq \lambda}\E_{\Q}[\ell(\theta;\xi)]$ is satisfied at $\theta^*$,
and can be verified in certain settings, for example, when the residual noise in a linear regression is symmetric.  
We will elaborate it in \Cref{coro:linear-symmetric}. However, note that the condition $\theta_\lambda^*=\theta^*$ is typically not easy to satisfy with a non-zero $\lambda$ while, on the other hand, if we take $\lambda=0$ at the population level (i.e., $\lambda\to0$ as $n\to\infty$), then we cannot satisfy the condition $\|H(\lambda)\|=\Theta(1)$ in Corollary \ref{coro:eo-plus-improvement}. Because of this dilemma, it is difficult to achieve first-order improvement generally for the DRO methods depicted in Corollary \ref{ex:auxiliary-robust}.



\begin{corollary}[Side Information for Shrinkage Estimator]\label{ex:auxiliary-stat}
Consider the shrinkage estimator method in \Cref{defn:problemclass}$(ii)(b)$. We have $H(\lambda) = G(\lambda)/\|\theta^*\|_2^2, M(\theta;\xi) = \theta^* + (I_{D_{\theta}\times D_{\theta}} - 2\theta^*\theta^{*T}/\|\theta^*\|_2^2)\IFx(\xi), r_{\lambda,n} = o_p(n^{-1/2})$.
\end{corollary}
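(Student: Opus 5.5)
The plan is to expand the shrinkage estimator $\hat\theta_\lambda = \Pi_\Theta\bigl((1 + G(\lambda)/\|\hat\theta_{EO}\|_2^2)\hat\theta_{EO}\bigr)$ around $\theta^*$. We then match the result to the form \eqref{eq:eo-plus-representation} of \Cref{thm:unification}. Throughout we assume $\theta^*\neq \mathbf{0}$, so that $\|\hat\theta_{EO}\|_2^{-2}$ is well defined with probability tending to one. We also treat $G(\lambda)$ as bounded, $G(\lambda)=O(1)$, which covers both the $\Theta(1)$ and the vanishing regimes. Write $\hat\theta_{EO} = \theta^* + \Delta_n$. By \Cref{prop:eo-conv},
\[
\Delta_n = \frac1n\sum_{i=1}^n \IFx(\xi_i) + o_p(n^{-1/2}), \qquad \Delta_n = O_p(n^{-1/2}).
\]
The key object is the smooth map $\phi(\theta) := \theta/\|\theta\|_2^2$. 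The perturbation added to $\hat\theta_{EO}$ is exactly $G(\lambda)\phi(\hat\theta_{EO})$.

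First, Taylor-expand $\phi$ at $\theta^*$. Its Jacobian is
\[
\nabla\phi(\theta) = \|\theta\|_2^{-2}\bigl(I - 2\theta\theta^\top/\|\theta\|_2^2\bigr).
\]
Since $\phi$ is $C^2$ near $\theta^*\neq \mathbf{0}$, this gives
\[
\phi(\hat\theta_{EO}) = \frac{1}{\|\theta^*\|_2^2}\Bigl(\theta^* + \bigl(I - 2\theta^*\theta^{*\top}/\|\theta^*\|_2^2\bigr)\Delta_n\Bigr) + O_p(\|\Delta_n\|^2).
\]
Substituting the influence-function expansion of $\Delta_n$, and using that $\theta^*$ is deterministic so that $\theta^* = \frac1n\sum_i \theta^*$, we obtain
\[
G(\lambda)\phi(\hat\theta_{EO}) = \frac{G(\lambda)}{\|\theta^*\|_2^2}\cdot\frac1n\sum_{i=1}^n\Bigl(\theta^* + \bigl(I - 2\theta^*\theta^{*\top}/\|\theta^*\|_2^2\bigr)\IFx(\xi_i)\Bigr) + G(\lambda)\bigl(o_p(n^{-1/2}) + O_p(n^{-1})\bigr).
\]
This identifies $H(\lambda) = G(\lambda)/\|\theta^*\|_2^2$ (a scalar multiple of the identity) and the stated $M(\theta;\xi)$. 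The remainder is
\[
r_{\lambda,n} = G(\lambda)\,o_p(n^{-1/2}),
\]
which is $o_p(n^{-1/2})$ whenever $G(\lambda)=O(1)$.

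The projection $\Pi_\Theta$ is the same on both sides, since the representation in \eqref{eq:eo-plus-representation} is stated inside $\Pi_\Theta$. So no argument about the projection is needed; we only match its arguments.

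The main obstacle, though a mild one, is controlling the remainder uniformly in $\lambda$ and handling the event where $\hat\theta_{EO}$ is near zero. I would restrict to the event $\{\|\Delta_n\|\le \|\theta^*\|/2\}$. This event has probability tending to one by consistency, and on it the second derivative of $\phi$ is bounded by a constant depending only on $\|\theta^*\|$. That gives the $O_p(n^{-1})$ bound on the Taylor remainder. Off this event, the contribution is $o_p$ of anything, because it is an event of vanishing probability.
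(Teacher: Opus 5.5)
Your proposal is correct and follows essentially the same route as the paper. Both arguments expand $\hat\theta_{EO}/\|\hat\theta_{EO}\|_2^2$ to first order around $\theta^*\neq\mathbf{0}$ and substitute the influence-function expansion from \Cref{prop:eo-conv}; the paper expands $1/\|\hat\theta_{EO}\|_2^2$ and multiplies by $\hat\theta_{EO}$, whereas you use the Jacobian of $\theta\mapsto\theta/\|\theta\|_2^2$ directly, and both yield the same $H(\lambda)$, $M$, and remainder. Your explicit assumption $G(\lambda)=O(1)$, needed for $r_{\lambda,n}=o_p(n^{-1/2})$, makes precise a condition the paper leaves implicit.
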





Beyond the methods in \Cref{defn:problemclass}, we present two additional data-optimization integration methods, Conditional Value-at-Risk (CVaR) and GMM, that admit representations analogous to~\eqref{eq:eo-plus-representation} and yield essentially the same first-order improvement implications under \Cref{thm:improvement-principle}. We discuss these methods in Appendices~\ref{app:cvardro} and~\ref{app:gmm}, respectively.

Finally, 
we apply Corollaries~\ref{coro:eo-plus-improvement} and~\ref{ex:auxiliary-robust} to examine two classical loss functions in linear regression across a range of noise distributions. This comparison shows when EO+ methods lead to significant improvements:
\begin{corollary}[Characterization of Improvement in Linear Regression]\label{coro:linear-symmetric}
Continuing from \Cref{ex:lr}, define the $\chi^2$-EO+ method with parameter $\lambda \in \R$ by:
\begin{equation}\label{eq:chi2-eo}
    \hat\theta_{\lambda} = \begin{cases}\argmin_{\theta} \max_{\P:\chi^2(\Q, \hat\P_n)\leq \lambda} \E_{\Q}[\ell(\theta;\xi)],~\text{if}~\lambda \geq 0;\\ \argmin_{\theta} \min_{\P:\chi^2(\Q, \hat\P_n)\leq -\lambda} \E_{\Q}[\ell(\theta;\xi)],~\text{if}~\lambda < 0.\end{cases}
\end{equation}
Table~\ref{tab:linear-symmetric} summarizes, for the OLS loss $\ell(\theta;\xi)=(\theta^{\top}X-Y)^2$ and the least absolute deviation (LAD) loss $\ell(\theta;\xi) = |\theta^{\top} X - Y|$, whether the best $\chi^2$-EO+ method achieves a first-order improvement, and the optimal order of $\lambda$.
\begin{table}[!htb]
    \centering
    \caption{First-order improvement of the best $\chi^2$-EO+ method across all choices of $\lambda$. Parentheses indicate the  optimal order of $\lambda$ in $n$.}
    \label{tab:linear-symmetric}
    \resizebox{0.9\textwidth}{!}{\begin{tabular}{c|ccccc}
    \toprule
    Loss / Noise Distribution & Uniform & Normal & Laplace &  Location-shifted Exponential$^*$ & $t$-distribution (degrees of freedom $> 4$)\\
    \midrule
     LAD & Yes ($\Theta(1)$)  & Yes ($\Theta(1)$) & No ($\Theta(1/n)$) & No ($\Theta(1/n)$) & Yes ($\Theta(1)$)\\
     OLS & Yes ($\Theta(1)$) & No ($\Theta(1/n)$) & Yes ($\Theta(1)$) & No ($\Theta(1/n)$) & Yes ($\Theta(1)$)\\
     \bottomrule
    \end{tabular}}
    
\vspace{0.1cm}
\footnotesize{$^*$: ``Location-shifted Exponential'' denotes the noise $\epsilon \overset{d}{=} \text{Exp}(r) - \frac{1}{r}$ for some parameter $r > 0$, such that $\E[\epsilon] = 0$.}
\end{table}
\vspace{-1em}
\end{corollary}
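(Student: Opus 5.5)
The plan is to verify, for each loss/noise pair in Table~\ref{tab:linear-symmetric}, the two conditions of \Cref{coro:eo-plus-improvement}, using the side information given by \Cref{ex:auxiliary-robust}. We write $\xi=(X,Y)$ with $\epsilon = Y-(\theta^*)^\top X$ independent of $X$. For $\lambda<0$ the optimistic formulation in~\eqref{eq:chi2-eo} is treated as the same expansion with $\sqrt{\lambda}$ replaced by $-\sqrt{|\lambda|}$. Hence $H(\lambda)$ covers both signs, and optimizing over $\lambda\in\R$ amounts to optimizing the scalar multiplier in~\eqref{eq:reg-exact-decomp}.

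\textbf{Step 1: correctness.} We first check the correctness condition $\mu_M(\theta^*)=\mathbf{0}$, which coincides with the population requirement $\theta^*_\lambda=\theta^*$. Since $\E[\IFx(\xi)]=0$, this reduces to
\[
\Cov_{\P}[\ell(\theta^*;\xi),\nabla_\theta\ell(\theta^*;\xi)]=\mathbf{0}.
\]
\begin{itemize}
\item For OLS, $\nabla_\theta\ell=-2\epsilon X$, so the condition becomes $\E[\epsilon^3]\,\E[X]=0$.
\item For LAD, $\nabla_\theta\ell=-\mathrm{sign}(\epsilon)X$, so the condition becomes $\E[|\epsilon|\mathrm{sign}(\epsilon)]\,\E[X]=\E[\epsilon]\,\E[X]$. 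This vanishes, provided $\theta^*$ (the median fit) also matches the mean fit, i.e.\ the median of $\epsilon$ is zero.
\end{itemize}
Symmetric noise (uniform, normal, Laplace, $t$) therefore gives correctness in both cases. The location-shifted exponential has $\E[\epsilon^3]\neq0$, and its median is not $0$, so correctness fails for both losses. By the second part of \Cref{coro:eo-plus-improvement}, the exponential column is then ``No'' with optimal $\lambda=\Theta(1/n)$; here $\|H\|\propto\sqrt\lambda$ forces $\lambda\asymp n^{-2}$ in $\sqrt\lambda$ units, which must be reconciled with the table's parametrization. We would double-check this scaling carefully.

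\textbf{Step 2: non-orthogonality for symmetric noise.} Once the covariance term is zero, its $\theta$-gradient is still generally nonzero, but the key quantity is
\[
\Gamma=\Cov(\IFx(\xi),M(\theta^*;\xi)).
\]
Here $\IFx(\xi)=-I_\ell^{-1}\nabla\ell$, and the main part of $M$ is $(\ell-\E\ell)\nabla\ell$. The second part of $M$ is linear in $\IFx$ and contributes $\nabla\Cov\cdot\Sigma_0$, which we also compute. Independence of $X$ and $\epsilon$ factorizes everything into moments of $\epsilon$:
\begin{itemize}
\item For OLS, $\Gamma \propto \E[(\epsilon^2-\sigma^2)\epsilon^2]\,\E[XX^\top]$ up to $I_\ell^{-1}$, i.e.\ proportional to $\E\epsilon^4-\sigma^4$ minus a term from $\nabla\Cov$ of order $\sigma^4$. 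The combination should reduce to a multiple of the excess kurtosis $\E\epsilon^4-3\sigma^4$. This vanishes exactly for the normal, is negative for the uniform, and is positive for the Laplace and for $t$ with $\nu>4$ (the finite fourth moment is exactly what is needed).
\item For LAD, the analogous quantity involves $\E[|\epsilon|]$, $\Var|\epsilon|$ and the density $f(0)$ entering $I_\ell$ and $\nabla\Cov$. We expect it to reduce to something like $\E|\epsilon|\cdot 2f(0)-1$ times a positive factor. For the Laplace this is $b\cdot 2\cdot\frac{1}{2b}-1=0$, while it is nonzero for the uniform, normal and $t$.
\end{itemize}
Applying \Cref{coro:eo-plus-improvement} then yields ``Yes ($\Theta(1)$)'' exactly when this scalar is nonzero, and ``No ($\Theta(1/n)$)'' otherwise.

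\textbf{Main obstacle.} The main obstacle is Step 2 for LAD. The loss is nondifferentiable, so $\nabla_\theta\Cov_{\P}[\ell,\nabla\ell]$ must be computed through the smoothed population map $\theta\mapsto\E[|\epsilon-(\theta-\theta^*)^\top X|\,\mathrm{sign}(\cdot)X]$, which brings in $f(0)$. Getting the exact cancellation constant, so that the Laplace case becomes precisely orthogonal, is delicate. We would first try the one-dimensional location model $X\equiv1$ to fix the constant, and then extend to general $X$ by independence.
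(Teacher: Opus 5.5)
Your route is the paper's: you verify correctness and non-orthogonality from \Cref{coro:eo-plus-improvement} for the $\chi^2$ side information of \Cref{ex:auxiliary-robust}. The paper simply specializes to $X\equiv1$ and gets $M\propto\epsilon^3-3\sigma^2\epsilon$ with $\Gamma\propto\mu_4-3\sigma^4$ for OLS, and $\Gamma\propto 1-2\E|\epsilon|f(\theta^*)$ for LAD.

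Your conjectured reductions are correct. For OLS with $X$ independent of $\epsilon$, $\nabla_\theta\Cov_{\P}[\ell,\nabla_\theta\ell]=4\sigma^2\E[XX^\top]$ and $\Gamma=-2(\mu_4-3\sigma^4)I$. The LAD step is less delicate than you fear. With $X\equiv1$ and $\delta=\theta-\theta^*$,
\[
\Cov_{\P}[\ell,\nabla_\theta\ell]=\delta-\E|\epsilon-\delta|\,(2F(\delta)-1),
\]
whose derivative at $\delta=0$ is $1-2f(0)\E|\epsilon|$. The $\E|\epsilon|\operatorname{sign}(\epsilon)$ terms then cancel, leaving $M=\operatorname{sign}(\epsilon)/(2f(0))-\epsilon$ and $\Gamma=(1-2f(0)\E|\epsilon|)/(4f(0)^2)$. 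This vanishes exactly for the Laplace.

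There is one concrete slip in Step 1. You derived the correctness condition $\E[\epsilon^3]\,\E[X]=\mathbf 0$ (and its LAD analogue with the mean of the median-centered residual). You then declared failure for the exponential column from $\E[\epsilon^3]\neq0$ alone. That conclusion needs $\E[X]\neq\mathbf 0$, for instance an intercept, which is what the paper's reduction to $X\equiv1$ implicitly assumes. For LAD, the intercept also absorbs the median shift, which keeps the residual independent of $X$. Without this assumption the OLS entry is actually false: with centered regressors the side information is correct and $\Gamma\propto\mu_4-3\sigma^4=6/r^4\neq0$, so a constant $\lambda$ yields a first-order gain. State the assumption explicitly, or restrict to $X\equiv1$ as the paper does.

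Your $\sqrt{\lambda}$-versus-$\lambda$ worry is justified, and it applies to every ``No'' entry, not only the exponential one. It covers Normal/OLS and Laplace/LAD as well, where correctness holds but non-orthogonality fails. The paper's proof never derives the parenthetical orders; it only invokes \Cref{thm:improvement-principle}, which is phrased in terms of $\|H\|=\Theta(1/n)$. Since $\|H(\lambda)\|\propto\sqrt{\lambda}$, this means $\sqrt{\lambda}=\Theta(1/n)$. The table's $\Theta(1/n)$ is therefore consistent only when read as the order of $\sqrt{\lambda}$, equivalently of $\|H(\lambda)\|$.
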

The $\chi^2$-EO+ method in~\eqref{eq:chi2-eo} encompasses the standard $\chi^2$-DRO when $\lambda \geq 0$ and the distributionally favorable optimization recently proposed by \citet{jiang2024distributionally} to mitigate the effect of endogenous outliers. 
To verify the conditions for attaining first-order improvement or not in each case, we consider the univariate special case $X = 1$. This reduces the problem to mean estimation with $\epsilon = Y - \theta^*$. Then, a first-order improvement of the $\chi^2$-EO+ method hinges on two conditions: $\E[M(\theta^*;\xi)] = 0$ and $\E[M(\theta^*;\xi)^{\top}\IFx(\xi)] \neq 0$ for the corresponding $M(\cdot;\cdot)$. For both OLS and LAD losses, the first condition is sufficient when the noise $\epsilon$ is symmetric, and thus fails for asymmetric distributions such as the location-shifted exponential. For OLS, one can compute $\IFx(\xi) = \epsilon$ and $M(\theta^*;\xi) \propto \epsilon^3 - 3\sigma^2\epsilon$, so the second condition reduces to $\E[\epsilon^4] - 3\sigma^4 \neq 0$, which fails under Gaussian noise but holds for other symmetric distributions. For LAD, the second condition reduces to $1 - 2\E[|\epsilon|]f(\theta^*) \neq 0$, which fails for Laplace noise but holds for other distributions. These two conditions together explain the pattern in Table~\ref{tab:linear-symmetric}.

\section{Maximizing First-Order Improvements}\label{sec:perturb-opt}
In this section, we go beyond existing data-driven optimization to create general methodologies that maximize first-order improvements.
Note that in the formulation of $\hat\theta_{H, M}$ in \Cref{defn:direction-perturb}, both the adjustment matrix $H$ and the side information $M$ are design choices, which we have shown to connect to EO+ methods that encompass many existing data-driven optimization approaches. At the same time, following the excess risk decomposition in \eqref{general expansion} and \eqref{eq:regret-optimal-h} for general $H$ when setting $\mu_M(\theta^*) = 0$, and therefore $\widetilde M(\theta^*;\xi) = M(\theta^*;\xi)+\nabla_\theta\mu_M(\theta^*)\IFx(\xi)$, the first-order improvement is determined entirely by the leading term in the excess risk expansion of $\hat\theta_{H,M}$, given by
\begin{equation}\label{eq:optimal-h}
\E_{\Dscr_n, \P}\Paran{\bigg\|\frac{1}{n}\sum_{i = 1}^n {\IFx(\xi_i) + \frac{H}{n}\sum_{i = 1}^n \widetilde M(\theta^*;\xi_i)}\bigg\|_{I_{\ell}(\theta^*)}^2} = \frac{1}{n}\E_{\P}\Paran{\bigg\|{\IFx(\xi) + {H} \widetilde M(\theta^*;\xi)}\bigg\|_{I_{\ell}(\theta^*)}^2}.
\end{equation}
Consequently, constructing the optimal directionally perturbed EO solution reduces to minimizing this term over all possible choices of $H$ and admissible $M$. Given a fixed function form $M$, when $\Var_{\P}[\widetilde M(\theta^*;\xi)] \succ {0}$, the minimizer $H^*$ can be written in closed form:
\begin{equation}\label{eq:optimal-close-h}
\begin{aligned}
    H^* &:= -\E_{\P}[\IFx(\xi) \widetilde M(\theta^*;\xi)^{\top}](\E_{\P}[\widetilde M(\theta^*;\xi) \widetilde M(\theta^*;\xi)^{\top}])^{\dagger}
\end{aligned}
\end{equation}
via standard quadratic optimization and noting that the $I_\ell(\theta^*)$ in the norm does not affect the optimal solution. However, the optimal $H^*$ in~\eqref{eq:optimal-close-h} is not directly accessible in practice because it depends on unknown population quantities, including the influence function $\IFx(\xi)$ and the underlying distribution $\P$. Our methodology aims to bypass this challenge by accurately estimating the required population quantities either from analytical calculations or bootstrap resampling.



We begin in \Cref{subsec:comp-h} with a given moment function $M$ that represents correct side information and satisfies the conditions in \Cref{thm:improvement-principle}. In this setting, we construct a data-driven adjustment matrix $\widehat H$ that achieves the optimal first-order improvement (among all methods parametrized by $H$). In \Cref{subsec:connect-cv}, we provide its connection with the perspective of control variates in Monte Carlo. In Appendix~\ref{subsec:comp-h-phi}, we consider the more general setting where we jointly select $\widehat H$ and $M$. There, multiple correct choices of side information $M$ may be available, and we select the one that leads to the largest improvement.


\subsection{Optimizing Adjustment Matrix Given Correct Side Information}\label{subsec:comp-h}
We first consider the setting where correct side
information is available. 
\begin{assumption}[Correct Side Information]\label{asp:single-side-information}
    There exists a function $M: \Theta \times \Xi \mapsto \R^{D_M}$ such that $\mu_M(\theta^*) = \mathbf{0}$, $\Sigma_0 \nabla_{\theta}\mu_M(\theta^*)^{\top} + \Gamma \neq \mathbf{0}$, $\Var_{\P}[\widetilde M(\theta^*;\xi)] \succ 0$, where $\mu_M(\theta) = \E_{\P}[M(\theta;\xi)]$, $\Gamma=\text{Cov}_{\P}\Paran{\IFx(\xi),M(\theta^*;\xi)}$ and $\widetilde M(\theta;\xi) = M(\theta;\xi) + \nabla_{\theta}\mu_M(\theta)\IFx(\xi)$.
    
    Furthermore, there exists a neighborhood $\Nscr$ of $\theta^*$ such that $\theta\mapsto M(\theta;\xi)$ is continuously differentiable on
$\mathcal N$ and $\E_{\P}[\sup_{\theta \in \Nscr}\paran{\|M(\theta;\xi)\|_2^3, \|\nabla_{\theta}M(\theta;\xi)\|^3}] < \infty$.
\end{assumption}
Above, the third-moment condition is imposed as a standard regularity requirement for subsequent expected excess risk comparisons.  Given such an $M$, we choose an adjustment matrix $H \in \R^{D_{\theta}\times D_{M}}$ so that the directionally perturbed EO in~\eqref{eq:empirical-solution-adjustment} attains the largest first-order improvement over $\hat\theta_{EO}$ among all possible choices of $H$. 
As mentioned above, $H^*$ in~\eqref{eq:optimal-close-h} leads to the largest first-order improvement because it minimizes the leading excess-risk term of $\hat\theta_{H, M}$, i.e., ~\eqref{eq:optimal-h}. That is:
\begin{theorem}[Optimizing Adjustment Matrix]\label{thm:opt-adjust}
Suppose Assumptions~\ref{asp:theta-star0},~\ref{asp:optimal-condition} and~\ref{asp:single-side-information} hold. Fix $B \geq \|H^*\|_{\infty}$. Define $\Hscr_B = \{H\in\R^{D_\theta\times D_M}:
\|H\|_\infty\leq B\}$. For each $n$, let $H_n\in\Hscr_B$ be an approximate finite-sample
minimizer satisfying $\E_{\Dscr_n}[R(\hat\theta_{H_n,M})] \le
\inf_{H\in\Hscr_B} \E_{\Dscr_n}[R(\hat\theta_{H,M})] +o(n^{-1})$. Then $\lim_{n \to \infty}\|H_n-H^*\|=0$.
\end{theorem}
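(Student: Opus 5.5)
The approach is to prove that the normalized expected excess risk converges uniformly over the compact set $\Hscr_B$ to a strictly convex quadratic limit whose unique minimizer is $H^*$. A standard argmin-consistency argument then gives $H_n \to H^*$. Define
\[
\Psi(H) := \tfrac{1}{2}\E_{\P}\Big[\big\|\IFx(\xi) + H\widetilde M(\theta^*;\xi)\big\|_{I_{\ell}(\theta^*)}^2\Big].
\]
We aim to show
\[
\sup_{H\in\Hscr_B}\Big| n\,\E_{\Dscr_n}[R(\hat\theta_{H,M})] - \Psi(H)\Big| \to 0 .
\]
Since $\Var_{\P}[\widetilde M(\theta^*;\xi)]\succ 0$ and $I_\ell(\theta^*)\succ 0$, the map $H\mapsto\Psi(H)$ is a strictly convex quadratic in $\mathrm{vec}(H)$. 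Its Hessian is $\Var_{\P}[\widetilde M]\otimes I_\ell(\theta^*)\succ 0$. Its unique minimizer is $H^*$ from \eqref{eq:optimal-close-h}, because the weighting by $I_\ell(\theta^*)$ does not change the first-order condition $\E[(\IFx + H\widetilde M)\widetilde M^\top]=0$. Since $B\ge\|H^*\|_\infty$, $H^*$ lies in $\Hscr_B$.

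For the uniform expansion, I would reuse the decomposition \eqref{expansion decomposition} with general $H$ and $\mu_M(\theta^*)=\mathbf 0$:
\[
\hat\theta_{H,M}-\theta^* = \frac1n\sum_{i=1}^n\big(\IFx(\xi_i)+H\widetilde M(\theta^*;\xi_i)\big) + \big(\mathrm{I}_n + H\,\mathrm{II}_n\big).
\]
Here $\mathrm{I}_n$ is the EO remainder from Proposition~\ref{prop:eo-conv}, and $\mathrm{II}_n$ is the Taylor remainder of $\frac1n\sum_i M(\hat\theta_{EO};\xi_i)$ from \eqref{eq:first-order-fluctuation0}. The key structural point is that the remainder is affine in $H$ with $H$-free random terms. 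Therefore $\sup_{H\in\Hscr_B}\|\mathrm{I}_n+H\,\mathrm{II}_n\| \le \|\mathrm{I}_n\| + cB\|\mathrm{II}_n\| = o_p(n^{-1/2})$, uniformly in $H$. The projection $\Pi_\Theta$ is inactive with probability tending to one because $\theta^*$ is interior. Then the quadratic expansion \eqref{general expansion} of $Z$ around $\theta^*$ holds with an $o_p(n^{-1})$ remainder that is again uniform over $\Hscr_B$. The leading term has expectation exactly $\Psi(H)/n$, by the computation in \eqref{eq:optimal-h}. This yields uniform convergence in probability of $nR(\hat\theta_{H,M})$ minus the leading quadratic form.

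The main obstacle is upgrading these $o_p$ statements to convergence of expectations uniformly in $H$. This needs uniform integrability of
\[
\sup_{H\in\Hscr_B} n\|\hat\theta_{H,M}-\theta^*\|^2 \quad\text{and}\quad \sup_{H\in\Hscr_B} nR(\hat\theta_{H,M}).
\]
I would first bound $\sup_H n\|\hat\theta_{H,M}-\theta^*\|^2$ by a constant times
\[
n\|\hat\theta_{EO}-\theta^*\|^2 + B^2 n\Big\|\frac1n\sum_{i=1}^n M(\hat\theta_{EO};\xi_i)\Big\|^2 .
\]
The third-moment conditions in Assumptions~\ref{asp:optimal-condition} and~\ref{asp:single-side-information} give $L^{3/2}$-boundedness of $\sqrt n$-scaled averages, which implies uniform integrability of their squares. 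The Lipschitz bound $R(\theta)\le \E[L(\xi)]\|\theta-\theta^*\|$ controls the excess risk globally. Combining it with the local quadratic bound of $Z$ near $\theta^*$ handles the event where $\hat\theta_{EO}$ leaves the neighborhood $\Nscr$; that event has vanishing probability, and its contribution is absorbed by Hölder's inequality. This step is where the proof needs the most care, and I would follow the same argument used for Proposition~\ref{prop:eo-conv}.

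Finally, for the argmin step, write $F_n(H)=n\E[R(\hat\theta_{H,M})]$, so that $\sup_{\Hscr_B}|F_n-\Psi|\to0$. By the near-optimality of $H_n$,
\[
\Psi(H_n)\le F_n(H_n)+o(1)\le F_n(H^*)+o(1)\le \Psi(H^*)+o(1).
\]
Strict convexity gives $\Psi(H)-\Psi(H^*)\ge c\|H-H^*\|^2$ with $c$ the smallest eigenvalue of $\Var_{\P}[\widetilde M]\otimes I_\ell(\theta^*)$. Hence $\|H_n-H^*\|\to0$.
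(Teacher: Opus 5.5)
Your proposal is correct and follows the paper's proof essentially step for step. Both arguments use a uniform expansion $\sup_{H\in\Hscr_B}|n\E_{\Dscr_n}[R(\hat\theta_{H,M})]-\Psi(H)|=o(1)$ over the compact $\Hscr_B$, the near-optimality chain through $H^*\in\Hscr_B$, and a quadratic lower bound on $\Psi(H)-\Psi(H^*)$ obtained by completing the square. The paper simply asserts that uniform expansion by appeal to the proof of \Cref{thm:improvement-principle}, so your observation that the remainder is affine in $H$ with $H$-free random terms is a useful added justification. One caveat: your uniform-integrability step on the event $\{\hat\theta_{EO}\notin\Nscr\}$ still needs some control of $M$ off $\Nscr$, which \Cref{asp:single-side-information} does not provide. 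The paper glosses over the same point.
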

Theorem \ref{thm:opt-adjust} states that any such finite-sample minimizer must converge to the closed-form $H^*$ in~\eqref{eq:optimal-close-h}. This is based on the argument that minimizing the expected excess risk asymptotically reduces to minimizing its leading population term~\eqref{eq:optimal-h}. 

With limited data, our high-level procedure is given as follows:
\begin{enumerate}
    \item Compute the EO solution $\hat\theta_{EO}$;
    \item Compute the best data-driven adjustment matrix $\widehat H$ and define its entrywise clipped version at threshold $B$ by $\widehat H_B$, i.e., $(\widehat H_B)_{ij} = \text{sign}(\widehat H_{ij})\min\{|\widehat H_{ij}|, B\}$;
    \item Output $\hat\theta_{\widehat H_B, M} = \Pi_{\Theta}\Para{\hat\theta_{EO} + \frac{\widehat H_B}{n}\sum_{i=1}^n M\Para{\hat\theta_{EO};\xi_i}}$.
\end{enumerate}
Here, $B > 0$ is a prespecified large constant to bound the entries of the adjustment matrix for numerical stability. To understand how to estimate $\widehat H$, we first introduce the notion of consistency:
\begin{assumption}[Estimation Consistency]\label{asp:consistent-estimate}
    $\|\widehat H - H^*\| = o_p(1)$. 
\end{assumption}
Under this assumption, we have:
\begin{theorem}[Nearly Optimizing Adjustment Matrix]\label{prop:max-improvement}
    Suppose Assumptions~\ref{asp:theta-star0},~\ref{asp:optimal-condition},~\ref{asp:single-side-information} and \ref{asp:consistent-estimate} hold. Then $\hat\theta_{\widehat H_B, M}$ achieves the same amount of first-order improvement as $\hat\theta_{H^*, M}$. 
\end{theorem}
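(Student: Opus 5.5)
We will show that $\E_{\Dscr_n}[R(\hat\theta_{\widehat H_B,M})]$ and $\E_{\Dscr_n}[R(\hat\theta_{H^*,M})]$ have the same leading term, namely $\frac{1}{2n}\E_{\P}\big[\|\IFx(\xi)+H^*\widetilde M(\theta^*;\xi)\|_{I_\ell(\theta^*)}^2\big]$ from \eqref{eq:optimal-h}. The argument has two parts: first an almost-sure/in-probability expansion of the estimator, then an upgrade of the resulting $o_p(n^{-1})$ remainder to $o(n^{-1})$ in expectation.

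\textbf{Step 1: clipping is asymptotically inactive.} Since $B\ge\|H^*\|_\infty$, we would first like $\widehat H_B-H^*=o_p(1)$. This holds because entrywise clipping at $B$ is $1$-Lipschitz and leaves $H^*$ fixed. Hence $|(\widehat H_B)_{ij}-H^*_{ij}|\le|\widehat H_{ij}-H^*_{ij}|$, and Assumption~\ref{asp:consistent-estimate} gives the claim. This is the only place the boundary case $B=\|H^*\|_\infty$ matters, and the Lipschitz/fixed-point property handles it.

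\textbf{Step 2: first-order expansion.} By Proposition~\ref{prop:eo-conv} and the Taylor argument \eqref{eq:first-order-fluctuation0}, under Assumption~\ref{asp:single-side-information} (with $\mu_M(\theta^*)=\mathbf 0$, continuous differentiability near $\theta^*$, and a dominated derivative) we have
\[
S_n:=\tfrac1n\sum_i M(\hat\theta_{EO};\xi_i)=\tfrac1n\sum_i\widetilde M(\theta^*;\xi_i)+o_p(n^{-1/2})=O_p(n^{-1/2}).
\]
Therefore $\widehat H_B S_n=H^*S_n+o_p(1)O_p(n^{-1/2})=H^* S_n+o_p(n^{-1/2})$. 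Since $\theta^*$ is interior, the projection is eventually the identity with probability tending to one. This yields
\[
\sqrt n(\hat\theta_{\widehat H_B,M}-\theta^*)=\sqrt n(\hat\theta_{H^*,M}-\theta^*)+o_p(1)\Rightarrow N\big(0,\Var_{\P}[\IFx+H^*\widetilde M]\big).
\]
The second-order expansion \eqref{general expansion} then gives $nR(\hat\theta_{\widehat H_B,M})\Rightarrow\tfrac12\|W\|^2_{I_\ell(\theta^*)}$, where $W$ has the above Gaussian law. The same limit holds for $nR(\hat\theta_{H^*,M})$.

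\textbf{Step 3 (main obstacle): convergence in distribution to convergence of expectations.} We need uniform integrability of $nR(\hat\theta_{\widehat H_B,M})$, and we cannot control $\widehat H$'s tails. The clipping is what allows this, since $\|\widehat H_B\|\le \sqrt{D_\theta D_M}\,B$ deterministically. We would bound
\[
nR(\hat\theta)\le n\,\E_{\P}[L(\xi)]\,\|\hat\theta-\theta^*\|
\]
by the Lipschitz property of $Z$ inherited from Assumption~\ref{asp:optimal-condition}, or better, by a quadratic bound near $\theta^*$. Combined with non-expansiveness of $\Pi_\Theta$, this gives
\[
\|\hat\theta-\theta^*\|\le\|\hat\theta_{EO}-\theta^*\|+CB\|S_n\|.
\]
It then suffices to show $\sup_n\E[(\sqrt n\|\hat\theta_{EO}-\theta^*\|)^{2+\delta}]<\infty$ and the analogous bound for $\sqrt n\|S_n\|$. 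The first is the uniform integrability already used in Proposition~\ref{prop:eo-conv}, which relies on the third-moment assumption on $L$. For $S_n$ we would use the third-moment envelope $\sup_{\Nscr}\|M\|^3$ and $\|\nabla M\|^3$ from Assumption~\ref{asp:single-side-information}, splitting on the event $\{\hat\theta_{EO}\in\Nscr\}$ and controlling the complement by its small probability times the moment bound. This gives uniform integrability of $n\|\hat\theta-\theta^*\|^2$, hence of $nR$ via the local quadratic bound $R(\theta)\le C\|\theta-\theta^*\|^2$. We expect this last point to need care away from $\theta^*$, and we would patch it with a Lipschitz bound on the low-probability event where $\hat\theta$ is far from $\theta^*$.

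Given uniform integrability, Vitali's theorem gives
\[
n\E[R(\hat\theta_{\widehat H_B,M})]\to\tfrac12\E\|W\|^2_{I_\ell(\theta^*)}=\lim_n n\E[R(\hat\theta_{H^*,M})],
\]
which is exactly equality of the leading constants $C_{EO+}$, i.e.\ the same amount of first-order improvement in the sense of Definition~\ref{defn:improvement-order}.
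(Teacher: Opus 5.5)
Your proposal is correct, at the same level of rigor as the paper's own argument, but it takes a different route.

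\textbf{How the paper argues.} The paper never computes distributional limits. It couples the two estimators directly. Non-expansiveness of $\Pi_\Theta$ gives
$n\|\hat\theta_{\widehat H_B,M}-\hat\theta_{H^*,M}\|_2^2\le\|\widehat H_B-H^*\|^2\,\|n^{-1/2}\sum_i M(\hat\theta_{EO};\xi_i)\|_2^2$.
H\"older then pairs $\E\|\widehat H_B-H^*\|^6\to0$ against a third-moment bound on $n^{-1/2}\sum_i M(\hat\theta_{EO};\xi_i)$. The $L^6$ convergence comes from bounded convergence, which is where clipping is used. This makes the two estimators $o(n^{-1/2})$-close in $L^2$. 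A second-order expansion of $R$ then converts this into $\E[R(\hat\theta_{\widehat H_B,M})]-\E[R(\hat\theta_{H^*,M})]=o(1/n)$.

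\textbf{How you argue.} You show that both $nR(\cdot)$ converge weakly to the same $\frac12\|W\|^2_{I_\ell(\theta^*)}$, using only $\widehat H_B-H^*=o_p(1)$. You use clipping once, for the deterministic domination $\|\hat\theta-\theta^*\|\le\|\hat\theta_{EO}-\theta^*\|+CB\|S_n\|$, which gives uniform integrability before Vitali.

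\textbf{What each buys.} The coupling gives a stronger conclusion: the estimators themselves, not just their risks, are asymptotically $L^2$-equivalent. It also lets the paper reuse the risk expansion already established for fixed $H^*$. Your route identifies the common constant $\frac12\tr[I_\ell(\theta^*)\Var(\IFx(\xi)+H^*\widetilde M(\theta^*;\xi))]$ directly. It needs only $2+\delta$ moments rather than the specific $3$/$6$ H\"older pairing. The paper's closing ``second-order expansion'' step tacitly needs the same uniform integrability of $n\|\hat\theta-\theta^*\|_2^2$ that you make explicit.

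\textbf{Two comments on your Step 3.}

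First, the ``care away from $\theta^*$'' is not an issue. If $R(\theta)\le c\|\theta-\theta^*\|^2$ for $\|\theta-\theta^*\|\le r$, then for $\|\theta-\theta^*\|>r$ the Lipschitz bound gives
$R(\theta)\le\E[L(\xi)]\|\theta-\theta^*\|\le(\E[L(\xi)]/r)\|\theta-\theta^*\|^2$.
So a global quadratic bound holds and no small-probability estimate is needed.

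Second, controlling the event $\{\hat\theta_{EO}\notin\Nscr\}$ ``by its small probability times the moment bound'' does not work as stated. Assumption~\ref{asp:single-side-information} provides envelopes only over $\Nscr$, so nothing bounds $M(\hat\theta_{EO};\xi_i)$ on that event. The paper's assertion $\E\|n^{-1/2}\sum_i M(\hat\theta_{EO};\xi_i)\|_2^3=\Theta(1)$ has exactly the same hole. Either route needs an additional condition controlling $M$, and the tails of $\hat\theta_{EO}$, away from $\Nscr$ to be fully rigorous.
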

Theorem~\ref{prop:max-improvement} stipulates that, to maximize the first-order improvement for a given $M$, we can focus on constructing
a consistent estimator of the oracle-best adjustment matrix $H^*$, and ``plug in" directly to the formula of $\hat\theta_{\widehat H,M}$. In the following, we propose two procedures to construct this $\widehat H$, one using analytical calculations based on influence-function estimates; the other relying on bootstrap resampling and avoiding explicit influence-function calculations. 
\subsubsection{Analytical Approach} When a reliable estimate of the influence function is available, we can use a direct plug-in estimator to approximate~\eqref{eq:optimal-close-h}. The procedure is described in \Cref{alg:analytical-estimate-1}. 
\begin{small}
\begin{algorithm}[!htb]
\caption{Analytical Approach to Estimate $\widehat H$}
\label{alg:analytical-estimate-1}
\small
\begin{algorithmic}[1]
    \REQUIRE The dataset $\{\xi_i\}_{i \in [n]}$, the EO solution $\hat\theta_{EO}$
    \IF{$\ell$ is twice differentiable in $\theta$}
    \STATE Compute $\hat I_{\ell}(\hat\theta_{EO}) = \frac{1}{n}\sum_{i = 1}^n\nabla_{\theta}^2 \ell(\hat\theta_{EO};\xi_i)$.
    \ELSE
    \STATE Compute a problem-specific consistent estimator $\hat I_{\ell}(\hat\theta_{EO})$.
    \ENDIF
    \STATE Compute the estimated influence function $\widehat{\IFx}(\xi):= - \hat I_{\ell}(\hat\theta_{EO})^{\dagger} \nabla_{\theta} \ell(\hat\theta_{EO};\xi)$.
    \STATE Compute $\widehat H$ by:
    \begin{equation}\label{eq:close-form-h}
    \widehat H= -\E_{\hat\P_n}[\widehat \IFx(\xi) \widehat M(\hat\theta_{EO};\xi)^{\top}]\Para{\E_{\hat\P_n}[\widehat M(\hat\theta_{EO};\xi) \widehat M(\hat\theta_{EO};\xi)^{\top}]}^{\dagger},    
    \end{equation}
    where $\widehat M(\theta;\xi) = M(\theta;\xi) + \Para{\frac{1}{n}\sum_{i = 1}^n \nabla_{\theta} M(\theta;\xi_i)} \widehat\IFx(\xi)$.
    \ENSURE $\widehat H$.
\end{algorithmic}
\end{algorithm}
\end{small}

\Cref{alg:analytical-estimate-1} estimates the optimal perturbation matrix $H^*$ by replacing the population quantities in the closed-form expression in~\eqref{eq:optimal-close-h} with their empirical counterparts. The main challenge is to estimate the Hessian matrix $I_{\ell}(\theta^*)$, which is needed to estimate the influence function $\IFx(\xi)$. 

More specifically, we require a consistent Hessian matrix estimator $\hat I_{\ell}(\hat\theta_{EO})$ satisfying $\|\hat I_{\ell}(\hat\theta_{EO}) - I_{\ell}(\theta^*)\| = o_p(1)$. This condition holds for all twice-differentiable objectives under standard regularity conditions using the empirical Hessian estimator as in Step 2. For nonsmooth objectives, a problem-specific estimator may be available. Such estimators often exploit alternative characterizations of local curvature, as in Step 4. 

Once a consistent estimate for $I_\ell(\theta^*)$ is available, the estimated influence function and $\widehat M(\hat\theta_{EO};\xi)$ can be computed directly. Consequently, the empirical covariance quantities appearing in~\eqref{eq:close-form-h} consistently approximate their population counterparts in~\eqref{eq:optimal-close-h}.
The resulting estimator provides a consistent approximation to $H^*$ that maximizes the first-order improvement:
 \begin{proposition}[Consistency of Analytical Approach]\label{prop:analytical-consistency}
    Suppose Assumptions~\ref{asp:theta-star0},~\ref{asp:optimal-condition} and~\ref{asp:single-side-information} hold with $\|\hat I_{\ell}(\hat\theta_{EO}) - I_{\ell}(\theta^*)\|_2 = o_p(1)$.
      Then the output of \Cref{alg:analytical-estimate-1} satisfies Assumption~\ref{asp:consistent-estimate}.
\end{proposition}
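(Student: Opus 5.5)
The proof is a plug-in consistency argument. I would show separately that the empirical cross-moment $\hat A := \E_{\hat\P_n}[\widehat\IFx(\xi)\widehat M(\hat\theta_{EO};\xi)^\top]$ converges in probability to $A := \E_{\P}[\IFx(\xi)\widetilde M(\theta^*;\xi)^\top]$. In the same way, the empirical second moment $\hat B := \E_{\hat\P_n}[\widehat M\widehat M^\top]$ converges to $B := \E_{\P}[\widetilde M(\theta^*;\xi)\widetilde M(\theta^*;\xi)^\top]$. Under $\mu_M(\theta^*)=\mathbf{0}$, $B = \Var_\P[\widetilde M(\theta^*;\xi)]$, which is positive definite by Assumption~\ref{asp:single-side-information}. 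The pseudoinverse is discontinuous only across rank changes, and $B$ has full rank. Hence with probability tending to one $\hat B$ is invertible, $\hat B^\dagger = \hat B^{-1}$, and $\hat B^{-1}\convp B^{-1}$ by continuity of inversion. The continuous mapping theorem then gives $\widehat H = -\hat A\hat B^{\dagger}\convp -AB^{-1} = H^*$, which is Assumption~\ref{asp:consistent-estimate}.

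\emph{Step 1: building blocks.} By Proposition~\ref{prop:eo-conv}, $\hat\theta_{EO}\convp\theta^*$. Since $\hat I_\ell(\hat\theta_{EO})\convp I_\ell(\theta^*)\succ 0$, the pseudoinverse $\hat I_\ell^{\dagger}$ equals the inverse with probability tending to one and converges to $I_\ell(\theta^*)^{-1}$. Next, $\frac1n\sum_i\nabla_\theta M(\hat\theta_{EO};\xi_i)\convp\nabla_\theta\mu_M(\theta^*)$. This follows from a uniform law of large numbers on the neighborhood $\Nscr$, justified by continuity of $\nabla_\theta M$ on $\Nscr$ and the envelope condition $\E[\sup_{\Nscr}\|\nabla_\theta M\|^3]<\infty$ in Assumption~\ref{asp:single-side-information}, together with interchange of derivative and expectation.

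\emph{Step 2: expand the sample moments.} Write $\widehat\IFx(\xi) = -\hat I_\ell^{\dagger}g(\hat\theta_{EO};\xi)$ with $g = \nabla_\theta\ell$, and $\widehat M(\hat\theta_{EO};\xi) = M(\hat\theta_{EO};\xi) + \hat G\widehat\IFx(\xi)$, where $\hat G$ is the averaged Jacobian from Step 1. Then $\hat A$ and $\hat B$ are finite sums of products of random matrices that converge in probability, namely $\hat I_\ell^\dagger$ and $\hat G$, times empirical averages of the forms $\E_{\hat\P_n}[g(\hat\theta_{EO})g(\hat\theta_{EO})^\top]$, $\E_{\hat\P_n}[g(\hat\theta_{EO})M(\hat\theta_{EO})^\top]$, and $\E_{\hat\P_n}[M(\hat\theta_{EO})M(\hat\theta_{EO})^\top]$. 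It therefore suffices to show that each such average converges to its population counterpart at $\theta^*$.

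\emph{Step 3: the main obstacle.} For $\E_{\hat\P_n}[MM^\top]$, I would use the uniform law of large numbers over $\Nscr$ with envelope $\sup_{\Nscr}\|M\|^2$, which is integrable by the third-moment condition, and continuity of $M$ in $\theta$. The hard terms are the gradient averages, because Assumption~\ref{asp:optimal-condition} only gives differentiability of $\ell$ at $\theta^*$, not continuity of $\theta\mapsto g(\theta;\xi)$ near $\theta^*$. I would first bound $\|g\|\le L(\xi)$ using the Lipschitz condition; with $\E[L^3]<\infty$ this provides the integrable envelopes $L^2$ and $L\sup_{\Nscr}\|M\|$ (via Hölder). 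To handle $\hat\theta_{EO}\neq\theta^*$, I would then invoke a Glivenko--Cantelli or stochastic-equicontinuity argument: show $\E_\P[\sup_{\|\theta-\theta^*\|\le\delta}\|g(\theta;\xi)-g(\theta^*;\xi)\|^2]\to0$ as $\delta\to0$. In the smooth case this is dominated convergence under the envelope $L^2$. In the nonsmooth case, such as LAD or newsvendor, the gradient jumps only on a set of $\xi$ whose probability shrinks with $\delta$ because $\P$ has a continuous density, so bounded jumps still give the conclusion. If this step needs more than the stated assumptions, I would state it as the mild additional regularity implicitly assumed by ``under standard regularity conditions.'' Combining the Cauchy--Schwarz remainders with the law of large numbers at $\theta^*$ completes Steps 2--3.
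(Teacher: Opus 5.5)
Your proposal is correct and follows the same plug-in route as the paper: you show consistency of the cross-moment and second-moment matrices in \eqref{eq:close-form-h}, then use continuity of inversion at the nonsingular limit $\E_{\P}[\widetilde M(\theta^*;\xi)\widetilde M(\theta^*;\xi)^{\top}]$. Two differences are worth noting: factoring $\hat I_\ell^{\dagger}$ and the averaged Jacobian out of the empirical averages and applying your equicontinuity bound on $\nabla_\theta\ell$ near $\theta^*$ supplies the uniform control that the paper's pointwise-in-$\xi$ Lemma~\ref{asp:plugin} leaves implicit, and the extra continuity of $\nabla_\theta\ell$ that you flag is the same additional condition the paper itself invokes in that lemma.
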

For non-smooth objectives, e.g., the newsvendor problem in \Cref{ex:newsvendor}, recall that $I_{\ell}(\theta^*) = p f(\theta^*)$, and we estimate it by $\hat I_{\ell}(\hat\theta_{EO}) = p\hat f(\hat\theta_{EO})$, where $\hat f(\cdot)$ is a kernel density estimator. Under standard regularity conditions, $\|\hat I_{\ell}(\hat\theta_{EO}) - I_{\ell}(\theta^*)\| = o_p(1)$ (cf. Chapter 1.2 in~\citet{tsybakov2008introduction}).


\subsubsection{Bootstrap Approach} \label{sec:bootstrap}
To avoid explicit influence function calculations, we propose a bootstrap-based approach that leverages resampling to construct a data-driven estimator.  The procedure is described in \Cref{alg:bootstrap-estimate-1}.

\begin{small}
\begin{algorithm}[!htb]
\caption{Bootstrap Approach to Estimate $\widehat H$ 
}
\label{alg:bootstrap-estimate-1}
\small
\begin{algorithmic}[1]
    \REQUIRE Number of bootstrap replications $B$, the dataset $\{\xi_i\}_{i \in [n]}$
    \FOR{$b = 1,\ldots, B$}
    \STATE Obtain $\Dscr_n^{(b)} = \{\xi_i^{(b)}\}_{i \in [n]}$ by resampling the dataset with replacement.
    \STATE Compute the bootstrapped EO solution $\hat\theta_{EO}^{(b)} \in \argmin_{\theta} \sum_{i \in [n]}\ell(\theta;\xi_i^{(b)})$.
    \STATE Compute the bootstrapped side information $\widebar M^{(b)} = \frac{1}{n}\sum_{i = 1}^n M\Para{\hat\theta_{EO}^{(b)};\xi_i^{(b)}}$.
    \ENDFOR
    \STATE Compute $\widehat{H}$ by:
    \begin{equation}\label{eq:close-form-h-bootstrap}
        \widehat H = -\widehat{\Cov}_{B}(\hat\theta_{EO}^{(b)}, \widebar M^{(b)})\,\widehat{\Var}_{B}(\widebar M^{(b)})^{\dagger},
    \end{equation}
    where $\widehat{\Var}_{B}$ and $\widehat{\Cov}_B$ are computed over the bootstrap samples indexed by $b \in [B]$.
    \ENSURE $\widehat H$.
\end{algorithmic}
\end{algorithm}
\end{small}

To explain \Cref{alg:bootstrap-estimate-1}, we argue how the bootstrapped quantities in \eqref{eq:close-form-h-bootstrap} collectively approximate \eqref{eq:optimal-close-h} while, importantly, avoiding influence-function estimation. The key to the latter advantage lies in the use of $\widehat{\Cov}_{B}(\hat\theta_{EO}^{(b)}, \widebar M^{(b)})$ that involves bootstrapping $\hat\theta_{EO}^{(b)}$, which can be linearized in terms of the influence function thanks to Proposition \ref{prop:eo-conv}, i.e., $\hat\theta_{EO}\approx\theta^*+(1/n)\sum_{i=1}^n\IFx(\xi_i)$. Furthermore from \eqref{expansion decomposition}, $\widebar M(\hat\theta_{EO}):=(1/n)\sum_{i=1}^nM(\hat\theta_{EO};\xi_i)\approx(1/n)\sum_{i=1}^n\widetilde M(\theta^*;\xi_i)$. Hence
\begin{equation*}
\Cov(\hat\theta_{EO}, \widebar M(\hat\theta_{EO}))\approx\frac{1}{n}\Cov(\IFx(\xi),\widetilde M(\theta^*;\xi))=\frac{1}{n}\E[\IFx(\xi)\widetilde M(\theta^*;\xi)^\top],
\end{equation*}
where the last equality follows from $\E[\IFx(\xi)]=0$. Similarly,
$$\Var(\widebar M(\hat\theta_{EO}))\approx\Var\left(\frac{1}{n}\sum_{i=1}^n\widetilde M(\theta^*;\xi_i)\right)=\frac{1}{n}\Var(\widetilde M(\theta^*;\xi))=\frac{1}{n}\E[\widetilde M(\theta^*;\xi)\widetilde M(\theta^*;\xi)^\top],$$
where the last equality follows from $\E[\widetilde M(\theta^*;\xi)]=0$. Therefore,
$$\Cov(\hat\theta_{EO}, \widebar M(\hat\theta_{EO}))\Var(\widebar M(\hat\theta_{EO}))^{\dagger}\approx \E[\IFx(\xi)\widetilde M(\theta^*;\xi)]\E[\widetilde M(\theta^*;\xi)\widetilde M(\theta^*;\xi)^\top]^{\dagger}$$
which is precisely \eqref{eq:optimal-close-h}. In other words, we can focus on evaluating $\Cov(\hat\theta_{EO}, \widebar M(\hat\theta_{EO}))\Var(\widebar M(\hat\theta_{EO}))^{\dagger}$ to compute $\widehat H$. To this end, the bootstrap applies by replacing all empirical quantities with resampled counterparts, namely replacing $\mathbb P$ with $\hat\P_n$ and hence $\hat\P_n$ with the resampled empirical distribution. It uses the principle that the statistical fluctuation of the resampled quantities, conditional on the data, mimics that of the original empirical counterparts. This gives rise to \eqref{eq:close-form-h-bootstrap}.



Therefore, we can derive the following consistency property for the bootstrap approach.


\begin{proposition}[Consistency of Bootstrap Approach]\label{prop:bootstrap-consistency}
    Suppose Assumptions~\ref{asp:theta-star0},~\ref{asp:optimal-condition} and~\ref{asp:single-side-information} hold. If the number of bootstrap replications $B = \omega(1)$, then the output of \Cref{alg:bootstrap-estimate-1} satisfies Assumption~\ref{asp:consistent-estimate}. 
\end{proposition}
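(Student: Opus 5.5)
To prove Proposition~\ref{prop:bootstrap-consistency}, I will show that the two bootstrap moment matrices in \eqref{eq:close-form-h-bootstrap}, after rescaling by $n$, converge in probability to their population counterparts:
\[
n\widehat{\Cov}_B(\hat\theta_{EO}^{(b)},\widebar M^{(b)}) \convp \E_{\P}[\IFx(\xi)\widetilde M(\theta^*;\xi)^\top],
\qquad
n\widehat{\Var}_B(\widebar M^{(b)}) \convp \E_{\P}[\widetilde M(\theta^*;\xi)\widetilde M(\theta^*;\xi)^\top] =: V.
\]
The factor $n$ cancels in $\widehat H$. Under Assumption~\ref{asp:single-side-information} the limit $V$ is positive definite. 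The pseudoinverse is continuous at invertible matrices, so with probability tending to one the rescaled $\widehat{\Var}_B$ is invertible and its pseudoinverse converges to $V^{-1}$. The continuous mapping theorem then gives $\widehat H\convp H^*$, which is Assumption~\ref{asp:consistent-estimate}.

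\textbf{Step 1: bootstrap linearization.} I will establish a conditional version of Proposition~\ref{prop:eo-conv} and of \eqref{eq:first-order-fluctuation0}. Write $\hat\P_n^{(b)}$ for the resampled empirical measure and $\widehat{\IFx}_n(\xi):=\IFx(\xi)-\E_{\hat\P_n}[\IFx]$ for the recentred influence function. Standard bootstrap theory for M-estimators (van der Vaart--Wellner, Thm.~3.6.13 type, under the Lipschitz and third-moment conditions of Assumption~\ref{asp:optimal-condition}) gives
\[
\sqrt n\,(\hat\theta_{EO}^{(b)}-\hat\theta_{EO}) = \frac{1}{\sqrt n}\sum_{i=1}^n \widehat{\IFx}_n(\xi_i^{(b)}) + o_{p}(1),
\]
conditionally on the data in probability. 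Similarly, a Taylor expansion of $M$ on the neighborhood $\Nscr$, using $\E\sup_{\Nscr}\|\nabla_\theta M\|^3<\infty$ and a bootstrap LLN for the Jacobian, gives
\[
\sqrt n\,(\widebar M^{(b)}-\widebar M(\hat\theta_{EO})) = \frac{1}{\sqrt n}\sum_{i=1}^n \bigl(M(\theta^*;\xi_i^{(b)})-\E_{\hat\P_n}M(\theta^*;\xi)+\nabla_\theta\mu_M(\theta^*)\widehat{\IFx}_n(\xi_i^{(b)})\bigr)+o_p(1).
\]
The leading terms are sums of conditionally i.i.d.\ mean-zero vectors. Their conditional covariance is the empirical covariance of $(\IFx,\widetilde M)$ under $\hat\P_n$, which converges a.s.\ to the population covariance by the SLLN.

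\textbf{Step 2: from conditional distributional convergence to second moments.} Convergence in distribution does not by itself give convergence of $\widehat{\Cov}_B$. I therefore need the conditional second moments of $\sqrt n(\hat\theta^{(b)}_{EO}-\hat\theta_{EO})$ and $\sqrt n(\widebar M^{(b)}-\widebar M)$ to converge, which requires conditional uniform integrability of their squares. For the linear leading terms this follows from the third-moment conditions: a conditional Rosenberg/Marcinkiewicz bound controls the $(2+\delta)$-th moment by empirical third moments, which are $O_p(1)$. \emph{This is the main obstacle.} The remainder terms are only $o_p(1)$, and a rare bootstrap sample could produce a wild $\hat\theta^{(b)}_{EO}$. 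My first attempt will be to truncate: replace each bootstrap quantity by its clipped version at level $K\sqrt{\log n}/\sqrt n$ (or rely on $\Theta$ bounded in the relevant regime) and show the clipping is asymptotically inactive. If that fails, I will establish a conditional $L^{2+\delta}$ bound on $\sqrt n(\hat\theta^{(b)}_{EO}-\hat\theta_{EO})$ via a localization/peeling argument using the Lipschitz envelope $L$ with $\E L^3<\infty$ and the curvature from Assumption~\ref{asp:theta-star0}.

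\textbf{Step 3: Monte Carlo error.} Given the data, the $B$ replications are i.i.d. By the uniform integrability from Step 2 (second moments of the rescaled pairs bounded uniformly), a conditional weak LLN shows the sample covariance over $b\in[B]$ differs from the conditional covariance by $o_p(1)$ once $B=\omega(1)$. Combining Steps 1--3 yields the two displayed limits, and continuity of $(A,V)\mapsto -AV^{\dagger}$ at $V\succ0$ completes the argument.
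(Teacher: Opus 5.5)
Your route is the paper's: a conditional bootstrap linearization/CLT for $(\hat\theta_{EO}^{(b)},\widebar M^{(b)})$, convergence of $n\Cov^*(\hat\theta_{EO}^{(b)},\widebar M^{(b)})$ and $n\Var^*(\widebar M^{(b)})$, a conditional law of large numbers over the $B=\omega(1)$ replicates, and continuity of $(A,V)\mapsto -AV^{\dagger}$ at $V\succ 0$; the uniform-integrability step you isolate in Step~2 is exactly the one the paper passes over, since it goes from conditional weak convergence directly to convergence of the conditional covariances. Be aware that clipping cannot close that step for \Cref{alg:bootstrap-estimate-1} as written (with $B$ unrestricted above, reconciling clipped and unclipped sample covariances again requires the conditional second moment of the rescaled replicates on the clipping event to vanish), and that the conditional $L^{2+\delta}$ route also needs control of $M(\hat\theta^{(b)}_{EO};\cdot)$ on the event $\hat\theta^{(b)}_{EO}\notin\Nscr$ (and of $\hat\theta^{(b)}_{EO}$ itself if $\Theta$ is unbounded), which Assumption~\ref{asp:single-side-information} does not supply, so some extra global condition is needed, tacitly by the paper's argument as well.
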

As long as the number of bootstrap replications $B$ grows sufficiently with $n$, the resulting estimator $\widehat H$ consistently approximates $H^*$ and achieves the same amount of first-order improvement.
Compared with the analytical approach, the bootstrap approach only requires repeatedly solving the EO problem on resampled datasets and evaluating the corresponding side-information statistics. As a result, it is straightforward to implement and remains applicable even when influence functions or Hessian matrices are difficult to derive, for example when the objective function is complicated or non-smooth. On the other hand, it is computationally more expensive due to the repeated EO solves.

\subsection{Connection to Control Variates}\label{subsec:connect-cv}
The previous results provide a foundation to connect with the perspective of control variates~\citep{rubinstein1985efficiency,nelson1990control, asmussen2007stochastic} in Monte Carlo. The latter utilizes auxiliary outputs, which constitute the so-called control variate, to reduce variance over naive Monte Carlo. Instead of using the sample mean for target simulation outputs alone (to estimate its population mean), we can combine these auxiliary outputs into the final estimates by using a form that resembles a regression of the target against these auxiliary outputs (centered by their known mean). The coefficient in combining the auxiliary outputs is obtained by minimizing the estimation variance. In the Monte Carlo literature, a wide range of approaches to construct control variates have been studied, including applications in classical option pricing \citep{glasserman2013monte}, regularized least squares \citep{south2022}, function approximation \citep{maire2003} and neural networks \citep{wan2020neural}.


Our approach to maximize first-order improvement has a close connection with the control variate idea, but intriguingly layered in a very different context of excess risk improvement. By the linearization property in Proposition \ref{prop:eo-conv}, $\hat\theta_{EO}$ is approximately $\theta^*+(1/n)\sum_{i=1}^n\IFx(\xi_i)$. In this regard, estimating $\theta^*$ can be viewed as approximately estimating $\E[\IFx(\xi)]$ (note that $\E[\IFx(\xi)]=0$ by construction, and thus this appears a tautology, but the perspective will be useful). That is, suppose we now want to estimate the mean $\E[\IFx(\xi)]$. If we have a control variate $\widetilde M(\theta^*;\xi)$ with a known mean 0, then \eqref{eq:optimal-h}, which is equivalent to minimizing the variance since all involved quantities there have mean 0, is exactly the mechanism to find the optimal coefficient in the control variate estimator. Correspondingly, \eqref{eq:optimal-close-h} is precisely the formula for the optimal coefficient.

In other words, the side information and the adjustment matrix in our design have a parallel relation with the auxiliary outputs and the regression coefficients in the control variate context. The root of this connection is that improving the first-order performance of EO essentially boils down to variance reduction. The decomposition in \eqref{eq:reg-exact-decomp} that underlies the proof of Theorem \ref{thm:improvement-principle} reveals that, in order to achieve first-order improvements, the only possible route is to keep the bias negligible and substantially reduce variance. That is, variance reduction is the only significant driver capable of improving first-order performance. To this end, the side information $M$ acts as an effective control variate to execute this reduction.

Despite the connection, note that since our main goal is to improve excess risk, the linearization and influence function serve only as intermediate tools. The bootstrap approach discussed in Section \ref{sec:bootstrap} makes clear that we do not necessarily need to use the influence function explicitly in executing the control variate estimation. There, we directly use the EO solution as the estimator for our target, and calibrate the optimal coefficient from it. However, since this target is now a nonlinear functional of the data distribution, we leverage the bootstrap as an effective tool to estimate the coefficient, leading to Algorithm \ref{alg:bootstrap-estimate-1}.

\section{Attaining First-Order Improvements for Contextual Stochastic Optimization}\label{sec:cso}
We extend our previous analyses in Sections~\ref{sec:perturb-stat} and~\ref{sec:perturb-opt} to contextual stochastic optimization problems~\citep{bertsimas2020predictive,elmachtoub2022smart,sadana2025survey}, where the randomness $\xi$ depends on additional covariates $u$.
More precisely, in these problems,
the distribution of $\xi$ 
depends on the covariate $u \in \R^{D_{u}}$. The ground-truth distribution $\P_{\xi|u}$ is unknown; instead, the decision-maker only has
data $\Dscr_n= \{(u_i, \xi_i)\}_{i = 1}^n$ consisting of i.i.d. samples from the joint distribution 
$\P:=\P_{(u,\xi)}$. The decision-maker observes the covariate $u = u_0$ before making the decision $\theta(u)$. The excess risk of the decision $\theta(u)$ is defined as:
\[R(\theta(u)):=Z_u(\theta(u)) - Z_u(\theta^*(u)),\] 
where $Z_u(\theta):=\E_{\xi\sim \P_{\xi|u}}[\ell(\theta;\xi)]$ and $\theta^*(u) = \argmin_{\theta} Z_u(\theta)$.

Depending on the size of the decision class, methods in the contextual optimization literature can generally be divided into parametric and nonparametric approaches. For parametric approaches (see \Cref{defn:cso-param-method} in Appendix~\ref{app:cso-parametric}), 
%
the analyses in Sections~\ref{sec:perturb-stat} and~\ref{sec:perturb-opt} apply directly via reparameterization. 
In this section, we focus on nonparametric approaches, i.e., weighted empirical optimization procedures within the class of predict-then-optimize methods introduced in \cite{bertsimas2020predictive}. For simplicity, we assume $\Theta = \R^{D_{\theta}}$ here.
\begin{definition}[Weighted EO Solution]\label{defn:np-opt0}
  The weighted EO solution is computed by:
  \begin{equation}\label{eq:np-optimizer0}
    \hat\theta_{EO}(u) \in \argmin_{\theta \in \Theta} \sum_{i \in [n]} w_{n,i}(u)\ell(\theta;\xi_i),    
  \end{equation}
  where $\{w_{n,i}(u)\}_{i \in [n]}$ are weights determined by $\Dscr_n$ and $u$. 
\end{definition}
Many weight choices in the weighted EO solutions, including kernel estimators and regular random forest estimators (described in Appendix~\ref{app:perturb-opt-cso}),
satisfy the following condition:
\begin{assumption}[Influence Function Decomposition]\label{asp:if-decomposition}
At each covariate point $u_0$, there exists some $\theta^*(u_0)$ such that, for some convergence rate $\gamma \leq \frac{1}{2}$,  the weighted EO solution $\hat\theta_{EO}(u_0)$ satisfies:
\[
\hat\theta_{EO}(u_0)-\theta^*(u_0)
= n^{-\gamma}\Para{\frac{1}{\sqrt{n}}\sum_{i=1}^n \IFx_{u_0}(u_i,\xi_i) + b_{u_0} + o_p(1)},
\]
where $\E_{(u,\xi)\sim\P}[\IFx_{u_0}(u, \xi)]=0$, $\text{Var}_{(u,\xi)\sim \P}[\IFx_{u_0}(u, \xi)] = \Theta(1)$, $b_{u_0}=O(1)$ and $\E_{\Dscr_n}[n^{2\gamma}\|\hat\theta_{EO}(u_0) - \theta^*(u_0)\|_2^2]<\infty$.
\end{assumption}

\begin{example}[Kernel Estimator]\label{ex:kernel-learner0}
Let $K:\R^{D_u} \to \R$ be a kernel with $\int K(u) du  < \infty$. Then the weights $w_{n,i}(u) = K((u_i - u)/h_n)$, with bandwidth $h_n$, satisfy Assumption~\ref{asp:if-decomposition}. Here, $\gamma = \min\paran{\frac{1}{2} + \frac{D_u}{2}\frac{\log h_n}{\log n}, -\frac{\log h_n}{\log n}}$ from \cite{iyengar2024cross}. Standard kernels include the naive kernel $K(x) = \mathbf{1}_{\|x\|_2 \leq 1}$ and Gaussian kernel $K(x) = \exp(-\|x\|^2)$. 
\end{example}

Similar to \Cref{sec:perturb-stat}, we consider the following directionally perturbed weighted EO solution:
\begin{definition}[Directionally Perturbed Weighted EO Solution]\label{defn:perturb-sol-cso}
    For a given covariate $u_0$, the directionally perturbed weighted EO solution is defined as:
    \begin{equation}\label{eq:perturb-weight-eo}
        \hat\theta_{H_n,M_{n,u_0}}(u_0):=\hat\theta_{EO}(u_0)+ \frac{H_n}{n} \sum_{i=1}^n M_{n,u_0}(u_i,\xi_i),   
    \end{equation}
    where $M_{n,u_0}(u,\xi) \in \R^{D_M}$ is a perturbation function and $H_n \in \R^{D_{\theta} \times D_M}$ is an adjustment matrix, both of which may depend on $n$.
\end{definition}
We now establish the main result characterizing the improvement order achieved by $\hat\theta_{H_n,M_{n,u_0}}(u_0)$. Here, the key distinction from the non-contextual setting is that the weighted EO solution typically converges at a slower nonparametric rate. This changes the order of performance improvements, although the main idea in \Cref{sec:perturb-stat} still applies. 
The following proposition formalizes the correctness and non-orthogonality conditions that govern first-order improvements in the contextual setting.

\begin{theorem}[Characterization of Improvement Order for Weighted EO Solution]\label{prop:contextual-improvement}
For a given covariate $u_0$, suppose Assumptions~\ref{asp:theta-star0},~\ref{asp:optimal-condition} (when we replace $Z(\theta)$ with $Z_{u_0}(\theta)$) and Assumption~\ref{asp:if-decomposition} hold. Consider a sequence of perturbation functions $\{M_{n,u_0}(u,\xi)\}_{n =1}^{\infty}$ with $\Omega_n(u_0):=\Var_{\P}[M_{n,u_0}(u,\xi)]$ and $\Omega(u_0):= \lim_{n \to \infty} n^{1-2\gamma}\Omega_n(u_0) \succ 0$. For the directionally perturbed weighted EO solution $\hat\theta_{H_n,M_{n,u_0}}(u_0)$ in~\Cref{defn:perturb-sol-cso}, if:
\begin{enumerate}
\item the side information is ``almost'' correct: $\mu_{n}(u_0):=\E_{\P}[M_{n,u_0}(u,\xi)] = o(n^{\gamma - 1})$,
\item the non-orthogonality condition holds: $\Gamma_{u_0}:=\lim_{n \to \infty}
n^{1/2-\gamma}\Cov_{\P}\!\left[
\IFx_{u_0}(u,\xi),M_{n,u_0}(u,\xi)
\right] \neq \mathbf 0$,
\end{enumerate}
then there exists some $H_n$ with $\|H_n\| = \Theta(n^{1-2\gamma})$ such that $\hat\theta_{H_n, M_{n,u_0}}(u_0)$ achieves a first-order improvement over $\hat\theta_{EO}(u_0)$, whereas any $H_n$ with $\|H_n\| = o(n^{1-2\gamma})$ cannot.
\end{theorem}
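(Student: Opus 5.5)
The plan is to mirror the bias--variance decomposition behind Theorem~\ref{thm:improvement-principle}, now at the nonparametric scale $n^{-\gamma}$. First I will derive the baseline. Assumption~\ref{asp:if-decomposition} together with the quadratic expansion \eqref{eq:second-order-expansion} (valid for $Z_{u_0}$ by Assumptions~\ref{asp:theta-star0}--\ref{asp:optimal-condition}) gives
\[
\E[R(\hat\theta_{EO}(u_0))]=\tfrac{1}{2}n^{-2\gamma}\,\E\bigl\|S_n+b_{u_0}\bigr\|_{I_\ell}^2+o(n^{-2\gamma}),
\qquad S_n:=n^{-1/2}\textstyle\sum_i \IFx_{u_0}(u_i,\xi_i).
\]
The moment bound in Assumption~\ref{asp:if-decomposition} provides the uniform integrability needed to pass from $o_p$ to expectations. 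The result is $C_{EO}=\tfrac12\{\tr[\Var(\IFx_{u_0})I_\ell]+\|b_{u_0}\|_{I_\ell}^2\}$ with rate exponent $\gamma$.

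Next I will expand the perturbation. I write
\[
\tfrac{1}{n}\sum_i M_{n,u_0}(u_i,\xi_i)=\mu_n(u_0)+n^{-1/2}T_n,
\qquad T_n:=n^{-1/2}\textstyle\sum_i\bigl(M_{n,u_0}-\mu_n\bigr).
\]
Here $\Var(T_n)=\Omega_n(u_0)\sim n^{2\gamma-1}\Omega(u_0)$, and the covariance between $S_n$ and $T_n$ is $\Cov[\IFx_{u_0},M_{n,u_0}]\sim n^{\gamma-1/2}\Gamma_{u_0}$. I then parametrize $H_n=n^{1-2\gamma}G$, so that
\[
H_n n^{-1/2}T_n=n^{-\gamma}\,G\bigl(n^{1/2-\gamma}T_n\bigr),
\]
where $n^{1/2-\gamma}T_n$ has covariance converging to $\Omega(u_0)$ and cross-covariance with $S_n$ converging to $\Gamma_{u_0}$. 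The bias contribution is $H_n\mu_n=n^{1-2\gamma}o(n^{\gamma-1})G=o(n^{-\gamma})$, which is negligible at the scale $n^{-\gamma}$. This is exactly why Condition 1 is stated as $o(n^{\gamma-1})$. Substituting into the quadratic expansion gives
\[
n^{2\gamma}\,\E[R(\hat\theta_{H_n,M})-R(\hat\theta_{EO})]\to \tr[\Gamma_{u_0}G^\top I_\ell]+\tfrac12\tr[G\Omega G^\top I_\ell]+\Delta(G),
\]
where $\Delta(G)$ collects the cross term between $b_{u_0}$ and the perturbation. The mean of $n^{1/2-\gamma}T_n$ is zero, so this cross term vanishes. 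The limit is a convex quadratic in $G$ whose linear part is nonzero by Condition 2. Choosing $G=-\epsilon\,\Gamma_{u_0}\Omega^{-1}$ with small $\epsilon>0$, or the exact minimizer $G^*=-\Gamma_{u_0}\Omega^{-1}$, makes the limit strictly negative. This gives $C_{EO+}<C_{EO}$ at the same rate with $\|H_n\|=\Theta(n^{1-2\gamma})$.

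For the converse, suppose $\|H_n\|=o(n^{1-2\gamma})$. Then $G_n:=n^{2\gamma-1}H_n\to 0$, and the same expansion shows the normalized difference tends to the quadratic evaluated at $G=0$, namely $0$. Hence the leading constant is unchanged and no first-order improvement is possible. I will handle this by a subsequence argument on $G_n$ to avoid assuming that $G_n$ converges.

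The main obstacle is justifying the passage from in-probability expansions to expectations when $M_{n,u_0}$ changes with $n$. This requires uniform integrability of $n^{2\gamma}\|\hat\theta_{H_n,M}-\theta^*\|^2$. I will try to get it from the assumed second-moment bound on the EO part together with an explicit second-moment computation for $n^{1/2-\gamma}T_n$, whose variance converges, via $\|a+b\|^2\le 2\|a\|^2+2\|b\|^2$. The cross-covariance limit also requires the $o_p(1)$ remainder in Assumption~\ref{asp:if-decomposition} to be asymptotically uncorrelated with $T_n$. This follows from Cauchy--Schwarz once that remainder is shown to be $L^2$-negligible. If that cannot be shown directly, I would add a mild $L^2$ version of the remainder condition.
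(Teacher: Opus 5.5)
Your proposal is correct and follows essentially the paper's proof. The paper also expands the expected excess risk to second order and shows that the bias term $H_n\mu_n(u_0)$ is $o(n^{-\gamma})$ under Condition~1. It then reduces the risk difference to $\frac{1}{2n}\tr\bigl(I_{\ell,u_0}(H_n\Gamma_n^\top+\Gamma_nH_n^\top+H_n\Omega_nH_n^\top)\bigr)+o(n^{-2\gamma})$ and takes $H_n=-t\,\Gamma_n\Omega_n^{\dagger}$ with small $t$, which is your $G=-\epsilon\,\Gamma_{u_0}\Omega(u_0)^{-1}$ after the rescaling $H_n=n^{1-2\gamma}G$; your converse via $G_n\to 0$ makes explicit what the paper leaves implicit.

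Keep the small-$\epsilon$ choice rather than the exact minimizer $G^*$. When $\Var(\IFx_{u_0})=\Gamma_{u_0}\Omega(u_0)^{-1}\Gamma_{u_0}^\top$ and $b_{u_0}=0$, $G^*$ drives the leading constant to zero, which changes the rate instead of giving a first-order improvement in the sense of Definition~\ref{defn:improvement-order}. This is exactly why the paper takes $t$ small.
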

Our characterization above holds for every covariate $u_0$. 
To understand whether using a given source of side information can achieve a first-order improvement at $u_0$, we need to verify two conditions in \Cref{prop:contextual-improvement}.  First, the ``almost'' correct side information condition is automatically satisfied if $M_{n,u_0}(u, \xi)$ approximates any centered moment of the joint distribution $\P_{(u, \xi)}$ well. On the other hand, the non-orthogonality condition requires careful construction of $M_{n,u_0}$. Intuitively, $M_{n,u_0}(u,\xi)$ can improve the estimator only if it captures some of the same local fluctuations as $\IFx_{u_0}(u,\xi)$. If it is orthogonal to these fluctuations, then $\Gamma_{u_0}=0$, and $M_{n,u_0}$ provides no first-order variance reduction.

Therefore, an effective perturbation function must depend on $u_0$, and a natural choice is to incorporate the conditional mean 
$\E[\xi|u_0]$. For example, under weighted EO solutions specified in \Cref{ex:kernel-learner0} with $K(x) = \mathbf{1}_{\|x\|_2 \leq 1}$, we have:
\begin{equation}\label{eq:side-info-cso}
M_{n,u_0}(u_i, \xi_i) = (\xi_i - \E_{\P}[\xi_i|u_0])\mathbf{1}_{\{\|u_i - u_0\|_2 \leq h_n\}}   
\end{equation}
for the corresponding bandwidth sequence $\{h_n\}_{n = 1}^{\infty}$. Here in~\eqref{eq:side-info-cso}, the indicator $\mathbf{1}_{\{\|u - u_0\|_2 \leq h_n\}}$ localizes the information to a neighborhood of $u_0$. In practice, $\E[\xi|u_0]$ can be estimated from pretrained models, which are well-suited to conditional mean prediction. Furthermore, the same computational procedures in \Cref{sec:perturb-opt} can be applied to maximize the first-order improvement. In Appendix~\ref{app:perturb-opt-cso}, we describe (i) mild conditions under which $\mu_n(u_0) = o(n^{\gamma-1})$ in~\eqref{eq:side-info-cso}; (ii) constructions for side information in other weight specifications; and (iii) computational procedures to maximize the first-order improvement.

Our diagnostics of performance improvements over weighted EO solutions contribute to the broader contextual optimization literature. These weighted EO solutions may converge to the optimal solutions slowly, so improvements to them are especially valuable. Similar to \Cref{subsec:perturb-stat-connection}, most existing weighted EO+ methods do not improve in the first-order sense unless the two conditions from~\Cref{prop:contextual-improvement} hold.
In contrast, existing comparisons in contextual optimization focus only on decisions under parametric distributions~\citep{elmachtoub2023estimatethenoptimize,elmachtoub2025dissecting} or linear objectives~\citep{hu2022fast}, and no general framework exists for improving these solutions in a principled way.
\section{Further Discussions}\label{sec:discuss}
\subsection{Identifying Side Information in Practical Problems}\label{app:identify}
Our analysis in previous sections reduces the question of constructing optimally perturbed (weighted) EO solutions to a concrete verification task: identifying a candidate perturbation function $M$ that is nearly correct and non-orthogonal to the fluctuation of the EO solution. In practice, such correct side information commonly arises from three sources. These categories are not mutually exclusive. Throughout the paper, the form of $M$, including any pretrained component, is externally supplied or learned from auxiliary data independent of $\Dscr_n$, except for the joint optimization of the adjustment matrix and side information in Appendix~\ref{subsec:comp-h-phi}.

\textbf{Shape Information.} The first source is structural knowledge about the distribution of the uncertainty variable, e.g., symmetry or tail-probability information. We illustrate this with several examples. First, continuing \Cref{ex:lr}, if the decision-maker knows that the noise $\epsilon = Y - (\theta^*)^{\top}X$ is symmetric around zero and exogenous, then all odd residual moments vanish at $\theta^*$, so that $M_{\phi}(\theta;\xi) = X\,\big(Y - \theta^{\top}X\big)^{2\phi + 1}, \forall \phi \in \mathbb{N}^+$ constitutes a collection of correct side information, i.e., $\mu_{M_{\phi}}(\theta^*) = \mathbf{0}$ for every $\phi$. We characterize the corresponding side information in \Cref{coro:linear-symmetric} and will further validate these constructions empirically in \Cref{subsec:lr}, where the side information induced by $\chi^2$-divergence and CVaR-DRO is shape-based. A second example is tail-probability information. In inventory and reliability applications, historical operations data can reveal a stable stockout frequency, yielding $\P(\xi > \tau) = p_0$ for accurately known threshold $\tau$ and probability level $p_0$, giving rise to $M(\theta;\xi) = \mathbf{1}\{\xi > \tau\} - p_0$.

\textbf{Invariant Information.} The second source comprises relationships that remain stable across multiple data sources or environments~\citep{arjovsky2019invariant}, a structure also commonly exploited in transfer learning. Suppose auxiliary datasets drawn from multiple distributions share an invariant component with $\P$, such as common moment restrictions. Our newsvendor example in \Cref{subsec:context-newsvendor} operationalizes exactly this mechanism by using an auxiliary distribution $\Q \neq \P$ that satisfies only the invariance condition $\E_{\Q}[\xi|u] = \E_{\P}[\xi|u]$. This condition is used both in the real-world experiment (\Cref{subsec:context-newsvendor}), and the simulation study (Appendix~\ref{app:newsvendor-simulate}), where the expected demand-feature relationship is assumed to remain invariant across products and shops.

\textbf{A Priori Moment Information.} The third source comprises moment restrictions imposed from external knowledge. Examples include a priori distributional information such as known means or dispersion bounds~\citep{delage2010distributionally} and moment conditions arising from partial observations in semi-supervised learning~\citep{song2024general}. In contextual optimization, a canonical example is the conditional mean $\E[\xi|u_0]$, which yields the construction in~\eqref{eq:side-info-cso}, i.e., centering $\xi$ at the (estimated) conditional mean.

Finally, it is important to distinguish correctness from first-order improvement. The three sources above only guarantee $\mu_M(\theta^*) = \mathbf{0}$. Whether the information leads to first-order improvement is also determined by non-orthogonality, which does not need to be verified analytically in advance. In fact, for any such prespecified $M$, if it is nearly orthogonal to the influence function, the estimated adjustment matrix $\widehat H$ in \Cref{sec:perturb-opt} will be close to zero and the perturbed solution nearly reduces to the EO solution.
\subsection{Generalizations}\label{subsec:generalization}
Our framework can be generalized in several directions. Below, the first generalization follows directly from our analysis; the others are directions for future work.

\paragraph{Nearly Correct Side Information.} Our framework extends to nearly correct side information where the bias of the side information is small. For example, $\mu_M(\theta^*) = o(n^{-1/2})$ in the standard setting (cf.~\Cref{thm:improvement-principle}), or $\mu_n(u_0) = o(n^{\gamma-1})$ in the contextual setting, as illustrated in~\Cref{prop:contextual-improvement}. In Appendix~\ref{app:newsvendor-simulate}, our numerical results show that significant improvement can be achieved under such nearly correct side information. Furthermore, our asymptotic comparison based on expected excess risk can be extended to general performance metrics. We refer to Appendix~\ref{app:generalization-improvement} for details. 

\paragraph{Finite-sample Improvements.} Our asymptotic comparison of $R(\hat\theta_{H,M})$ versus $R(\hat\theta_{EO})$ can be translated into finite-sample guarantees. When $\hat\theta_{H,M}$ achieves a first-order improvement over $\hat\theta_{EO}$, it attains a smaller asymptotic variance under the normal approximation. Consequently, by leveraging tools such as the Berry--Esseen bound~\citep{shao2022berry}, together with uniform control of the stochastic-expansion remainders, this improvement can be extended to the finite-sample regime; see, e.g., \citet{elmachtoub2025dissecting}. Developing such guarantees is an interesting direction for future work.

\paragraph{Improvements under Other Optimization Structures.}
Several extensions beyond twice-differentiable, strictly convex objectives are worth exploring. First, when the cost function is linear in $\theta$, the excess risk $R(\hat\theta_{EO})$ may not be of order $O(n^{-1})$ but may instead depend on problem degeneracy. One possible approach is to consider the entropic-regularized solution~\citep{weed2018explicit}. Second, for regular constrained problems in which $\theta^*$ lies on the boundary of $\Theta$, the influence function admits an explicit characterization~\citep{duchi2021asymptotic}. In this case, $\Pi_{\Theta}\Para{\hat\theta_{EO} + \frac{H}{n}\sum_{i=1}^n M(\hat\theta_{EO};\xi_i)}$ may have complex asymptotic behavior and its exact performance is difficult to characterize. One possible approach is to incorporate the side information as a regularization term in the objective: $\hat\theta_{H,M} \in \argmin_{\theta \in \Theta} \sum_{i=1}^n \big(\ell(\theta;\xi_i) + \|H\,M(\theta;\xi_i)\|_2^2\big)$. We leave these directions for future work.
\section{Numerical Experiments}\label{sec:numeric}
In this section, we present numerical results to validate our previous theoretical findings and demonstrate the superiority of the optimally perturbed (weighted) EO solutions. In \Cref{subsec:lr}, we mainly focus on validating the improvement pattern of existing EO+ solutions derived in \Cref{sec:perturb-stat} in a linear regression problem. In \Cref{subsec:context-newsvendor}, we compare the optimally perturbed EO solution developed in Sections~\ref{sec:perturb-opt} and~\ref{sec:cso} against weighted EO/EO+ solutions in a newsvendor problem. Our optimally perturbed (weighted) EO solution does not need to tune the adjustment size once the solution $\hat\theta_{EO}$ is obtained. In contrast, for existing (weighted) EO+ methods, we report their performance under the adjustment sizes that perform best over the true distribution $\P$ for each problem instance to give them the strongest possible performance. That is, we give these benchmark methods an advantage beyond what the data alone would allow, which makes the strength of our approach even more apparent. For computational tractability, in regularized/DRO problems, when $\lambda < 0$, we use the extrapolated solution $\hat\theta_{\lambda}^{\mathrm{ext}}:=2\hat\theta_{EO}-\hat\theta_{-\lambda}$ rather than the solution of the corresponding formal negative-parameter problem whose outer optimization can be nonconvex, e.g.,~\eqref{eq:reg-method} and~\eqref{eq:chi2-eo}; this extrapolation matches the local perturbation to first order for small $|\lambda|$.
\subsection{Linear Regression}\label{subsec:lr}
We consider the OLS loss $\ell(\theta;\xi) = (\theta^{\top}X - Y)^2$, where $\xi = (X, Y)^{\top}$, $X$ denotes the feature and $Y$ denotes the label. The true data-generating process is $Y = (\theta^*)^{\top}X + \epsilon$ with noise $\epsilon$ following different parametric specifications (Normal, Laplace, $t$-distribution) with details in Appendix~\ref{app:numerical-lr}.
We consider two existing EO+ solutions: $\chi^2$-EO+ and CVaR-EO+ (in Appendix~\ref{app:cvardro}). Their best adjustment sizes $\lambda$ in~\eqref{eq:dro-method} are chosen via the grid search to minimize the expected excess risk:
\begin{enumerate}[leftmargin = *]
\item \textbf{\(\chi^2\)-EO+.}  We search \(\lambda\in (-0.2,0.2]\) with a step size $0.0025$. When $\lambda$ is positive, we solve the optimization in \eqref{eq:chi2-eo}. When $\lambda$ is negative, we compute \(\hat\theta_{\lambda}=2\hat\theta_{EO}-\hat\theta_{-\lambda}\).
\item \textbf{CVaR-EO+.} We search \(\lambda\in(-1,1)\) with a step size \(0.025\). When $\lambda$ is positive, we solve the optimization~\eqref{eq:cvar-objective} in Appendix~\ref{app:cvardro}. When $\lambda$ is negative, we compute \(\hat\theta_{\lambda}=2\hat\theta_{EO}-\hat\theta_{-\lambda}\).
\end{enumerate}

We refer to these two EO+ solutions with the best $\lambda$ as \textbf{Chi2} and \textbf{CVaR}, respectively. In the following, we mainly compare their performance improvements over the EO solution under different noise specifications. Furthermore, to demonstrate the equivalent performance between these EO+ solutions and the corresponding directionally perturbed EO solutions in Definition~\ref{defn:direction-perturb} and \Cref{thm:unification}, we compute the optimally perturbed EO+ solutions given the following two perturbation functions motivated from the $\chi^2$-EO+ and CVaR-EO+, in Corollary~\ref{ex:auxiliary-robust} and Theorem~\ref{thm:cvar-unification} respectively: (i) $M_1(\theta;\xi) = X (Y -\theta^{\top}X)^3$; (ii) $M_{\phi}(\theta;\xi) = (1 - \frac{1}{1-\phi}\cdot \mathbf{1}_{\{\epsilon^2 > q_{\phi}\}}) X \epsilon$, where $q_{\phi}$ is the $\phi$-quantile of $\ell(\theta^*;\xi)$ with $\phi$ selected by minimizing the expected excess risk over $[0, 1)$. We refer to our optimally perturbed EO+ solutions $\hat\theta_{\widehat H, M_1}$ and $\hat\theta_{\widehat H, M_{\widehat\phi}}$ as \textbf{EO-Chi2} and \textbf{EO-CVaR}, respectively, with design and implementation details deferred to Appendix~\ref{app:numerical-lr}. 

\begin{figure*}[!htb]
\centering
\includegraphics[width=0.8\textwidth]{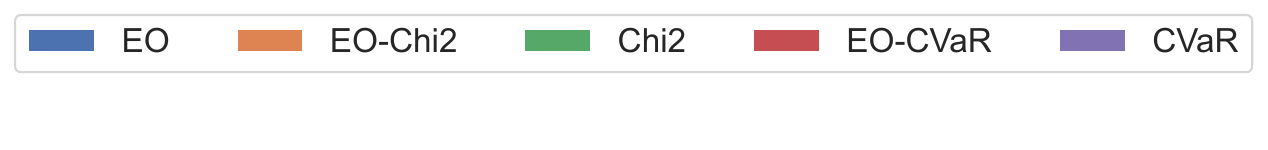}\\
\subfloat[Laplace\label{fig:lr_laplace}]{
    \includegraphics[width=0.32\textwidth]{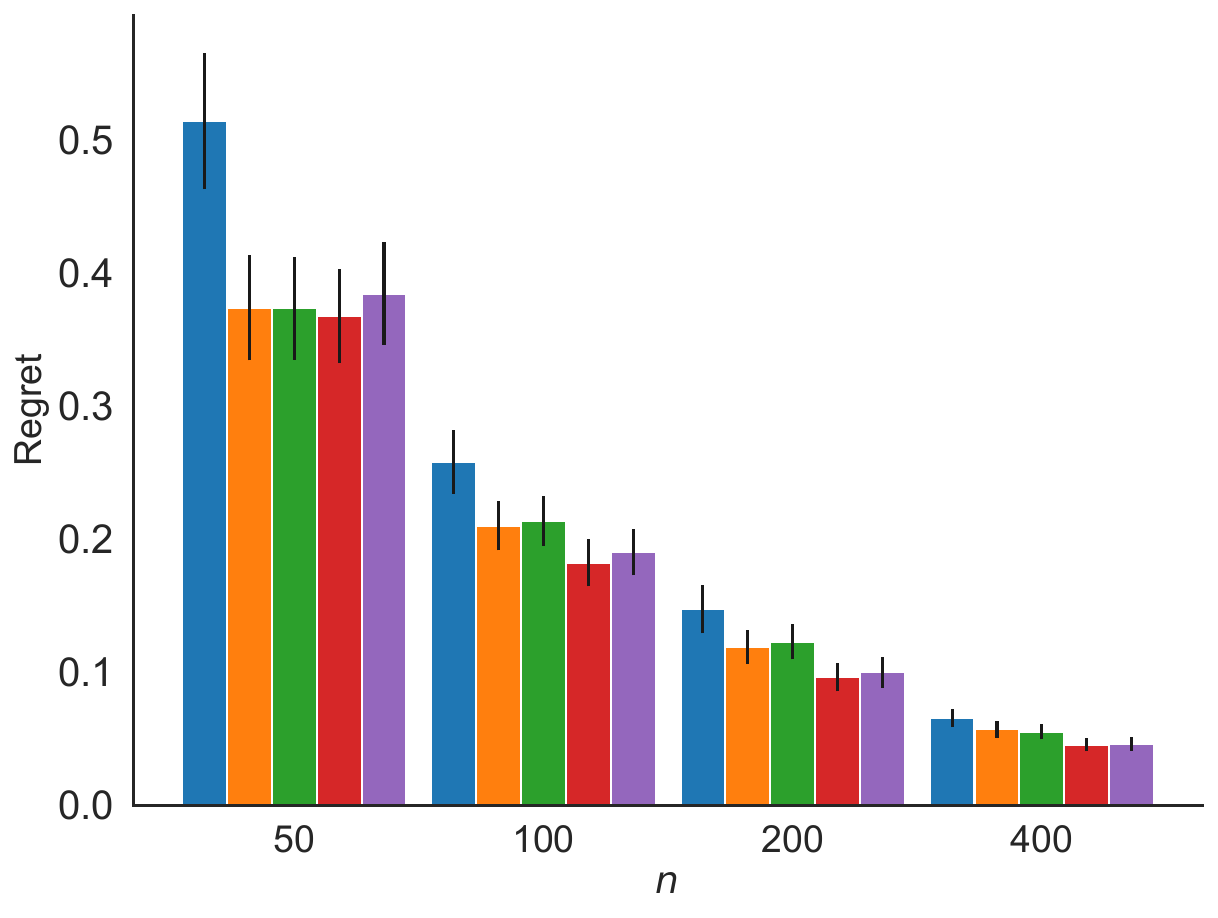}
}
\subfloat[Normal\label{fig:lr_normal}]{
    \includegraphics[width=0.32\textwidth]{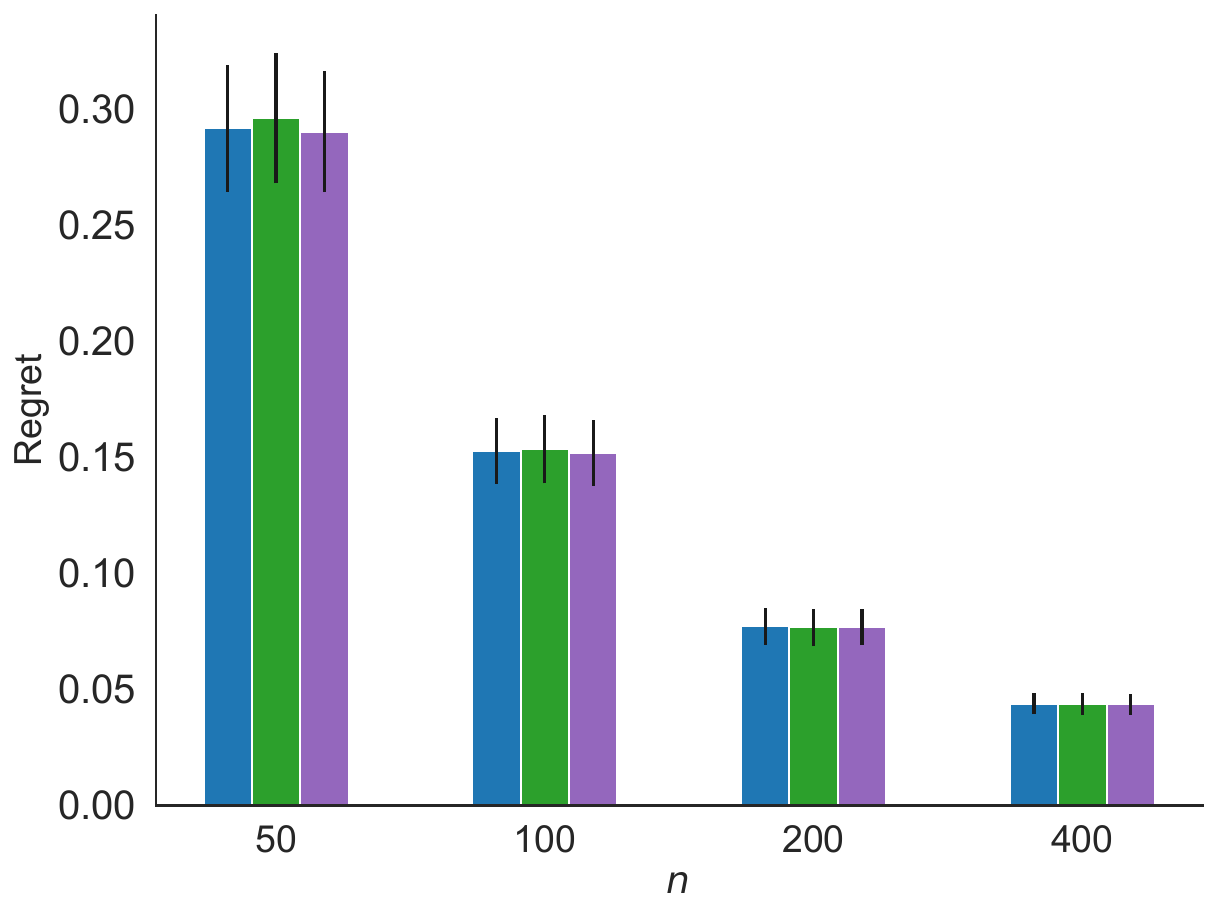}
}
\subfloat[$t$\label{fig:lr_t}]{
    \includegraphics[width=0.32\textwidth]{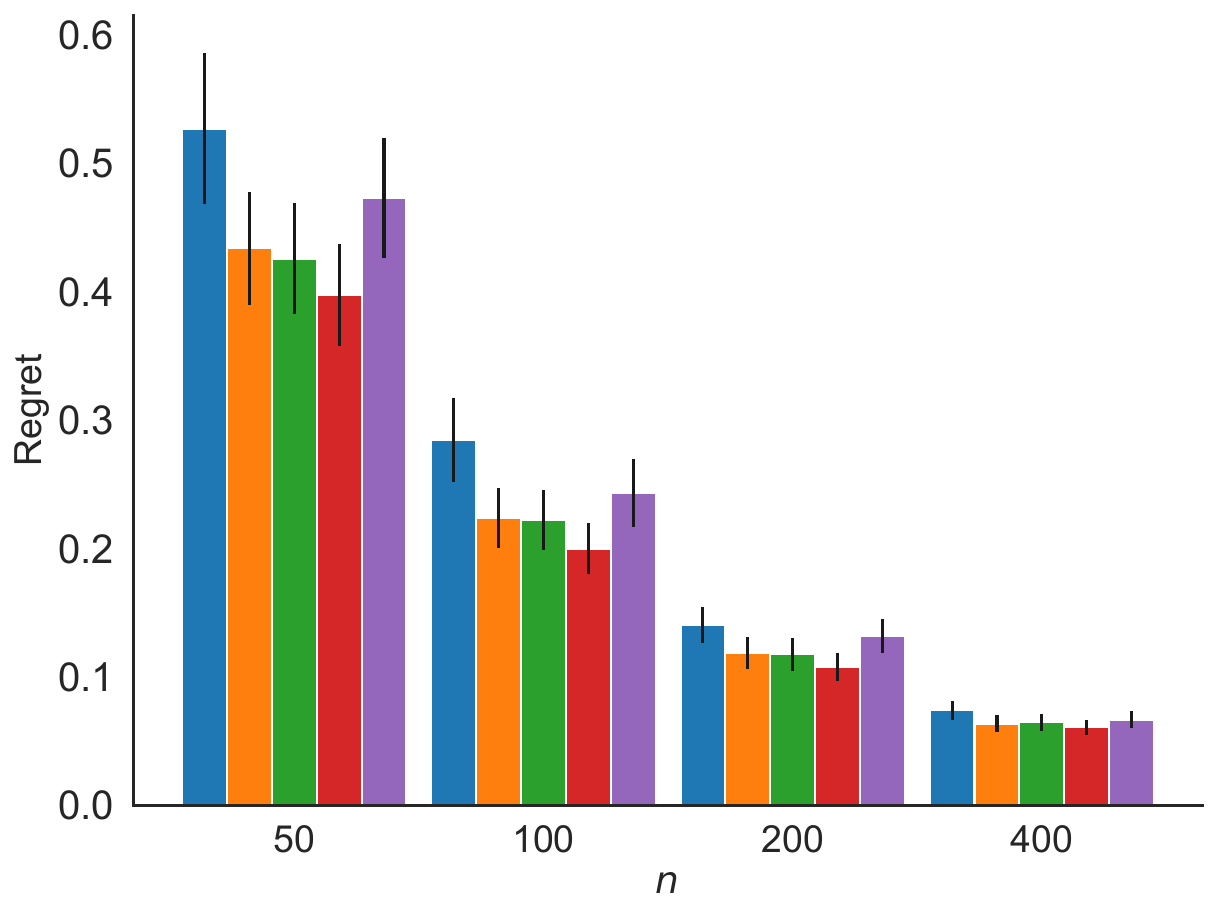}
}
\caption{Expected excess risk comparison of various methods in linear regression under different noise specifications, where the error bars denote the standard errors of the excess risk of each method.}
\label{fig:synthetic_regret_lr}
\end{figure*}

In Figure~\ref{fig:synthetic_regret_lr}, we report the expected excess risk of different solutions across noise specifications. Under the Laplace and $t$-distributions, both the $\chi^2$-EO+ and CVaR-EO+ solutions yield substantial risk reduction compared with the EO solution. Under the normal distribution, neither EO+ solution yields significant improvement. The pattern is consistent with \Cref{coro:linear-symmetric}: the improvement of $\chi^2$-EO+ over EO occurs precisely in the cases for which \Cref{coro:linear-symmetric} predicts a first-order gain under the OLS loss.
In fact, the first-order improvement result for $\chi^2$-EO+ under OLS in \Cref{coro:linear-symmetric} holds for CVaR-EO+ as well. In \Cref{fig:synthetic_regret_size}, we further report the best adjustment sizes $\lambda$ for these two EO+ methods under different noise specifications and compare them with the theoretically optimal $\lambda$ from \Cref{coro:linear-symmetric}.  In \Cref{fig:synthetic_regret_size}$(b)(e)$, under the normal distribution, the best $\lambda$ converges to 0 as $n$ increases, which is consistent with the optimal order $\Theta(1/n)$ in this case. Under other noise specifications, the best $\lambda$ remains bounded away from 0 despite fluctuations, which is consistent with the optimal order $\Theta(1)$ in these cases. These results offer practical guidance for hyperparameter tuning in an EO+ method when one wants to find the $\lambda$ with the smallest expected excess risk. Specifically, one can check whether the condition in \Cref{coro:eo-plus-improvement} holds for the particular cost function and information about the distribution: if it does, a large $\lambda$ can be retained and tuned; otherwise, $\lambda$ should be shrunk with the sample size $n$.

We also comment that \textbf{EO-Chi2} and \textbf{EO-CVaR} perform similarly to the corresponding EO+ solutions under the best $\lambda$, as observed in \Cref{fig:synthetic_regret_lr}. This indicates that our optimally perturbed EO solution exploits structural knowledge of the noise distribution and closely matches their performance. However, computing the optimally perturbed EO solution only requires solving EO, which is computationally more efficient than solving the DRO counterpart.


\begin{figure*}
\centering
\subfloat[Laplace-Chi2\label{fig:lr_laplace_radius_chi2}]{
    \includegraphics[width=0.32\textwidth]{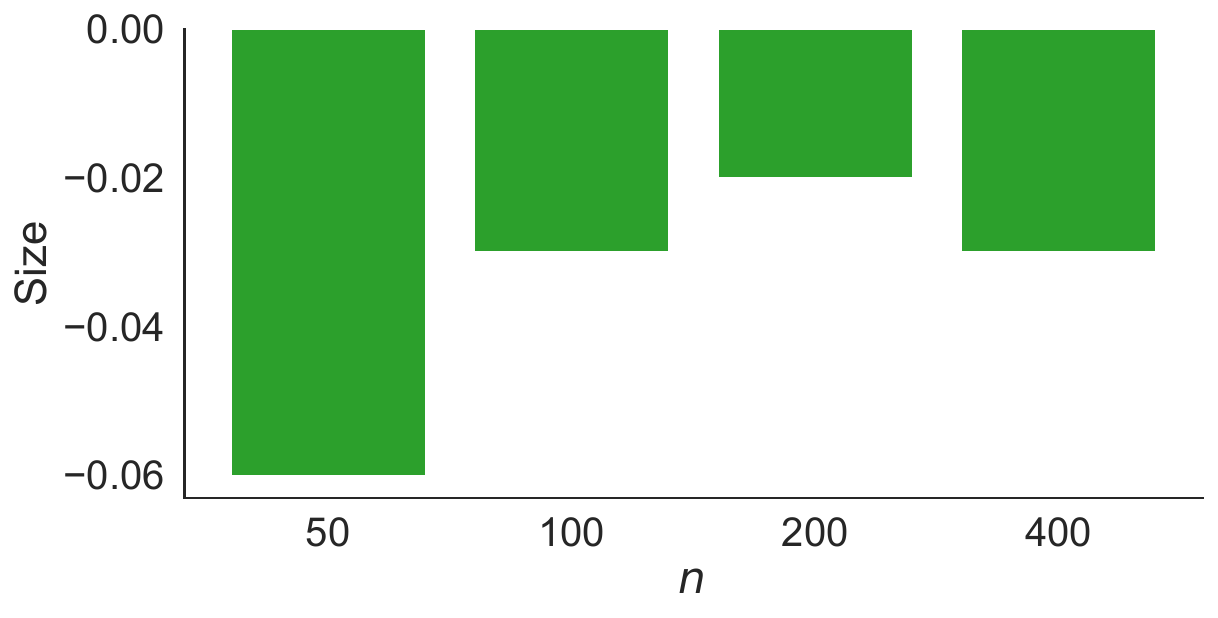}
}
\subfloat[Normal-Chi2\label{fig:lr_normal_radius_chi2}]{
    \includegraphics[width=0.32\textwidth]{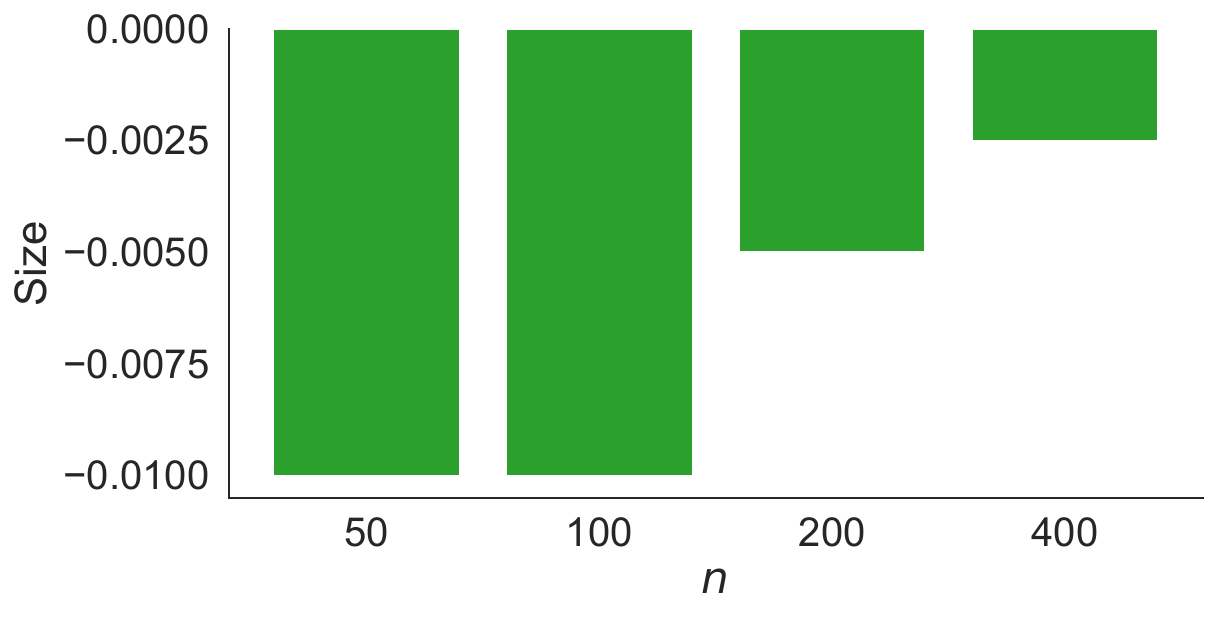}
}
\subfloat[$t$-Chi2\label{fig:lr_t_radius_chi2}]{
    \includegraphics[width=0.32\textwidth]{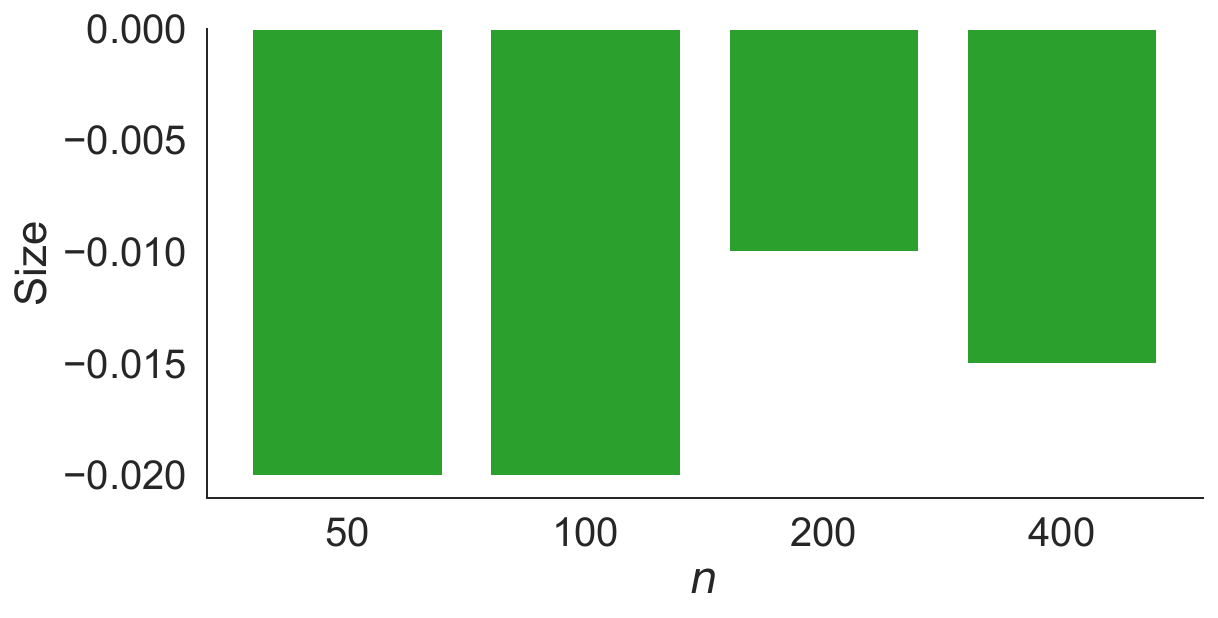}
}\\
\subfloat[Laplace-CVaR\label{fig:lr_laplace_radius_cvar}]{
    \includegraphics[width=0.32\textwidth]{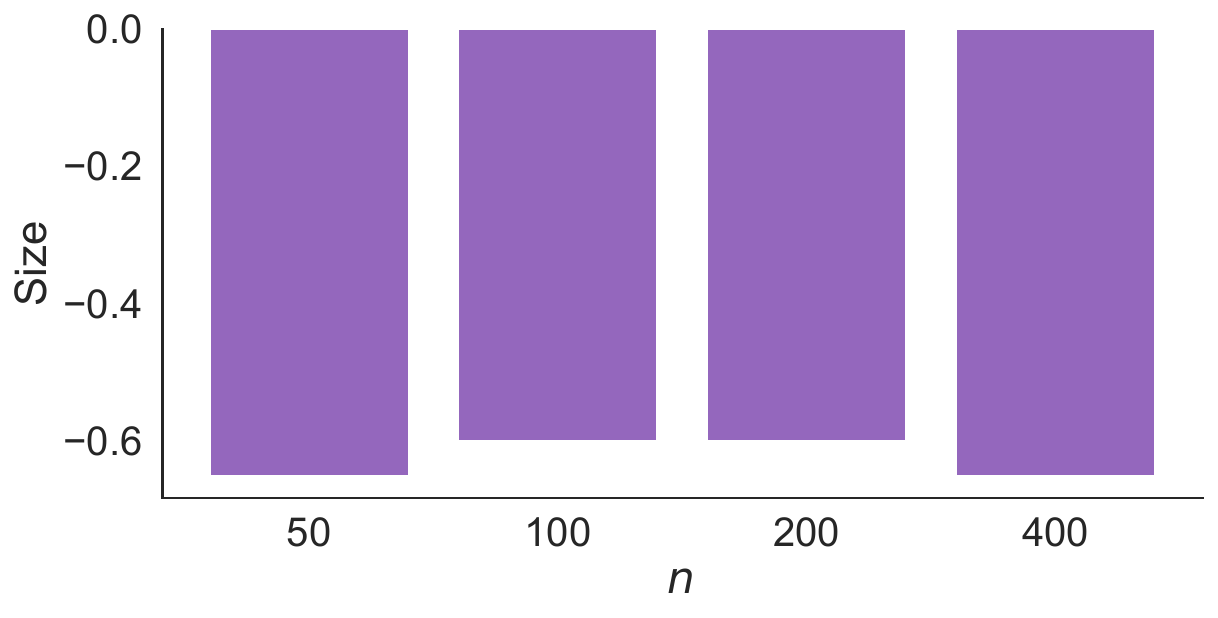}
}
\subfloat[Normal-CVaR\label{fig:lr_normal_radius_cvar}]{
    \includegraphics[width=0.32\textwidth]{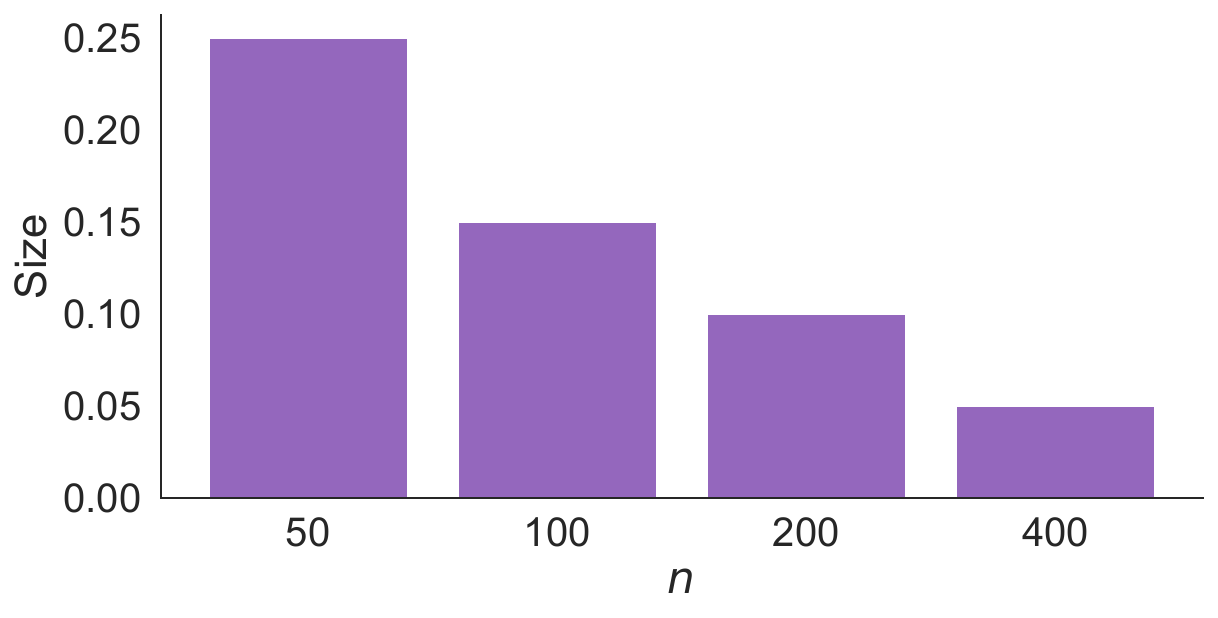}
}
\subfloat[$t$-CVaR\label{fig:lr_t_radius_cvar}]{
    \includegraphics[width=0.32\textwidth]{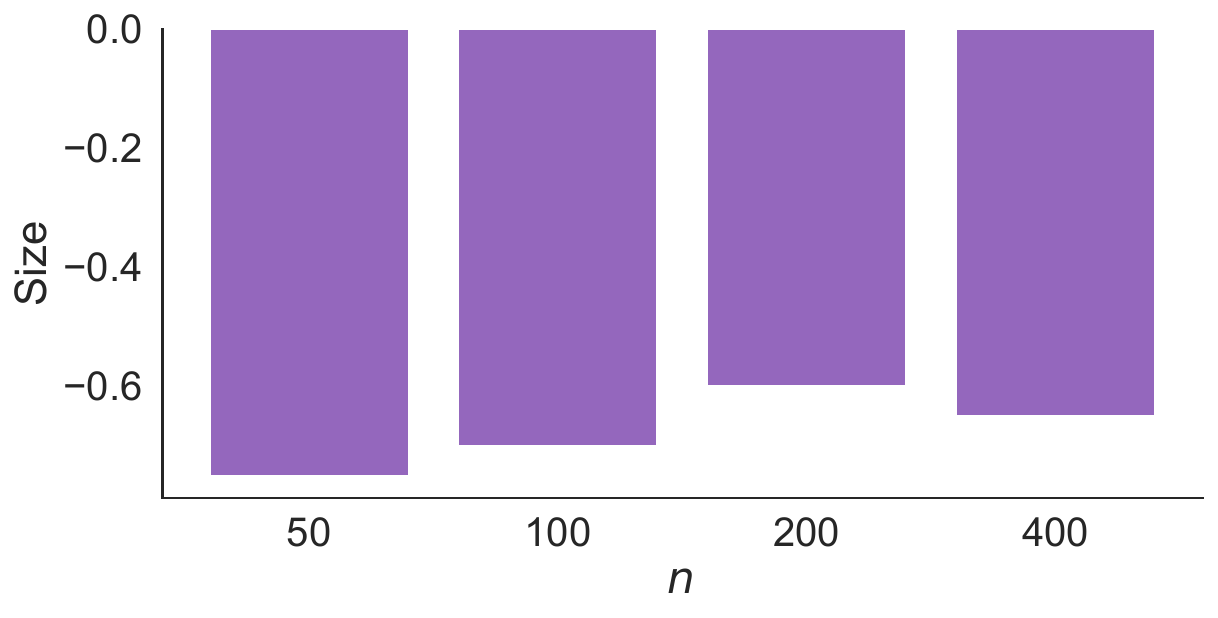}
}
\caption{The size of the selected $\lambda$ with the smallest out-of-sample cost for EO+ methods under different noise specifications. Each subplot corresponds to a pairing of a residual distribution (Laplace, Normal, $t$) with an EO+ formulation (Chi2 and CVaR).}
\label{fig:synthetic_regret_size}
\end{figure*}

\subsection{Contextual Newsvendor Problem}\label{subsec:context-newsvendor}
Consider the newsvendor loss in \Cref{ex:newsvendor} under contextual features. We evaluate the performance of different methods using three real-world datasets (\texttt{Bakery}, \texttt{Restaurant}, \texttt{SID}) following the same preprocessing pipeline as in \cite{buttler2022meta}. We set $c = 1$ and $p = 5$. To obtain $\hat\theta_{EO}(u)$, we use three nonparametric weighting methods in~\eqref{eq:np-optimizer0}, namely k-Nearest-Neighbor (kNNW), Decision Tree (DTW), and Random Forest (RFW). We provide details of the nonparametric estimates in \Cref{app:perturb-opt-cso}. 

In our directionally perturbed weighted EO solution~\eqref{eq:perturb-weight-eo}, we set the perturbation function according to~\eqref{eq:side-info-cso} by incorporating the conditional mean $\hat\E[\xi|u]$. We describe the data summary, covariates and approaches used to construct the conditional mean $\hat\E[\xi|u]$ from the three datasets in \Cref{tab:dataset}. Here, we refer to the optimally perturbed weighted EO solution as \textbf{EO-M} and defer its detailed construction procedure to Appendix~\ref{app:procedure}. Appendix~\ref{app:newsvendor-simulate} reports a simulation study whose results are consistent with the main findings.  
\begin{table}[!htb]
    \centering
    \caption{Overview of newsvendor datasets}
    \label{tab:dataset}
    \resizebox{0.95\textwidth}{!}{\begin{tabular}{c|cccc}
    \toprule
       Dataset  & Products $\times$ Shops & Sample Size per Product/Shop $(n)$ & Feature & Side Information (Predicted $\hat\E[\xi|u]$)\\
       \midrule
       \texttt{Restaurant}  & 7 $\times 1$ & 765 & \makecell{calendric, lag, weather\\holiday, promotions} & \makecell{a trained demand predictor using standard features\\and concurrent demand for other products}\\
        \texttt{Bakery} & 3 $\times$ 35 &1215 & \makecell{calendric, lag,\\ weather, holidays} & \makecell{a trained demand predictor using features from \\all products and shops via label encoding}\\
        \texttt{SID} &  50 $\times 10$ & 1826 & calendric, lag & \makecell{a trained demand predictor using features from \\all products and shops via label encoding}\\
        \bottomrule
    \end{tabular}}
\end{table}

We compare the \textbf{EO-M} solution against standard weighted EO+ solutions including (i) $\chi^2$-EO+ in~\eqref{eq:chi2-eo}; (ii) objective regularization (``Obj Reg'') in~\eqref{eq:reg-method}; (iii) \textbf{EO-M (fixed)}: the directionally perturbed weighted EO solution while fixing the estimated value $\widehat H = -1$. Note that this is the optimal $H$ when the demand noise $\xi - \E[\xi|u]$ is normally distributed under newsvendor loss, as shown in Appendix~\ref{app:newsvendor-simulate}; (iv) \textbf{EO-feature}: the weighted EO solution while augmenting $u$ with concurrent demands of other products in the \texttt{Restaurant} dataset. We report the performance of different methods in \Cref{tab:cost-reduce-real} across datasets and weight configurations (kNNW, DTW, RFW). When comparing costs, we report the maximum seed-specific p-value. Thus, a small reported value indicates statistical significance for every seed, consistent with our goal of uniform improvement. We have the following findings: 

\begin{table}[!htb]
\centering
\caption{Baseline out-of-sample objective values of the weighted EO solutions and cost reductions achieved by the corresponding weighted EO+ solutions. For each EO+ method, cost reduction is defined as
$\Delta:=\operatorname{Cost}(\mathrm{EO})-\operatorname{Cost}(\mathrm{EO+})$,
so that positive values indicate better performance relative to EO. The ``-feature'' method is evaluated only on the \texttt{Restaurant} dataset. Values are averaged over random seeds (40 for \texttt{Restaurant}, 10 for \texttt{Bakery}, and 5 for \texttt{SID}), with standard deviations reported in parentheses. }
\label{tab:cost-reduce-real}
\resizebox{\textwidth}{!}{
\begin{tabular}{llc|ccccc}
\toprule
& & \multicolumn{1}{c|}{Baseline} & \multicolumn{5}{c}{Cost Reduction Relative to EO ($\uparrow$)} \\
\cmidrule(lr){3-3} \cmidrule(lr){4-8}
Dataset & Estimator & EO Objective & EO-$M$ & EO-$M$ (fixed) & EO-feature & $\chi^2$-EO+ & Obj Reg \\
\midrule

\multirow{3}{*}{\texttt{Restaurant}}
& KNNW & $-61.030{\scriptsize(5.980)}$ & $1.486{\scriptsize(0.177)}^{***}$ & $1.528{\scriptsize(0.193)}^{**}$ & $-0.026{\scriptsize(0.072)}$ & $0.171{\scriptsize(0.030)}^{*}$ & $0.098{\scriptsize(0.021)}$ \\
& DTW & $-59.698{\scriptsize(5.961)}$ & $2.691{\scriptsize(0.228)}^{***}$ & $2.722{\scriptsize(0.239)}^{***}$ & $0.216{\scriptsize(0.144)}$ & $0.509{\scriptsize(0.072)}^{**}$ & $0.219{\scriptsize(0.042)}^{*}$ \\
& RFW & $-61.959{\scriptsize(6.078)}$ & $0.328{\scriptsize(0.112)}$ & $0.001{\scriptsize(0.121)}$ & $1.112{\scriptsize(0.122)}^{***}$ & $0.075{\scriptsize(0.022)}$ & $0.091{\scriptsize(0.024)}$ \\

\midrule

\multirow{3}{*}{\texttt{Bakery}}
& KNNW & $-336.466{\scriptsize(124.730)}$ & $25.965{\scriptsize(10.235)}^{***}$ & $28.350{\scriptsize(11.307)}^{***}$ & -- & $0.511{\scriptsize(0.400)}^{***}$ & $0.194{\scriptsize(0.255)}^{***}$ \\
& DTW & $-356.857{\scriptsize(132.771)}$ & $16.204{\scriptsize(5.212)}^{***}$ & $19.949{\scriptsize(6.480)}^{***}$ & -- & $1.412{\scriptsize(0.758)}^{***}$ & $0.242{\scriptsize(0.248)}^{**}$ \\
& RFW & $-361.602{\scriptsize(133.979)}$ & $5.873{\scriptsize(2.264)}^{***}$ & $-3.024{\scriptsize(6.430)}$ & -- & $0.231{\scriptsize(0.399)}^{*}$ & $0.744{\scriptsize(0.485)}^{***}$ \\

\midrule

\multirow{3}{*}{\texttt{SID}}
& KNNW & $-193.143{\scriptsize(28.665)}$ & $1.579{\scriptsize(0.147)}^{***}$ & $1.394{\scriptsize(0.173)}^{***}$ & -- & $0.021{\scriptsize(0.034)}^{***}$ & $0.009{\scriptsize(0.015)}^{**}$ \\
& DTW & $-194.480{\scriptsize(91.264)}$ & $1.496{\scriptsize(0.586)}^{***}$ & $1.792{\scriptsize(0.699)}^{***}$ & -- & $0.346{\scriptsize(0.276)}^{***}$ & $0.061{\scriptsize(0.083)}^{***}$ \\
& RFW & $-195.057{\scriptsize(91.420)}$ & $1.427{\scriptsize(0.575)}^{***}$ & $0.880{\scriptsize(0.659)}^{**}$ & -- & $0.055{\scriptsize(0.117)}^{***}$ & $0.065{\scriptsize(0.106)}^{***}$ \\

\bottomrule
\end{tabular}
}

\vspace{0.1cm}
\footnotesize{\emph{Note.} For each seed, a one-sided test is conducted across all product--shop pairs with $H_0: \Delta = 0, H_1: \Delta > 0$, yielding one p-value per seed. Superscripts $^{***}$, $^{**}$, and $^{*}$ indicate that the maximal seed-specific p-value is below 1\%, 5\%, and 10\%, respectively.}
\end{table}
First, across different weight estimators and datasets, incorporating side information leads to substantial performance improvements. Across nine dataset-estimator pair combinations, the \textbf{EO-M} solution achieves significant cost reductions compared to the corresponding weighted \textbf{EO} solution. In contrast, conventional weighted EO+ solutions ($\chi^2$-EO+, objective regularization) do not exhibit comparable gains.

More importantly, the performance gains from incorporating correct side information can exceed the inherent performance differences across weight estimators (e.g., between weighted EO solutions under KNNW and RFW). For instance, in the \texttt{Bakery} dataset, the baseline cost gap of weighted EO methods between the KNNW (or DTW) and RFW estimators is approximately 21 (or 6, respectively) units. In contrast, incorporating side information within KNNW (or DTW) leads to improvements of about 26 (or 16, respectively) units, thereby surpassing the gap attributable to the choice of weight estimator.

Finally, the \textbf{EO-M (fixed)} solution achieves strong performance as well. This implies that even a simple fixed adjustment magnitude ($\widehat H = -1$) is effective in practice when the adjustment matrix is difficult to compute.

\section*{Acknowledgement}
We gratefully acknowledge support from the InnoHK initiative, the Government of the HKSAR, Laboratory for AI-Powered Financial Technologies, and the Columbia Dream Sports AI Innovation Award.

\bibliography{references}
\bibliographystyle{plainnat}

\ECSwitch

\ECRUNTITLE{First-order Improvement in DDO}
\ECHead{Appendices}
\section{Proofs and Additional Details in Sections~\ref{sec:preliminary}--\ref{sec:perturb-stat}}\label{app:unification}
\subsection{Helper Lemmas}\label{app:tech-lemma}
We introduce the following standard asymptotic lemmas from the literature, which will be used in the proof of \Cref{prop:eo-conv} and \Cref{thm:unification}.
\begin{lemma}[Theorem 5.23 in \cite{van2000asymptotic}; Theorem 1 in \cite{nishiyama2010moment}]\label{lemma:m-estimator0}
Let $\xi_1,\ldots,\xi_n$ be i.i.d.\ from $\P$, let $\Theta\subseteq\R^d$ be open, and define
$M(\theta):=\E_{\P}[m_{\theta}(\xi)]$ and
$M_n(\theta):=n^{-1}\sum_{i=1}^n m_{\theta}(\xi_i)$.
Suppose that: 
\begin{enumerate}[(i)]
\item $\theta_0\in\Theta$ is the unique minimizer of $M$ and
$\hat\theta_n\overset{p}{\to}\theta_0$; 
\item $\theta\mapsto m_{\theta}(\xi)$
is differentiable at $\theta_0$ for $\P$-almost every $\xi$, with
$\E_{\P}[\nabla_{\theta}m_{\theta_0}(\xi)]=0$ and
$\E_{\P}[\|\nabla_{\theta}m_{\theta_0}(\xi)\|_2^2]<\infty$; 
\item On a
neighborhood of $\theta_0$, $|m_{\theta_1}(\xi)-m_{\theta_2}(\xi)| \leq K(\xi)\|\theta_1-\theta_2\|_2$ such that $\E_{\P}[K^3(\xi)]<\infty$;
\item $M(\theta)=M(\theta_0)
+\frac{1}{2}(\theta-\theta_0)^{\top}V_{\theta_0}(\theta-\theta_0)
+o(\|\theta-\theta_0\|_2^2)$, where $V_{\theta_0}:=\nabla_{\theta}^2M(\theta_0)$ is positive definite;
\item $\hat\theta_n$ is an approximate empirical minimizer satisfying $M_n(\hat\theta_n)\leq\inf_{\theta\in\Theta}M_n(\theta)+o_p(n^{-1})$.
\end{enumerate}
Then
\[
\sqrt n(\hat\theta_n-\theta_0)
=-V_{\theta_0}^{-1}\frac{1}{\sqrt n}
\sum_{i=1}^n\nabla_{\theta}m_{\theta_0}(\xi_i)+o_p(1),
\]
and hence $\sqrt n(\hat\theta_n-\theta_0)\Rightarrow
\mathcal N(0,V_{\theta_0}^{-1}\Omega_{\theta_0}V_{\theta_0}^{-1})$, where $\Omega_{\theta_0}:=
\E_{\P}\!\left[
\nabla_{\theta}m_{\theta_0}(\xi)
\nabla_{\theta}m_{\theta_0}(\xi)^{\top}
\right]$ and $\lim_{n \to \infty}\E_{\Dscr_n}\!\left[
n(\hat\theta_n-\theta_0)(\hat\theta_n-\theta_0)^{\top}
\right] = V_{\theta_0}^{-1}\Omega_{\theta_0}V_{\theta_0}^{-1}$.
\end{lemma}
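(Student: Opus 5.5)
The plan is to follow the classical route of Theorem 5.23 in \cite{van2000asymptotic} for the linear representation, and then upgrade convergence in distribution to convergence of second moments through a uniform integrability argument in the spirit of \cite{nishiyama2010moment}. Throughout, write $\mathbb G_n:=\sqrt n(M_n-M)$ for the empirical process, $h:=\sqrt n(\theta-\theta_0)$, and $\Delta_n:=n^{-1/2}\sum_{i=1}^n\nabla_\theta m_{\theta_0}(\xi_i)$. By (ii) and the central limit theorem, $\Delta_n=O_p(1)$.

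\emph{Step 1 (rate).} First I would establish $\sqrt n$-consistency. By (iii), on the neighborhood the class $\{m_\theta-m_{\theta_0}:\|\theta-\theta_0\|\le\delta\}$ has envelope $\delta K(\xi)$. A bracketing maximal inequality for Lipschitz-in-parameter classes then gives
\[
\E\sup_{\|\theta-\theta_0\|\le\delta}|\mathbb G_n(m_\theta-m_{\theta_0})|\lesssim\delta\,\|K\|_{L^2}.
\]
Condition (iv) gives $M(\theta)-M(\theta_0)\gtrsim\|\theta-\theta_0\|^2$ locally. Combining these with consistency (i) and the near-minimizing property (v), the rate theorem (Theorem 5.52 of \cite{van2000asymptotic}) applies with $\phi_n(\delta)=\delta$ and yields $\|\hat\theta_n-\theta_0\|=O_p(n^{-1/2})$.

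\emph{Step 2 (linearization).} Next I would show stochastic differentiability: uniformly over $\|h\|\le C$,
\[
\mathbb G_n\big[\sqrt n(m_{\theta_0+h/\sqrt n}-m_{\theta_0})-h^\top\nabla_\theta m_{\theta_0}\big]=o_p(1).
\]
This follows because the bracketed functions converge to $0$ in $L^2(\P)$: pointwise differentiability from (ii) together with the domination by $\|h\|K$ in (iii) allows dominated convergence, and the same Lipschitz bracketing controls the supremum over $h$. Together with (iv), this gives the local expansion
\[
nM_n(\theta_0+h/\sqrt n)-nM_n(\theta_0)=\tfrac12 h^\top V_{\theta_0}h+h^\top\Delta_n+o_p(1),
\]
uniformly on compacts. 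Let $\hat h_n:=\sqrt n(\hat\theta_n-\theta_0)$, which is tight by Step 1, and let $\tilde h_n:=-V_{\theta_0}^{-1}\Delta_n$ be the minimizer of the quadratic. Evaluating the expansion at both points and using (v), the difference of the quadratics equals $\tfrac12(\hat h_n-\tilde h_n)^\top V_{\theta_0}(\hat h_n-\tilde h_n)$ and is at most $o_p(1)$. Positive definiteness of $V_{\theta_0}$ then gives $\hat h_n=\tilde h_n+o_p(1)$, which is the stated representation. The limit $\mathcal N(0,V_{\theta_0}^{-1}\Omega_{\theta_0}V_{\theta_0}^{-1})$ follows from the CLT and Slutsky.

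\emph{Step 3 (moment convergence; main obstacle).} Given the distributional limit, it suffices to show $\sup_n\E\|\hat h_n\|^{2+\varepsilon}<\infty$ for some $\varepsilon>0$, since that gives uniform integrability of $\hat h_n\hat h_n^\top$. I would use a peeling argument over the shells $S_j=\{2^j\le\|h\|<2^{j+1}\}$.
\begin{itemize}
\item On $S_j$, near-optimality of $\hat\theta_n$ forces $\sup_{S_j}|\mathbb G_n(m_\theta-m_{\theta_0})|\sqrt n\gtrsim 2^{2j}$, because the quadratic growth makes $n(M(\theta)-M(\theta_0))\gtrsim 2^{2j}$ there.
\item A moment maximal inequality of order $3$ (Rosenthal/Pisier-type bounds for the Lipschitz class, using $\E K^3<\infty$) should give $\E\sup_{S_j}|\sqrt n\,\mathbb G_n(\cdot)|^3\lesssim 2^{3j}$.
\item Markov's inequality then bounds the shell probability by $2^{-3j}$, so summing over $j$ gives $P(\|\hat h_n\|>t)\lesssim t^{-3}$, i.e.\ a uniform $L^{2+\varepsilon}$ bound for any $\varepsilon<1$.
\end{itemize}
The delicate part is the region outside the neighborhood where (iii) and (iv) hold. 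There I would use the well-separatedness of the unique minimizer, which gives $\inf_{\|\theta-\theta_0\|\ge\delta}M(\theta)-M(\theta_0)>0$, to bound $P(\|\hat\theta_n-\theta_0\|\ge\delta)$ polynomially fast in $n$, relying on the global Lipschitz bound and the third moment of the Lipschitz constant as in \cite{nishiyama2010moment}. If direct control fails, I would simply invoke Theorem 1 of \cite{nishiyama2010moment} for this tail step, since its hypotheses are exactly (i)--(v).
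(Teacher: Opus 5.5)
Your Steps 1--2 are sound. They are exactly the proof of Theorem 5.23 in \cite{van2000asymptotic}: the rate from Theorem 5.52 with $\phi_n(\delta)=\delta$, then stochastic differentiability and comparison of the two local quadratics. That theorem is all the paper uses for the linear representation. For the second-moment claim, the paper, like your fallback, simply invokes Theorem 1 of \cite{nishiyama2010moment}.

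The gap is in Step 3, and it cannot be closed from (i)--(v).
\begin{itemize}
\item Your shell bound on $S_j$ needs the quadratic growth $M(\theta)-M(\theta_0)\gtrsim\|\theta-\theta_0\|^2$ and the Lipschitz envelope $2^{j+1}K/\sqrt n$ on all of $S_j$. But (iii)--(iv) are only local, so they cover only shells with $2^j\lesssim\delta\sqrt n$.
\item It also needs $M_n(\hat\theta_n)\le M_n(\theta_0)$, or at least a deterministic $O(n^{-1})$ slack. Condition (v) gives this only up to a \emph{random} $o_p(n^{-1})$ slack. So the shell inequality holds only off an event whose probability vanishes at no specified rate, and the Markov bounds cannot be summed into a tail estimate.
\item Your patch for the far region uses well-separatedness and a global Lipschitz bound, and neither is among the hypotheses. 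Uniqueness of the minimizer over an open, possibly unbounded $\Theta$ does not give $\inf_{\|\theta-\theta_0\|\ge\delta}M(\theta)>M(\theta_0)$, and (iii) is local.
\end{itemize}

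In fact the moment conclusion is false under (i)--(v) as written. Take $d=1$, $\Theta=\R$, $m_\theta(\xi)=(\theta-\xi)^2$ with $\xi\sim\mathcal N(0,1)$, and $\hat\theta_n=\bar\xi_n+5\cdot\mathbf 1_{\{\xi_1>q_n\}}$ with $\P(\xi_1>q_n)=n^{-1/2}$.
\begin{itemize}
\item Conditions (i)--(v) all hold. The slack is $25\cdot\mathbf 1_{\{\xi_1>q_n\}}=o_p(n^{-1})$, and (iii) holds on $(-1,1)$ with $K=2+2|\xi|$.
\item The linear representation also holds.
\item On $\{\xi_1>q_n\}$ one has $\hat\theta_n\ge 4$ with conditional probability tending to one. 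Hence $\E[n\hat\theta_n^2]\gtrsim n\cdot n^{-1/2}\to\infty$, instead of converging to $V_{\theta_0}^{-1}\Omega_{\theta_0}V_{\theta_0}^{-1}=1$.
\end{itemize}
So your fallback claim that Nishiyama's hypotheses are ``exactly (i)--(v)'' cannot be right. Any moment-convergence result must assume more, for example:
\begin{itemize}
\item a global Lipschitz bound with $\E K^3<\infty$;
\item global quadratic separation, or a bounded $\Theta$ plus well-separatedness;
\item a minimization property without random slack.
\end{itemize}
Under these additions your Step 3 does go through. The $L^3$ maximal inequality gives $\P(\hat h_n\in S_j)\lesssim 2^{-3j}$, hence $\P(\|\hat h_n\|>t)\lesssim t^{-3}$ and uniform integrability of $\|\hat h_n\|^2$. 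On a bounded $\Theta$, the same $L^3$ bound gives $\P(\|\hat\theta_n-\theta_0\|\ge\delta)=O(n^{-3/2})=o(n^{-1})$, which is exactly why a third moment is needed. The paper's one-line appeal to \cite{nishiyama2010moment} has the same issue.
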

Here, the moment convergence holds because $K(\xi) \in L_3(\P)$ and Theorem 1 in \cite{nishiyama2010moment} applies.


\begin{lemma}[Theorem 5.31 in \cite{van2000asymptotic}]\label{lemma:m-estimator0-aug}
Let $\xi_1,\ldots,\xi_n$ be i.i.d.\ from $\P$, let
$\Theta\subseteq\R^d$ and $\Lambda\subseteq\R^m$ be open, and define $\Psi(\theta,\lambda):=\E_{\P}[\psi_{\theta,\lambda}(\xi)]$, $\Psi_n(\theta,\lambda):=\frac{1}{n}\sum_{i=1}^n\psi_{\theta,\lambda}(\xi_i)$, where $\psi_{\theta,\lambda}:\Xi\to\R^d$. Fix
$(\theta^*,\lambda_0)\in\Theta\times\Lambda$, and suppose that:
\begin{enumerate}[(i)]
    \item for some $\delta>0$, the class $\big\{\psi_{\theta,\lambda}:
    \|\theta-\theta^*\|_2\leq\delta,\ 
    \|\lambda-\lambda_0\|_2\leq\delta\big\}$ is $\P$-Donsker, and $\E_{\P}[\|\psi_{\theta,\lambda}(\xi)
    -\psi_{\theta^*,\lambda_0}(\xi)\|_2^2]\to0$ as
    $(\theta,\lambda)\to(\theta^*,\lambda_0)$;
    \item $\Psi(\theta^*,\lambda_0)=0$;
    \item for $\lambda$ in a neighborhood of $\lambda_0$,
    $\Psi(\theta,\lambda)
    =\Psi(\theta^*,\lambda)
    +V_{\theta^*,\lambda}(\theta-\theta^*)
    +o(\|\theta-\theta^*\|_2)
    \quad\text{as }\theta\to\theta^*$, where $V_{\theta^*,\lambda}\to V_{\theta^*,\lambda_0}$ as
    $\lambda\to\lambda_0$ and the matrix
    $V_{\theta^*,\lambda_0}$ is nonsingular;
    \item the possibly random sequences
    $(\hat\theta_n,\lambda_n)\in\Theta\times\Lambda$ satisfy
    $(\hat\theta_n,\lambda_n)\overset{p}{\to}
    (\theta^*,\lambda_0)$ and $\sqrt n\,\Psi_n(\hat\theta_n,\lambda_n)=o_p(1)$.
\end{enumerate}
Then
\[
\hat\theta_n-\theta^*
=-V_{\theta^*,\lambda_0}^{-1}\Psi(\theta^*,\lambda_n)
-\frac{1}{n}V_{\theta^*,\lambda_0}^{-1}
\sum_{i=1}^n\psi_{\theta^*,\lambda_0}(\xi_i)
+o_p\!\left(
\|\Psi(\theta^*,\lambda_n)\|_2\vee n^{-1/2}
\right).
\]
\end{lemma}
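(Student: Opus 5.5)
The plan follows the standard Z-estimator argument: stochastic equicontinuity from the Donsker condition, followed by a linearization of the population map $\Psi$. Write $\mathbb{G}_n f := \sqrt n\,(\Psi_n - \Psi)$ evaluated through $f$, i.e.\ $\mathbb{G}_n\psi_{\theta,\lambda} = \sqrt n\,(\Psi_n(\theta,\lambda) - \Psi(\theta,\lambda))$.

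\emph{Step 1: equicontinuity.} By (i), the class $\{\psi_{\theta,\lambda}\}$ near $(\theta^*,\lambda_0)$ is $\P$-Donsker and is $L_2(\P)$-continuous at $(\theta^*,\lambda_0)$. By (iv), $(\hat\theta_n,\lambda_n)\overset{p}{\to}(\theta^*,\lambda_0)$. The asymptotic equicontinuity of the empirical process (Lemma 19.24 in \cite{van2000asymptotic}) then yields
\begin{equation*}
\mathbb{G}_n\psi_{\hat\theta_n,\lambda_n} - \mathbb{G}_n\psi_{\theta^*,\lambda_0} = o_p(1).
\end{equation*}
Combining this with $\sqrt n\,\Psi_n(\hat\theta_n,\lambda_n)=o_p(1)$ from (iv), and with $\Psi(\theta^*,\lambda_0)=0$ from (ii) (so that $\mathbb{G}_n\psi_{\theta^*,\lambda_0} = n^{-1/2}\sum_i\psi_{\theta^*,\lambda_0}(\xi_i)$), we get
\begin{equation*}
\sqrt n\,\Psi(\hat\theta_n,\lambda_n) = -\frac{1}{\sqrt n}\sum_{i=1}^n \psi_{\theta^*,\lambda_0}(\xi_i) + o_p(1).
\end{equation*}

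\emph{Step 2: linearization.} Apply (iii) at $\lambda=\lambda_n$:
\begin{equation*}
\Psi(\hat\theta_n,\lambda_n) = \Psi(\theta^*,\lambda_n) + V_{\theta^*,\lambda_n}(\hat\theta_n-\theta^*) + o_p(\|\hat\theta_n-\theta^*\|_2).
\end{equation*}
This step is the main obstacle. Because $\lambda_n$ is random and moving, the little-$o$ remainder in (iii) must hold uniformly for $\lambda$ in a neighborhood of $\lambda_0$; a pointwise-in-$\lambda$ statement would not suffice. I would read (iii) in this uniform sense, as in van der Vaart's Theorem 5.31. Then $\hat\theta_n\overset{p}{\to}\theta^*$ turns the remainder into $o_p(\|\hat\theta_n-\theta^*\|_2)$. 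Substituting into Step 1 gives
\begin{equation*}
V_{\theta^*,\lambda_n}(\hat\theta_n-\theta^*) = -\Psi(\theta^*,\lambda_n) - \frac1n\sum_{i=1}^n\psi_{\theta^*,\lambda_0}(\xi_i) + o_p(n^{-1/2}) + o_p(\|\hat\theta_n-\theta^*\|_2).
\end{equation*}

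\emph{Step 3: rate and rearrangement.}
\begin{enumerate}
\item Since $V_{\theta^*,\lambda_n}\to V_{\theta^*,\lambda_0}$ and the limit is nonsingular, the smallest singular value of $V_{\theta^*,\lambda_n}$ is bounded away from zero with probability tending to one.
\item The sum $\frac1n\sum_i\psi_{\theta^*,\lambda_0}(\xi_i)$ is $O_p(n^{-1/2})$ by the CLT, using the square integrability implied by the Donsker property.
\item Absorbing the $o_p(\|\hat\theta_n-\theta^*\|_2)$ term into the left-hand side therefore gives the preliminary rate $\|\hat\theta_n-\theta^*\|_2 = O_p(\|\Psi(\theta^*,\lambda_n)\|_2\vee n^{-1/2})$.
\item Multiply the display in Step 2 by $V_{\theta^*,\lambda_0}^{-1}$, and replace $V_{\theta^*,\lambda_n}$ by $V_{\theta^*,\lambda_0}$. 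The cost of this replacement is $\|V_{\theta^*,\lambda_n}-V_{\theta^*,\lambda_0}\|\cdot\|\hat\theta_n-\theta^*\|_2 = o_p(\|\Psi(\theta^*,\lambda_n)\|_2\vee n^{-1/2})$.
\item All remaining remainders are likewise $o_p(\|\Psi(\theta^*,\lambda_n)\|_2\vee n^{-1/2})$.
\end{enumerate}
This produces the claimed expansion.
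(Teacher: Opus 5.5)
Your argument is correct and is essentially the proof of Theorem~5.31 in \cite{van2000asymptotic}, which the paper cites without reproducing. Both run the same steps: asymptotic equicontinuity via Lemma~19.24, linearization of $\Psi$ at $\lambda=\lambda_n$, a preliminary rate from the invertibility of $V_{\theta^*,\lambda_n}$, and then replacement of $V_{\theta^*,\lambda_n}$ by $V_{\theta^*,\lambda_0}$. Your reading of (iii) as holding uniformly in $\lambda$ near $\lambda_0$ is exactly van der Vaart's hypothesis, and you are right that the pointwise reading suggested by the paper's wording would not suffice.
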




\subsection{\textit{Proof of \Cref{prop:eo-conv}}}\label{app:eo-conv}
To establish the order $\gamma_{EO}$, we apply Theorem 5.9 of \cite{van2000asymptotic}. Because $\sup_{\theta \in \Theta}\|1/n\sum_{i \in [n]}\nabla_{\theta}\ell(\theta;\xi_i) - \nabla_{\theta}Z(\theta)\|\overset{p}{\to} 0$ and $\theta^*$ satisfies the first-order optimality condition in \Cref{asp:theta-star0}, the theorem gives $\hat\theta_{EO}\overset{p}{\to}\theta^*$.

Together with \Cref{asp:optimal-condition}, \Cref{lemma:m-estimator0} yields
\[\sqrt{n}\Para{\hat\theta_{EO} - \theta^*} = -\frac{1}{\sqrt{n}}\sum_{i = 1}^n[\nabla_{\theta}^2 Z(\theta^*)]^{-1} \nabla_{\theta}\ell(\theta^*;\xi_i) + o_p(1).\]
Therefore, $\hat\theta_{EO} - \theta^* = O_p(n^{-1/2})$.

Taking a second-order Taylor expansion of $Z(\theta)$ at the point $\theta = \theta^*$, we have:
\[Z(\hat\theta_{EO})- Z(\theta^*) = \frac{1}{2}(\hat\theta_{EO} - \theta^*)^{\top}\nabla_{\theta}^2 Z(\theta^*)(\hat\theta_{EO} - \theta^*) + o(\|\hat\theta_{EO} - \theta^*\|_2^2).\]
Taking expectations over $\Dscr_n$ and recalling $\E_{\Dscr_n}[\|\hat\theta_{EO} - \theta^*\|_2^2] = O(1/n)$ from \Cref{lemma:m-estimator0}, we have $\E[R(\hat\theta_{EO})] = \frac{C_{EO}}{n} + o(1/n)$ for some constant $C_{EO}$, and therefore, $\gamma_{EO} = 1/2$.

For the size of $\zeta_{EO}$, we require the following additional assumption:
\begin{assumption}[Regularity Condition of Second-Order Excess Risk Rate]\label{asp:regular-risk-rate}
\mbox{}
    \begin{itemize}
        \item $Z(\theta)$ is four times continuously differentiable with respect to $\theta$ around $\theta^*$;
        \item There exists some $\delta > 0$ such that $\sup_{n}\E_{\Dscr_n}[\|\sqrt{n}(\hat\theta_{EO} - \theta^*)\|_2^{4+\delta}] < \infty$, $\sup_{\theta\in\Theta}
\left\|
\nabla_\theta^k Z(\theta)
\right\|_{\mathrm{op}}
<\infty$ for $k=2,3,4$, and $\E_{\P}\left[
\|\IFx(\xi)\|^{4+\delta}
\right]
<\infty$;
        \item Denote $\Delta_n:=\hat\theta_{EO}-\theta^*$ and
        $\Sigma_0:=\E_{\P}[\IFx(\xi)\IFx(\xi)^\top]$, the estimator admits
        a second-order stochastic expansion. That is, for finite deterministic tensors
        $\mathcal B_2,\mathcal B_3$, and $\mathcal B_4$,
        \begin{equation*}
        \E_{\Dscr_n}[\Delta_n^{\otimes 2}] =\frac{\Sigma_0}{n}+\frac{\mathcal B_2}{n^2}
        +o(n^{-2}),
        \E_{\Dscr_n}[\Delta_n^{\otimes 3}] =\frac{\mathcal B_3}{n^2}+o(n^{-2}),
        \E_{\Dscr_n}[\Delta_n^{\otimes 4}] =\frac{\mathcal B_4}{n^2}+o(n^{-2}),
        \end{equation*}
        where the remainders converge in any fixed tensor norm. 
    \end{itemize}
\end{assumption}
Suppose \Cref{asp:regular-risk-rate} holds. Denote
$\Delta_n=\hat\theta_{EO}-\theta^*$. For a $k$th-order tensor
$A\in\R^{D_\theta\times\cdots\times D_\theta}$, we use the bracket
notation for tensor contraction,
\[
\nabla_{\theta}^k Z(\theta)[A]
:=\sum_{j_1,\ldots,j_k=1}^{D_\theta}
\frac{\partial^k Z(\theta)}
{\partial\theta_{j_1}\cdots\partial\theta_{j_k}}
A_{j_1,\ldots,j_k}.
\]
In particular, $\nabla_{\theta}^2 Z(\theta)[v^{\otimes2}]
=v^\top\nabla_{\theta}^2 Z(\theta)v$. A fourth-order Taylor expansion gives:
\begin{equation}\label{eq:fourth-order-eo-risk}
\begin{aligned}
Z(\hat\theta_{EO})-Z(\theta^*)
={}&\frac{1}{2}\nabla_{\theta}^2 Z(\theta^*)[\Delta_n^{\otimes 2}]
+\frac{1}{6}\nabla_{\theta}^3 Z(\theta^*)[\Delta_n^{\otimes 3}]\\
&+\frac{1}{24}\nabla_{\theta}^4 Z(\theta^*)[\Delta_n^{\otimes 4}]
+\rho_n .
\end{aligned}
\end{equation}
The smoothness and uniform-integrability conditions in
\Cref{asp:regular-risk-rate} imply
$\E_{\Dscr_n}[|\rho_n|]=o(n^{-2})$. Taking expectations in
\eqref{eq:fourth-order-eo-risk} and using the assumed tensor-moment
expansions yields
\[
\E_{\Dscr_n}[R(\hat\theta_{EO})]
=\frac{C_{EO}}{n}+\frac{E_{EO}}{n^2}+o(n^{-2}),
\]
where $C_{EO}:=\frac{1}{2}\nabla_{\theta}^2 Z(\theta^*)[\Sigma_0], E_{EO}:=\frac{1}{2}\nabla_{\theta}^2 Z(\theta^*)[\mathcal B_2]
+\frac{1}{6}\nabla_{\theta}^3 Z(\theta^*)[\mathcal B_3]
+\frac{1}{24}\nabla_{\theta}^4 Z(\theta^*)[\mathcal B_4]$.

$\hfill \square$

\subsection{\textit{Proof of \Cref{thm:improvement-principle}}}\label{app:improvement}
We use the following notation in this section:
\begin{align*}
\theta_{H,M}^* &:= \theta^* + H \E_{\P}[M(\theta^*;\xi)],\\\widebar M(\theta) &:= \frac{1}{n}\sum_{i = 1}^n M(\theta;\xi_i),\\\Omega_M&:=\text{Var}_{\P}\Paran{M(\theta^*;\xi)},\\\Gamma&:=\text{Cov}_{\P}\Paran{\IFx(\xi),M(\theta^*;\xi)},\\\Sigma_{0} &:= \E_{\P}[\IFx(\xi)\IFx(\xi)^{\top}] = (I_{\ell}(\theta^*))^{-1} \E_{\P}[\nabla_{\theta}\ell(\theta^*;\xi)\nabla_{\theta}\ell(\theta^*;\xi)^{\top}] (I_{\ell}(\theta^*))^{-1},\\\mu_M(\theta) &:= \E_{\P}[M(\theta;\xi)].
\end{align*}
We first describe the following helper lemmas used to prove \Cref{thm:improvement-principle}.
\begin{lemma}[Smoothness]\label{ass:moments}
Suppose 
there exists $K(\xi)$ such that $\E[K(\xi)] < \infty$ and $\|\nabla_{\theta} M(\theta;\xi) - \nabla_{\theta} M(\theta^*;\xi)\|\leq K(\xi)\|\theta - \theta^*\|$ for $\theta$ near $\theta^*$. Then:
\[\sup_{\|\theta-\theta^*\|\le r_n}\Big\|\widebar M(\theta)-\widebar M(\theta^*)-\nabla_\theta\widebar M(\theta^*)\Para{\theta-\theta^*}\Big\|_2 =o_p\Para{n^{-1/2}}\]
for $r_n = \Theta(n^{-\beta})$ with $\beta \in (1/4,1/2)$.
\end{lemma}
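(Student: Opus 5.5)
We prove the lemma (Smoothness) by splitting the quantity inside the supremum into a deterministic linearization remainder, controlled by the Lipschitz bound on $\nabla_\theta M$, and a centered empirical-process term. For $\theta$ near $\theta^*$, the mean value theorem in integral form applied to each sample gives
\[
\widebar M(\theta)-\widebar M(\theta^*)-\nabla_\theta\widebar M(\theta^*)(\theta-\theta^*)
=\frac1n\sum_{i=1}^n\int_0^1\Big(\nabla_\theta M(\theta^*+t(\theta-\theta^*);\xi_i)-\nabla_\theta M(\theta^*;\xi_i)\Big)(\theta-\theta^*)\,dt .
\]
This requires $\theta\mapsto M(\theta;\xi)$ to be differentiable along the segment for almost every $\xi$. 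That holds on a neighborhood of $\theta^*$ by the standing differentiability in Definition~\ref{defn:direction-perturb}, strengthened to the Lipschitz-gradient hypothesis of the lemma. Since $r_n\to0$, the ball $\{\|\theta-\theta^*\|\le r_n\}$ lies in that neighborhood for all large $n$.

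Next we bound the integrand uniformly over the ball using the hypothesis $\|\nabla_\theta M(\theta';\xi)-\nabla_\theta M(\theta^*;\xi)\|\le K(\xi)\|\theta'-\theta^*\|$. This gives
\[
\sup_{\|\theta-\theta^*\|\le r_n}\Big\|\widebar M(\theta)-\widebar M(\theta^*)-\nabla_\theta\widebar M(\theta^*)(\theta-\theta^*)\Big\|_2
\le \frac{r_n^2}{2}\cdot\frac1n\sum_{i=1}^n K(\xi_i).
\]
The bound is uniform in $\theta$, so no chaining or Donsker argument is needed. By $\E[K(\xi)]<\infty$ and the weak law of large numbers (or simply Markov's inequality), $\frac1n\sum_i K(\xi_i)=O_p(1)$. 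Hence the supremum is $O_p(r_n^2)=O_p(n^{-2\beta})$.

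Finally, $\beta>1/4$ gives $2\beta>1/2$, so $O_p(n^{-2\beta})=o_p(n^{-1/2})$, which is the claim. The upper restriction $\beta<1/2$ is not needed for this bound. It only ensures that the ball contains $\hat\theta_{EO}$ with probability tending to one, since $\hat\theta_{EO}-\theta^*=O_p(n^{-1/2})$ by Proposition~\ref{prop:eo-conv}; this is how the lemma is used later. The only delicate point is the measurability and integrability of the supremum. We handle it by working with the explicit random upper bound $\frac{r_n^2}{2}\cdot\frac1n\sum_i K(\xi_i)$ and stating the conclusion in terms of outer probability if necessary. The main obstacle is therefore just confirming that the almost-everywhere differentiability suffices for the integral mean-value representation. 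If it does not, we instead use absolute continuity along line segments, which follows from $\nabla_\theta M$ being locally Lipschitz in $\theta$.
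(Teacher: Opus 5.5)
Your proof is correct and is the paper's argument: a per-sample mean-value bound of order $K(\xi_i)\|\theta-\theta^*\|^2$, averaged to the uniform bound $\frac{r_n^2}{2}\cdot\frac1n\sum_{i}K(\xi_i)=O_p(r_n^2)=o_p(n^{-1/2})$ via the law of large numbers and $\beta>1/4$. Your integral form of the mean value theorem also makes the factor $\tfrac12$ explicit, which the paper's displayed per-sample inequality omits but then uses in the averaged bound.
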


\begin{lemma}[Asymptotic Normality of Perturbed Solution]\label{lemma:asym-perturbed-solution}
    Consider $\hat\theta_{H,M}$ defined in~\eqref{eq:empirical-solution-adjustment}. Suppose Assumptions~\ref{asp:theta-star0} and~\ref{asp:optimal-condition} hold. If $M(\theta;\xi) \in \R^{D_M}$ is differentiable with respect to $\theta$ almost everywhere for each $\xi$ with $H\mu_M(\theta^*) = o(1)$ ($H$ can depend on $n$), we have:
    \[
\sqrt n(\hat\theta_{H,M}-\theta_{H,M}^*) - Y_H \Rightarrow 0
,
\]
where $Y_H \sim \mathcal N \big(0,\Sigma_H\big)$ with $\Sigma_H
=(I_{D_{\theta}\times D_{\theta}}+H \nabla_{\theta}\mu_M(\theta^*))\Sigma_{0}(I_{D_{\theta}\times D_{\theta}}+H\nabla_{\theta}\mu_M(\theta^*))^\top
+ H \Omega_M H^\top
+ 2\,\mathrm{Sym}\big((I_{D_{\theta}\times D_{\theta}}+H \nabla_{\theta}\mu_M(\theta^*))\Gamma H^\top\big)$ and $\mathrm{Sym}(A)=\frac{1}{2}(A+A^\top)$.
\end{lemma}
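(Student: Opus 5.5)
The argument is a linearization of $\hat\theta_{H,M}$ around $\theta_{H,M}^*$, followed by the central limit theorem. I assume $\Theta=\R^{D_\theta}$, or that $\theta^*$ is interior, so that $\Pi_\Theta$ is the identity with probability tending to one. This holds because $H\mu_M(\theta^*)=o(1)$ and, as shown below, $\hat\theta_{H,M}\to\theta^*$ in probability. We then have
\[
\hat\theta_{H,M}-\theta^*_{H,M}=(\hat\theta_{EO}-\theta^*)+H\big(\widebar M(\hat\theta_{EO})-\mu_M(\theta^*)\big).
\]
\Cref{prop:eo-conv} gives $\hat\theta_{EO}-\theta^*=\frac1n\sum_i\IFx(\xi_i)+o_p(n^{-1/2})$, and in particular $\|\hat\theta_{EO}-\theta^*\|=O_p(n^{-1/2})$. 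The event $\{\|\hat\theta_{EO}-\theta^*\|\le r_n\}$, with $r_n=n^{-\beta}$ and $\beta\in(1/4,1/2)$, therefore has probability tending to one.

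On this event, \Cref{ass:moments} gives
\[
\widebar M(\hat\theta_{EO})=\widebar M(\theta^*)+\nabla_\theta\widebar M(\theta^*)(\hat\theta_{EO}-\theta^*)+o_p(n^{-1/2}).
\]
By the weak law of large numbers, $\nabla_\theta\widebar M(\theta^*)\overset{p}{\to}\nabla_\theta\mu_M(\theta^*)$. Here I use integrability of $\nabla_\theta M(\theta^*;\xi)$ and the fact that differentiation passes under the expectation, which follows from the Lipschitz-gradient envelope by dominated convergence. Since $\hat\theta_{EO}-\theta^*=O_p(n^{-1/2})$, the product error $\big(\nabla_\theta\widebar M(\theta^*)-\nabla_\theta\mu_M(\theta^*)\big)(\hat\theta_{EO}-\theta^*)$ is $o_p(n^{-1/2})$. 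Combining these pieces gives
\[
\sqrt n(\hat\theta_{H,M}-\theta^*_{H,M})=\frac1{\sqrt n}\sum_{i=1}^n\Big[(I+H\nabla_\theta\mu_M(\theta^*))\IFx(\xi_i)+H\big(M(\theta^*;\xi_i)-\mu_M(\theta^*)\big)\Big]+H\,o_p(1)+o_p(1).
\]

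Next I apply the multivariate CLT to the i.i.d. mean-zero vectors $(\IFx(\xi_i),M(\theta^*;\xi_i)-\mu_M(\theta^*))$. These have finite second moments because $\E[L^3]<\infty$ bounds $\nabla_\theta\ell(\theta^*;\xi)$ and hence $\IFx$, and I assume $M(\theta^*;\cdot)\in L_2$. The joint covariance has blocks $\Sigma_0$, $\Gamma$ and $\Omega_M$. Applying the linear map $(a,b)\mapsto(I+H\nabla\mu_M)a+Hb$ and expanding the resulting quadratic form yields exactly
\[
\Sigma_H=(I+H\nabla\mu_M)\Sigma_0(I+H\nabla\mu_M)^\top+H\Omega_MH^\top+2\,\mathrm{Sym}\big((I+H\nabla\mu_M)\Gamma H^\top\big).
\]
This proves the claim for fixed $H$, with convergence in distribution to $\mathcal N(0,\Sigma_H)$.

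\textbf{Main obstacle: $H$ depending on $n$.} The remainder is $H\,o_p(1)$, so it vanishes only if $\|H\|=O(1)$. This is why the statement is phrased as ``$\sqrt n(\hat\theta_{H,M}-\theta^*_{H,M})-Y_H\Rightarrow0$'' with an $n$-dependent $Y_H$ rather than as a fixed limit. The plan for this case is to assume bounded $H$ and pass to subsequences along which $H_n\to H_\infty$. Along each such subsequence the linear CLT statistic $S_n$ converges to $\mathcal N(0,\Sigma_{H_\infty})$, and $\Sigma_{H_n}\to\Sigma_{H_\infty}$ by continuity. One can then couple $Y_{H_n}$ to the Gaussian limit, taking $Y_{H_n}=\Sigma_{H_n}^{1/2}\Sigma_{H_\infty}^{-1/2}$ applied to the limit variable, or working with a pseudo-inverse if $\Sigma_{H_\infty}$ is singular. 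The difference $S_n-Y_{H_n}$ then converges to zero along the subsequence, and a subsequence argument gives the claim for the full sequence. The condition $H\mu_M(\theta^*)=o(1)$ is used only to ensure that $\theta^*_{H,M}\to\theta^*$, which keeps $\hat\theta_{H,M}$ interior so that the projection can be dropped.
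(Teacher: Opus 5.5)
Your proposal is correct and follows the paper's proof essentially step for step: drop the projection with probability tending to one, linearize $\widebar M(\hat\theta_{EO})$ around $\theta^*$ via \Cref{ass:moments}, replace $\nabla_\theta\widebar M(\theta^*)$ by $\nabla_\theta\mu_M(\theta^*)$ using the law of large numbers, and push the joint CLT for $(\IFx(\xi_i),M(\theta^*;\xi_i)-\mu_M(\theta^*))$ through the linear map $(a,b)\mapsto(I+H\nabla_\theta\mu_M(\theta^*))a+Hb$. Your subsequence treatment of bounded $n$-dependent $H$ is more careful than the paper, which applies Slutsky as if $H$ were fixed, but the pseudo-inverse coupling breaks down when $\Sigma_{H_\infty}$ is singular; it is cleaner to conclude instead that the bounded-Lipschitz distance between the law of $\sqrt n(\hat\theta_{H,M}-\theta^*_{H,M})$ and $\mathcal N(0,\Sigma_H)$ tends to zero, which your subsequence argument yields directly.
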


We provide the proofs of the two lemmas above in the following.

\textit{Proof of \Cref{ass:moments}.} We apply the mean-value argument for each $M(\theta;\xi_i)$ between $\theta^*$ and $\theta$:
\begin{equation}\label{eq:mean-val-m}
\|M(\theta;\xi_i)-M(\theta^*;\xi_i)-\nabla_\theta M(\theta^*;\xi_i)(\theta-\theta^*)\|
\leq K(\xi_i)\|\theta - \theta^*\|^2.
\end{equation}
Therefore, for any $\|\theta-\theta^*\|\le r_n$, averaging inequality~\eqref{eq:mean-val-m} over $i \in [n]$ bounds the right-hand side by $\frac{r_n^2}{2n} \sum_{i \in [n]} K(\xi_i)$. By the law of large numbers, $\frac{1}{n}\sum_{i \in [n]} K(\xi_i)\overset{p}{\to}\E[K(\xi)]<\infty$, hence the remainder is $O_p(r_n^2)= o_p(n^{-1/2})$ uniformly over $\|\theta - \theta^*\| \leq r_n$ for the corresponding choice of $r_n$. 
$\hfill \square$


\textit{Proof of \Cref{lemma:asym-perturbed-solution}.} We can decompose:
\begin{align*}
& \sqrt n\Para{\hat\theta_{H,M}-\theta_{H,M}^*}\\
= & \underbrace{\sqrt n\Para{G_n(\hat\theta_{EO})-G_n(\theta^*)}}_{(A)}
+ \underbrace{\sqrt n\Para{G_n(\theta^*)-G(\theta^*)}}_{(B)} + \underbrace{\sqrt{n}\Para{\hat\theta_{H,M} - G_n(\hat\theta_{EO})}}_{(C)},
\end{align*}
where $G_n(\theta):=\theta+H \widebar M(\theta), G(\theta):=\theta+H \mu_M(\theta)$.

For the part $(A)$, by a mean-value expansion and Lemma~\ref{ass:moments},
\[
G_n(\hat\theta_{EO})-G_n(\theta^*)
= \Para{I_{D_{\theta}\times D_{\theta}}+H \nabla_\theta\widebar M(\theta^*)}\Para{\hat\theta_{EO} -\theta^*} + r_n,
\quad \text{with}~\|r_n\|_2=o_p\Para{n^{-1/2}}.
\]
Multiplying by $\sqrt n$, we have:
\[
\sqrt n\Para{G_n(\hat\theta_{EO})-G_n(\theta^*)}
= \Para{I_{D_{\theta}\times D_{\theta}}+H \nabla_\theta\widebar M(\theta^*)}\sqrt n\Para{\hat\theta_{EO} -\theta^*} + o_p(1).
\]

For the part $(B)$, by definition:
\[
\sqrt n\Para{G_n(\theta^*)-G(\theta^*)}
= H \sqrt n\Para{\widebar M(\theta^*)-\mu_M(\theta^*)}.
\]
For the part $(C)$, for any $\varepsilon > 0$, 
\[\P\Para{\|\sqrt{n}\Para{\hat\theta_{H,M} - G_n(\hat\theta_{EO})}\| > \varepsilon} \leq \P(G_n(\hat\theta_{EO})\not\in \Theta) \to 0,\]
where the last step follows from the fact that $\theta^* \in \text{int}(\Theta)$, $\hat\theta_{EO} \in \Theta$ and $H\widebar M(\hat\theta_{EO}) = O_p(n^{-1/2})$ from the condition.

Combining $(A)$--$(C)$:
\begin{small}
    \begin{equation*}
\sqrt n\Para{\hat\theta_{H,M}-\theta_{H,M}^*}
= \Para{I_{D_{\theta}\times D_{\theta}}+H \nabla_\theta\widebar M(\theta^*)}\sqrt n\Para{\hat\theta_{EO} -\theta^*}
+ H \sqrt n\Para{\widebar M(\theta^*)-\mu_M(\theta^*)}
+ o_p(1).
    \end{equation*}
\end{small}
By Slutsky’s theorem and Lemma~\ref{ass:moments},
\[
\nabla_\theta\widebar M(\theta^*)=\frac1n\sum_{i=1}^n \nabla_\theta M(\theta^*;\xi_i)\xrightarrow{p} \nabla_\theta \mu_M(\theta^*)\in\mathbb R^{D_M\times D_{\theta}},
\]
and
\[
\frac1{\sqrt n}\sum_{i=1}^n
\begin{bmatrix}
\IFx(\xi_i)\\[2pt]
M(\theta^*;\xi_i)-\mu_M(\theta^*)
\end{bmatrix}
\ \Rightarrow\ 
\mathcal N\Para{0,
\begin{bmatrix}
\Sigma_{0} & \Gamma\\
\Gamma^\top & \Omega_M
\end{bmatrix}}.
\]
Applying the continuous mapping theorem, we have $\sqrt n(\hat\theta_{H,M}-\theta_{H,M}^*) \Rightarrow \mathcal N\big(0,\Sigma_H\big)$.

$\hfill \square$

We now return to the proof of \Cref{thm:improvement-principle} and calculate $\E[Z(\hat\theta_{H,M})] - \E[Z(\hat\theta_{EO})]$, where the expectation $\E[\cdot]$ is over the dataset $\Dscr_n$ unless specified.

When $\|H\mu_M(\theta^*)\| = \Theta(1)$, recall that $\theta_{H,M}^* = \Pi_{\Theta}\Para{\theta^* + H \mu_{M}(\theta^*)}$ and $\theta^* \in \text{int}(\Theta)$. If $\theta^* + H \mu_M(\theta^*) \in \text{int}(\Theta)$, then $Z(\theta_{H, M}^*) > Z(\theta^*)$ because $\theta^*$ is the unique minimizer of $Z(\theta)$. If instead $\theta^* + H \mu_{M}(\theta^*)$ is on or outside the boundary of $\Theta$, its projection $\theta_{H, M}^*$ lies on the boundary, and we still have $Z(\theta_{H,M}^*) - Z(\theta^*) > 0$. Because $\hat\theta_{H,M} \convp \theta_{H,M}^*$ and $\hat\theta_{EO}\convp \theta^*$, we have $\lim_{n \to \infty}[R(\hat\theta_{H,M}) - R(\hat\theta_{EO})] > 0$, which implies that $\hat\theta_{H, M}$ does not achieve any improvement over $\hat\theta_{EO}$.

When $H \mu_M(\theta^*) = o(1)$, a comparison of $Z(\theta_{H,M}^*)$ and $Z(\theta^*)$ yields:
        \begin{equation}\label{eq:objective-difference-H}
        Z(\theta_{H,M}^*) 
        = Z(\theta^*) + \frac{1}{2}(H\mu_M(\theta^*))^{\top} I_{\ell}(\theta^*) (H\mu_M(\theta^*)) + o(\|H\|^2).
        \end{equation}
Given the asymptotic normality of $\hat\theta_{H,M}$ and $\hat\theta_{EO}$ in \Cref{lemma:asym-perturbed-solution}, we apply a second-order Taylor expansion of $Z(\theta)$ at $\theta_{H,M}^*$ and $\theta^*$, respectively. Then we have:
\begin{align}
    \E[Z(\hat\theta_{H,M})] & = Z(\theta_{H,M}^*) + \frac{\text{Tr}[\Sigma_H I_{\ell}(\theta_{H,M}^*)]}{2n} + o\Para{\frac{1}{n}}, \label{eq:compare1-H}\\
    \E[Z(\hat\theta_{EO})] & =  Z(\theta^*) + \frac{\text{Tr}[\Sigma_{0} I_{\ell}(\theta^*)]}{2n} + o\Para{\frac{1}{n}}. \label{eq:compare2-H}
\end{align}

Therefore, combining~\eqref{eq:objective-difference-H},~\eqref{eq:compare1-H}, and~\eqref{eq:compare2-H} above, we have
\begin{equation}\label{eq:expectedperformance1-H}
    \E[Z(\hat\theta_{H,M})] =  \E[Z(\hat\theta_{EO})] + \frac{\rho(H)}{2n} + \frac{1}{2} \|H\mu_M(\theta^*)\|_{I_{\ell}(\theta^*)}^2 + o\Para{\frac{1}{n}},
\end{equation}
where $\rho(H) = \text{Tr}[\Sigma_H I_{\ell}(\theta_{H,M}^*) - \Sigma_{0} I_{\ell}(\theta^*)]$. 

We divide the discussion into the following cases:
\begin{enumerate}[leftmargin=*]
    \item $H\mu_M(\theta^*) = o(1)$, $H = o(1)$. We expand $\rho(H)$ around $H = \mathbf{0}$:
        \[
        \rho(H) = \rho(0) + \text{Tr}[H^{\top}I_{\ell}(\theta^*)A] + o(\|H\|) = \text{Tr}[H^{\top} I_{\ell}(\theta^*)A] + o(\|H\|),
        \]
        where $A = 2\Paran{\Sigma_0 \nabla_\theta\mu_M(\theta^*)^\top + \Gamma}$. Therefore,
        \begin{align*}
         \E[Z(\hat\theta_{H,M})] - \E[Z(\hat\theta_{EO})]
        = \frac{\text{Tr}[H^{\top}I_{\ell}(\theta^*) A]}{n}
        + \frac{1}{2} \|H\mu_M(\theta^*)\|_{I_{\ell}(\theta^*)}^2  
        + o(\|H\|^2) + o\Para{\frac{1}{n}},
        \end{align*}
        where the optimal solution to the right-hand side above is $H^* = -\frac{1}{n}\frac{A \mu_M(\theta^*)\mu_M(\theta^*)^{\top}}{\|\mu_M(\theta^*)\|^4}$, which satisfies $\|H^*\| = \Theta(1/n)$. Then $\E[Z(\hat\theta_{H^*, M}) - Z(\hat\theta_{EO})] = \Theta(1/n^2) < 0$, which is second-order.

    \item $H\mu_M(\theta^*) = 0$. Then $\theta_{H,M}^* = \theta^*$ and we expand: 
\[\rho(H) = \text{Tr}[H^{\top} I_{\ell}(\theta^*) A] + \text{Tr}[H^{\top} I_{\ell}(\theta^*) H B],\] where $B = \Cov_{\P}[\widetilde M(\theta^*;\xi)] = \nabla_\theta \mu_M(\theta^*) \Sigma_0\nabla_\theta\mu_M(\theta^*)^\top 
+ \nabla_\theta\mu_M(\theta^*) \Gamma 
+ \Gamma^\top \nabla_\theta\mu_M(\theta^*)^\top 
+ \Omega_M.$
Therefore, 
\[
\E[Z(\hat\theta_{H,M}) - Z(\hat\theta_{EO})]
=  \frac{\text{Tr}[H^{\top} I_{\ell}(\theta^*) A] + \text{Tr}[H^{\top} I_{\ell}(\theta^*) H B]}{2n} + o(n^{-1}).
\]
Here, $B \succeq 0$ as it is a covariance matrix. Therefore, the quadratic term in $H$ is positive semidefinite.
\begin{enumerate}[leftmargin = *]
    \item When $\mu_M(\theta^*) = 0$, $H \mu_M(\theta^*) = 0$ holds for any $H$. Therefore, the optimal choice of $H^* = -\frac{1}{2} A B^{\dagger}$ leads to an improvement of the magnitude $-\frac{1}{4}\tr[A B^{\dagger}A^{\top} I_{\ell}(\theta^*)]$. Hence, as long as $A \neq 0$, $\tr[A B^{\dagger}A^{\top} I_{\ell}(\theta^*)] \neq 0$ and $\hat\theta_{H^*, M}$ achieves a $\Theta(n^{-1})$ improvement.
    \item When $\mu_M(\theta^*) \neq 0$, we require $H \mu_M(\theta^*) = 0$; then finding the largest performance improvement reduces to solving the following constrained optimization problem:
    \[\min_H\{\text{Tr}[H^{\top} I_{\ell}(\theta^*) A] + \text{Tr}[H^{\top} I_{\ell}(\theta^*) H B], \text{s.t.}~H \mu_M(\theta^*) = 0.\}\]
    Here, for any $H_0$ with $\|H_0\| = \Theta(1)$ such that $H_0 \mu_M(\theta^*) = 0$, we consider $H = \lambda H_0$ and minimize the optimization problem above over $\lambda$. If $A(I_{D_M\times D_M} - \mu_M(\theta^*)\mu_M(\theta^*)^{\top}/\|\mu_M(\theta^*)\|_2^2)  \neq 0$, the optimal $\lambda^* = -\frac{\tr[H_0^{\top}I_{\ell}(\theta^*)A]}{2\tr[H_0^{\top}I_{\ell}(\theta^*)H_0 B]} \neq 0$ and the minimum of the optimization problem is positive. Otherwise, the minimum of the optimization problem is 0.  Therefore, $\hat\theta_{\lambda^* H_0, M}$ achieves a $\Theta(n^{-1})$ improvement.
\end{enumerate}
\end{enumerate}

$\hfill \square$

\subsection{\textit{Proof of \Cref{thm:unification}}}\label{app:formulation}
We summarize the additional regularity conditions under which \Cref{thm:unification} holds for each procedure.
\begin{assumption}[Other Regularity Conditions]\label{asp:additional-summary}
For the cost function $\ell(\theta;\xi)$, we have:
    \begin{itemize}
        \item $\ell(\theta;\xi)$ is twice continuously differentiable almost everywhere with respect to $\theta \in \Theta$;
        \item $\{\nabla_{\theta}\ell(\theta;\cdot): \|\theta - \theta^*\| \leq \delta\}$ is Donsker for some small $\delta > 0$, and $\{\nabla_{\theta}\ell(\theta;\cdot): \theta \in \Theta\}$ is Glivenko-Cantelli;
        \item $\nabla_{\theta}\ell(\theta;\xi)$ is Lipschitz continuous with respect to $\theta \in \Theta$, i.e., $\forall \theta_1, \theta_2 \in \Theta$, $\|\nabla_{\theta}\ell(\theta_1;\xi) - \nabla_{\theta}\ell(\theta_2;\xi)\|\leq L_2(\xi)\|\theta_1 - \theta_2\|$ with $\E_{\P}[L_2^2(\xi)] < \infty$.
    \end{itemize}
    For each EO+ method, assume that the procedure returns an empirical optimum satisfying the KKT conditions and that the following method-specific condition holds:
    \begin{enumerate}
        \item Explicit regularization methods: \Cref{asp:condition-explicitregularizer} holds.
        \item DRO methods:
        \begin{itemize}
            \item Wasserstein distance: \Cref{asp:condition-wdro} holds.
            \item $f$-divergence: $\ell(\theta;\xi)$ is uniformly bounded; $\max_{d(\Q, \hat\P_n)\leq \lambda}\E_{\Q}[\ell(\theta;\xi)] \neq \sup_{\hat\P_n}[\ell(\theta;\xi)]$ for any $\theta \in \Theta$. 
        \end{itemize}
        \item Shrinkage estimator: $\theta^* \neq \mathbf{0}$, and $G(\lambda) = 0$ if and only if $\lambda = 0$.
    \end{enumerate}
\end{assumption}

Appendices~\ref{app:explicitregularization}--\ref{app:shrinkage} provide the specific assumptions and detailed representations of EO+ methods corresponding to the form in \Cref{thm:unification}. Most of the EO+ representations come from the verification of \Cref{lemma:m-estimator0-aug}. Note that prior work~\citep{lam2021impossibility,gotoh2021calibration} focuses on a restrictive subset of DRO methods with $\lambda = o(1)$. For example, the standard $f$-divergence expansion of \cite{gotoh2021calibration} takes the form
\[
\hat\theta_{\lambda}
= \hat\theta_{EO} + \frac{\sqrt\lambda}{f''(1)} \cdot I_{\ell}(\theta^*)^{-1}
\text{Cov}_{\hat\P_n}\big(\ell(\theta^*;\xi),\nabla \ell (\theta^*;\xi)\big)
+ o(\lambda^{1/2}),
\]
which is a special case of \Cref{ex:auxiliary-robust}$(a)$ when $\lambda = o(1)$. In contrast, our results in \Cref{thm:unification} cover additional cases: (i) CVaR-DRO, which is a generalized form of the standard $f$-divergence; (ii) statistics-enhanced methods; and (iii) EO+ solutions with general $\lambda$, including constant-order $\lambda$.

\subsubsection{Explicit Regularization}\label{app:explicitregularization}
We next allow the regularization function to depend on the observed sample, i.e., $G:\Theta\times\Xi\to\R$. The resulting regularized estimator is defined as:
\begin{equation}\label{eq:reg-method2}
\hat\theta_{\lambda} \in \argmin_{\theta}\E_{\hat\P_n}[\ell(\theta;\xi) + \lambda G(\theta;\xi)].
\end{equation}
To characterize the statistical effect of this data-dependent regularization, we impose the following condition:
\begin{assumption}[Condition for Explicit Regularization Method]\label{asp:condition-explicitregularizer}
    For almost every $\xi$, $G(\theta;\xi)$ is twice differentiable in $\theta \in \Theta$, and $\nabla_{\theta}^2 G(\theta;\xi)$ is uniformly bounded in $\theta \in \Theta$.
\end{assumption}

\textit{Proof of the explicit-regularization case of \Cref{thm:unification}.}
We have two representations of the form~\eqref{eq:reg-method}, each under a different condition:

\underline{Representation (i).} Denote $\hat Z\Para{\theta}:=\E_{\hat\P_n}[\ell\Para{\theta;\xi}]$. Then the first-order condition at $\hat\theta_\lambda$ is:
\[\nabla_\theta \hat Z\Para{\hat\theta_\lambda} + \lambda \E_{\hat\P_n}{\nabla_\theta G\Para{\hat\theta_\lambda;\xi}} = 0.\] 
Let $\Delta:=\hat\theta_\lambda-\hat\theta_{EO}$. We take a Taylor expansion of both gradients of $\hat Z(\theta)$ and $\E_{\hat\P_n}[\nabla_{\theta} G(\theta;\xi)]$ at $\hat\theta_{EO}$:
\begin{align*}
\nabla_\theta \hat Z\Para{\hat\theta_\lambda}
& = \nabla_\theta \hat Z\Para{\hat\theta_{EO}} + \nabla_\theta^2 \hat Z\Para{\hat\theta_{EO}}\,\Delta + o(\Delta) = \nabla_\theta^2 \hat Z\Para{\hat\theta_{EO}}\,\Delta + o(\Delta),\\    
\E_{\hat\P_n}{\nabla_\theta G\Para{\hat\theta_\lambda;\xi}}
& = \E_{\hat\P_n}{\nabla_\theta G\Para{\hat\theta_{EO};\xi}}
+ \E_{\hat\P_n}{\nabla_\theta^2 G\Para{\hat\theta_{EO};\xi}}\,\Delta + o(\Delta).
\end{align*}
Substituting these expansions into the first-order condition and retaining only the leading terms gives:
\[
\Delta
= -\Big(\nabla_\theta^2 \hat Z\Para{\hat\theta_{EO}} + \lambda\, \E_{\hat\P_n}{\nabla_\theta^2 G \Para{\hat\theta_{EO};\xi}}\Big)^{-1}\,
\lambda\,\E_{\hat\P_n}{\nabla_\theta G\Para{\hat\theta_{EO};\xi}}
+ o_p\Para{\lambda \vee n^{-1/2}}.
\]
Replacing the sample quantities by their population counterparts at $\theta^*$ gives the following expansion, where $\nabla_\theta^2 \hat Z\Para{\hat\theta_{EO}} = I_{\ell}(\theta^*) +o_p(1)$ and $\E_{\hat\P_n}{\nabla_\theta G\Para{\hat\theta_{EO};\xi}} = \frac{1}{n}\sum_{i=1}^n \nabla_\theta G\Para{\theta^*;\xi_i} + o_p(1)$:
\[
\Delta
= \frac{1}{n}\sum_{i=1}^n \Big(-\lambda\, (I_{\ell}(\theta^*) + \lambda \E_{\P}[\nabla_{\theta}^2 G(\theta^*;\xi)])^{-1}\Big)
\nabla_\theta G\Para{\theta^*;\xi_i}+ o_p\Para{\lambda \vee n^{-1/2}}.
\]
In this case, 
\begin{align*}
    M(\theta;\xi) & = \nabla_{\theta} G(\theta^*;\xi) \in \R^{D_{\theta}},\\
    H(\lambda) & = -\lambda\, (I_{\ell}(\theta^*) + \lambda \E_{\P}[\nabla_{\theta}^2 G(\theta^*;\xi)])^{-1} \in \R^{D_{\theta}\times D_{\theta}}.
\end{align*}

\underline{Representation (ii).} When $\theta_{\lambda}^* = \theta^*$, 
comparing the asymptotic expansion of $\hat\theta_{EO}$ and $\hat\theta_{\lambda}$ in Lemmas~\ref{lemma:m-estimator0} and~\ref{lemma:m-estimator0-aug}, we obtain:
\begin{equation}\label{eq:m-estimator-diff}
\begin{aligned}
        \hat\theta_{EO} & = \theta^* - \frac{1}{n}\sum_{i = 1}^n I_{\ell}(\theta^*)^{-1} \nabla_{\theta} \ell(\theta^*;\xi_i) + o_p(n^{-1/2}),\\
    \hat\theta_{\lambda} & = \theta_{\lambda}^* - \frac{1}{n}\sum_{i = 1}^n (I_{\ell}(\theta^*) + \lambda \E_{\P}[\nabla_{\theta}^2 G(\theta_{\lambda}^*;\xi)])^{-1}(\nabla_{\theta}\ell(\theta_{\lambda}^*;\xi_i) + \lambda \nabla_{\theta} G(\theta_{\lambda}^*;\xi_i)) + o_p(n^{-1/2}).
\end{aligned}    
\end{equation}
Then we obtain:
\begin{small}
\begin{align*}
    M(\theta;\xi)
    &=\begin{bmatrix}
    \nabla_{\theta}\ell(\theta^*;\xi)\\
    \nabla_{\theta}G(\theta^*;\xi)
    \end{bmatrix}
    \in\R^{2D_{\theta}},\\
    H(\lambda)
    &=-\begin{bmatrix}
    (I_{\ell}(\theta^*)+\lambda\E_{\P}[\nabla_{\theta}^2G(\theta^*;\xi)])^{-1}
    -I_{\ell}(\theta^*)^{-1}
    &
    \lambda(I_{\ell}(\theta^*)+\lambda\E_{\P}[\nabla_{\theta}^2G(\theta^*;\xi)])^{-1}
    \end{bmatrix}
    \in\R^{D_{\theta}\times 2D_{\theta}},\\
    r_{\lambda, n} & = o_p(n^{-1/2})~\text{independent of }\lambda.
\end{align*}
\end{small}

$\hfill \square$

This explicit regularization formulation covers data-independent regularizers of the form $G(\theta;\xi)=G(\theta)$. Due to the close connections between explicit regularization and distributionally robust optimization (DRO)~\citep{gao2022wasserstein,lam2018sensitivity}, the explicit regularization formulation lays the foundation for the following DRO formulations, including Wasserstein distance and $f$-divergences.

\subsubsection{Wasserstein-Distance-Based DRO}
\begin{definition}[Wasserstein Distance]\label{def:was-distance}
For two distributions $\P,\Q\in \Pscr(\Xi)$, the $p$-Wasserstein distance $(p\in \mathbb{N})$ with respect to the $l_1$-norm (i.e., $\|\cdot\|_1$) is defined as:
\begin{equation}\label{eq:pwasserstein-dist}
\Wscr_p(\P,\Q) \coloneqq \inf_{\Fscr \in \Pi(\P, \Q)} \left( \E_{(Y_1, Y_2)\sim \Fscr}\Big[\|Y_1 - Y_2\|_1^p \Big] \right)^{\frac{1}{p}} ,
\end{equation}
where $\Pi(\P, \Q)$ denotes the set of all joint distributions with marginals $\P$ and $\Q$.
\end{definition}
\begin{assumption}[Conditions for Wasserstein DRO]\label{asp:condition-wdro}
    For a given order $p$ of Wasserstein distance, assume the following.
    \begin{itemize}
        \item There exist $L_1, \delta > 0$, such that $\forall \theta \in \Theta$, $|\ell(\theta;\xi_1) - \ell(\theta;\xi_2)|\leq L_1 \|\xi_1 - \xi_2\|^p, \forall \|\xi_1 - \xi_2\|\leq \delta$;
        \item There exists $L_2 > 0$, such that $\forall \theta \in \Theta$, $\|\nabla_{\xi}\ell(\theta;\xi_1)- \nabla_{\xi}\ell(\theta;\xi_2)\|_{\infty}\leq L_2 \|\xi_1 - \xi_2\|$ for $\xi_1, \xi_2 \in \Xi$;
        \item $\forall \theta \in \Theta$, $\E_{\P}[\|\nabla_{\xi}\ell(\theta;\xi)\|_{\infty}^{\max\{q, 2q - 2\}}]<\infty$, where $q = \frac{p}{p - 1}\mathbf{1}_{p > 1} + \infty\mathbf{1}_{p = 1}$.
    \end{itemize}
\end{assumption}
Under this assumption, Theorem 8.7 in \cite{kuhn2024distributionally} implies that Wasserstein DRO methods reduce to a special case of explicit regularization methods, with:
\[\sup_{\Q: W_p(\Q, \hat\P_n)\leq \lambda}\E_{\Q}[\ell(\theta;\xi)] = \E_{\hat\P_n}[\ell(\theta;\xi)] + \lambda\E_{\hat\P_n}[\|\nabla_{\xi} \ell(\theta;\xi)\|_{*}^q]^{\frac{1}{q}} + o(\lambda).\]
Consequently, the $p$-Wasserstein-DRO solution is a special case of the explicit regularization method in Appendix~\ref{app:explicitregularization} with:
\begin{align*}
    H(\lambda) &= -\frac{\lambda}{q} I_{\ell}(\theta^*)^{-1}(\E_{\P}[\|\nabla_{\xi}\ell(\theta^*;\xi)\|_*^q])^{\frac{1}{q} - 1} \in \R^{D_{\theta}\times D_{\theta}},\\
    M(\theta;\xi) & =\nabla_{\theta}\|\nabla_{\xi}\ell(\theta^*;\xi)\|_*^q \in \R^{D_{\theta}}.
\end{align*}

\subsubsection{$\chi^2$-Divergence-Based and Standard $f$-Divergence-Based DRO}\label{app:chi2dro}
\begin{definition}[$f$-divergence]\label{def:f-divergence}
    Let $\P$ and $\Q$ be distributions such that $\P$ is absolutely continuous with respect to $\Q$. Let
  $f:[0,\infty) \to (-\infty, \infty]$ be a convex function that is finite for every $x > 0$ and satisfies $f(1) = 0$. The $f$-divergence between $\P$ and $\Q$ is
  defined by
  \begin{equation}\label{eq:f-divergence}
          d_f(\P, \Q) = \int f\left(\frac{d\P}{d\Q}\right)d\Q =
    \E_{\Q}\left[f\left(\frac{d\P}{d\Q}\right)\right].
  \end{equation}
  The $\chi^2$-divergence corresponds to $f(x) = (x - 1)^2$.  
\end{definition}

\textit{Proof of \Cref{ex:auxiliary-robust}.}
Under \Cref{asp:additional-summary}, Proposition 1 in \cite{lam2018sensitivity} implies that, whenever the nonnegativity constraints on the empirical likelihood ratios are inactive, the inner objective of the $\chi^2$-divergence can be written as
\begin{equation}\label{eq:chi2-dro-dual}
    \max_{\chi^2(\Q, \hat\P_n)\leq \lambda}\E_{\Q}[\ell(\theta;\xi)] = \E_{\hat\P_n}[\ell(\theta;\xi)] + \sqrt{\lambda\text{Var}_{\hat\P_n}[\ell(\theta;\xi)]}.
\end{equation}
Therefore, the DRO objective can be written as
\[\min_{\theta}\max_{\Q: \chi^2(\Q, \hat\P_n)\leq \lambda} \E_{\Q}[\ell(\theta;\xi)] =\min_{\theta, u} \paran{\frac{1}{n}\sum_{i = 1}^n \ell(\theta;\xi_i) + \sqrt{\frac{\lambda}{n}\sum_{i = 1}^n (\ell(\theta;\xi_i)- u)^2}},\]
with an additional decision variable $u$. The optimal value is $u = \frac{1}{n}\sum_{i = 1}^n \ell(\theta;\xi_i)$.

\underline{Representation (i).} We have:
\begin{align*}
M(\theta;\xi)&=\big(\ell(\theta^*;\xi)-\E_{\P}[\ell(\theta^*;\xi)]\big) \nabla_\theta\ell(\theta^*;\xi)\in\R^{D_\theta},\\
H(\lambda)&=- I_{\ell}(\theta^*)^{-1} \frac{\sqrt{\lambda}}{\sqrt{\text{Var}_{\P}[\ell(\theta^*;\xi)]}}\in\R^{D_\theta\times D_\theta}.
\end{align*}

\underline{Representation (ii).} Suppose $\theta_{\lambda}^* = \theta^*$, which, by the population first-order condition of the dual reformulation~\eqref{eq:chi2-dro-dual} together with $\nabla_{\theta} Z(\theta^*) = 0$, is equivalent to the moment condition
\begin{equation}\label{eq:chi2-centering}
\Cov_{\P}\Para{\ell(\theta^*;\xi),\, \nabla_{\theta}\ell(\theta^*;\xi)} = \mathbf{0}.
\end{equation}
Denote $\bar\ell^* := \E_{\P}[\ell(\theta^*;\xi)]$, $\sigma_{\ell}^2 := \Var_{\P}[\ell(\theta^*;\xi)] > 0$, and
\begin{align*}
    G_0(\xi) & := \frac{\Para{\ell(\theta^*;\xi) - \bar\ell^*}\nabla_{\theta} \ell(\theta^*;\xi)}{\sigma_{\ell}} \in \R^{D_{\theta}},\\
    G_1(\xi) & := \frac{1}{\sigma_{\ell}}\Para{\nabla_{\theta}\ell(\theta^*;\xi)\nabla_{\theta}\ell(\theta^*;\xi)^{\top} + \Para{\ell(\theta^*;\xi) - \bar\ell^*}\nabla_{\theta}^2 \ell(\theta^*;\xi)} \in \R^{D_{\theta}\times D_{\theta}}.
\end{align*}
After augmenting the decision variables $(\theta, u)$ with the variance level $v$, we obtain a system of optimality conditions expressed as exact sample-average estimating equations. Applying \Cref{lemma:m-estimator0-aug}, the Jacobian blocks coupling $\theta$ with the nuisances $(u, v)$ vanish under~\eqref{eq:chi2-centering}. Hence, for each fixed $\lambda$ such that $I_{\ell}(\theta^*) + \sqrt{\lambda}\,\E_{\P}[G_1(\xi)]$ is invertible,
\begin{small}
\begin{equation}\label{eq:ch2-div-unification}
    \hat\theta_{\lambda} - \hat\theta_{EO} \;=\; -\sqrt{\lambda}\, \Para{I_{\ell}(\theta^*) + \sqrt{\lambda}\,\E_{\P}[G_1(\xi)]}^{-1}\cdot \frac{1}{n}\sum_{i = 1}^n (G_0(\xi_i) + \E_{\P}[G_1(\xi)]\, \IFx(\xi_i)) \;+\; o_p(n^{-1/2}).
\end{equation}
\end{small}
In the form of \Cref{thm:unification}, this corresponds to $r_{\lambda, n} = o_p(n^{-1/2})$, 
\begin{align*}
M(\theta;\xi) &= G_0(\xi) + \E_{\P}[G_1(\xi)]\IFx(\xi) \in \R^{D_{\theta}},\\
H(\lambda) &= -\sqrt{\lambda}\, (I_{\ell}(\theta^*) + \sqrt{\lambda}\E_{\P}[G_1(\xi)])^{-1} \in \R^{D_{\theta}\times D_{\theta}}.
\end{align*}

We make the following two comments. First, under~\eqref{eq:chi2-centering} the side information is automatically correct: $\E_{\P}[G_0(\xi)] = \Cov_{\P}[\ell(\theta^*;\xi), \nabla_{\theta}\ell(\theta^*;\xi)]/\sigma_{\ell} = \mathbf{0}$ and $\E_{\P}[\IFx(\xi)] = \mathbf{0}$, so $\E_{\P}[M(\theta^*;\xi)] = \mathbf{0}$, and the conditions of \Cref{thm:improvement-principle} reduce to the non-orthogonality condition $\Cov_{\P}[\IFx(\xi), M(\theta;\xi)] \neq \mathbf{0}$. Second, for small $\lambda$ we have $H(\lambda) = -\sqrt{\lambda}\, I_{\ell}(\theta^*)^{-1} + O(\lambda)$, recovering Representation (i) up to $o_p(\sqrt{\lambda} \vee n^{-1/2})$; for constant-order $\lambda$, $H(\lambda)$ remains bounded rather than diverging, a regime invisible to the $\lambda = o(1)$ expansions of \citet{gotoh2021calibration,lam2021impossibility}.

Beyond the DRO formulations, \citet{jiang2024distributionally} propose a \emph{distributionally favorable} framework, which replaces the min-max formulation by a min-min formulation over the decision variable and the distribution. This can also be incorporated into Theorem~\ref{thm:unification}: in the $\chi^2$-divergence case, the second term on the right-hand side of the equivalence~\eqref{eq:chi2-dro-dual} above changes sign from
\(+\sqrt{\lambda\,\text{Var}_{\hat\P_n}[\ell(\theta;\xi)]}\) 
to 
\(-\sqrt{\lambda\,\text{Var}_{\hat\P_n}[\ell(\theta;\xi)]}\).

This result for $\chi^2$-divergence-based DRO under representation (i) can be generalized to the standard $f$-divergence-based DRO in~\eqref{eq:f-divergence}. Under \Cref{asp:additional-summary}, Theorem 2 of \cite{duchi2021statistics} gives
\[\max_{d_f(\Q, \hat\P_n)\leq \lambda}\E_{\Q}[\ell(\theta;\xi)] = \E_{\hat\P_n}[\ell(\theta;\xi)] + \sqrt{\frac{\lambda f''(1)}{2}\text{Var}_{\hat\P_n}[\ell(\theta;\xi)]} + o(\sqrt{\lambda}).\]
Therefore, we have:
\begin{align*}
M(\theta;\xi) & = \big(\ell(\theta^*;\xi)-\E_{\P}[\ell(\theta^*;\xi)]\big)\nabla_{\theta}\ell(\theta^*;\xi) \in \R^{D_{\theta}}\\   
H(\lambda)&=- I_{\ell}(\theta^*)^{-1} \frac{\sqrt{\lambda f''(1)/2}}{\sqrt{\text{Var}_{\P}[\ell(\theta^*;\xi)]}}\in\R^{D_\theta\times D_\theta}.
\end{align*}

$\hfill \square$

\subsubsection{Transfer Learning}
Recall that in the transfer learning setup, $\theta_0$ is  a fixed parameter estimated from other data sources and the estimator is: 
\[
\hat\theta_{\lambda}\in
\argmin_{\theta\in\Theta}\Para{\frac{1}{n}\sum_{i=1}^n \ell(\theta;\xi_i)
+ \lambda\|\theta-\theta_0\|_{2}^2}.
\]
This is the same as explicit regularization~\eqref{eq:reg-method} with $G(\theta;\xi)=\|\theta-\theta_0\|_2^2$ described in~\Cref{ex:auxiliary-stat}.

\subsubsection{Shrinkage Estimator}\label{app:shrinkage}
Recall that the shrinkage estimator $\hat\theta_{\lambda}
= \Big(1 + \tfrac{G(\lambda)}{\|\hat\theta_{EO}\|_2^2}\Big)\hat\theta_{EO}$, where $G(\lambda)$ is a fixed function of $\lambda$. 

\textit{Proof of \Cref{ex:auxiliary-stat}.} From the standard influence function expansion $\hat\theta_{EO} = \theta^* + \frac{1}{n}\sum_{i=1}^n \IFx(\xi_i) +  o_p\left(\frac{1}{\sqrt{n}}\right)$, squaring both sides gives: $\|\hat\theta_{EO}\|_2^2 
= \|\theta^*\|_2^2 + 2\theta^{*\top}\left(\frac{1}{n}\sum_{i=1}^n \IFx(\xi_i)\right) 
+ o_p\left(\frac{1}{\sqrt{n}}\right)$. Therefore:
\begin{align*}
\frac{1}{\|\hat\theta_{EO}\|_2^2}
&= \frac{1}{\|\theta^*\|_2^2} 
- \frac{2\theta^{*\top}\left(\frac{1}{n}\sum_{i=1}^n \IFx(\xi_i)\right)}{\|\theta^*\|_2^4} 
+ o_p\left(\frac{1}{\sqrt{n}}\right).
\end{align*}
Moreover, 
\begin{align*}
\hat\theta_\lambda - \hat\theta_{EO}
& = \frac{G(\lambda)}{\|\hat\theta_{EO}\|_2^2}\cdot \hat\theta_{EO}\\
& = \frac{G(\lambda)}{\|\theta^*\|_2^2}\Para{1 - \frac{2\theta^{*\top} \Para{\frac{1}{n}\sum_{i = 1}^n \IFx(\xi_i)}}{\|\theta^*\|_2^2}}\Para{\theta^* + \frac{1}{n}\sum_{i =1}^n \IFx(\xi_i)}
+ o_p\left(\frac{1}{\sqrt{n}}\right).\\
& = \frac{G(\lambda)}{\|\theta^*\|_2^2}\Para{\theta^* + \Para{I_{D_{\theta}\times D_{\theta}} - \frac{2\theta^*\theta^{*T}}{\|\theta^*\|_2^2}}\frac{1}{n}\sum_{i = 1}^n \IFx(\xi_i) + O_p\Para{\frac{1}{n}}} + o_p\Para{\frac{1}{\sqrt{n}}}.
\end{align*}
This implies:
\[ M(\theta;\xi_i) 
:=\theta^* + \Para{I_{D_{\theta}\times D_{\theta}} -\frac{2\theta^* \theta^{*\top}}{\|\theta^*\|_2^2}} \IFx(\xi_i),
\qquad 
H(\lambda) :=  G(\lambda)/\|\theta^*\|_2^2, \quad r_{\lambda, n} = o_p(n^{-1/2}).
\]

$\hfill \square$

In conclusion, these casewise representations establish \Cref{thm:unification}. $\hfill \square$

\subsection{\textit{Proof of \Cref{coro:linear-symmetric}}}
For the OLS loss with $X \equiv 1$, $\ell(\theta;\xi) = (\theta - \xi)^2$, we have $\nabla_{\theta}\ell(\theta^*;\xi) = -2\epsilon$, $I_{\ell}(\theta^*) = 2$, and hence $\IFx(\xi) = \epsilon$. Denote $\sigma^2 := \E[\epsilon^2]$ and $\mu_k := \E[\epsilon^k]$, and note that $\ell(\theta^*;\xi) = \epsilon^2$ and $\sigma_{\ell}^2 = \Var(\epsilon^2) = \mu_4 - \sigma^4$.

First, the centering condition~\eqref{eq:chi2-centering} reads $\Cov(\epsilon^2, -2\epsilon) = -2\mu_3 = 0$. For the location-shifted exponential distribution, $\mu_3 \neq 0$, so $\theta_{\lambda}^* \neq \theta^*$ and the $\chi^2$-based perturbation carries a nonvanishing bias; equivalently, in Representation (i), $\mu_M(\theta^*) \propto \mu_3 \neq 0$, and a first-order improvement is not attainable under that noise type by \Cref{thm:improvement-principle}.

For symmetric noise, $\mu_3 = 0$ and~\eqref{eq:chi2-centering} holds. Specializing~\eqref{eq:ch2-div-unification}, we compute $G_0(\xi) = -2(\epsilon^3 - \sigma^2\epsilon)/\sigma_{\ell}$ and $G_1(\xi) = (6\epsilon^2 - 2\sigma^2)/\sigma_{\ell}$ with $\E[G_1(\xi)] = 4\sigma^2/\sigma_{\ell}$, so that
\[
\hat\theta_{\lambda} - \hat\theta_{EO} = \frac{\sqrt{\lambda}}{\sigma_{\ell} + 2\sqrt{\lambda}\,\sigma^2}\cdot \frac{1}{n}\sum_{i = 1}^n \Para{\epsilon_i^3 - 3\sigma^2\epsilon_i} + o_p(n^{-1/2}).
\]
Thus the perturbation direction is $M(\theta;\xi) \propto \epsilon^3 - 3\sigma^2\epsilon$, which is correct, i.e., $\E[M(\theta^*;\xi)] \propto \mu_3 = 0$, for all symmetric noise distributions. For the non-orthogonality condition, we calculate
\[
\E[M(\theta^*;\xi)^{\top}\IFx(\xi)] \propto \E[\epsilon^4 - 3\sigma^2\epsilon^2] = \mu_4 - 3\sigma^4.
\]
\begin{itemize}
    \item When $\epsilon$ follows a normal distribution, $\mu_4 = 3\sigma^4$ and $\E[M(\theta^*;\xi)^{\top}\IFx(\xi)] = 0$: under the normal distribution, $\epsilon^3 - 3\sigma^2\epsilon$ is orthogonal to $\epsilon$ by Hermite orthogonality, so no first-order improvement is available.
    \item When $\epsilon$ follows a symmetric distribution with finite fourth moment and $\mu_4 \neq 3\sigma^4$ (e.g., Laplace, uniform, or $t$-distribution with more than four degrees of freedom), $\E[M(\theta^*;\xi)^{\top}\IFx(\xi)] \neq 0$ and a first-order improvement is attainable.
\end{itemize}
For the LAD loss $\ell(\theta;\xi) = |\theta - \xi|$, we obtain $\IFx(\xi) = \frac{\text{sgn}(\epsilon)}{2f(\theta^*)}$. For the $\chi^2$-divergence EO+ solution, we have
\[M(\theta^*;\xi) = \frac{\text{sgn}(\epsilon) {\sqrt{\text{Var}(|\epsilon|)}} + \sqrt{\lambda}(\epsilon - \E[|\epsilon|]\text{sgn}(\epsilon))}{2f(\theta^*) {\sqrt{\text{Var}(|\epsilon|)}} + \sqrt{\lambda}(1 - 2 \E[|\epsilon|] f(\theta^*))} - \frac{\text{sgn}(\epsilon)}{2f(\theta^*)}.\] For the location-shifted exponential distribution, $\E[M(\theta^*;\xi)] \neq 0$. This implies that first-order improvement is not attainable under that noise type.  For other noises, $\E[M(\theta^*;\xi)] = 0$. 

Furthermore, we calculate:
$\E[M(\theta^*;\xi)^{\top}\IFx(\xi)] \propto 1 - 2 \E[|\epsilon|] f(\theta^*)$. 
\begin{itemize}
    \item When $\epsilon \sim \text{Lap}(0, b)$, $\E[|\epsilon|] = b$ and $f(\theta^*) = \frac{1}{2b}$; therefore, $\E[M(\theta^*;\xi)^{\top} \IFx(\xi)] = 0$;
    \item When $\epsilon$ follows other symmetric distributions (Normal, $t$-distribution, Uniform distribution), $1 - 2 \E[|\epsilon|] f(\theta^*) \neq 0$; therefore, $\E[M(\theta^*;\xi)^{\top} \IFx(\xi)] \neq 0$. 
\end{itemize}

$\hfill \square$

\subsection{\textit{Proof of \Cref{coro:eo-plus-improvement}}}\label{app:proof-eo-plus}
We show that, for each EO+ formulation in \Cref{defn:problemclass}, the condition $H(\lambda)\mu_M(\theta^*) = 0$ implies $\mu_M(\theta^*) = 0$ when $\lambda \neq 0$. 

For all methods whose explicit representations are derived in Appendices~\ref{app:explicitregularization} to~\ref{app:shrinkage}, the adjustment matrix takes the form $H(\lambda) = G(\lambda)\tilde G_{\lambda}^{-1}$, where $G(\lambda) = \lambda^q$ for some $q > 0$ and $\tilde G_{\lambda}$ is an invertible matrix. 

Then if $H(\lambda)\mu_M(\theta^*) = \lambda^q \tilde G_{\lambda}^{-1}\mu_M(\theta^*) = 0$, multiplying both sides by $\tilde G_{\lambda}$ yields $\lambda^q \mu_M(\theta^*) = 0$. Since $\lambda \neq 0$, it follows that $\mu_M(\theta^*) = 0$. The result then follows from the same analyses of Theorem~\ref{thm:improvement-principle} after replacing~\eqref{eq:empirical-solution-adjustment} with the formulation of $\hat\theta_{\lambda}$ in~\eqref{eq:eo-plus-representation} of \Cref{thm:unification}.

$\hfill \square$


\subsection{Connections between Directionally Perturbed EO Solutions and CVaR-DRO} \label{app:cvardro}
We next consider functions $f(\cdot)$ satisfying \Cref{def:f-divergence} but not necessarily the standard smooth form. This class includes generalized divergences associated with Conditional Value-at-Risk (CVaR)~\citep{rockafellar2000optimization,duchi2021learning,sahoo2022learning}. For example, in~\eqref{eq:f-divergence}, by setting $f(x) = 0$ if $x \in [1 - \lambda, \frac{1}{1-\lambda}]$ and $\infty$ otherwise, we obtain the CVaR objective, which can be formulated as:
\begin{equation}\label{eq:cvar-objective}
    (\hat\theta_{\lambda}, \hat\eta_{\lambda}) =\argmin_{\theta,\eta}\;\; \eta + \frac{1}{1-\lambda}\,\E_{\hat\P_n}\big[(\ell(\theta;\xi)-\eta)^+\big].
\end{equation}
\begin{assumption}[Condition for CVaR-DRO]\label{asp:condition-cvar}
Let $(\theta_\lambda^*,\eta_\lambda^*)$ denote the population solution obtained by replacing $\hat\P_n$ with $\P$ in~\eqref{eq:cvar-objective}. Let $\eta_\lambda^*$ be the $\lambda$-quantile of $\ell(\theta_{\lambda}^*;\xi)$ under $\P$. Assume the following conditions.
\begin{itemize}
    \item $\theta_{\lambda}^* = \theta^*$;
    \item There exists a unique $(\theta_{\lambda}^*, \eta_{\lambda}^*)$ such that $\E[\psi(\theta_\lambda^*,\eta_\lambda^*;\xi)]=0$, where $\psi(\theta,\eta;\xi)
:=\begin{bmatrix}
\frac{1}{1-\lambda}\,\mathbf{1}_{\{\ell(\theta;\xi)>\eta\}}\;\nabla_\theta \ell(\theta;\xi)\\[4pt]
1-\frac{1}{1-\lambda}\,\mathbf{1}_{\{\ell(\theta;\xi)>\eta\}}
\end{bmatrix}$;
    \item Assume $J_\lambda:=\nabla_{(\theta,\eta)}\E[\psi(\theta_\lambda^*,\eta_\lambda^*;\xi)]$ is invertible. Partition it as $J_\lambda=\left[\begin{smallmatrix}J_{\theta\theta,\lambda}&
    J_{\theta\eta,\lambda}\\J_{\eta\theta,\lambda}&
    J_{\eta\eta,\lambda}\end{smallmatrix}\right]$. Also assume
$J_{\theta\eta,\lambda}=\mathbf 0$ and
$J_{\eta\theta,\lambda}=\mathbf 0$;
    \item  The random variable $\ell(\theta^*;\xi)$ has a continuous distribution in a neighborhood of the quantile threshold $\eta_{\lambda}^*$;
\end{itemize}
\end{assumption}
\begin{theorem}[Representation and Improvement Characterization of CVaR-EO+ Solution]\label{thm:cvar-unification}
Suppose Assumptions~\ref{asp:theta-star0},~\ref{asp:optimal-condition}, and~\ref{asp:condition-cvar} holds. Suppose the CVaR-EO+ solution $\hat\theta_{\lambda}$ is computed by: (i) the solution to~\eqref{eq:cvar-objective}, if $\lambda \geq 0$; (ii) $2\hat\theta_{EO} - \hat\theta_{-\lambda}$, if $\lambda < 0$. Then we have:
\[\hat\theta_{\lambda} = \hat\theta_{EO} + \frac{1}{n}\sum_{i = 1}^n M_{\lambda}(\xi_i) + o_p(n^{-1/2}),\]
where $M_{\lambda}(\xi)$ is a function that depends on $\xi$ and $\lambda$. Moreover, $\hat\theta_{\lambda}$ achieves a first-order improvement over $\hat\theta_{EO}$ only if $\lambda = \Theta(1)$.
\end{theorem}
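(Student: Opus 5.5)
The plan is to treat the CVaR program \eqref{eq:cvar-objective} as a Z-estimator in the augmented parameter $(\theta,\eta)$ and apply \Cref{lemma:m-estimator0-aug}.

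\textbf{Linearizing the CVaR solution ($\lambda\ge 0$).} Subgradient optimality of \eqref{eq:cvar-objective} gives $\frac1n\sum_i\psi(\hat\theta_\lambda,\hat\eta_\lambda;\xi_i)=o_p(n^{-1/2})$. The $o_p$ term accounts for the discontinuity of the indicator at ties; it is $O_p(1/n)$ because $\ell(\theta^*;\xi)$ is continuous near $\eta_\lambda^*$. The class $\{\mathbf 1_{\{\ell(\theta;\xi)>\eta\}}\nabla_\theta\ell(\theta;\xi)\}$ near $(\theta^*,\eta_\lambda^*)$ is Donsker. This follows from the Donsker property of $\nabla_\theta\ell$ in \Cref{asp:additional-summary}, together with the VC property of the sets $\{\ell(\theta;\xi)>\eta\}$ when $\ell$ is Lipschitz in $\theta$. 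The $L_2$-continuity in (i) uses continuity of the distribution of $\ell(\theta^*;\xi)$ near $\eta^*_\lambda$. Consistency of $(\hat\theta_\lambda,\hat\eta_\lambda)$ comes from uniqueness of the population root and a standard argmin argument.

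With $\theta^*_\lambda=\theta^*$ and the block-diagonal $J_\lambda$, \Cref{lemma:m-estimator0-aug} yields
\[
\hat\theta_\lambda-\theta^*=-J_{\theta\theta,\lambda}^{-1}\frac1n\sum_{i=1}^n\frac{\mathbf 1_{\{\ell(\theta^*;\xi_i)>\eta^*_\lambda\}}}{1-\lambda}\nabla_\theta\ell(\theta^*;\xi_i)+o_p(n^{-1/2}).
\]
Subtracting the expansion of $\hat\theta_{EO}$ from \Cref{prop:eo-conv} gives
\[
M_\lambda(\xi)=-J_{\theta\theta,\lambda}^{-1}\tfrac{1}{1-\lambda}\mathbf 1_{\{\ell(\theta^*;\xi)>\eta^*_\lambda\}}\nabla_\theta\ell(\theta^*;\xi)-\IFx(\xi).
\]
For $\lambda<0$, linearity of the extrapolation $2\hat\theta_{EO}-\hat\theta_{-\lambda}$ gives $M_\lambda=-M_{-\lambda}$ with the same remainder order.

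\textbf{Improvement characterization.} The representation is a directionally perturbed EO with $H=I$ and perturbation $M_\lambda$ evaluated at $\theta^*$, so \Cref{coro:eo-plus-improvement} and the argument of \Cref{thm:improvement-principle} apply, with no $\nabla_\theta\mu_M$ term. Correctness holds because $\E[M_\lambda]=-J^{-1}_{\theta\theta,\lambda}\E[\psi_\theta(\theta^*,\eta^*_\lambda)]-\E[\IFx]=0$. The excess-risk difference is then $\frac1{2n}\big(\E\|\IFx+M_\lambda\|^2_{I_\ell}-\E\|\IFx\|^2_{I_\ell}\big)+o(1/n)$.

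If $\lambda=o(1)$, then $\eta^*_\lambda$ tends to the lower endpoint of the distribution of $\ell(\theta^*;\xi)$. The indicator tends to $1$ in $L_2$, $J_{\theta\theta,\lambda}\to I_\ell(\theta^*)$, and so $M_\lambda\to 0$ in $L_2$ by dominated convergence using the third-moment condition. The leading constant therefore equals $C_{EO}$, and the gain is only higher-order. The symmetric statement holds for negative $\lambda\to0$ through $M_{-\lambda}$. Hence a first-order gain can occur only when $\lambda$ stays bounded away from $0$, i.e.\ $\lambda=\Theta(1)$. We also keep $\lambda$ bounded away from $1$ so that $1/(1-\lambda)$ and $J^{-1}$ remain bounded.

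\textbf{Main obstacle.} The delicate step is verifying the hypotheses of \Cref{lemma:m-estimator0-aug} for the nonsmooth $\psi$. This means the Donsker property of the indicator-weighted gradients, and the differentiability of $\E\psi$ in $(\theta,\eta)$ that produces $J_\lambda$ (requiring a density of $\ell(\theta^*;\xi)$ near $\eta^*_\lambda$). I would first handle these via the bracketing of half-space-type indicators. A second point of care is the $L_2$ convergence $M_\lambda\to0$ as $\lambda\to0$, and here the uniform third moment in \Cref{asp:optimal-condition} supplies the domination.
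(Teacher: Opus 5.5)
Your proposal is correct and follows essentially the same route as the paper: you treat \eqref{eq:cvar-objective} as a $Z$-estimator in $(\theta,\eta)$ via \Cref{lemma:m-estimator0-aug}, use the block-diagonal $J_\lambda$ to decouple the $\theta$-score (so that, after the $1-\lambda$ factors cancel, $M_\lambda(\xi)=\big(I_\ell(\theta^*)^{-1}-I_{\ell,\lambda}(\theta^*)^{-1}\mathbf 1_{\{\ell(\theta^*;\xi)>\eta_\lambda^*\}}\big)\nabla_\theta\ell(\theta^*;\xi)$), flip the sign for $\lambda<0$, and invoke the $\mu_M(\theta^*)=\mathbf 0$ case of \Cref{thm:improvement-principle}. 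Your $L_2$ argument that $M_\lambda\to 0$ as $\lambda\to 0$ is a slightly stronger form of the paper's observation that $\Gamma(\lambda)\to\mathbf 0$, since it also removes the variance term and pins the leading constant at $C_{EO}$; your treatment of the Donsker and differentiability hypotheses is also more careful than the paper's, which applies the lemma without checking them, though note that Lipschitzness of $\ell$ in $\theta$ does not by itself give a VC property for the sets $\{\ell(\theta;\xi)>\eta\}$, so the bracketing route you mention at the end is the one to use.
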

Because the perturbation function is complicated, characterizing improvements for constant-order $\lambda$ requires problem-specific calculations. We provide the linear-regression calculations in \Cref{app:numerical-lr}.

\textit{Proof of \Cref{thm:cvar-unification}.} For $\lambda > 0$, 
following \Cref{lemma:m-estimator0-aug}, the influence function for
$(\hat\theta_{\lambda},\hat\eta_{\lambda})$ is
$-J_\lambda^{-1}\psi(\theta_\lambda^*,\eta_\lambda^*;\xi)$. For the
representation in \Cref{ex:auxiliary-robust}$(b)$, under \Cref{asp:condition-cvar}, writing
$J_\lambda=\left[\begin{smallmatrix}J_{\theta\theta,\lambda}&
J_{\theta\eta,\lambda}\\J_{\eta\theta,\lambda}&
J_{\eta\eta,\lambda}\end{smallmatrix}\right]$, we have
$J_{\theta\eta,\lambda}=\mathbf 0$ and
$J_{\eta\theta,\lambda}=\mathbf 0$. Hence the $\theta$ score is locally
decoupled from the $\eta$ score.
Define the unscaled $\theta$-Jacobian of the tail score by
\[
I_{\ell,\lambda}(\theta^*)
:=\left.\nabla_\theta\E_{\P}\!\left[
\mathbf{1}_{\{\ell(\theta;\xi)>\eta_\lambda^*\}}
\nabla_\theta\ell(\theta;\xi)
\right]\right|_{\theta=\theta_\lambda^*}.
\]
Then the $\theta\theta$ block of $J_\lambda$ is
$I_{\ell,\lambda}(\theta^*)/(1-\lambda)$. Consequently, the factors $1-\lambda$ cancel when
this block is inverted, and the influence function of $\hat\theta_\lambda$ is
\[
\IFx_\lambda(\xi)
=-I_{\ell,\lambda}(\theta^*)^{-1}
\mathbf{1}_{\{\ell(\theta_\lambda^*;\xi)>\eta_\lambda^*\}}
\nabla_\theta\ell(\theta_\lambda^*;\xi).
\]

Since $\theta_\lambda^*=\theta^*$, the asymptotic linear expansions of the
CVaR and EO estimators give
\begin{equation}\label{eq:cvar-unify}
\begin{aligned}
\hat\theta_\lambda-\hat\theta_{EO}
&\;=\; \frac{1}{n}\sum_{i=1}^n
 {\Big[I_{\ell}(\theta^*)^{-1}-I_{\ell,\lambda}(\theta^*)^{-1}
 \mathbf{1}_{\{\ell(\theta^*;\xi_i)>\eta_\lambda^*\}}\Big]}
 {\nabla_\theta \ell(\theta^*;\xi_i)}
 \;+\; o_p(n^{-1/2}),
\end{aligned}
\end{equation}
which implies $M_{\lambda}(\xi)
=\left(I_{\ell}(\theta^*)^{-1}
-I_{\ell,\lambda}(\theta^*)^{-1}\mathbf{1}_{\{\ell(\theta^*;\xi)>\eta_\lambda^*\}}\right)
\nabla_\theta\ell(\theta^*;\xi)$ when $\lambda > 0$. For $\lambda < 0$, one can easily compute $M_{\lambda}(\xi) = \left(I_{\ell,-\lambda}(\theta^*)^{-1}\mathbf{1}_{\{\ell(\theta^*;\xi)>\eta_{-\lambda}^*\}} - I_{\ell}(\theta^*)^{-1}\right)
\nabla_\theta\ell(\theta^*;\xi)$.

Because CVaR-EO+ has the same perturbation form as \Cref{defn:direction-perturb}, the analysis follows the $\mu_M(\theta^*) = 0$ case in \Cref{thm:improvement-principle} and gives
\[\E[Z(\hat\theta_{\lambda}) - Z(\hat\theta_{EO})] = \frac{\tr[I_{\ell}(\theta^*)\Gamma(\lambda)] + \tr[I_{\ell}(\theta^*)\Var_{\P}[M_{\lambda}(\xi)]]}{2n},\]
where the perturbation function is denoted as $M_{\lambda}(\xi)$ and the covariance matrix $\Gamma(\lambda)$ can be computed as follows:
\begin{align*}
\Gamma(\lambda) & = \Cov_{\P}[\IFx(\xi), M_{\lambda}(\xi)] \\
& = \E_{\P}[\IFx(\xi)M_{\lambda}(\xi)^{\top}]\\
& = \E_{\P}[\IFx(\xi)\IFx(\xi)^{\top}] - \E_{\P}[\nabla_{\theta}\ell(\theta^*;\xi)(I_{\ell}(\theta^*)I_{\ell, \lambda}(\theta^*))^{-1}\mathbf{1}_{\{\ell(\theta^*;\xi) > \eta_{\lambda}^*\}}\nabla_{\theta}\ell(\theta^*;\xi)^{\top}].
\end{align*}
Above, $\lim_{\lambda \to 0}\mathbf{1}_{\{\ell(\theta^*;\xi) > \eta_{\lambda}^*\}} = 1$ and $\lim_{\lambda \to 0}I_{\ell, \lambda}(\theta^*) = 1$ for almost every $\xi$. Then $\lim_{\lambda \to 0}\Gamma(\lambda) = 0$. If  $\lambda = o(1)$, the non-orthogonality condition does not hold asymptotically while $\Var_{\P}[M_{\lambda}(\xi)] \succeq 0$. This implies that a first-order improvement is not achievable when $\lambda = o(1)$.

$\hfill \square$

\subsection{Connections between Directionally Perturbed EO Solutions and GMM}\label{app:gmm}
Finally, we relate our directionally perturbed EO solution to the GMM estimator $\hat\theta_{{GMM}}$, which is defined as the solution to
\[
\hat\theta_{GMM} \in \arg\min_{\theta \in \Theta}\ \hat g(\theta)^\top C \hat g(\theta),
\]
where $\hat g(\theta)
=\begin{bmatrix}
\frac{1}{n}\sum_{i=1}^n \nabla_\theta \ell(\theta;\xi_i)\\[6pt]
\frac{1}{n}\sum_{i=1}^n G(\theta;\xi_i)
\end{bmatrix} \in \R^{D_\theta + D}$ and $C \in \R^{(D+D_{\theta})\times (D+D_{\theta})}$ is a fixed weighting matrix.

\begin{assumption}[Condition for GMM]\label{asp:condition-gmm}
Let
\[
g(\theta;\xi):=
\begin{bmatrix}
\nabla_\theta\ell(\theta;\xi)\\
G(\theta;\xi)
\end{bmatrix},
\qquad
\bar g(\theta):=\E_{\P}[g(\theta;\xi)].
\]
Suppose the following conditions hold:
\begin{enumerate}[(i)]
    \item $C=C^\top\succ0$ is fixed and deterministic, and
    $\bar g(\theta^*)=\mathbf 0$;
    \item for some neighborhood
    $\mathcal N$ of $\theta^*$, $\theta^*$ is the unique minimizer of
    $\bar g(\theta)^\top C\bar g(\theta)$ over $\mathcal N$;
    \item $g(\theta;\xi)$ is continuously differentiable in $\theta$ on
    $\mathcal N$ for $\P$-almost every $\xi$. There exist measurable
    envelopes $F$ and $L$ satisfying
    $\E_{\P}[F(\xi)^2]+\E_{\P}[L(\xi)]<\infty$ such that $\sup_{\theta\in\mathcal N}
    \big\{\|g(\theta;\xi)\|_2+
    \|\nabla_\theta g(\theta;\xi)\|\big\}
    \leq F(\xi)$
    and $\|\nabla_\theta g(\theta_1;\xi)
    -\nabla_\theta g(\theta_2;\xi)\|_{\mathrm{op}}
    \leq L(\xi)\|\theta_1-\theta_2\|_2$ for any $\theta_1,\theta_2\in\mathcal N$;
    \item Denote
    \[
    A:=\E_{\P}[\nabla_\theta^2\ell(\theta^*;\xi)],
    \qquad
    B:=\E_{\P}[\nabla_\theta G(\theta^*;\xi)],
    \qquad
    D_*:=\begin{bmatrix}A\\ B\end{bmatrix}.
    \]
    The matrix $D_*^\top C D_*$ is nonsingular.
\end{enumerate}
\end{assumption}
\begin{theorem}[Representation of GMM as EO+ Solution]\label{thm:gmm-unification}
Suppose Assumptions~\ref{asp:theta-star0},~\ref{asp:optimal-condition}, and~\ref{asp:condition-gmm} hold. Then, the GMM solution $\hat\theta_{GMM}$ can be expressed as an EO+ solution with the following representation:
\[
\hat\theta_{GMM}-\hat\theta_{EO}
= \frac{1}{n}\sum_{i=1}^n H^* G\Para{\hat\theta_{EO};\xi_i} + o_p\Para{n^{-1/2}},
\]
where $H^*
= -\Para{A^\top C_{11} A + A^\top C_{12} B + B^\top C_{21} A + B^\top C_{22} B}^{-1}
 \Para{A^\top C_{12} + B^\top C_{22}}$, with $A$ and $B$ evaluated at $\theta^*$ as defined in \Cref{asp:condition-gmm}.
\end{theorem}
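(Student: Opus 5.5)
We obtain a first-order expansion of $\hat\theta_{GMM}$ around $\theta^*$ via the first-order condition of the GMM criterion, and then subtract the influence-function expansion of $\hat\theta_{EO}$ from \Cref{prop:eo-conv}. The first step is consistency. Under \Cref{asp:condition-gmm}(ii)--(iii), the envelope $F$ gives a uniform law of large numbers for $\hat g$ on $\mathcal N$, so $\hat g(\theta)^\top C\hat g(\theta)\to\bar g(\theta)^\top C\bar g(\theta)$ uniformly. Local uniqueness of the minimizer then yields $\hat\theta_{GMM}\overset{p}{\to}\theta^*$ by the argmax theorem (Theorem 5.7 of \cite{van2000asymptotic}).

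Next comes linearization. The first-order condition is $\nabla_\theta\hat g(\hat\theta_{GMM})^\top C\,\hat g(\hat\theta_{GMM})=0$. We expand $\hat g(\hat\theta_{GMM})=\hat g(\theta^*)+\nabla_\theta\hat g(\theta^*)(\hat\theta_{GMM}-\theta^*)+O_p(\|\hat\theta_{GMM}-\theta^*\|^2)$, where the remainder is controlled by the Lipschitz envelope $L$ as in \Cref{ass:moments}. We also use $\nabla_\theta\hat g(\hat\theta_{GMM})\overset{p}{\to}D_*$, and note that $\hat g(\theta^*)=O_p(n^{-1/2})$ because $\bar g(\theta^*)=0$. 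Invertibility of $D_*^\top CD_*$ then gives $\hat\theta_{GMM}-\theta^*=O_p(n^{-1/2})$ and
\[
\hat\theta_{GMM}-\theta^*=-(D_*^\top CD_*)^{-1}D_*^\top C\,\hat g(\theta^*)+o_p(n^{-1/2}).
\]
Writing $C$ in blocks, $D_*^\top CD_*=A^\top C_{11}A+A^\top C_{12}B+B^\top C_{21}A+B^\top C_{22}B=:W$. The first block of $\hat g(\theta^*)$ is $\frac1n\sum_i\nabla_\theta\ell(\theta^*;\xi_i)=-A\cdot\frac1n\sum_i\IFx(\xi_i)$ (recall $A=I_\ell(\theta^*)$). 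Hence
\[
\hat\theta_{GMM}-\theta^*=W^{-1}(A^\top C_{11}A+B^\top C_{21}A)\tfrac1n\textstyle\sum_i\IFx(\xi_i)-W^{-1}(A^\top C_{12}+B^\top C_{22})\tfrac1n\sum_i G(\theta^*;\xi_i)+o_p(n^{-1/2}).
\]

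The main obstacle is the coefficient on $\frac1n\sum_i\IFx(\xi_i)$. It is not the identity, so a naive subtraction of $\hat\theta_{EO}-\theta^*=\frac1n\sum_i\IFx(\xi_i)+o_p(n^{-1/2})$ leaves a residual. The key is to replace $G(\theta^*;\xi_i)$ by $G(\hat\theta_{EO};\xi_i)$, exactly as in \eqref{eq:first-order-fluctuation0}:
\[
\tfrac1n\textstyle\sum_i G(\hat\theta_{EO};\xi_i)=\tfrac1n\sum_i G(\theta^*;\xi_i)+B\,\tfrac1n\sum_i\IFx(\xi_i)+o_p(n^{-1/2}).
\]
This uses \Cref{ass:moments} with $K=L$ and $\hat\theta_{EO}-\theta^*=O_p(n^{-1/2})$. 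Substituting gives a total coefficient on $\frac1n\sum_i\IFx$ of
\[
W^{-1}\big(A^\top C_{11}A+B^\top C_{21}A+A^\top C_{12}B+B^\top C_{22}B\big)=W^{-1}W=I.
\]
Therefore
\[
\hat\theta_{GMM}=\theta^*+\tfrac1n\textstyle\sum_i\IFx(\xi_i)+H^*\tfrac1n\sum_i G(\hat\theta_{EO};\xi_i)+o_p(n^{-1/2}),
\]
with $H^*=-W^{-1}(A^\top C_{12}+B^\top C_{22})$. Subtracting the expansion of $\hat\theta_{EO}$ from \Cref{prop:eo-conv} then yields the claim.

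The remaining routine points are as follows. We need $\hat\theta_{EO}\in\mathcal N$ with probability tending to one, and the symmetry $C_{21}=C_{12}^\top$ should be used when matching the blocks. The substitution step is where care is needed: the cancellation $W^{-1}W=I$ works only after re-centering $G$ at $\hat\theta_{EO}$ rather than at $\theta^*$.
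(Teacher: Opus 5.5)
Your proof is correct. It uses the same core mechanism as the paper: linearize the GMM first-order condition $\hat D(\hat\theta_{GMM})^\top C\,\hat g(\hat\theta_{GMM})=\mathbf 0$ and invert $D_*^\top C D_*$. The difference is the expansion point. You expand around $\theta^*$, while the paper expands $\hat g(\hat\theta_{GMM})$ directly around $\hat\theta_{EO}$.

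In the paper's version, the EO first-order condition $\hat g_1(\hat\theta_{EO})=o_p(n^{-1/2})$ removes the first block immediately. This gives $\hat\theta_{GMM}-\hat\theta_{EO}=-(D_*^\top CD_*)^{-1}D_*^\top C\,(\mathbf 0^\top,\hat g_2(\hat\theta_{EO})^\top)^\top+o_p(n^{-1/2})$. So the cancellation $W^{-1}W=I$, which you single out as the main obstacle, never has to be checked by hand. Your route costs that extra re-centering step, but it yields as a by-product the full asymptotically linear representation of $\hat\theta_{GMM}$ about $\theta^*$. The two routes are equivalent because, under the smoothness in Assumption~\ref{asp:condition-gmm}(iii), $\hat g_1(\hat\theta_{EO})=o_p(n^{-1/2})$ and the influence-function expansion of $\hat\theta_{EO}$ imply each other.

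Two small remarks. First, symmetry of $C$ is not what makes the blocks match, since the stated $W$ already contains $A^\top C_{12}B$ and $B^\top C_{21}A$ as separate terms. Symmetry is used only to write the gradient of $\hat g^\top C\hat g$ as $2\hat D^\top C\hat g$. Second, local uniqueness on $\mathcal N$ gives consistency only for a minimizer taken over (a compact subset of) $\mathcal N$. The paper likewise treats $\hat\theta_{GMM}$ as a consistent interior local solution rather than proving this.
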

The theorem is restricted to a smooth, correctly specified, locally identified
GMM problem with an interior solution and a fixed weighting matrix $C$. For any $C$ with finite entries, $\hat\theta_{GMM}$ achieves a first-order improvement over $\hat\theta_{EO}$. The optimal matrix $C$ can be constructed from $\hat\theta_{EO}$ and the optimal GMM estimator is a one-step estimator~\citep{imbens1997one}.  Intuitively, comparing \Cref{thm:gmm-unification} with \Cref{coro:eo-plus-improvement}, $\|H^*\| = \Theta(1)$ and $\E_{\P^*}[G(\theta^*;\xi)] = 0$, and it satisfies the conditions of first-order improvements there. For now, we only focus on the fixed $C$ but one can derive a similar perturbation representation for a data-dependent weighting matrix $C$. Expressing the solution in the unified perturbation representation under \eqref{eq:empirical-solution-adjustment} and \Cref{thm:unification} offers two main advantages over the standard GMM formulation. 
First, it facilitates analysis of constrained or non-smooth problems, where $\nabla\ell$ may arise from noncontinuous objectives. 
Second, the unified view in \Cref{thm:unification} is more flexible: it naturally handles moment conditions that do not depend on $\theta$, is robust to misspecification, and enables fast adaptation in streaming-data settings without repeated optimization given the empirical solution $\hat\theta_{EO}$.

\textit{Proof of \Cref{thm:gmm-unification}.} Denote 
\[
\hat g(\theta):=\frac1n\sum_{i=1}^n g(\theta;\xi_i),\qquad
\hat g_j(\theta):=\frac1n\sum_{i=1}^n g_j(\theta;\xi_i)\ \Para{j=1,2},
\]
where $g_1(\theta;\xi):=\nabla_\theta \ell(\theta;\xi)\in\R^{D_{\theta}}, g_2(\theta;\xi):=G(\theta;\xi)\in\R^{D}$. Partition $C$ into blocks:
\[
C=\begin{bmatrix} C_{11} & C_{12}\\ C_{21} & C_{22}\end{bmatrix},\qquad  C_{11} \in \R^{D_{\theta}\times D_{\theta}}, C_{12}\in\R^{D_{\theta}\times D}, C_{21}\in\R^{D \times D_{\theta}}, C_{22} \in \R^{D\times D}.
\]
Let $\hat D(\theta):=\nabla_\theta\hat g(\theta)$. The envelope and
Lipschitz conditions imply the required uniform law of large numbers and
the uniform Taylor expansion
\[
\hat g(\theta_1)-\hat g(\theta_2)
=D_*(\theta_1-\theta_2)
+o_p(\|\theta_1-\theta_2\|_2)
\]
for consistent $\theta_1,\theta_2\in\mathcal N$. They also imply
$\hat D(\hat\theta_{GMM})\overset{p}{\to}D_*$. Moreover,
$\hat g_2(\hat\theta_{EO})=O_p(n^{-1/2})$ by the central limit theorem,
the expansion around $\theta^*$, and
$\hat\theta_{EO}-\theta^*=O_p(n^{-1/2})$.

Because $\hat\theta_{GMM}$ is an interior solution, its first-order
condition is
\[
\hat D(\hat\theta_{GMM})^\top C
\hat g(\hat\theta_{GMM})=\mathbf 0.
\]
Linearizing $\hat g(\hat\theta_{GMM})$ at $\hat\theta_{EO}$, using
$\hat g_1(\hat\theta_{EO})=o_p(n^{-1/2})$, and using the nonsingularity of
$D_*^\top C D_*$ gives
\begin{align*}
\hat\theta_{GMM}-\hat\theta_{EO}
&=-(D_*^\top C D_*)^{-1}D_*^\top C
\begin{bmatrix}\mathbf 0\\ \hat g_2(\hat\theta_{EO})\end{bmatrix}
+o_p(n^{-1/2})\\
&=-(D_*^\top C D_*)^{-1}
\Para{A^\top C_{12}+B^\top C_{22}}
\hat g_2(\hat\theta_{EO})
+o_p(n^{-1/2})\\
&=\frac1n\sum_{i=1}^n H^*
G(\hat\theta_{EO};\xi_i)+o_p(n^{-1/2}).
\end{align*}
Expanding $D_*^\top C D_*$ by blocks gives the displayed formula for
$H^*$.

$\hfill \square$


\section{Proofs and Additional Details in Section~\ref{sec:perturb-opt}}\label{app:proof-perturb-opt}
\subsection{\textit{Proof of \Cref{thm:opt-adjust}}}
Define $L_n(H):=\E_{\Dscr_n}[R(\hat\theta_{H,M})]$ and $Q(H):=
\frac12
\E_{\P}\!\left[
\|\IFx(\xi)+H\,\widetilde M(\theta^*;\xi)\|_{I_\ell(\theta^*)}^2
\right]$. 
Because $\Hscr_B$ is compact, the proof of
\Cref{thm:improvement-principle}, together with the moment and remainder
conditions in Assumption~\ref{asp:single-side-information}, gives the
expected-regret expansion uniformly over $\Hscr_B$:
\begin{equation}\label{eq:uniform-H-risk-expansion}
\sup_{H\in\Hscr_B}
\left|
nL_n(H)-Q(H)
\right|
=o(1).
\end{equation}
Since $H_n$ is an approximate minimizer and $H^*\in\Hscr_B$, we have $nL_n(H_n)
\leq \inf_{H\in\Hscr_B}nL_n(H)+o(1)
\leq nL_n(H^*)+o(1)$.

Applying~\eqref{eq:uniform-H-risk-expansion} to both $H_n$ and $H^*$
yields
\begin{equation}\label{eq:Q-near-minimizer}
Q(H_n)\leq Q(H^*)+o(1).
\end{equation}

To identify the population minimizer, from \Cref{asp:single-side-information}, we have:
$\E_{\P}[\widetilde M(\theta^*;\xi)]=\mathbf 0, \Cov_{\P}[\widetilde M(\theta^*;\xi)]\succ0$. Completing the square in
$Q$ at the minimizer $H^*$ gives
\begin{align*}
Q(H)-Q(H^*)
& =\frac12\tr\!\left[
I_\ell(\theta^*)(H-H^*)\widetilde\Omega_M(H-H^*)^\top
\right]\\
& \gtrsim \frac12\lambda_{\min}\!\left(I_\ell(\theta^*)\right)
\lambda_{\min}\!\left(\widetilde\Omega_M\right)
\|H-H^*\|^2.
\end{align*}
Combining this inequality with~\eqref{eq:Q-near-minimizer} shows that
$\|H_n-H^*\|=o(1)$.

$\hfill \square$

\subsection{\textit{Proof of \Cref{prop:max-improvement}}}
First, $\hat\theta_{H^*, M}$ achieves the largest first-order improvement over the EO solution $\hat\theta_{EO}$ by \Cref{asp:single-side-information} and \Cref{thm:opt-adjust}. We then bound:
\begin{small}
\begin{align*}
    n\|\hat\theta_{\widehat H_B, M} - \hat\theta_{H^*, M}\|_2^2 & = n\bigg\|\Pi_{\Theta}\Para{\hat\theta_{EO} + \frac{\widehat H_B}{n}\sum_{i=1}^n M\Para{\hat\theta_{EO};\xi_i}} - \Pi_{\Theta}\Para{\hat\theta_{EO} + \frac{H^*}{{n}}\sum_{i=1}^n M\Para{\hat\theta_{EO};\xi_i}}\bigg\|_2^2\\
    & \leq \bigg\|\frac{(\widehat H_B - H^*)}{\sqrt{n}}\sum_{i = 1}^n M(\hat\theta_{EO};\xi_i) \bigg\|_2^2 \leq \|\widehat H_B - H^*\|^2\bigg\|\sum_{i = 1}^n M(\hat\theta_{EO};\xi_i)/\sqrt{n}\bigg\|_2^2
\end{align*}
\end{small}
where the first inequality above follows from the non-expansiveness property of the projection operator.  Taking expectations on both sides, we have:
\begin{small}
\begin{align*}
    \E\Paran{n\|\hat\theta_{\widehat H_B, M} - \hat\theta_{H^*, M}\|_2^2} \leq \Para{\E\Paran{\|\widehat H_B - H^*\|^6}}^{\frac{1}{3}} \Para{\E\bigg\|\frac{1}{\sqrt{n}} \sum_{i = 1}^n M(\hat\theta_{EO};\xi_i)\bigg\|_2^3}^{\frac{2}{3}}  = o(1) \Theta(1) = o(1),
\end{align*}
\end{small}
where the first inequality above follows from H\"older inequality. In the second inequality, $\E\Paran{\|\widehat H_B - H^*\|^6} = o(1)$ follows from the fact that $\|\widehat H - H^*\| = o_p(1)$ and both $\widehat H_B, H^*$ are bounded; and $\E\bigg\|\frac{1}{\sqrt{n}} \sum_{i = 1}^n M(\hat\theta_{EO};\xi_i)\bigg\|_2^3 = \Theta(1)$ follows from \Cref{asp:consistent-estimate} for a neighborhood $\Nscr$ of $\theta^*$ and $\E_{\P}\|\sum_{i =1}^n M(\theta^*;\xi_i)/\sqrt{n}\|_2^3 =  \Theta(1)$. 

Finally, a second-order expansion of $R(\theta)$ at $\hat\theta_{\widehat H_B, M}$ and $\hat\theta_{H^*, M}$ yields $\E[R(\hat\theta_{\widehat H_B, M})] - \E[R(\hat\theta_{H^*, M})] = o(1/n)$. We therefore conclude that $\hat\theta_{\widehat H_B, M}$ has the same first-order improvement as $\hat\theta_{H^*, M}$.

$\hfill \square$

\subsection{Hardness of Attaining $H\mu_M(\theta^*) = \mathbf{0}$ when $\mu_M(\theta^*) \neq \mathbf{0}$}\label{app:h-correct-impossible}
In \Cref{sec:perturb-opt}, we do not consider the data-driven estimation of the optimal adjustment matrix $H$ in the case where $\mu_M(\theta^*)\neq \mathbf{0}$ but $H\mu_M(\theta^*)=\mathbf{0}$. We refer to this case as $H$-correctness. While such an adjustment is population-correct under \Cref{thm:improvement-principle},
preserving this correctness after estimating $H$ is generally difficult:
\begin{proposition}[Hardness of Data-Driven $H$-Correctness]
\label{lemma:h-correct-nonadaptive}
Let $H^*$ satisfy $H^*\mu_M(\theta^*)=\mathbf{0}$, and suppose
$\widehat H-H^*=o_p(1)$. Then $\hat\theta_{\widehat H, M}$ and $\hat\theta_{H^*, M}$ achieve the same amount of first-order improvement only if $(\widehat H-H^*)\mu_M(\theta^*) = o_p(n^{-1/2})$.

Moreover, suppose $H=H(\eta)$ for a finite-dimensional parameter
$\eta\in\R^q$, where $H(\eta)$ is differentiable at $\eta^*$ and
$H^*=H(\eta^*)$. If the matrix $G
    :=
    \left.
    \nabla_{\eta}
    \bigl\{H(\eta)\mu_M(\theta^*)\bigr\}
    \right|_{\eta=\eta^*}$ has full column rank, then $(\widehat H-H^*)\mu_M(\theta^*)
    =
    o_p(n^{-1/2})$ implies $\|\widehat\eta-\eta^*\|
    =
    o_p(n^{-1/2}).$
\end{proposition}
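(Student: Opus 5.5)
Part one comes down to expanding $\hat\theta_{\widehat H,M}$ around $\hat\theta_{H^*,M}$ and isolating a bias term. I write $\bar M:=\frac1n\sum_i M(\hat\theta_{EO};\xi_i)$. By \eqref{eq:first-order-fluctuation0}, $\bar M=\mu_M(\theta^*)+\frac1n\sum_i\widetilde M(\theta^*;\xi_i)+o_p(n^{-1/2})$. Since $\theta^*$ is interior, the projection is inactive with probability tending to one (as in the proof of \Cref{lemma:asym-perturbed-solution}). Therefore
\[
\hat\theta_{\widehat H,M}-\hat\theta_{H^*,M}=(\widehat H-H^*)\bar M=(\widehat H-H^*)\mu_M(\theta^*)+(\widehat H-H^*)\,O_p(n^{-1/2})+o_p(n^{-1/2}).
\]
The middle term is $o_p(n^{-1/2})$ because $\widehat H-H^*=o_p(1)$. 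Set $\delta_n:=(\widehat H-H^*)\mu_M(\theta^*)$. Then $\sqrt n(\hat\theta_{\widehat H,M}-\theta^*)=\sqrt n(\hat\theta_{H^*,M}-\theta^*)+\sqrt n\,\delta_n+o_p(1)$.

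The first leading term converges to $Y_{H^*}\sim\mathcal N(0,\Sigma_{H^*})$ by \Cref{lemma:asym-perturbed-solution}, using $\theta^*_{H^*,M}=\theta^*$. I plug this into the quadratic expansion \eqref{general expansion}. Then $nR(\hat\theta_{\widehat H,M})=\frac12\|\sqrt n(\hat\theta_{H^*,M}-\theta^*)+\sqrt n\delta_n\|^2_{I_\ell(\theta^*)}+o_p(1)$.

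I argue the contrapositive. Suppose $\sqrt n\delta_n\neq o_p(1)$. Then along a subsequence there are $\epsilon>0$ and sets of probability at least $\epsilon$ on which $\|\sqrt n\delta_n\|\ge\epsilon$. The goal is to show that this strictly increases the limit of $n\E[R]$ relative to $\frac12\tr[\Sigma_{H^*}I_\ell(\theta^*)]$.

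The main obstacle is the cross term $\E\langle\sqrt n(\hat\theta_{H^*,M}-\theta^*),\sqrt n\delta_n\rangle_{I_\ell}$. This term need not vanish, because $\widehat H$ is data-dependent and can correlate with the fluctuation. I would handle it as follows. Since $\widehat H$ is estimated from the same data, $\sqrt n\delta_n$ is asymptotically a function of the empirical process. Among all perturbations of the form $\hat\theta_{EO}+\frac1n\sum_i H\widetilde M+ \text{(extra)}$, the first-order risk is minimized at $H^*$. That holds only when the extra direction lies in the span of a mean-zero control variate. An $O_p(n^{-1/2})$ shift along $\mu_M(\theta^*)$, which is non-vanishing bias information, cannot be of that form unless it is asymptotically negligible.

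Concretely, I would use the optimality of $H^*$ as the $L^2$ projection in \eqref{eq:optimal-h}. Writing the limit of $\sqrt n\delta_n$ jointly with $Y_{H^*}$ through uniform integrability (the third-moment bounds in \Cref{asp:single-side-information} and the clipping of $\widehat H$), the excess over $\frac12\tr[\Sigma_{H^*}I_\ell]$ equals $\frac12\E\|\text{limit of }\sqrt n\delta_n\|^2_{I_\ell}$ plus a cross term. The cross term is nonnegative by projection optimality. Hence equal first-order improvement forces the limit to be zero, i.e. $\delta_n=o_p(n^{-1/2})$. If the joint limit does not exist, I would pass to subsequences using tightness via Prokhorov.

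Part two is a delta-method argument. Differentiability gives $(\widehat H-H^*)\mu_M(\theta^*)=H(\hat\eta)\mu_M(\theta^*)-H(\eta^*)\mu_M(\theta^*)=G(\hat\eta-\eta^*)+o(\|\hat\eta-\eta^*\|)$, where $\hat\eta\to\eta^*$ in probability because $\widehat H\to H^*$; I take this consistency of $\hat\eta$ as implied by how $\widehat H=H(\hat\eta)$ is estimated. Full column rank gives $\|Gv\|\ge\sigma_{\min}(G)\|v\|$ with $\sigma_{\min}(G)>0$. Hence $\sigma_{\min}(G)\|\hat\eta-\eta^*\|(1-o_p(1))\le\|\delta_n\|=o_p(n^{-1/2})$, and therefore $\|\hat\eta-\eta^*\|=o_p(n^{-1/2})$. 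This is faster than the parametric rate, which explains why $H$-correctness is hard to achieve from data.
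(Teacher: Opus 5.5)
Your decomposition $\hat\theta_{\widehat H,M}-\hat\theta_{H^*,M}=\delta_n+o_p(n^{-1/2})$ with $\delta_n=(\widehat H-H^*)\mu_M(\theta^*)$ matches the paper's argument. So does your Part 2, which linearizes $H(\eta)\mu_M(\theta^*)$ at $\eta^*$, uses $\sigma_{\min}(G)>0$, and takes consistency of $\widehat\eta$ as given.

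The gap is the step that turns ``same amount of first-order improvement'' into $\sqrt n\,\delta_n=o_p(1)$. You assert that the cross term $\E\langle Y_{H^*},D\rangle_{I_\ell(\theta^*)}$ is nonnegative by projection optimality of $H^*$, where $D$ is a subsequential limit of $\sqrt n\,\delta_n$. That fails for two reasons:
\begin{itemize}
\item The statement does not assume $H^*$ is optimal. Even the constrained optimum over $\{H:H\mu_M(\theta^*)=\mathbf 0\}$ only makes $\IFx+H^*\widetilde M$ orthogonal to fluctuations $K\widetilde M$ with a \emph{deterministic} $K$ satisfying $K\mu_M(\theta^*)=\mathbf 0$.
\item Because $\widehat H$ is built from the same sample, $D$ is essentially arbitrary. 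Let $\bar M=\frac1n\sum_i M(\hat\theta_{EO};\xi_i)$ and $\widehat H=H^*+(T_n-\hat\theta_{H^*,M})\bar M^\top/\|\bar M\|_2^2$. Then $\widehat H-H^*=o_p(1)$, and $\hat\theta_{\widehat H,M}$ becomes any $\sqrt n$-consistent $T_n$ you like.
\end{itemize}
So $D$ can be negatively correlated with $Y_{H^*}$, giving $\tfrac12\E\|D\|^2_{I_\ell(\theta^*)}+\E\langle Y_{H^*},D\rangle_{I_\ell(\theta^*)}=0$ with $D\neq 0$.

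Here is a concrete instance. Take $D_\theta=D_M=1$, $\ell(\theta;\xi)=(\theta-\xi)^2$, $\xi=\theta^*+\epsilon$ with $\epsilon\sim\mathrm{Laplace}(0,b)$, and $M\equiv 1$. Then $H^*=0$ is forced and $\hat\theta_{H^*,M}=\bar\xi$. With $\widehat H=2(\mathrm{med}_n-\bar\xi)=o_p(1)$ you get $\hat\theta_{\widehat H,M}=2\,\mathrm{med}_n-\bar\xi$. The median and mean have asymptotic variances $b^2$ and $2b^2$ and asymptotic covariance $b^2$. So this estimator has asymptotic variance $4b^2-4b^2+2b^2=2b^2$, the same leading excess-risk constant as $\bar\xi$, yet $\sqrt n\,\delta_n\Rightarrow\mathcal N(0,4b^2)$. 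Hence the first claim cannot be derived from equality of leading constants alone, and no refinement of your contrapositive will close it.

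The paper's proof does not attempt this step. It simply asserts that equal first-order improvement implies $\|\hat\theta_{\widehat H,M}-\hat\theta_{H^*,M}\|=o_p(n^{-1/2})$. In effect it reads ``same amount'' as $\sqrt n$-asymptotic equivalence, after which your first display finishes Part 1 in one line.

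If you want a genuinely risk-based version, add a hypothesis that kills the cross term. For example, require $\widehat H$ to be estimated from data independent of $\Dscr_n$. Then $D$ is independent of the mean-zero $Y_{H^*}$, the excess risk is $\tfrac12\E\|D\|^2_{I_\ell(\theta^*)}$, and equality forces $D=0$.
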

Above, if $\widehat\eta$ is a regular estimator with a nondegenerate
$\sqrt n$ limiting distribution, $\hat\theta_{\widehat H, M}$ and $\hat\theta_{H^*, M}$ cannot achieve the same amount of first-order improvement. Therefore, regular estimators typically cannot attain this rate without assuming additional structural information.

\textit{Proof of \Cref{lemma:h-correct-nonadaptive}.}
For the first claim, recall the expansion
\begin{align*}
    \hat\theta_{\widehat H,M}
    -
    \hat\theta_{H^*,M}
    &=
    (\widehat H-H^*)\sum_{i = 1}^n M(\hat\theta_{EO};\xi_i)/n
    +o_p(n^{-1/2}) \\
    &=
    (\widehat H-H^*)\mu_M(\theta^*)
    +
    (\widehat H-H^*)
    \Para{
        \sum_{i = 1}^n M(\hat\theta_{EO};\xi_i)/n
        -
        \mu_M(\theta^*)}
    +o_p(n^{-1/2}).
\end{align*}
If $\hat\theta_{\widehat H, M}$ and $\hat\theta_{H^*, M}$ achieve the same amount of first-order improvement, then $\|\hat\theta_{\widehat H, M} - \hat\theta_{H^*, M}\| = o_p(n^{-1/2})$.
Since $\widehat H-H^*=o_p(1)$ and
$\sum_{i = 1}^n M(\hat\theta_{EO};\xi_i)/n
        -
        \mu_M(\theta^*)=O_p(n^{-1/2})$ by the preceding arguments in the proof of \Cref{thm:improvement-principle}, the second
term on the right-hand side above is $o_p(n^{-1/2})$. This implies that the first term $(\widehat H-H^*)\mu_M(\theta^*) $ is of the order $o_p(n^{-1/2})$.

For the second claim, we have:
\[
    \bigl(H(\widehat\eta)-H(\eta^*)\bigr)\mu_M(\theta^*)
    =
    G(\widehat\eta-\eta^*)
    +
    o_p\bigl(\|\widehat\eta-\eta^*\|\bigr).
\]
Because $G$ has full column rank, its smallest singular value is strictly
positive. Thus, in a neighborhood of $\eta^*$, $\left\|
        \bigl(H(\widehat\eta)-H(\eta^*)\bigr)\mu_M(\theta^*)
    \right\|
    \gtrsim
    \|\widehat\eta-\eta^*\|$, which yields the desired property.

$\hfill \square$

\subsection{\textit{Proof of \Cref{prop:analytical-consistency}}}
\begin{lemma}[Plug-in Consistency]\label{asp:plugin}
Suppose $\|\hat I_{\ell}(\hat\theta_{EO})-I_{\ell}(\theta^*)\|_{\mathrm{op}}=o_p(1)$. 
Then:
\[\|\widehat{\IFx}(\xi)-\IFx(\xi)\|_{2}=o_p(1), \forall \xi~\text{a.e.}\]
\end{lemma}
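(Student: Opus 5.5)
The plan is to write the difference explicitly and split it into a Hessian-estimation error and a gradient-estimation error. By definition,
\[
\widehat{\IFx}(\xi)-\IFx(\xi) = -\hat I_{\ell}(\hat\theta_{EO})^{\dagger}\nabla_{\theta}\ell(\hat\theta_{EO};\xi) + I_{\ell}(\theta^*)^{-1}\nabla_{\theta}\ell(\theta^*;\xi),
\]
which I would decompose as
\[
-\Bigl(\hat I_{\ell}(\hat\theta_{EO})^{\dagger}-I_{\ell}(\theta^*)^{-1}\Bigr)\nabla_{\theta}\ell(\hat\theta_{EO};\xi) \;-\; I_{\ell}(\theta^*)^{-1}\Bigl(\nabla_{\theta}\ell(\hat\theta_{EO};\xi)-\nabla_{\theta}\ell(\theta^*;\xi)\Bigr).
\]
Each term is then shown to be $o_p(1)$ for almost every fixed $\xi$.

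For the first factor of the first term, I would invoke continuity of the pseudoinverse at an invertible matrix. Assumption~\ref{asp:theta-star0} makes $I_{\ell}(\theta^*)\succ 0$. Set $\epsilon_0 := \lambda_{\min}(I_{\ell}(\theta^*))/2$. On the event $\|\hat I_{\ell}(\hat\theta_{EO})-I_{\ell}(\theta^*)\|_{\mathrm{op}}\le \epsilon_0$, which has probability tending to one by hypothesis, Weyl's inequality shows that $\hat I_{\ell}(\hat\theta_{EO})$ is invertible. There $\hat I_{\ell}(\hat\theta_{EO})^{\dagger} = \hat I_{\ell}(\hat\theta_{EO})^{-1}$, and the resolvent identity $A^{-1}-B^{-1}=A^{-1}(B-A)B^{-1}$ gives
\[
\|\hat I_{\ell}(\hat\theta_{EO})^{-1}-I_{\ell}(\theta^*)^{-1}\|_{\mathrm{op}} \le \epsilon_0^{-1}\,\lambda_{\min}(I_{\ell}(\theta^*))^{-1}\,\|\hat I_{\ell}(\hat\theta_{EO})-I_{\ell}(\theta^*)\|_{\mathrm{op}} = o_p(1).
\]
This is the one place where the Moore--Penrose pseudoinverse needs care. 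It is discontinuous in general, so I would stress that the argument only needs eventual invertibility, and that comes from positive definiteness at $\theta^*$.

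For the gradient factors I would use consistency $\hat\theta_{EO}\overset{p}{\to}\theta^*$ from Proposition~\ref{prop:eo-conv}. Fix $\xi$ outside the null set on which $\ell(\cdot;\xi)$ fails to be differentiable at $\theta^*$. If $\nabla_\theta\ell(\cdot;\xi)$ is continuous at $\theta^*$, the continuous mapping theorem gives $\nabla_{\theta}\ell(\hat\theta_{EO};\xi)\overset{p}{\to}\nabla_{\theta}\ell(\theta^*;\xi)$. That makes the second term $o_p(1)$, and it makes $\nabla_{\theta}\ell(\hat\theta_{EO};\xi)=O_p(1)$, so the first term is $o_p(1)\cdot O_p(1)=o_p(1)$. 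Combining the two terms with the triangle inequality finishes the proof.

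The main obstacle is this pointwise convergence of the gradient. For smooth losses it follows from the Lipschitz-gradient condition in Assumption~\ref{asp:additional-summary}, which gives $\|\nabla_\theta\ell(\hat\theta_{EO};\xi)-\nabla_\theta\ell(\theta^*;\xi)\|\le L_2(\xi)\|\hat\theta_{EO}-\theta^*\|$ with $L_2(\xi)<\infty$ a.e. For nonsmooth losses such as the newsvendor, the subgradient is discontinuous only on a set of $\xi$ such as $\{\xi=\theta^*\}$. That set has probability zero when $\P$ is continuous, and at every other $\xi$ the subgradient is locally constant in $\theta$, so convergence in probability still holds. I would state this a.e.-continuity explicitly as the condition being used.
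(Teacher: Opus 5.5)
Your proposal is correct and follows the paper's proof essentially step for step: the same add-and-subtract of $I_{\ell}(\theta^*)^{-1}\nabla_{\theta}\ell(\hat\theta_{EO};\xi)$, the same triangle-inequality split into a Hessian-error term and a gradient-error term, and the same reliance on consistency of $\hat\theta_{EO}$ plus a.e.\ continuity of $\nabla_{\theta}\ell(\cdot;\xi)$ at $\theta^*$. Your Weyl/resolvent argument is a sharpening of the paper's one-line appeal to ``continuity of matrix inversion'': it makes explicit that only eventual invertibility of $\hat I_{\ell}(\hat\theta_{EO})$ is needed, which is exactly what makes the pseudoinverse harmless here.
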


\textit{Proof of \Cref{asp:plugin}.} 
We have
\[
\widehat{\IFx}(\xi) - \IFx(\xi)
=
-[\hat I_{\ell}(\hat\theta_{EO})]^{\dagger}\nabla_{\theta} \ell(\hat\theta_{EO};\xi)
+
[I_{\ell}(\theta^*)]^{-1} \nabla_{\theta}\ell(\theta^*;\xi).
\]
Adding and subtracting $[I_{\ell}(\theta^*)]^{-1}\nabla_{\theta}\ell(\hat\theta_{EO};\xi)$ and applying the triangle inequality gives
\begin{small}
\[
\begin{aligned}
\|\widehat{\IFx}(\xi) - \IFx(\xi)\|_2
&\le
\|([\hat I_{\ell}(\hat\theta_{EO})]^{\dagger} - [I_{\ell}(\theta^*)]^{-1}) \nabla_{\theta} \ell(\hat\theta_{EO};\xi)\|_2 +
\|[I_{\ell}(\theta^*)]^{-1}\big(\nabla_{\theta} \ell(\hat\theta_{EO};\xi) - \nabla_{\theta}\ell(\theta^*;\xi)\big)\|_2 \\
&\le
\|[\hat I_{\ell}(\hat\theta_{EO})]^{\dagger} - [I_{\ell}(\theta^*)]^{-1}\|_{\mathrm{op}}
\cdot
\|\nabla_{\theta} \ell(\hat\theta_{EO};\xi)\|_2 \\
&\quad +
\|[I_{\ell}(\theta^*)]^{-1}\|_{\mathrm{op}}
\cdot
\|\nabla_{\theta} \ell(\hat\theta_{EO};\xi) - \nabla_{\theta}\ell(\theta^*;\xi)\|_2.
\end{aligned}
\]
\end{small}

By the assumption $\|\hat I_{\ell}(\hat\theta_{EO}) - I_{\ell}(\theta^*)\|_{\mathrm{op}} = o_p(1)$ and nonsingularity of $I_{\ell}(\theta^*)$, the matrix inverse is continuous, so
\[
\|[\hat I_{\ell}(\hat\theta_{EO})]^{\dagger} - [I_{\ell}(\theta^*)]^{-1}\|_{\mathrm{op}} = o_p(1).
\]
Moreover, if $\hat\theta_{EO} \overset{p}{\to} \theta^*$ and $\nabla_{\theta}\ell(\theta;\xi)$ is continuous at $\theta^*$ for almost every $\xi$, we have:
\[
\|\nabla_{\theta} \ell(\hat\theta_{EO};\xi) - \nabla_{\theta}\ell(\theta^*;\xi)\|_2 = o_p(1),
\qquad \forall \xi \text{ a.e.}
\]
Also, $\|\nabla_{\theta} \ell(\hat\theta_{EO};\xi)\|_2 = O_p(1)$ for almost every $\xi$. Combining the above bounds yields
\[
\|\widehat{\IFx}(\xi) - \IFx(\xi)\|_2 = o_p(1),
\qquad \forall \xi \text{ a.e.}
\]

$\hfill \square$

We now return to the proof of \Cref{prop:analytical-consistency}.

We first consider the unconstrained case $\Theta = \R^{D_{\theta}}$. Here, both $\widehat H$ and $H^*$ admit closed-form expressions as follows:
\begin{align*}
\widehat H &= -\E_{\hat\P_n}[\widehat \IFx(\xi) \widehat M(\hat\theta_{EO};\xi)^{\top}](\E_{\hat\P_n}[\widehat M(\hat\theta_{EO};\xi) \widehat M(\hat\theta_{EO};\xi)^{\top}])^{\dagger} \\
H^* &= -\E_{\P}[\IFx(\xi) \widetilde M(\theta^*;\xi)^{\top}](\E_{\P}[\widetilde M(\theta^*;\xi) \widetilde M(\theta^*;\xi)^{\top}])^{-1},
\end{align*}
where $\widehat M(\theta;\xi) = M(\theta;\xi) + \nabla_{\theta} \widebar M \, \widehat\IFx(\xi)$.

By \Cref{asp:plugin}, the consistency $\hat\theta_{EO} \overset{p}{\to} \theta^*$, and the continuity of $M(\theta;\xi)$, we have, for any $\xi$:
\begin{small}
\[\|\widetilde M(\theta^*;\xi) - \widehat M(\hat\theta_{EO};\xi)\|_{1} \lesssim \max\{\|\hat\theta_{EO} - \theta^*\|, \|\nabla_{\theta}\mu_M(\theta^*) - \nabla_{\theta}\frac{1}{n}\sum_{i = 1}^n M(\theta^*;\xi_i)\|, \|\IFx(\xi) - \widehat{\IFx}(\xi)\|\} = o_p(1).\]
\end{small}
We establish the consistency of the moment term and cross-moment term appearing in $\widehat H$ and $H^*$.

\noindent For the moment term:
\begin{small}
\[
\begin{aligned}
&\left\|\E_{\P}[\widetilde M(\theta^*;\xi)\widetilde M(\theta^*;\xi)^{\top}] - \E_{\hat\P_n}[\widehat M(\hat\theta_{EO};\xi)\widehat M(\hat\theta_{EO};\xi)^{\top}]\right\| \\
&\le 
\left\|\E_{\hat\P_n}\!\left[\widehat M(\hat\theta_{EO};\xi)\widehat M(\hat\theta_{EO};\xi)^{\top} - \widetilde M(\theta^*;\xi)\widetilde M(\theta^*;\xi)^{\top}\right]\right\| \\
&\quad +
\left\|\E_{\hat\P_n}[\widetilde M(\theta^*;\xi)\widetilde M(\theta^*;\xi)^{\top}] - \E_{\P}[\widetilde M(\theta^*;\xi)\widetilde M(\theta^*;\xi)^{\top}]\right\|
= o_p(1).
\end{aligned}
\]
\end{small}
For the cross-moment term:
\begin{small}
\[
\begin{aligned}
&\left\|\E_{\P}[\IFx(\xi)\widetilde M(\theta^*;\xi)^{\top}] - \E_{\hat\P_n}[\widehat \IFx(\xi)\widehat M(\hat\theta_{EO};\xi)^{\top}]\right\| \\
&\le 
\left\|\E_{\hat\P_n}\!\left[\widehat \IFx(\xi)\widehat M(\hat\theta_{EO};\xi)^{\top} - \IFx(\xi)\widetilde M(\theta^*;\xi)^{\top}\right]\right\| \\
&\quad +
\left\|\E_{\hat\P_n}[\IFx(\xi)\widetilde M(\theta^*;\xi)^{\top}] - \E_{\P}[\IFx(\xi)\widetilde M(\theta^*;\xi)^{\top}]\right\| \\
&\le 
\E_{\hat\P_n}\!\left[\|\widehat \IFx(\xi) - \IFx(\xi)\|_2 \, \|\widehat M(\hat\theta_{EO};\xi)\|_2\right] \\
&\quad +
\E_{\hat\P_n}\!\left[\|\IFx(\xi)\|_2 \, \|\widehat M(\hat\theta_{EO};\xi) - \widetilde M(\theta^*;\xi)\|_2\right] \\
&\quad +
\left\|\E_{\hat\P_n}[\IFx(\xi)\widetilde M(\theta^*;\xi)^{\top}] - \E_{\P}[\IFx(\xi)\widetilde M(\theta^*;\xi)^{\top}]\right\|
= o_p(1).
\end{aligned}
\]
\end{small}

Therefore, we have $\|\widehat H - H^*\| = o_p(1)$ when $\Theta = \R^{D_{\theta}}$.

For a constrained $\Theta$, the same argument applies provided that the unconstrained empirical minimizer
\[
\widetilde H
=
-\E_{\hat\P_n}[\widehat \IFx(\xi) \widehat M(\hat\theta_{EO};\xi)^{\top}]
\Big(\E_{\hat\P_n}[\widehat M(\hat\theta_{EO};\xi) \widehat M(\hat\theta_{EO};\xi)^{\top}]\Big)^{\dagger}
\]
is asymptotically feasible, in the sense that $\P\!\left(
\hat\theta_{EO} + \frac{\widetilde H}{n}\sum_{i=1}^n M(\hat\theta_{EO};\xi_i)\in\Theta
\right)\to 1.$
Indeed, on the event $\hat\theta_{EO} + \frac{\widetilde H}{n}\sum_{i=1}^n M(\hat\theta_{EO};\xi_i)\in\Theta$, $\widetilde H$ is feasible for the constrained empirical optimization problem. Since $\widetilde H$ is also the unique minimizer of the same quadratic objective without the constraint, it must coincide with the constrained minimizer $\widehat H$. Therefore, it suffices to show
\[
\P\!\left(
\hat\theta_{EO} + \frac{\widetilde H}{n}\sum_{i=1}^n M(\hat\theta_{EO};\xi_i)\notin\Theta
\right)\to 0.
\]

To verify this, note that $\frac{1}{n}\sum_{i=1}^n M(\hat\theta_{EO};\xi_i)=O_p(n^{-1/2})$, and $\widetilde H = O_p(1)$ by the same argument as in the unconstrained case, since $\widetilde H \overset{p}{\to} H^*$. Therefore, $\frac{\widetilde H}{n}\sum_{i=1}^n M(\hat\theta_{EO};\xi_i)=O_p(n^{-1/2}).$

Thus the correction term is asymptotically negligible. Assumption~\ref{asp:theta-star0} gives $\theta^*\in\operatorname{int}(\Theta)$, and $\hat\theta_{EO}\convp\theta^*$. Because the additional correction term $ \frac{\widetilde H}{n}\sum_{i=1}^n M(\hat\theta_{EO};\xi_i) = O_p(n^{-1/2})$, the corrected solution remains in $\Theta$ with probability tending to one:
\[
\P\!\left(
\hat\theta_{EO} + \frac{\widetilde H}{n}\sum_{i=1}^n M(\hat\theta_{EO};\xi_i)\in\Theta 
\right)\to 1.
\]
Consequently, it follows that $\P\Para{\widehat H = \widetilde H} \to 1$, and the same consistency conclusion follows.

$\hfill \square$

\subsection{\textit{Proof of \Cref{prop:bootstrap-consistency}}}
Recall the definitions $\widetilde M(\theta;\xi) = M(\theta;\xi) + \nabla_{\theta} \mu_M(\theta)\IFx(\xi)$, and $\widehat M(\theta;\xi) = M(\theta;\xi) + \nabla_{\theta} \widebar M\widehat\IFx(\xi)$ in the proof of \Cref{prop:analytical-consistency}.

Since each bootstrap pair $(\hat\theta_{EO}^{(b)},\widebar M^{(b)})_{b \in [B]}$ is generated from an i.i.d. bootstrap resample, these pairs are i.i.d. conditional on the data. Recall the joint asymptotic linear representation of $\hat\theta_{EO}$ and $\widebar M$:
\[
\sqrt n
\begin{pmatrix}
\hat\theta_{EO}-\theta^*\\
\widebar M-\mu_M(\theta^*)
\end{pmatrix}
\Rightarrow
\mathcal N\!\left(
0,
\begin{bmatrix}
\Sigma_0
&
\E_{\P}\!\left[
\IFx(\xi)\widetilde M(\theta^*;\xi)^\top
\right]
\\
\E_{\P}\!\left[
\IFx(\xi)\widetilde M(\theta^*;\xi)^\top
\right]^\top
&
\E_{\P}\!\left[
\widetilde M(\theta^*;\xi)
\widetilde M(\theta^*;\xi)^\top
\right]
\end{bmatrix}
\right).
\]
Furthermore, since Assumptions~\ref{asp:theta-star0},~\ref{asp:optimal-condition} and~\ref{asp:single-side-information} hold, we have the following bootstrap consistency result:
\[
\sqrt n(\hat\theta_{EO}^{(b)}-\hat\theta_{EO},\, \widebar M^{(b)}-\widebar M)
\Rightarrow
\mathcal N\Para{0,
\begin{bmatrix}
\Sigma_{0} & \E_{\P}[\IFx(\xi) \widetilde M(\theta^*;\xi)^{\top}]\\
\E_{\P}[\IFx(\xi) \widetilde M(\theta^*;\xi)^{\top}]^\top & \E_{\P}[\widetilde M(\theta^*;\xi) \widetilde M(\theta^*;\xi)^{\top}]
\end{bmatrix}},
\]
conditionally on the data $\Dscr_n$. Then the conditional covariance matrices satisfy
\[
n\Cov^*(\hat\theta_{EO}^{(b)},\widebar M^{(b)}) \overset{p}{\to} \E_{\P}[\IFx(\xi) \widetilde M(\theta^*;\xi)^{\top}], \quad 
n\Var^*(\widebar M^{(b)}) \overset{p}{\to} \E_{\P}[\widetilde M(\theta^*;\xi) \widetilde M(\theta^*;\xi)^{\top}],
\]
where $\Cov^*(\cdot,\cdot)$ and $\Var^*(\cdot)$ are the covariance and variance under the bootstrap distribution conditional on the original data $\Dscr_n$. Therefore,
\[
-\Cov^*(\hat\theta_{EO}^{(b)},\widebar M^{(b)})\Var^*(\widebar M^{(b)})^{\dagger}
\overset{p}{\to} H^*.
\]
Then we control the Monte Carlo error from using only $B$ bootstrap replicates. Conditional on $\Dscr_n$, the estimators $\widehat{\Cov}_B(\hat\theta_{EO}^{(b)},\widebar M^{(b)})$ and 
$\widehat{\Var}_B(\widebar M^{(b)})$ are empirical covariance matrices based on $B$ i.i.d.\ samples. Therefore, the conditional law of large numbers yields
\begin{small}
\[
\widehat{\Cov}_B(\hat\theta_{EO}^{(b)},\widebar M^{(b)})
-
\Cov^*(\hat\theta_{EO}^{(b)},\widebar M^{(b)})
=O_p\Para{\frac{1}{n\sqrt{B}}},\quad \widehat{\Var}_B(\widebar M^{(b)})
-
\Var^*(\widebar M^{(b)})
=O_p\Para{\frac{1}{n\sqrt{B}}}.
\]
\end{small}
For $B=\omega(1)$, the two differences above become $o_p(1/n)$. Then by continuity of matrix inversion,
\[
\widehat H
=
-\widehat{\Cov}_B(\hat\theta_{EO}^{(b)},\widebar M^{(b)})
\widehat{\Var}_B(\widebar M^{(b)})^{\dagger}
\overset{p}{\to}
H^*.
\]
Thus $\widehat H$ is a consistent estimator of $H^*$, and Assumption~\ref{asp:consistent-estimate} is satisfied.

$\hfill \square$

\section{Proofs and Additional Details in Section~\ref{sec:cso}}\label{app:cso-same}
\subsection{Parametric Contextual Stochastic Optimization Methods}\label{app:cso-parametric}
We first describe common parametric contextual stochastic optimization
methods and then state conditions under which a given method reduces to the non-contextual analysis.
\begin{definition}[Parametric Contextual EO Solutions]\label{defn:cso-param-method}
    Consider the following parametric contextual optimization solutions, where the decision rule $\pi_z(u)$ is parametrized by $z$ and comes from the decision class $\{\pi_z(\cdot): \Uscr \mapsto \Theta \mid z \in \Zscr\}$. The resulting contextual EO solution is $\hat\theta_{EO}(u) = \pi_{\hat z_{EO}}(u)$.
    \begin{itemize}
        \item \underline{Decision-rule Optimization}: $\pi_{z}(u)$ is any parametrized family of decision functions, and $\hat z_{EO} \in \argmin_{z} \sum_{i = 1}^n \ell(\pi_{z}(u_i);\xi_i)$.
        \item The conditional distribution of $\xi$ given the covariate $u$ is parametrized within the family $\mathcal{\P}_{\Zscr}:=\{\P_{\xi|z,u}| z \in \Zscr\}$,  and $\pi_{z}(u) \in\argmin_{\theta} \E_{\P_{\xi|z,u}}[\ell(\theta;\xi)]$. 
        \begin{enumerate}
            \item \underline{Estimate-then-Optimize}: $\hat z$ is estimated using a statistical approach that only uses the data $\Dscr_n$, e.g., in the maximum likelihood estimation, we have $\hat z_{EO} \in \argmax_{z \in \Zscr}\sum_{i = 1}^n \ln p_z(\xi_i|u_i)$.
            \item \underline{Integrated-Estimation-and-Optimization}: $\hat z_{EO} \in \argmin_{z \in \Zscr}\sum_{i =1}^n \ell(\pi_z(u_i);\xi_i)$.
        \end{enumerate}
    \end{itemize}
\end{definition}

\begin{proposition}[Conditional Reduction to the Non-contextual Setting]
\label{prop:cso-param-reduction}
Let $W:=(u,\xi)$, let $\Zscr\subseteq\R^{D_z}$ be open, and define $L(z;W):=\ell(\pi_z(u);\xi), Q(z):=\E_{\P}[L(z;W)]$.

Consider one of the parametric procedures in
\Cref{defn:cso-param-method}, with estimator $\hat z_{EO}$ and target
$z^*\in\Zscr$. Suppose that:
\begin{enumerate}[(i)]
    \item $z\mapsto\pi_z(u)$ is continuously differentiable near $z^*$
    for almost every $u$, the composed loss $z\mapsto L(z;W)$ is twice continuously differentiable near $z^*$ almost everywhere;
    \item $z^*$ is the unique local minimizer of $Q$, lies in the interior
    of $\Zscr$, and the composed population Hessian satisfies
    $\nabla_z^2Q(z^*)\succ0$;
    \item $\hat z_{EO}\overset{p}{\to}z^*$ and, for some measurable
    $\varphi_z(W)$ satisfying
    $\E_{\P}[\varphi_z(W)]=\mathbf 0$ and
    $\E_{\P}[\|\varphi_z(W)\|_2^2]<\infty$,
    \[
    \hat z_{EO}-z^*
    =\frac1n\sum_{i=1}^n\varphi_z(W_i)+o_p(n^{-1/2});
    \]
    \item the empirical-process, moment, Taylor-remainder, and
    side-information assumptions required by the particular result being
    invoked hold for $L(z;W)$, $\varphi_z(W)$, and $M(z;W)$.
\end{enumerate}
Then each result in Sections~\ref{sec:perturb-stat}
and~\ref{sec:perturb-opt} whose remaining assumptions are satisfied applies
to the reduced finite-dimensional problem under the substitutions
\[
\theta\leftarrow z,\qquad
\xi\leftarrow W,\qquad
Z(\theta)\leftarrow Q(z),\qquad
\IFx(\xi)\leftarrow\varphi_z(W).
\]
In particular, the associated excess risk is
$Q(z)-Q(z^*)$, and correct side information satisfies
$\E_{\P}[M(z^*;W)]=\mathbf 0$.
\end{proposition}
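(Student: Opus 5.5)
The plan is to show that, after the substitutions $\theta\leftarrow z$, $\xi\leftarrow W$, $Z\leftarrow Q$, $\IFx\leftarrow\varphi_z$, the reduced problem $\min_{z\in\Zscr}Q(z)=\E_{\P}[L(z;W)]$ is literally an instance of the non-contextual problem~\eqref{eq:ddsp}. It is defined on i.i.d.\ data $W_i=(u_i,\xi_i)$ with a known cost $L(z;W)$ and a finite-dimensional decision $z$. In this instance each parametric contextual EO solution plays the role of $\hat\theta_{EO}$. The proof therefore reduces to checking that every standing assumption used in Sections~\ref{sec:perturb-stat}--\ref{sec:perturb-opt} holds for this instance. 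The structural facts those proofs actually use are Assumption~\ref{asp:theta-star0}, the regularity part of Assumption~\ref{asp:optimal-condition}, and the linearization of Proposition~\ref{prop:eo-conv}.

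\textbf{Step 1: optimality conditions.} Condition (ii) gives $z^*\in\operatorname{int}\Zscr$ as the unique minimizer with $\nabla_z^2Q(z^*)\succ0$. Condition (i), together with dominated differentiation supplied by the moment conditions in (iv), gives $\nabla_zQ(z^*)=\mathbf 0$. This verifies Assumption~\ref{asp:theta-star0} with $I_\ell\leftarrow\nabla_z^2Q(z^*)$. The Lipschitz/envelope and $L^3$ requirements of Assumption~\ref{asp:optimal-condition} are included in (iv).

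\textbf{Step 2: replacing the influence function.} Every downstream argument uses Proposition~\ref{prop:eo-conv} only through the expansion $\hat\theta_{EO}-\theta^*=n^{-1}\sum_i\IFx(\xi_i)+o_p(n^{-1/2})$, where $\IFx$ is mean zero with finite second moment. Condition (iii) provides exactly this with $\varphi_z$, and it does so even when $\varphi_z\neq-\nabla_z^2Q(z^*)^{-1}\nabla_zL(z^*;W)$. This is the case for estimate-then-optimize, where $\hat z_{EO}$ is an MLE rather than the minimizer of $\sum_iL$. I will then go through Lemma~\ref{lemma:asym-perturbed-solution}, the expansion~\eqref{eq:expectedperformance1-H}, and the proofs of Theorems~\ref{thm:improvement-principle}, \ref{thm:opt-adjust} and~\ref{prop:max-improvement} and Propositions~\ref{prop:analytical-consistency}--\ref{prop:bootstrap-consistency}. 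In each I will confirm that the Hessian-based formula for $\IFx$ enters only through the linear representation, the covariances $\Sigma_0,\Gamma$, and the quadratic expansion $Q(\hat z)-Q(z^*)=\frac12\|\hat z-z^*\|^2_{\nabla^2Q(z^*)}+o_p(n^{-1})$. In the analytical approach, $\widehat{\IFx}$ must also be replaced by a consistent estimate of $\varphi_z$; I will state this as part of the assumptions of the invoked result, as permitted by (iv).

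\textbf{Step 3: excess risk and correctness.} The reduced excess risk is $Q(z)-Q(z^*)$ by definition. The correctness condition $\mu_M(\theta^*)=\mathbf 0$ becomes $\E_{\P}[M(z^*;W)]=\mathbf 0$ by direct substitution.

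\textbf{Main obstacle.} The delicate point is Step 2. One must justify that the expected-risk conversion in Proposition~\ref{prop:eo-conv} and in Theorem~\ref{thm:improvement-principle} still goes through when the linearization comes from an external estimator rather than from M-estimation of $Q$. That conversion needs convergence of $\E[n\|\hat z_{EO}-z^*\|^2]$, obtained previously from Lemma~\ref{lemma:m-estimator0}, and uniform integrability of the remainder. My approach will be to require these explicitly as part of the moment and Taylor-remainder hypotheses in (iv). With them, $n\E[(\hat z-z^*)(\hat z-z^*)^\top]\to\E[\varphi_z\varphi_z^\top]$, and the rest of the argument carries over verbatim.
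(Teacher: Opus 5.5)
Your proposal is correct and follows essentially the paper's own route. The paper gives no separate proof beyond asserting that the analyses of Sections~\ref{sec:perturb-stat}--\ref{sec:perturb-opt} apply directly via reparameterization, adding only that for estimate-then-optimize the expansion in (iii) must be verified from the likelihood score rather than from $\nabla_z L$; this is exactly your Step~2, and your requirement that moment convergence of $n\|\hat z_{EO}-z^*\|_2^2$, and a consistent estimate of $\varphi_z$ for the analytical approach, be folded into (iv) is a sensible sharpening. One small correction: (ii) only makes $z^*$ the unique \emph{local} minimizer of $Q$, so it does not verify the global uniqueness in \Cref{asp:theta-star0}. This is harmless for the local expansions, since consistency comes from (iii), but the step in the proof of \Cref{thm:improvement-principle} that rules out gains from a $\Theta(1)$ bias $H\mu_M(\theta^*)\neq\mathbf 0$ uses global minimality and must be counted among the ``remaining assumptions.''
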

For decision-rule optimization and integrated estimation-and-optimization,
$L(z;W)$ is also the empirical optimization criterion, so standard smooth
$M$-estimation conditions may yield the influence expansion in part~(iii).
For estimate-then-optimize methods, $\hat z_{EO}$ is generally obtained from
a different estimating criterion, such as a likelihood. In that case,
\Cref{prop:cso-param-reduction} requires separately verifying that its own likelihood score yields the stated influence function $\varphi_z$. The likelihood score generally differs from the gradient of the decision loss.

\subsection{\textit{Proof of \Cref{prop:contextual-improvement}}} 

In addition to the notation introduced in \Cref{prop:contextual-improvement}, fix $u_0$ and write $\widebar M_n(u_0):=\frac{1}{n}\sum_{i=1}^n M_{n,u_0}(u_i,\xi_i), I_{\ell,u_0}:=\nabla_\theta^2Z_{u_0}(\theta^*(u_0)), \Gamma_n(u_0):=n^{1/2-\gamma}\Cov_{\P}\!\left[
\IFx_{u_0}(u,\xi),M_{n,u_0}(u,\xi)
\right]$. Thus $\lim_{n \to \infty}\Gamma_n(u_0)= \Gamma_{u_0}$. In the calculations below, $H_n$
denotes an adjustment sequence with $\|H_n\| = O(n^{1-2\gamma})$.

For any random $\tilde\theta(u_0)$ close to $\theta^*(u_0)$, we take a second-order Taylor expansion of $Z_{u_0}$ at $\theta^*(u_0)$:
\begin{align*}
R(\tilde\theta(u_0))& =Z_{u_0}(\tilde\theta(u_0))-Z_{u_0}(\theta^*(u_0)) \\   
& = \frac{1}{2}(\tilde\theta(u_0)-\theta^*(u_0))^\top I_{\ell,u_0}(\tilde\theta(u_0)-\theta^*(u_0)) + o_p\big(\|\tilde\theta(u_0)-\theta^*(u_0)\|^2\big).
\end{align*}
Taking expectations over the dataset $\Dscr_n$ and applying the bias-variance decomposition,
\begin{align*}
\E[R(\tilde\theta(u_0))] = & \frac{1}{2}\text{Tr}(I_{\ell,u_0}\Var(\tilde\theta(u_0))) \\
+ &\frac{1}{2}(\E[\tilde\theta(u_0)]-\theta^*(u_0))^\top I_{\ell,u_0} (\E[\tilde\theta(u_0)]-\theta^*(u_0)) + o(\E\|\tilde\theta(u_0)-\theta^*(u_0)\|^2).
\end{align*}

We compute the difference of the expected excess risk between $\tilde\theta(u_0)=\hat\theta_{EO}(u_0)$ and $\hat\theta_{H_n,M_{n,u_0}}(u_0)$.

For $\hat\theta_{EO}(u_0)$, from the influence function decomposition in \Cref{asp:if-decomposition} with $\gamma \leq \frac{1}{2}$,
\[
\Var(\hat\theta_{EO}(u_0))=\frac{1}{n^{2\gamma}}\Sigma_{u_0} + o\Big(n^{-2\gamma}\Big), 
\qquad 
\E[\hat\theta_{EO}(u_0)]-\theta^*(u_0)= n^{-\gamma}b_{u_0} + o(n^{-\gamma}),
\]
where $\Sigma_{u_0} = \E_{\P}[\IFx_{u_0}(u,\xi) \IFx_{u_0}(u, \xi)^{\top}] = \Theta(1)$. Therefore:
\[
\E[R(\hat\theta_{EO}(u_0))]
= \frac{1}{2n^{2\gamma}}\text{Tr}(I_{\ell,u_0} \Sigma_{u_0}) + \frac{1}{2n^{2\gamma}}b_{u_0}^\top I_{\ell,u_0} b_{u_0} + o(n^{-2\gamma}).
\]

For $\hat\theta_{H_n,M_{n,u_0}}(u_0)$, consider the moments of \(\hat\theta_{H_n,M_{n,u_0}}(u_0)=\hat\theta_{EO}(u_0)+H_n\widebar M_n(u_0)\). From \Cref{asp:if-decomposition} and the definition of $\Gamma_n(u_0)$, we have:
\begin{align*}
    \text{Var}(\hat\theta_{H_n,M_{n,u_0}}(u_0))
& = \text{Var}(\hat\theta_{EO}(u_0))
+ \text{Cov}(\hat\theta_{EO}(u_0),\widebar M_n(u_0))H_n^\top\\
&\quad + H_n\text{Cov}(\widebar M_n(u_0),\hat\theta_{EO}(u_0))
+ H_n \text{Var}(\widebar M_n(u_0)) H_n^\top\\
& = \frac{1}{n^{2\gamma}}\Sigma_{u_0}
+\frac{1}{n}\Big(H_n\Gamma_n(u_0)^{\top} + \Gamma_n(u_0) H_n^{\top}
+ H_n \Omega_n(u_0) H_n^\top\Big)
+ o\Big(n^{-2\gamma}\Big).
\end{align*}
Moreover, we have $\E[\hat\theta_{H_n,M_{n,u_0}}(u_0)]-\theta^*(u_0)
= (\E[\hat\theta_{EO}(u_0)]-\theta^*(u_0)) + H_n\E[\widebar M_n(u_0)] = n^{-\gamma}b_{u_0} + H_n\mu_n(u_0) + o(n^{-\gamma})$, so:
\begin{small}
\begin{align*}
  \E[R(\hat\theta_{H_n,M_{n,u_0}}(u_0))] 
  & = \frac{1}{2n^{2\gamma}}\text{Tr}(I_{\ell,u_0} \Sigma_{u_0})
  + \frac{1}{2n^{2\gamma}}b_{u_0}^\top I_{\ell,u_0} b_{u_0}\\
  &\quad + \frac{1}{2n}\text{Tr}\!\Big(I_{\ell,u_0}(H_n\Gamma_n(u_0)^{\top}
  + \Gamma_n(u_0) H_n^{\top} + H_n \Omega_n(u_0) H_n^\top)\Big)\\
  &\quad + \frac{1}{2}(H_n\mu_n(u_0))^\top I_{\ell,u_0} (H_n\mu_n(u_0))
  + n^{-\gamma}(H_n\mu_n(u_0))^\top I_{\ell,u_0} b_{u_0}
  +o(n^{-2\gamma}).
\end{align*}
\end{small}

Subtracting the expansions of $\E[R(\hat\theta_{H_n,M_{n,u_0}}(u_0))]$ and $\E[R(\hat\theta_{EO}(u_0))]$, we have 
\begin{equation}\label{eq:context-diff-expand}
\begin{aligned}
&~~~~\E[R(\hat\theta_{H_n,M_{n,u_0}}(u_0))]-\E[R(\hat\theta_{EO}(u_0))]\\
&= \frac{1}{2n}\text{Tr}\!\Big(I_{\ell,u_0}(H_n\Gamma_n(u_0)^{\top}
+ \Gamma_n(u_0) H_n^{\top} + H_n \Omega_n(u_0) H_n^\top)\Big)\\
&\qquad + \frac{1}{2}(H_n\mu_n(u_0))^\top I_{\ell,u_0} (H_n\mu_n(u_0))\\
&\qquad + n^{-\gamma}(H_n\mu_n(u_0))^\top I_{\ell,u_0} b_{u_0}+ o(n^{-2\gamma}).
\end{aligned}
\end{equation}

Recall that $\E[R(\hat\theta_{EO}(u_0))] = \Theta(n^{-2\gamma})$. We consider the following three cases:

\medskip
\noindent\textbf{Case (i):} $\mu_n(u_0)=o(n^{\gamma-1})$ and $\Gamma_{u_0}\neq\mathbf0$. Since $\|H_n\|=O(n^{1-2\gamma})$, we have $H_n\mu_n(u_0)=o(n^{-\gamma})$. Therefore,
\begin{align*}
&\E[R(\hat\theta_{H_n,M_{n,u_0}}(u_0))]-\E[R(\hat\theta_{EO}(u_0))]\\
&\qquad= \frac{1}{2n}\text{Tr}\!\Big(I_{\ell,u_0}(H_n\Gamma_n(u_0)^{\top}
+ \Gamma_n(u_0) H_n^{\top} + H_n \Omega_n(u_0) H_n^\top)\Big)
+ o\Big(n^{-2\gamma}\Big).
\end{align*}
For any sufficiently small fixed $t\in(0,1)$, set
$H_n(t)=-t\Gamma_n(u_0)\Omega_n(u_0)^{\dagger}$. The convergence of
$\Gamma_n(u_0)$ and $n^{1-2\gamma}\Omega_n(u_0)$ implies
$\|H_n(t)\|=\Theta(n^{1-2\gamma})$ and
\begin{align*}
&\E[R(\hat\theta_{H_n(t),M_{n,u_0}}(u_0))]
-\E[R(\hat\theta_{EO}(u_0))]\\
&\qquad=-\frac{2t-t^2}{2n}\operatorname{Tr}\!\left(
I_{\ell,u_0}\Gamma_n(u_0)\Omega_n(u_0)^\dagger
\Gamma_n(u_0)^\top\right)+o(n^{-2\gamma})
=-\Theta(n^{-2\gamma})<0.
\end{align*}
Taking $t$ sufficiently small also keeps the leading risk coefficient positive,
so the adjusted solution has the same rate $n^{-2\gamma}$ and a strictly
smaller leading coefficient than weighted EO.

\medskip
\noindent\textbf{Case (ii):} $\mu_n(u_0)=o(n^{\gamma-1})$ and $\Gamma_{u_0}=\mathbf0$.
Then 
\[
\E[R(\hat\theta_{H_n,M_{n,u_0}}(u_0))]-\E[R(\hat\theta_{EO}(u_0))]
= \frac{1}{2n}\text{Tr}\!\Big(I_{\ell,u_0} H_n \Omega_n(u_0) H_n^\top\Big) + o(n^{-2\gamma}),
\]
where the displayed leading term is nonnegative, so the best leading-order choice is $H_n=0$.

\medskip
\noindent\textbf{Case (iii):} $\mu_n(u_0)=\Theta(n^{\gamma-1})$. In this boundary regime, $H_n\mu_n(u_0)$ can be of order $n^{-\gamma}$, so all three terms in~\eqref{eq:context-diff-expand} can be relevant. Define
\[
Q_n(u_0):=\frac{1}{n}\Omega_n(u_0)+\mu_n(u_0)\mu_n(u_0)^\top,
\qquad
C_n(u_0):=\frac{1}{n}\Gamma_n(u_0)+n^{-\gamma}b_{u_0}\mu_n(u_0)^\top.
\]
Then the leading excess-risk difference can be written as
\[
\frac{1}{2}\operatorname{Tr}\!\left(I_{\ell,u_0}H_n Q_n(u_0)H_n^\top\right)
+\operatorname{Tr}\!\left(I_{\ell,u_0}H_n C_n(u_0)^\top\right)
+o(n^{-2\gamma}).
\]
Suppose \(I_{\ell,u_0}\) is positive definite. The
minimum-norm optimizer of this quadratic expression is $H_n^*=-C_n(u_0)Q_n(u_0)^\dagger$, and the corresponding leading excess-risk difference is
\[
-\frac{1}{2}\operatorname{Tr}\!\left(
I_{\ell,u_0}C_n(u_0)Q_n(u_0)^\dagger C_n(u_0)^\top\right)+o(n^{-2\gamma}).
\]
Above, the order of the performance improvement depends on
\(\Gamma_n(u_0)\), \(b_{u_0}\), and their scaling with \(n\), so it is not determined without further assumptions. We do not discuss the details here.

$\hfill \square$

\subsection{Maximizing First-Order Improvements in Contextual Stochastic Optimization}\label{app:perturb-opt-cso}
\paragraph{Mild Conditions Leading to Correct Side Information.} Suppose the conditional mean $\E_{\P}[\xi|u]$ is Lipschitz continuous and the marginal density of $u$ is bounded in a neighborhood of $u_0$, so that, for some $L > 0$,
\[\|\E_{\P}[\xi|u_1] - \E_{\P}[\xi|u_2]\|\leq L\|u_1 - u_2\|_2.\]
If $h_n = \Theta(n^{-\beta})$ with $\beta \in (\frac{1}{D_u + 2}, \frac{1}{D_u})$, then $\mu_n(u_0)=o(n^{\gamma-1})$ for the perturbation function in~\eqref{eq:side-info-cso}. More specifically,
\begin{align*}
\|\mu_n(u_0)\|_2
&=
\left\|
\E_{\P}\!\left[
\left(\E_{\P}[\xi\mid u]-\E_{\P}[\xi\mid u_0]\right)
\mathbf{1}_{\{\|u-u_0\|_2\leq h_n\}}
\right]
\right\|_2\\
&\leq
L h_n\,\P\!\left(\|u-u_0\|_2\leq h_n\right)
 =O(h_n^{D_u+1})
=o(n^{\gamma-1}),
\end{align*}
where the first equality follows from iterated expectations, while the bound $O(h_n^{D_u+1})$ uses $\P(\|u-u_0\|_2\leq h_n)=O(h_n^{D_u})$. Moreover, under the stated choice of $\beta$, $\gamma=(1-\beta D_u)/2>0$ and $\beta(D_u+1)>1-\gamma$, which gives the last equality.
\paragraph{Side Information.} For the following weight specifications, at each new covariate $u_0$, we define the perturbation function as follows:
\[M_{n,u_0}(u_i, \xi_i) = w_{n,i}(u_0)(\xi_i - \E[\xi_i|u_0]), \qquad \text{for every } i \in [n].\] 
The nonparametric methods differ only in the choice of $w_{n,i}(u)$. In addition to Example~\ref{ex:kernel-learner0}, the following nonparametric methods satisfy \Cref{asp:if-decomposition} under regularity conditions analogous to those in \cite{tsybakov2008introduction,athey2019generalized,kallus2023stochastic}:
\begin{example}[k-Nearest-Neighbor (kNN) Estimator]\label{ex:knn-learner0}
    Denote the nearest index set $\Nscr_{\Dscr_n,u}(k_n) = \{i \in [n] | u_i~\text{is a kNN of}~u\}$. Then 
    $w_{n,i}(u) = \mathbf{1}_{\{u_i~\text{is a }k_n\text{-NN of}~u\}}$ satisfies Assumption~\ref{asp:if-decomposition} with $k_n$, the number of neighbors, as the hyperparameter. 
\end{example}

\begin{example}[Decision Tree Estimator]\label{ex:decision-tree-learner0}
Consider a decision-tree-induced partition $\tau: \R^{D_u} \to \{1,\ldots, L\}$ of the feature space $\R^{D_u}$ into disjoint regions such that $\tau^{-1}(1)\cup\ldots\cup \tau^{-1}(L) = \R^{D_u}$. Then $w_{n,i}(u) = \mathbf{1}_{\{\tau(u) = \tau(u_i)\}}$ satisfies Assumption~\ref{asp:if-decomposition}, with $L$, the number of leaves, as the hyperparameter.
\end{example}

\begin{example}[Random Forest Estimator]\label{ex:rf-learner0}
    Consider a forest consisting of a set of trees $\{\tau_1,\ldots, \tau_T\}$, where each $\tau_i: \R^{D_u} \to \{1,\ldots, L_i\}$ is a decision-tree-induced partition of $\R^{D_u}$ into $L_i$ regions. Then $w_{n,i}(u) = \sum_{j = 1}^T \mathbf{1}_{\{\tau_j(u_i) = \tau_j(u)\}}/T$ satisfies Assumption~\ref{asp:if-decomposition}, with $\beta$, the subsampling ratio, as the hyperparameter.
\end{example}

\paragraph{Computational Procedure.} Given limited data and a new covariate $u_0$, the high-level procedure for computing the optimally perturbed weighted EO solution is specified as follows:
\begin{enumerate}
    \item Compute the weighted EO solution $\hat\theta_{EO}(u_0)$;
    \item Compute the best data-driven adjustment matrix $\widehat H_n$;
    \item Output $\hat\theta_{EO}(u_0) + \frac{\widehat H_n}{n} \sum_{i \in [n]} M_{n,u_0}(u_i, \xi_i).$
\end{enumerate}
As in \Cref{sec:perturb-opt}, $\widehat H_n$ can be obtained by either of the following approaches, analogous to those in that section:
\begin{itemize}
    \item \textbf{Analytical Approach}. Given the estimated influence function $\{\widehat{\IFx}_{n,u_0}(u_i, \xi_i)\}_{i \in [n]}$ (which takes account of the nonparametric scaling $n^{\gamma - \frac{1}{2}}$), we compute $\widehat H_n$ by 
    \begin{equation}\label{eq:comp-h-cso}
    \widehat H_n
    =-\E_{\hat\P_n}
    [\widehat \IFx_{n,u_0}(u,\xi),M_{n,u_0}(u,\xi)]\big({\Var}_{\hat\P_n}
    [M_{n,u_0}(u,\xi)]\big)^{\dagger}.
    \end{equation}
    \item \textbf{Bootstrap Approach}. We follow the same routine as \Cref{alg:bootstrap-estimate-1}. For each bootstrap replication $b \in [B]$, 
    \begin{enumerate}
        \item Obtain the bootstrapped dataset $\Dscr_n^{(b)} = \{(u_i^{(b)},\xi_i^{(b)})\}_{i \in [n]}$ by resampling the dataset with replacement;
        \item Compute the bootstrapped EO solution $\hat\theta_{EO}^{(b)}(u_0) \in \argmin_{\theta}\sum_{i \in [n]}w_{n,i}^{(b)}(u_0) \ell(\theta;\xi_i^{(b)})$;
        \item Compute the bootstrapped side information $\widebar M_{u_0}^{(b)}= \frac{1}{n}\sum_{i \in [n]}M_{n,u_0}^{(b)}(u_i^{(b)},\xi_i^{(b)})$.
    \end{enumerate}
    Finally, we compute
    \[
    \widehat H_n = -\widehat{\Cov}_{B}(\hat\theta_{EO}^{(b)}(u_0), \widebar M_{u_0}^{(b)})\,\widehat{\Var}_{B}(\widebar M_{u_0}^{(b)})^{\dagger}.
    \]
\end{itemize}
\section{Extension of \Cref{sec:perturb-opt}: Joint Optimization of Adjustment Matrix and Side Information}\label{subsec:comp-h-phi}
In this section, we generalize our methodology to the joint optimization of $H$ and $M$. Specifically, we consider a setting in which multiple correct choices of side information $M$ are available. In such settings, the key challenge is no longer merely estimating $H$, but also selecting the side information that yields the
largest first-order improvement. We formalize the setting below:

\begin{assumption}[Regular Collection of Side Information]
\label{asp:collection-sideinfo}
Let $(\Phi,\|\cdot\|_2)$ denote a compact metric space, and let $\Phi^*\subseteq\Phi$ be a nonempty closed subset. Consider the mapping $M_{\phi}(\cdot;\cdot): \Theta \times \Xi \mapsto \R^{D_M}$. There exists a neighborhood $\mathcal N$ of $\theta^*$ such that the following conditions hold.

\begin{enumerate}[(i)]
    \item For every $(\theta,\phi)\in\mathcal N\times\Phi$, the mappings
    $\xi\mapsto M_{\phi}(\theta;\xi)$ and
    $\xi\mapsto \nabla_{\theta}M_{\phi}(\theta;\xi)$ are measurable.
    Moreover, for almost every $\xi$, the mappings $(\theta,\phi)\mapsto M_{\phi}(\theta;\xi)$ and $(\theta,\phi)\mapsto \nabla_{\theta}M_{\phi}(\theta;\xi)$ are continuous on $\mathcal N\times\Phi$.

    \item There exist measurable envelopes $F, L:\Xi\to[0,\infty)$
    such that $\E_{\P}\!\left[
        F(\xi)^3+L(\xi)^3
        \right]<\infty$ and
    \[
    \begin{small}
    \begin{aligned}
        \sup_{(\theta,\phi)\in\mathcal N\times\Phi}\paran{
        \|M_{\phi}(\theta;\xi)\|_2, \|\nabla_{\theta}M_{\phi}(\theta;\xi)\|}
        &\leq F(\xi),\\
        \sup\paran{\|M_{\phi}(\theta;\xi)-M_{\phi'}(\theta;\xi)\|_2, \|\nabla_{\theta}M_{\phi}(\theta;\xi)
        -\nabla_{\theta}M_{\phi'}(\theta;\xi)} & \leq L(\xi)\|\phi - \phi'\|_2, \forall \theta \in \Nscr, \phi, \phi' \in \Phi.
    \end{aligned}
    \end{small}
    \]
    Furthermore, $\int_0^1
        \sqrt{\log N(\varepsilon,\Phi,\|\cdot\|_2)}\,d\varepsilon<\infty$, where $N(\varepsilon,\Phi,\|\cdot\|_2)$ is the covering number of $(\Phi,\|\cdot\|_2)$~\citep{vapnik1999overview,bartlett2005local}.

    \item For every $\phi\in\Phi^*$, $\mu_{M_{\phi}}(\theta^*)
        :=\E_{\P}[M_{\phi}(\theta^*;\xi)]
        =\mathbf 0$, $\Sigma_0 \nabla_{\theta}\mu_{M_{\phi}}(\theta^*)^\top + \Gamma_{\phi} \neq \mathbf{0}$ and $\Cov_{\P}[\widetilde M_{\phi}(\theta^*;\xi)] \succ 0$, where $\mu_{M_{\phi}}(\theta) = \E_{\P}[M_{\phi}(\theta;\xi)]$, $\Gamma_{\phi} = \text{Cov}_{\P}\Paran{\IFx(\xi),M_{\phi}(\theta^*;\xi)}$ and $\widetilde M_{\phi}(\theta;\xi) = M_{\phi}(\theta;\xi) + \nabla_{\theta} \mu_{M_{\phi}}(\theta)\IFx(\xi)$. 
\end{enumerate}
\end{assumption}

Given a collection of side information and a prespecified constant $B>0$, define
$\Hscr_B=\{H\in\R^{D_\theta\times D_M}:\|H\|_\infty\leq B\}$.
The set of optimal pairs when $M$ is parametrized by $\phi \in \Phi^*$ in~\eqref{eq:optimal-h} is
\begin{equation}\label{eq:optimal-close-h-phi}
\Hscr_B^* :=
\argmin_{(H,\phi)\in\Hscr_B \times \Phi^*} \E_{\P}\Paran{\bigg\|\IFx(\xi)+H\,\widetilde M_{\phi}(\theta^*;\xi)\bigg\|_{I_{\ell}(\theta^*)}^2}.
\end{equation}
\begin{theorem}[Jointly Optimizing Adjustment Matrix and Side Information]\label{thm:opt-adjust-side-info}
Suppose Assumptions~\ref{asp:theta-star0},~\ref{asp:optimal-condition} and~\ref{asp:collection-sideinfo} hold. Fix $B>0$ and define $\Hscr_B = \{H\in\R^{D_\theta\times D_M}:
\|H\|_\infty\leq B\}$. For each $n$, let $(H_n,\phi_n)\in\Hscr_B\times\Phi^*$ be an approximate finite-sample minimizer satisfying
\[
\E_{\Dscr_n}[R(\hat\theta_{H_n,M_{\phi_n}})]
\leq
\inf_{(H,\phi)\in\Hscr_B\times\Phi^*}
\E_{\Dscr_n}[R(\hat\theta_{H,M_\phi})]+o(n^{-1}).
\]
Recall that $\Hscr_B^*$ is defined in~\eqref{eq:optimal-close-h-phi}. Then $\lim_{n\to\infty} \operatorname{dist}\big((H_n,\phi_n),\Hscr_B^*\big)=0$, where $\operatorname{dist}\big((H,\phi),\Hscr_B^*\big)
:=
\inf_{(\bar H,\bar\phi)\in\Hscr_B^*}
\big\{\|H-\bar H\|+\|\phi-\bar\phi\|_2\big\}$.
\end{theorem}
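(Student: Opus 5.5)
We follow the template of the proof of \Cref{thm:opt-adjust}, now with the joint parameter $(H,\phi)\in\Hscr_B\times\Phi^*$. Define $L_n(H,\phi):=\E_{\Dscr_n}[R(\hat\theta_{H,M_\phi})]$ and the population leading term
\[
Q(H,\phi):=\tfrac12\E_{\P}\Big[\big\|\IFx(\xi)+H\widetilde M_\phi(\theta^*;\xi)\big\|_{I_\ell(\theta^*)}^2\Big],
\]
so that $\Hscr_B^*=\argmin_{\Hscr_B\times\Phi^*}Q$. The argument has three steps: (a) a uniform expansion $\sup_{(H,\phi)\in\Hscr_B\times\Phi^*}|nL_n(H,\phi)-Q(H,\phi)|=o(1)$; (b) deduce $Q(H_n,\phi_n)\le \min Q+o(1)$; (c) turn near-optimality of $Q$ into convergence in $\operatorname{dist}$ via compactness and continuity.

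Step (a) is the main obstacle. For a fixed $(H,\phi)$ with $\phi\in\Phi^*$, the case $\mu_{M_\phi}(\theta^*)=\mathbf 0$ in the proof of \Cref{thm:improvement-principle} (via \Cref{lemma:asym-perturbed-solution}) already gives $nL_n(H,\phi)\to Q(H,\phi)$. To make this uniform, we would redo the linearization
\[
\sqrt n(\hat\theta_{H,M_\phi}-\theta^*)=\tfrac1{\sqrt n}\sum_i\big(\IFx(\xi_i)+H\widetilde M_\phi(\theta^*;\xi_i)\big)+r_n(H,\phi),
\]
and show $\sup_{H,\phi}\|r_n(H,\phi)\|=o_p(1)$ together with uniform integrability of $\sup_{H,\phi}n\|\hat\theta_{H,M_\phi}-\theta^*\|^2$. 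The remainder involves three pieces: the Taylor term in $\theta$ of $\widebar M_\phi$ near $\theta^*$, controlled uniformly in $\phi$ by the envelope $F$ and continuity of $\nabla_\theta M_\phi$ in $(\theta,\phi)$; the deviation $\frac1n\sum_i\nabla_\theta M_\phi(\theta^*;\xi_i)-\nabla_\theta\mu_{M_\phi}(\theta^*)$, handled by a uniform LLN over the compact $\Phi$ using the Lipschitz-in-$\phi$ envelope $L$; and the projection term, which vanishes with probability tending to one since $\|H\|\le B$ and $\theta^*\in\operatorname{int}\Theta$. The entropy integral condition in \Cref{asp:collection-sideinfo}(ii) makes $\{M_\phi(\theta^*;\cdot):\phi\in\Phi\}$ Donsker, so $\sup_\phi\|n^{-1/2}\sum_i \widetilde M_\phi(\theta^*;\xi_i)\|=O_p(1)$. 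The third moments of $F,L$, combined with the $L^3$ moment condition in \Cref{asp:optimal-condition}, then give uniform $L^{3/2}$-boundedness of $n\|\hat\theta-\theta^*\|^2$, hence uniform integrability. A second-order expansion of $Z$ around $\theta^*$ with $o(\|\cdot\|^2)$ remainder converts this into (a). Linearity in $H$ means uniformity over $H$ costs only a factor of $B$.

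For (b), pick any $(\bar H,\bar\phi)\in\Hscr_B^*$, which is nonempty because $Q$ is continuous on the compact set $\Hscr_B\times\Phi^*$; continuity in $\phi$ follows from \Cref{asp:collection-sideinfo}(i)--(ii) and dominated convergence. Approximate optimality of $(H_n,\phi_n)$ together with (a), applied at both $(H_n,\phi_n)$ and $(\bar H,\bar\phi)$, gives $Q(H_n,\phi_n)\le Q(\bar H,\bar\phi)+o(1)$. For (c), argue by contradiction. If $\operatorname{dist}((H_n,\phi_n),\Hscr_B^*)\ge\varepsilon$ along a subsequence, compactness of $\Hscr_B\times\Phi^*$ ($\Phi^*$ is closed in the compact $\Phi$) yields a further limit point $(H_\infty,\phi_\infty)$ at distance at least $\varepsilon$ from $\Hscr_B^*$. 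Continuity of $Q$ then gives $Q(H_\infty,\phi_\infty)\le\min Q$, so $(H_\infty,\phi_\infty)\in\Hscr_B^*$, which is a contradiction. Unlike \Cref{thm:opt-adjust}, no strong convexity is needed, since only set convergence is claimed; this is also why the minimizer in $\phi$ is allowed to be non-unique.
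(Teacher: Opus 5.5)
Your proposal is correct and follows the paper's proof essentially step for step. The paper likewise (a) invokes the uniform expansion $\sup_{\Hscr_B\times\Phi^*}|nL_n(H,\phi)-Q(H,\phi)|=o(1)$, (b) deduces $Q(H_n,\phi_n)\le\inf Q+o(1)$, and (c) concludes with the same subsequence--compactness--continuity contradiction. The only difference is that you sketch why the uniform expansion holds (uniform linearization, the Donsker property from the entropy condition, uniform integrability), whereas the paper simply attributes it to the regularity conditions in Assumption~\ref{asp:collection-sideinfo}.
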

This is the joint analogue of \Cref{thm:opt-adjust}: approximate finite-sample minimizers of the expected excess risk over $\Hscr_B \times \Phi^*$ converge to the population-optimal set $\Hscr_B^*$ defined in~\eqref{eq:optimal-close-h-phi}, which need not be a singleton since different $\phi$ may attain the same first-order improvement. As before, it therefore suffices to estimate a population-optimal pair $(\widehat H, \widehat\phi)$ and plug it in.

Given limited data, we extend the high-level procedure in \Cref{subsec:comp-h} to jointly estimate $(\widehat H, \widehat \phi)$:
\begin{enumerate}
    \item Compute the EO solution $\hat\theta_{EO}$;
    \item Compute $(\widehat H, \widehat \phi)$ from the data (including clipping);
    \item Output $\hat\theta_{\widehat H, M_{\widehat\phi}} = \Pi_{\Theta}\Para{\hat\theta_{EO} + \frac{\widehat H}{n}\sum_{i=1}^n M_{\widehat\phi}\Para{\hat\theta_{EO};\xi_i}}$.
\end{enumerate}
Similarly, we can compute $(\widehat H,\widehat\phi)$ using analytical and bootstrap approaches analogous to those in \Cref{subsec:comp-h}, as described below.

\subsection{Analytical Approach} Compared with \Cref{subsec:comp-h}, one additional practical difficulty is that the set $\Phi^*$ in \Cref{asp:collection-sideinfo} is often unknown. Instead, we need to replace $\Phi^*$ with some known $\Phi$ and jointly optimize the adjustment matrix and side information over $\phi \in \Phi$. We still follow \Cref{alg:analytical-estimate-1} but replace Step 7, the closed-form estimate of $\widehat H$, with the following optimization problem over $(\widehat H, \widehat \phi)$:
\begin{equation}
\begin{aligned}\label{eq:cost-improvement2}
(\widehat H,\widehat \phi) \in & 
\argmin_{(H,\phi) \in \Hscr \times \Phi}
\Paran{\sum_{i=1}^n \big\|\widehat{\IFx}(\xi_i) + H\,\widehat M_{\phi}(\hat\theta_{EO};\xi_i)\big\|_{\hat I_{\ell}(\hat\theta_{EO})}^2}\\
& ~~~~~~\text{s.t.} \quad
\hat\theta_{EO} + \frac{H}{n}\sum_{i=1}^n M_{\phi}\Para{\hat\theta_{EO};\xi_i}\in\Theta,
\end{aligned}
\end{equation}
where $\widehat M_{\phi}(\theta;\xi) = M_{\phi}(\theta;\xi) + \Para{\frac{1}{n}\sum_{j = 1}^n \nabla_{\theta }M_{\phi}(\hat\theta_{EO};\xi_j)} \, \widehat\IFx(\xi)$. 

We specify the requirements for $\Phi$ as follows. 
\begin{assumption}[Construction Guarantee]\label{asp:construct-guarantee}
    For any $\delta > 0$, there exists a measurable closed set $\Phi(\delta)$ such that with probability at least $1-\delta$: (i) $\Phi^* \subseteq \Phi(\delta)$; and (ii) $\sup_{\phi \in \Phi(\delta)}\|\mu_{M_{\phi}}(\theta^*)\| = o(n^{-1/2})$.
\end{assumption}
That is, for a given $\delta$, $\Phi(\delta)$ is a subset that contains $\Phi^*$ with probability at least $1-\delta$ while maintaining controlled complexity. We consider a concrete example $\Phi(\delta)$ as follows:
\begin{example}[Moment Information Set]\label{ex:moment-info-set}
Suppose \Cref{asp:collection-sideinfo} holds and there are $K$ auxiliary distributions of $\xi$. For each $\phi \in \Phi^*\subseteq \R^{D_{\phi}}$, let $M_{\phi}$ take one of the following forms:
\begin{enumerate}[label=(\alph*)]
    \item $M_{\phi}(\theta;\xi)
    = \phi \cdot \Big(
    \xi^{(1)} - \E[\xi^{(1)}], \ldots,
    \xi^{(s)} - \E[\xi^{(s)}]
    \Big)^{\top}$,  where $D_{\phi} = D_{\theta}\times s$ and $\xi^{(i)}$ denote the $i$-th entry of $\xi$. This encodes the shared first moments of selected components in the uncertainty variable across data sources.
    \item $M_{\phi}(\theta;\xi) = \phi \xi + \theta + c$, where $D_{\phi} = {D_{\theta} \times D_{\xi}}$ and $c$ is a known constant. This encodes a known linear relationship between $\xi$ and $\theta$.
\end{enumerate}
For each $i \in [K]$, assume that a bounded $\xi \sim \Q_i$ satisfies  $\E_{\xi \sim \Q_i}[M_{\phi}(\theta_i^*;\xi)] = 0$ with a known $\theta_i^*$. We observe i.i.d. samples $\{\xi_{i,j}\}_{j \in [N]}$ drawn from $\Q_i$. The identification condition holds: $\sup_{\phi \in \Phi}\Para{\|\mu_{M_{\phi}}(\theta^*)\| - C{\max_{i \in [K]}\|\E_{\Q_i}[M_{\phi}(\theta_i^*;\xi)]}\|} \leq 0$ for some constant $C > 0$.

Then when the auxiliary sample size $N$ per domain satisfies $\frac{N}{D_{\phi}\log K} = \omega(n)$,  $\Phi(\delta)
:=\Big\{\phi:\ \max_{i\in[K]}\Big\|\sum_{j=1}^N M_{\phi}\Para{\theta_i^*;\xi_{i,j}}/N\Big\|_2^2
\lesssim D_{\phi}\log (KN/\delta)/{N} \Big\}$ satisfies \Cref{asp:construct-guarantee}.  
\end{example}
We formalize the consistency guarantee of $(\widehat H, \widehat\phi)$ in~\eqref{eq:cost-improvement2} under such  $\Phi(\delta)$:
    \begin{theorem}[Consistency of Analytical Approach over Multiple Sources of Side Information]\label{thm:perf}
Suppose Assumptions~\ref{asp:theta-star0},~\ref{asp:optimal-condition},~\ref{asp:collection-sideinfo}, and~\ref{asp:construct-guarantee} hold and $\hat\theta_{\widehat H, M_{\widehat\phi}}$ is solved from~\eqref{eq:cost-improvement2} with $\Phi := \Phi(\delta)$ and $\delta = o(1/n)$. 
Then $\hat\theta_{\widehat H,M_{\widehat \phi}}$ achieves the same amount of first-order improvement as $\hat\theta_{H^*, M_{\phi^*}}$.
\end{theorem}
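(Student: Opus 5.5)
The plan is to reduce \Cref{thm:perf} to a consistency statement for the estimated pair, and then reuse the argument behind \Cref{prop:max-improvement}. Define the population objective $Q(H,\phi):=\E_{\P}\|\IFx(\xi)+H\widetilde M_{\phi}(\theta^*;\xi)\|^2_{I_\ell(\theta^*)}$ and its empirical counterpart $\widehat Q_n(H,\phi):=\frac1n\sum_{i}\|\widehat{\IFx}(\xi_i)+H\widehat M_{\phi}(\hat\theta_{EO};\xi_i)\|^2_{\hat I_\ell(\hat\theta_{EO})}$. The argument has three steps:
\begin{enumerate}
\item Show that $\widehat Q_n\to Q$ uniformly over $\Hscr_B\times\Phi$.
\item Show that $(\widehat H,\widehat\phi)$ is asymptotically optimal and that any bias from $\widehat\phi\notin\Phi^*$ is negligible.
\item Transfer near-optimality of $Q$ into the excess-risk statement.
\end{enumerate}

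\emph{Step 1: uniform convergence.} I will combine \Cref{asp:plugin} with the consistency of $\hat I_\ell$ and $\hat\theta_{EO}$, and replace each hatted quantity by its population version at $\theta^*$. This follows the proof of \Cref{prop:analytical-consistency}, except that the bounds must now be uniform in $\phi$. Uniformity in $\phi$ comes from the envelope and Lipschitz-in-$\phi$ conditions in \Cref{asp:collection-sideinfo}(ii) together with compactness of $\Phi$ and the finite entropy integral. Under these conditions the class $\{\xi\mapsto\IFx(\xi)\widetilde M_\phi^\top,\ \widetilde M_\phi\widetilde M_\phi^\top\}$ is Glivenko--Cantelli, which gives $\sup_{\Hscr_B\times\Phi}|\widehat Q_n-Q|=o_p(1)$. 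The feasibility constraint in \eqref{eq:cost-improvement2} is asymptotically inactive, because the correction is $O_p(n^{-1/2})$ and $\theta^*\in\operatorname{int}\Theta$.

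\emph{Step 2: role of the construction set.} On the event of probability at least $1-\delta$ given by \Cref{asp:construct-guarantee}, we have $\Phi^*\subseteq\Phi(\delta)$. Hence $\min_{\Phi(\delta)}\widehat Q_n\le\min_{\Phi^*}\widehat Q_n$, and by Step 1 $Q(\widehat H,\widehat\phi)\le\min_{\Hscr_B\times\Phi^*}Q+o_p(1)$.

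The selected $\widehat\phi$ may lie outside $\Phi^*$. It nevertheless satisfies $\|\mu_{M_{\widehat\phi}}(\theta^*)\|=o(n^{-1/2})$, so the bias term $\widehat H\mu_{M_{\widehat\phi}}(\theta^*)$ in the expansion \eqref{expansion decomposition} is $o(n^{-1/2})$. That expansion is the one from the proof of \Cref{thm:improvement-principle}, made uniform over the compact set using \Cref{ass:moments} and the envelopes. Consequently $\sqrt n(\hat\theta_{\widehat H,M_{\widehat\phi}}-\theta^*)=\frac1{\sqrt n}\sum_i(\IFx+\widehat H\widetilde M_{\widehat\phi})(\xi_i)+o_p(1)$ uniformly.

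\emph{Step 3: transfer to excess risk.} This is the main obstacle. The selected $(\widehat H,\widehat\phi)$ is data-dependent, so the expected excess risk of the plug-in is not simply $Q(\widehat H,\widehat\phi)/(2n)$. I will handle this by comparing with the oracle $\hat\theta_{H^*,M_{\phi^*}}$, where $(H^*,\phi^*)$ is chosen within $\operatorname{argmin}Q$ close to the limit points of $(\widehat H,\widehat\phi)$.

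\Cref{thm:opt-adjust-side-info} and the quadratic-growth argument in the proof of \Cref{thm:opt-adjust} show that near-minimizers of $Q$ approach $\Hscr_B^*$. The optimal set need not be a singleton, but the value of $Q$ is constant on it. So, instead of controlling $\|\hat\theta_{\widehat H}-\hat\theta_{H^*}\|$, I will show directly that
\[
\E\Bigl\|\tfrac1{\sqrt n}\textstyle\sum_i(\IFx+\widehat H\widetilde M_{\widehat\phi})(\xi_i)\Bigr\|^2_{I_\ell}\to\min Q .
\]
To do this, I will write the left-hand side as a supremum over the compact index set of the empirical process $\frac1{\sqrt n}\sum_i\widetilde M_\phi(\xi_i)$. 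Its square is uniformly integrable by the third-moment envelopes. The process converges weakly, and I will use the fact that $(\widehat H,\widehat\phi)$ is asymptotically independent of its fluctuation direction only through its limit. If that asymptotic-independence step fails, the fallback is sample splitting or an $\epsilon$-net argument.

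Finally, the failure event has probability $\delta=o(1/n)$. On that event the excess risk is bounded by clipping at $B$ and the moment conditions, so its contribution to the expectation is $o(1/n)$. A second-order Taylor expansion of $Z$, as in \eqref{general expansion}, then shows that $\E R(\hat\theta_{\widehat H,M_{\widehat\phi}})$ and $\E R(\hat\theta_{H^*,M_{\phi^*}})$ differ by $o(1/n)$, which is the claim.
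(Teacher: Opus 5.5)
Your Steps 1 and 2 match the paper's proof. The paper shows uniform convergence of $\widehat Q_n$ to $Q$, then the comparison $Q(\widehat H,\widehat\phi)-Q(H^*,\phi^*)\le 2\sup|\widehat Q_n-Q|$ on the event $\Phi^*\subseteq\Phi(\delta)$, then an $o(1/n)$ contribution from the complementary event. In Step 3 the paper simply evaluates the fixed-pair expansion $\E[R(\hat\theta_{H,M_\phi})]=\frac{1}{2n}\E_{\P}\|\IFx+H\widetilde M_\phi\|^2_{I_\ell(\theta^*)}+o(1/n)$ at the data-dependent pair. You are right that this is not automatic. Your workaround, however, is where the argument breaks.

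Your target $\E\|\frac{1}{\sqrt n}\sum_i(\IFx+\widehat H\widetilde M_{\widehat\phi})(\xi_i)\|^2_{I_\ell}\to\min Q$ does not follow from $Q$ being constant on $\Hscr_B^*$.
\begin{itemize}
\item Suppose $\Hscr_B^*$ contains pairs whose perturbations $H\widetilde M_\phi(\theta^*;\cdot)$ are different elements of $L^2(\P)$. The tie between them is then broken by the $O_p(n^{-1/2})$ fluctuations of $\widehat Q_n$.
\item These fluctuations are asymptotically jointly Gaussian with the linear terms $S_{H,\phi}=\frac{1}{\sqrt n}\sum_i(\IFx+H\widetilde M_\phi)(\xi_i)$. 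They are generically correlated with them, through third moments and because $\hat\theta_{EO}$ and $\hat I_\ell$ enter $\widehat{\IFx}$ and $\widehat M_\phi$.
\item The selected index therefore has a random limit correlated with the Gaussian limit of $S$. A correlated choice among options with equal $\E\|S_k\|^2$ need not preserve that value.
\end{itemize}
For a limit-experiment example, take three options with $S_k=aW_k+R_k$. Here $W_1,W_2,W_3$ are i.i.d.\ $N(0,1)$ objective fluctuations, the $R_k$ are centered, independent of $W$, with equal variances, and $\hat k=\operatorname{argmin}_kW_k$. Then $\E S_{\hat k}^2=a^2\E[(\min_kW_k)^2]+\E R_1^2$, and $\E[(\min_kW_k)^2]\approx1.28$, so this exceeds $\E S_1^2$ whenever $a\neq0$. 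For the same reason, an oracle ``close to the limit points'' is not well defined when those limit points are random. Your fallbacks do not help: an $\epsilon$-net leaves the correlated tie-breaking in place, and sample splitting analyzes a different estimator from the one in the statement.

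The missing ingredient is identifiability of the optimal \emph{perturbation}: all pairs in $\Hscr_B^*$ should induce the same $g^*:=H\widetilde M_\phi(\theta^*;\cdot)$ in $L^2(\P)$. This holds, for example, if $\Hscr_B^*$ is a singleton, or if optimal pairs differ only by an invertible reparametrization of the same moment functions, as with the scaling freedom in \Cref{ex:moment-info-set}(a). Under this condition the argument closes as follows.
\begin{itemize}
\item Compactness and $L^2$-continuity of $(H,\phi)\mapsto H\widetilde M_\phi$ turn near-optimality into $\|\widehat H\widetilde M_{\widehat\phi}-g^*\|_{L^2(\P)}\overset{p}{\to}0$.
\item The entropy integral and Lipschitz-in-$\phi$ envelope of \Cref{asp:collection-sideinfo}(ii) make $\{H\widetilde M_\phi\}$ Donsker. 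Asymptotic equicontinuity then gives $\frac{1}{\sqrt n}\sum_i(\widehat H\widetilde M_{\widehat\phi}-g^*)(\xi_i)=o_p(1)$.
\item The cubic envelopes supply uniform integrability to pass to expectations. The comparison with $\hat\theta_{H^*,M_{\phi^*}}$ then finishes as in the H\"older step of \Cref{prop:max-improvement}.
\end{itemize}
Your tools (weak convergence of the indexed process, uniform integrability) are the right ones, but they only work once this limit is deterministic.

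Separately, Step 2 yields only $Q(\widehat H,\widehat\phi)\le\min_{\Hscr_B\times\Phi^*}Q+o_p(1)$. Since $\widehat\phi$ may lie in $\Phi(\delta)\setminus\Phi^*$, you also need the reverse inequality. It is required both for ``the same amount'' and for invoking convergence to $\Hscr_B^*$, since \Cref{thm:opt-adjust-side-info} is stated over $\Phi^*$ only. That reverse inequality requires that limit points of $\Phi(\delta)$ cannot beat $\Phi^*$. \Cref{asp:construct-guarantee} is only a bias bound and does not give this by itself. The paper covers this only with the remark that optimizing over $\Phi(\delta)$ instead of $\Phi^*$ costs $o(1)$.
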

This result shows that the data-driven estimator achieves the same amount of first-order improvement as the oracle choice $(H^*, \phi^*)$ under an appropriate $\delta$. However, solving~\eqref{eq:cost-improvement2} is generally non-convex, and in practice it is implemented via alternating minimization over $H$ and $\phi$.

\subsubsection{Special Cases} When $\Theta$ and $\Phi(\delta)$ have structures that make $\widehat H$ a closed-form function of $\widehat\phi$, the optimization problem~\eqref{eq:cost-improvement2} reduces to minimization over $\phi$. Consider the following case:

\begin{example}[Computation under Unconstrained $\Theta$]\label{prop:perf-no-constraint}
Suppose $\Theta = \R^{D_{\theta}}$. Denote $\widehat A_\phi := \frac{1}{n}\sum_{i = 1}^n \widehat M_\phi(\hat\theta_{EO};\xi_i)\widehat M_\phi(\hat\theta_{EO};\xi_i)^\top, \widehat B_\phi := \frac{1}{n}\sum_{i =1 }^n \widehat\IFx(\xi_i)\widehat M_\phi(\hat\theta_{EO};\xi_i)^\top$. Then the pair defined by
\begin{equation}\label{eq:if-max-unconstrained}
\widehat\phi \in \argmax_{\phi\in\Phi(\delta)}
\tr\!\Big(\hat I_{\ell}(\hat\theta_{EO})\,\widehat B_\phi \widehat A_\phi^{\dagger}\widehat B_\phi^\top\Big), \quad \widehat H = -\widehat B_{\widehat\phi} \widehat A_{\widehat\phi}^{\dagger},
\end{equation}
is an optimal solution to~\eqref{eq:cost-improvement2}.
\end{example}

\subsection{Bootstrap Approach}\label{app:bootstrap}
Since the best $\phi^*$ in~\eqref{eq:optimal-close-h-phi} depends on the Hessian of the cost function $I_{\ell}(\theta^*)$, we propose a bootstrap-based approach that avoids estimating $I_{\ell}(\theta^*)$. The procedure is described in \Cref{alg:bootstrap-estimate-2}. \Cref{alg:bootstrap-estimate-2} estimates the adjustment matrix and the parametrized side information through a bootstrap procedure with sample splitting. More specifically, at each replication, we draw the bootstrap samples and split them into two disjoint subsets. On the first subset, we compute the EO solution and evaluate the empirical side information for each candidate perturbation function parametrized by $\phi$. After repeating this across all replications, we select the adjustment matrix and side information that minimize the average loss evaluated at the directionally perturbed EO solution on the held-out second subset.

\begin{small}
    \begin{algorithm}[!htb]
\caption{Bootstrap Approach to Estimate $\widehat H, \widehat\phi$}
\label{alg:bootstrap-estimate-2}
\small
\begin{algorithmic}[1]
    \REQUIRE Number of bootstrap replications $B$, the dataset $\{\xi_i\}_{i \in [n]}$
    \FOR{$b = 1,\ldots, B$}
    \STATE Obtain $\Dscr_n^{(b)} = \{\xi_i^{(b)}\}_{i \in [n]}$ by resampling the dataset with replacement;
    \STATE Split $\Dscr_n^{(b)}$ into two disjoint subsets $\Dscr_{n,1}^{(b)}=\{\xi_i^{(b,1)}\}_{i \in [n_1]}$ and $\Dscr_{n,2}^{(b)}=\{\xi_i^{(b,2)}\}_{i \in [n_2]}$ with $n_1+n_2=n$;
    \STATE Compute the bootstrapped EO solution on the first subset
    \[
    \hat\theta_{EO}^{(b)}
    \in
    \argmin_{\theta \in \Theta}
    \widehat Z_1^{(b)}(\theta),
    \qquad
    \widehat Z_1^{(b)}(\theta)
    :=
    \frac{1}{n_1}\sum_{i = 1}^{n_1}\ell(\theta;\xi_i^{(b,1)});
    \]
    \STATE Compute the empirical bootstrapped side information on the first subset for each $\phi$
    \[
    \widebar M_{\phi}^{(b)}
    =
    \frac{1}{n_1}\sum_{i = 1}^{n_1}
    M_{\phi}\Para{\hat\theta_{EO}^{(b)};\xi_i^{(b,1)}},
    \]
    and the empirical side information on the original data $\widebar M_{\phi} = \frac{1}{n}\sum_{i = 1}^n M_{\phi} \Para{\hat\theta_{EO};\xi_i}$. 
    \ENDFOR
    \STATE Compute $(\widehat H, \widehat \phi)$ by solving
    \begin{equation}\label{eq:bootstrap-opt}
    (\widehat H, \widehat \phi)
    \in
    \argmin_{(H, \phi) \in \Hscr \times \Phi(\delta)}
    \frac{1}{B}\sum_{b=1}^B
    \widehat Z_2^{(b)}
    \Para{\hat\theta_{EO}^{(b)} + H\Para{\widebar M_{\phi}^{(b)}-\widebar M_{\phi}}},
    \end{equation}
    where
    \[
    \widehat Z_2^{(b)}(\theta)
    :=
    \frac{1}{n_2}\sum_{i = 1}^{n_2}\ell(\theta;\xi_i^{(b,2)}).
    \]
    \ENSURE $\widehat H, \widehat \phi$.
\end{algorithmic}
\end{algorithm}
\end{small}

\begin{theorem}[Consistency of Bootstrap Approach over Multiple Sources of Side Information]\label{thm:perf-bootstrap}
Suppose Assumptions~\ref{asp:theta-star0},~\ref{asp:optimal-condition},~\ref{asp:collection-sideinfo}, and~\ref{asp:construct-guarantee} hold, $\Hscr$ is compact, $\frac{n_1}{n} \to c \in (0, 1)$ and $B = \omega(1)$, and  $(\widehat H, \widehat\phi)$ is the output of \Cref{alg:bootstrap-estimate-2} with $\delta= o(1/n)$ in~\eqref{eq:bootstrap-opt}. Then $\hat\theta_{\widehat H,M_{\widehat \phi}}$ achieves the same amount of first-order improvement as $\hat\theta_{H^*, M_{\phi^*}}$.
\end{theorem}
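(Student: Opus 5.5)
The plan is to reduce \Cref{thm:perf-bootstrap} to \Cref{thm:opt-adjust-side-info} plus a consistency argument in the spirit of \Cref{prop:max-improvement}. It suffices to show that, with probability tending to one, $(\widehat H,\widehat\phi)$ approaches the oracle set $\Hscr^*$ of minimizers of the population criterion $Q(H,\phi):=\frac12\E_{\P}\|\IFx(\xi)+H\widetilde M_{\phi}(\theta^*;\xi)\|_{I_\ell(\theta^*)}^2$. Once this holds, the argument of \Cref{prop:max-improvement} gives $\E[R(\hat\theta_{\widehat H,M_{\widehat\phi}})]-\E[R(\hat\theta_{H^*,M_{\phi^*}})]=o(1/n)$. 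That argument uses non-expansiveness of $\Pi_\Theta$, H\"older's inequality with the third-moment envelopes in \Cref{asp:collection-sideinfo}(ii), and boundedness of $\Hscr$. One extra ingredient is needed here: a uniform-in-$\phi$ version of \Cref{ass:moments}, together with continuity of $\phi\mapsto \sqrt n\,\widebar M_\phi(\hat\theta_{EO})$. This guarantees that replacing $\phi^*$ by a nearby $\widehat\phi$ changes $\sqrt n(\hat\theta_{H,M_\phi}-\theta^*)$ by $o_p(1)$. It follows from the Lipschitz-in-$\phi$ envelope and the entropy integral, which make $\{M_\phi(\theta^*;\cdot)\}$ Donsker.

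The main step is to identify the limit of the bootstrap criterion in \eqref{eq:bootstrap-opt}. Fix $(H,\phi)$ and write $\Delta^{(b)}:=\hat\theta_{EO}^{(b)}+H(\widebar M^{(b)}_\phi-\widebar M_\phi)$. Conditionally on the data, \Cref{lemma:m-estimator0} applied to the resampled subset $\Dscr^{(b)}_{n,1}$ gives $\hat\theta^{(b)}_{EO}-\hat\theta_{EO}\approx \frac1{n_1}\sum\IFx(\xi^{(b,1)}_i)$. Similarly, the expansion \eqref{eq:first-order-fluctuation0} gives $\widebar M^{(b)}_\phi-\widebar M_\phi\approx\frac1{n_1}\sum\widetilde M_\phi(\theta^*;\xi^{(b,1)}_i)$ minus the corresponding full-sample centred average. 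Here the centring by $\widebar M_\phi$ plays the role of the unknown mean. Next, take a second-order expansion of $\widehat Z_2^{(b)}$ around $\hat\theta_{EO}$, using the held-out subset. The first-order term $\nabla \widehat Z_2^{(b)}(\hat\theta_{EO})^\top(\Delta^{(b)}-\hat\theta_{EO})$ has cross-covariance with the first-subset fluctuation that vanishes on average over $b$, because the two subsets are disjoint. The quadratic term then gives
\[
\frac1B\sum_b\widehat Z_2^{(b)}(\Delta^{(b)})=\text{(term free of }(H,\phi))+\frac{c_n}{n}\,Q(H,\phi)+o_p(n^{-1})
\]
uniformly over $\Hscr\times\Phi(\delta)$, where $c_n\to 1/c$ (from $n_1/n\to c$). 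Uniformity in $\phi$ comes from the Donsker property. Uniformity in $H$ comes from compactness of $\Hscr$. Taking $B=\omega(1)$ makes the Monte Carlo average concentrate on its conditional expectation, as in \Cref{prop:bootstrap-consistency}.

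From here, \Cref{asp:construct-guarantee} with $\delta=o(1/n)$ ensures two things on an event whose failure probability contributes only $o(1/n)$ to the expected excess risk (controlled by the moment bounds). First, $\Phi^*\subseteq\Phi(\delta)$. Second, every $\phi\in\Phi(\delta)$ is nearly correct, with $\mu_{M_\phi}(\theta^*)=o(n^{-1/2})$, so the bias term $\frac12\|H\mu_{M_\phi}\|^2$ is $o(1/n)$ and does not alter $Q$. An argmin-continuity argument, as in the proof of \Cref{thm:opt-adjust-side-info} with $Q$ continuous on a compact set, then yields $\operatorname{dist}((\widehat H,\widehat\phi),\Hscr^*)\to0$ in probability, and the conclusion follows by the first paragraph.

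The hardest step is the uniform quadratic expansion of the held-out bootstrap criterion. It requires three things at once: the cross term between the held-out gradient noise and the first-subset fluctuations must be shown negligible relative to $1/n$ after averaging over $b$; the ``free'' term must genuinely not depend on $(H,\phi)$; and the Monte Carlo and bootstrap approximation errors must be $o_p(1/n)$ uniformly over the parameter class. I would attack it by conditioning on $\Dscr_n$, using disjointness of the two subsets to factor expectations, and bounding the residual through the stochastic equicontinuity delivered by \Cref{asp:collection-sideinfo}(ii).
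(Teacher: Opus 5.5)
Your proposal is sound at the paper's level of rigor, and its central step coincides with the paper's. The held-out bootstrap criterion equals a term free of $(H,\phi)$ plus $\frac{n}{n_1}\cdot\frac1n Q(H,\phi)+o_p(n^{-1})$ uniformly; this follows from conditional independence of the two subsets, bootstrap consistency and $B=\omega(1)$. The event $\Phi^*\not\subseteq\Phi(\delta)$ is then discarded using $\delta=o(1/n)$.

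The genuine difference is how you transfer near-optimality of the criterion to the excess risk. The paper works only with values. It gets $Q(\widehat H,\widehat\phi)=Q(H^*,\phi^*)+o_p(1)$, hence $\E[Q(\widehat H,\widehat\phi)]\to Q(H^*,\phi^*)$. It then evaluates the expansion $n\E_{\Dscr_n}[R(\hat\theta_{H,M_\phi})]=Q(H,\phi)+o(1)$, which holds uniformly over fixed $(H,\phi)\in\Hscr\times\Phi^*$, at the random pair $(\widehat H,\widehat\phi)$.

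You instead prove argmin consistency as in \Cref{thm:opt-adjust-side-info}, and then couple $\hat\theta_{\widehat H,M_{\widehat\phi}}$ to $\hat\theta_{H^*,M_{\phi^*}}$ pathwise as in \Cref{prop:max-improvement}. This costs you stochastic equicontinuity in $\phi$, which your Lipschitz-envelope and entropy argument supplies. In exchange, it justifies exactly the step the paper leaves informal. An expansion valid for each fixed $(H,\phi)$ does not by itself allow plugging in parameters correlated with the data, whereas $n\E\|\hat\theta_{\widehat H,M_{\widehat\phi}}-\hat\theta_{H^*,M_{\phi^*}}\|_2^2=o(1)$ does. You also make explicit two things the paper handles only in passing: the held-out gradient cross term, which vanishes only after averaging over $b$ and which the paper absorbs into its remainder without comment; and the $o(n^{-1/2})$ bias for $\widehat\phi\in\Phi(\delta)\setminus\Phi^*$, which its expansion over $\Phi^*$ does not cover.

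One caveat: your coupling needs a fixed target. If the population optimal set is not a singleton, $\operatorname{dist}((\widehat H,\widehat\phi),\Hscr^*)\to0$ only yields a data-dependent nearest oracle, and \Cref{prop:max-improvement} no longer applies. You should assume a unique minimizer, or that $\IFx(\xi)+H\widetilde M_\phi(\theta^*;\xi)$ is a.s.\ the same across the optimal set. The paper's plug-in step tacitly needs the same condition, so this is a limitation of the statement rather than of your route.
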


\subsection{Proofs in \Cref{subsec:comp-h-phi}}
\subsubsection{\textit{Proof of \Cref{thm:opt-adjust-side-info}}} 
Define $L_n(H,\phi):=\E_{\Dscr_n}[R(\hat\theta_{H,M_\phi})]$ and $Q(H,\phi):=
\frac12
\E_{\P}\!\left[
\left\|
\IFx(\xi)
+
H\,
\widetilde M_\phi(\theta^*;\xi)
\right\|_{I_\ell(\theta^*)}^2
\right]$. Because $\Hscr_B\times\Phi^*$ is compact, the regularity conditions in
\Cref{asp:collection-sideinfo} make the expected-regret expansion in the proof of
\Cref{thm:improvement-principle} hold uniformly: $\sup_{(H,\phi)\in\Hscr_B\times\Phi^*}
\left|nL_n(H,\phi)-Q(H,\phi)\right|=o(1)$.

Since $(H_n,\phi_n)$ is an approximate minimizer, we have $nL_n(H_n,\phi_n)
\leq
\inf_{(H,\phi)\in\Hscr_B\times\Phi^*}nL_n(H,\phi)+o(1)$.
The uniform expansion therefore gives $Q(H_n,\phi_n)
\leq
\inf_{(H,\phi)\in\Hscr_B\times\Phi^*}Q(H,\phi)+o(1)$.

It remains to show convergence to the optimal set. Suppose, to the contrary, that
$\operatorname{dist}((H_n,\phi_n),\Hscr_B^*)$ does not converge to zero. Then there
exist $\varepsilon>0$ and a subsequence $\{n_k\}$ such that
$\operatorname{dist}((H_{n_k},\phi_{n_k}),\Hscr_B^*)\geq\varepsilon$ for every $k$.
By compactness of $\Hscr_B\times\Phi^*$, a further subsequence converges to some
$(\bar H,\bar\phi)\in\Hscr_B\times\Phi^*$. The function $Q$ is continuous under
\Cref{asp:collection-sideinfo}; hence the preceding approximate-optimality inequality
implies $Q(\bar H,\bar\phi)
=
\inf_{(H,\phi)\in\Hscr_B\times\Phi^*}Q(H,\phi)$. Thus $(\bar H,\bar\phi)\in\Hscr_B^*$, contradicting
$\operatorname{dist}((H_{n_k},\phi_{n_k}),\Hscr_B^*)\geq\varepsilon$.
Therefore,
$\lim_{n \to \infty}\operatorname{dist}((H_n,\phi_n),\Hscr_B^*) = 0$.

$\hfill \square$

\subsubsection{\textit{Proof of \Cref{ex:moment-info-set}}}
We verify the two conditions in \Cref{asp:construct-guarantee}.

By the uniform Bernstein inequality in Chapters 4--5 of \cite{wainwright2019high}, the condition in \Cref{ex:moment-info-set} implies that there exists a universal constant $C>0$ such that, for any $\delta\in(0,1)$, with probability at least $1-\delta$,
\begin{small}
\begin{equation}\label{eq:bernstein}
\begin{aligned}
& \sup_{\phi\in\Phi^*}\ \max_{i\in[K]}
\left\|
\frac{1}{N}\sum_{j=1}^N\Big(M_\phi(\theta_i^*;\xi_{i,j})-\E_{\xi\sim\Q_i}[M_\phi(\theta_i^*;\xi)]\Big)
\right\|_2\\
\le
& C\left(
\sqrt{\frac{\log N(1/N, \Phi^*, \|\cdot\|_2)+\log(K/\delta)}{N}}
+
\frac{\log N(1/N, \Phi^*, \|\cdot\|_2) + \log(K/\delta)}{N}
\right)\\
\lesssim  & C \sqrt{\frac{D_{\phi} \log(K N /\delta)}{N}}, 
\end{aligned}
\end{equation}
\end{small}
where the last inequality follows by $\log N(\varepsilon, \Phi^*, \|\cdot\|_2) \lesssim D_{\phi}\log(1/\varepsilon)$ for any $\varepsilon$ when $\Phi^* \subseteq \R^{D_{\phi}}$ is a finite-dimensional compact set.
We now verify that the moment-based construction satisfies \Cref{asp:construct-guarantee} by showing that the true invariant parameter lies in the empirical set with high probability. 

For the first condition in \Cref{asp:construct-guarantee}, consider $\tilde\phi \in \Phi^*$. It satisfies the invariance condition $\E_{\Q_i}[M_{\tilde\phi}(\theta;\xi)] = 0$ for every $i \in [K]$, as assumed in \Cref{ex:moment-info-set}. Then the above bound implies
\[
\max_{i \in [K]}
\left\|
\frac{1}{N}\sum_{j=1}^N M_{\tilde\phi}(\theta_i^*;\xi_{i,j})
\right\|_2
\lesssim
\sqrt{\frac{D_{\phi}\log(KN /\delta)}{N}}.
\]
Therefore, $\tilde\phi \in 
\Phi(\delta)$ with probability at least $1-\delta$.

For the second condition in \Cref{asp:construct-guarantee}, since $\Phi(\delta)\subseteq \Phi^*$, from the identification condition, we have:
\begin{small}
\begin{align*}
    \sup_{\phi \in \Phi(\delta)}\|\mu_{M_{\phi}}(\theta^*)\|_2 & \leq C \sup_{\phi \in \Phi(\delta)}\max_{i \in [K]}\|\E_{\Q_i}[M_{\phi}(\theta_i^*;\xi)]\|_2\\
    & \leq \sup_{\phi \in \Phi(\delta)}\max_{i \in [K]}\Para{\bigg\|\frac1N\sum_{j = 1}^N M_{\phi}(\theta_i^*;\xi_{i,j})\bigg\|_2 + \bigg\|\E_{\Q_i}[M_{\phi}(\theta_i^*;\xi)] - \frac1N\sum_{j = 1}^N M_{\phi}(\theta_i^*;\xi_{i,j})\bigg\|_2}\\
    & \lesssim 2C\sqrt{D_{\phi}\log(KN/\delta)/N} = o(n^{-1/2}),
\end{align*}
\end{small}
where the third inequality follows from~\eqref{eq:bernstein} and the last equality follows from the growth condition on $N$.

$\hfill \square$


\subsubsection{\textit{Proof of \Cref{thm:perf}}}
Define $\widetilde M_{\phi}(\theta;\xi) = M_{\phi}(\theta;\xi)+ \nabla_{\theta} \mu_{M_{\phi}}(\theta)\IFx(\xi)$ with $\mu_{M_{\phi}}(\theta) = \E_{\P}[M_{\phi}(\theta;\xi)]$. Applying the second-order Taylor expansion and influence-function representation used in Theorem~\ref{thm:improvement-principle} yields:
\begin{equation}\label{eq:risk-expansion}
\E[R\big(\hat\theta_{H,M_{\phi}}\big)]
\ = \frac{1}{2n} \E_{\P}\big[\|\IFx(\xi)+H \widetilde M_{\phi}(\theta^*;\xi)\|_{I_{\ell}(\theta^*)}^2\big]
\ +\ o\Big(\frac{1}{n}\Big), \forall H \in \Hscr, \phi \in \Phi(\delta).   
\end{equation}
We next establish the uniform convergence of the empirical objective in~\eqref{eq:cost-improvement2}. Let
\begin{align*}
    Q(H,\phi)
&:=\frac{1}{2}\E_{\P}\big[\|\IFx(\xi)+H\,\widetilde M_{\phi}(\theta^*;\xi)\|_{I_{\ell}(\theta^*)}^2\big],\\
\widehat Q_n(H,\phi)
& :=\frac{1}{2n}\sum_{i=1}^n\|\widehat{\IFx}(\xi_i)+H\,\widehat M_{\phi}(\hat\theta_{EO};\xi_i)\|_{\hat I_{\ell}(\hat\theta_{EO})}^2.
\end{align*}
By standard triangle inequality and uniform convergence arguments, since $H$ and $\phi$ are both bounded under Assumptions~\ref{asp:collection-sideinfo} and~\ref{asp:construct-guarantee}, 
we have $\E\bigg[\sup_{(H, \phi) \in\Hscr \times\Phi(\delta)}
\big|\widehat Q_n(H,\phi)-Q(H,\phi)\big|\bigg] = o(1)$.
Taking $\delta=o(1/n)$ in \Cref{asp:construct-guarantee}, the additional error from using $\widehat \phi \in \Phi(\delta)$ is $o(1)$ uniformly over $M_{\widehat \phi}$.

Then we transfer the optimality gap from the empirical objective to the population one. Let $(\widehat H,\widehat \phi)$ minimize \eqref{eq:cost-improvement2} 
over $\Hscr\times \Phi(\delta)$. Then, for any population oracle $(H^*, \phi^*) \in\argmin_{H,\phi\in\Phi^*} \E_{\P}\big[\|\IFx(\xi)+H\,\widetilde M_{\phi}(\theta^*;\xi)\|_{I_{\ell}(\theta^*)}^2\big]$, conditional on the event $\Phi^* \subseteq \Phi(\delta)$, we have
\begin{small}
\begin{align*}
& \quad \E[Q(\widehat H,\widehat \phi)]-Q(H^*, \phi^*) \\
& \leq 
\big[\E[Q(\widehat H,\widehat \phi)]-\E[\widehat Q_n(\widehat H,\widehat \phi)]\big] + \big[\E[\widehat Q_n(\widehat H,\widehat \phi)] -\E[\widehat Q_n(H^*, \phi^*)]\big]
+\big[\E[\widehat Q_n(H^*, \phi^*)]-Q(H^*, \phi^*)\big]\\
& \leq 2\E\bigg[\sup_{(H, \phi) \in\Hscr \times\Phi(\delta)}
\big|\widehat Q_n(H,\phi)-Q(H,\phi)\big|\bigg] = o(1)   
\end{align*}
\end{small}

On the event $\Phi^* \subseteq \Phi(\delta)$, applying expansion~\eqref{eq:risk-expansion} to both $(\widehat H,\widehat \phi)$ and $(H^*, \phi^*)$ gives the following performance gap:
\begin{align*}
\E\Paran{R(\hat\theta_{\widehat H, M_{\widehat\phi}})-R(\hat\theta_{H^*, M_{\phi^*}})}
&=\frac{1}{n}\Big[\E[Q(\widehat H,\widehat \phi)]-Q(H^*, \phi^*)\Big] + o\Big(\frac{1}{n}\Big) = o\Para{\frac{1}{n}}.
\end{align*}
On the other hand, we know $\P(\Phi^* \not\subseteq \Phi(\delta)) \leq \delta$. Setting $\delta = o(1/n)$ and applying \Cref{asp:construct-guarantee} gives: 
\begin{small}
\begin{equation}\label{eq:regret-prob-1n}
\begin{aligned}
&~~~\E[R(\hat\theta_{\widehat H, M_{\widehat\phi}})-R(\hat\theta_{H^*, M_{\phi^*}})]\\ &\leq \E[R(\hat\theta_{\widehat H, M_{\widehat\phi}})-R(\hat\theta_{H^*, M_{\phi^*}})|\Phi^* \subseteq \Phi(1/n)] + \E[|R(\hat\theta_{\widehat H, M_{\widehat\phi}}) - R(\hat\theta_{H^*, M_{\phi^*}})|]\P(\Phi^* \not\subseteq \Phi(\delta)) \\
& \leq o(1/n) + C o(1/n) = o(1/n), 
\end{aligned}
\end{equation}
\end{small}

$\hfill \square$

\subsubsection{\textit{Proof of \Cref{prop:perf-no-constraint}}}
In the unconstrained case, the optimal $\phi^*$ is the maximizer of the following problem:
\begin{equation}\label{eq:if-max-unconstrained-true}
\max_{\phi\in\Phi}
\tr\!\Big(I_{\ell}(\theta^*)\,B_\phi A_\phi^{-1}B_\phi^\top\Big), 
\end{equation}
where $A_\phi := \E\!\Big[\widetilde M_\phi(\theta^*;\xi)\widetilde M_\phi(\theta^*;\xi)^\top\Big], B_\phi := \E\!\Big[\IFx(\xi)\widetilde M_\phi(\theta^*;\xi)^\top\Big]$.
The objective value~\eqref{eq:if-max-unconstrained-true} corresponds to $Q(H^*, \phi^*)$, and the remainder of the proof is identical to the proof of \Cref{thm:perf}.

$\hfill \square$

\subsubsection{\textit{Proof of \Cref{thm:perf-bootstrap}}}
Define
\begin{align*}
\widehat{\mathcal J}^{(B)}(H,\phi)
& :=
\frac{1}{B}\sum_{b=1}^B
\widehat Z_2^{(b)}
\Para{
\hat\theta_{EO}^{(b)} + H\Para{\widebar M_{\phi}^{(b)} - \widebar M_{\phi}}}\\
Q(H,\phi)&:=\frac{1}{2}\E_{\P}\big[\|\IFx(\xi)+H\,\widetilde M_{\phi}(\theta^*;\xi)\|_{I_{\ell}(\theta^*)}^2\big].
\end{align*}
Because the second subset of the dataset is independent of the first subset,
\[
\E\!\left[
\widehat Z_2^{(b)}
\Para{
\hat\theta_{EO}^{(b)} + H\Para{\widebar M_{\phi}^{(b)} - \widebar M_{\phi}}
}
\;\middle|\;
\Dscr_{n,1}^{(b)}
\right]
=
\E_{\hat\P_n}\Paran{\ell\Para{\hat\theta_{EO}^{(b)} + H\Para{\widebar M_{\phi}^{(b)} - \widebar M_{\phi}};\xi}},
\]
where the expectation on the left-hand side is taken over bootstrap samples conditional on the original data $\Dscr_n$.
The term $\widehat Z_2^{(b)}(\hat\theta_{EO}^{(b)})$ does not depend on $(H,\phi)$, so we may subtract it from the criterion without changing the minimizer. For each $b \in [B]$, expanding the objective $\widehat Z_2^{(b)}(\cdot)$ to second order at $\hat\theta_{EO}$ gives
\begin{equation*}
\begin{aligned}
& \quad \widehat{\mathcal J}^{(B)}(H,\phi) - \widehat{\mathcal J}^{(B)}(\mathbf{0},\phi)\\
& =
\frac{1}{B}\sum_{b=1}^B \Big[\big(\hat\theta_{EO}^{(b)} - \hat\theta_{EO}\big)^{\top} I_{\ell}(\theta^*)\, H\big(\widebar M_{\phi}^{(b)} - \widebar M_{\phi}\big)
+ \tfrac{1}{2}\big\|H\big(\widebar M_{\phi}^{(b)} - \widebar M_{\phi}\big)\big\|_{I_{\ell}(\theta^*)}^2\Big]
+ o_p\Para{\frac{1}{n}},
\end{aligned}
\end{equation*}
which follows from the fact that $\hat\theta_{EO}$ optimizes the empirical objective approximately by \Cref{asp:optimal-condition} and its empirical Hessian converges to $I_{\ell}(\theta^*)$ as $B = \omega(1)$. 
Conditional on the data, $\widebar M_{\phi}^{(b)} - \widebar M_{\phi}$ is mean-zero up to $O_p(1/n_1)$. Following the bootstrap consistency established in the proof of \Cref{prop:bootstrap-consistency}, applied to the $n_1$ resamples, the conditional covariance matrices of $\sqrt{n_1}(\hat\theta_{EO}^{(b)} - \hat\theta_{EO})$ and $\sqrt{n_1}(\widebar M_{\phi}^{(b)} - \widebar M_{\phi})$ converge to those of $\IFx(\xi)$ and $\widetilde M_{\phi}(\theta^*;\xi)$. Therefore, averaging over $b$,
\begin{equation}\label{eq:boot-limit}
n\,\Big[\widehat{\mathcal J}^{(B)}(H,\phi) - \widehat{\mathcal J}^{(B)}(\mathbf{0},\phi)\Big]
=
\frac{n}{n_1}\Big[Q(H,\phi) - \tfrac{1}{2}\tr\big(I_{\ell}(\theta^*)\Sigma_0\big)\Big] + o_p(1).
\end{equation}
Since $n/n_1 \to 1/c > 0$ and the subtracted trace is free of $(H,\phi)$, the criterion~\eqref{eq:bootstrap-opt} and $Q$ share the same asymptotic minimizers.

Furthermore, conditional on the event $\Phi^* \subseteq \Phi(\delta)$, by the optimality of $(\widehat H,\widehat\phi)$ for~\eqref{eq:bootstrap-opt}, we have $\widehat{\mathcal J}^{(B)}(\widehat H,\widehat\phi) \le \widehat{\mathcal J}^{(B)}(H^*,\phi^*)$. Since Assumptions~\ref{asp:collection-sideinfo} and~\ref{asp:construct-guarantee} hold, uniform convergence~\eqref{eq:boot-limit} applied to both sides implies $Q(\widehat H,\widehat\phi) \le Q(H^*,\phi^*)+o_p(1)$. As in the proof of \Cref{thm:perf}, with $\delta = o(1/n)$ the additional error from optimizing over $\Phi(\delta)$ rather than $\Phi^*$ is $o_p(1)$ uniformly, so we also have $Q(\widehat H,\widehat\phi) \ge Q(H^*,\phi^*) - o_p(1)$. Combining both, and since $Q$ is bounded on $\Hscr\times\Phi(\delta)$,
\[
\E\big[Q(\widehat H,\widehat\phi)\big]-Q(H^*,\phi^*)=o(1).
\]
Next, by the excess-risk expansion,
\[
n\E_{\Dscr_n}\!\left[
R(\hat\theta_{H,M_\phi})
\right]
=
Q(H,\phi)+o(1)
\]
uniformly over $(H,\phi)\in\Hscr\times\Phi^*$. Applying this expansion at
$(\widehat H,\widehat\phi)$ and at $(H^*,\phi^*)$ gives
\[
n\Para{
\E_{\Dscr_n}
\left[
R(\hat\theta_{\widehat H,M_{\widehat\phi}})
\right]
-
\E_{\Dscr_n}
\left[
R(\hat\theta_{H^*,M_{\phi^*}})
\right]}
=
\E\big[Q(\widehat H,\widehat\phi)\big]-Q(H^*,\phi^*)+o(1)
=
o(1).
\]
On the other hand, when $\Phi^* \not\subseteq \Phi(\delta)$, which happens with probability at most $o(1/n)$, similar to~\eqref{eq:regret-prob-1n}, we can bound the difference of the excess risk contribution by $o(1/n)$. 

Therefore, $\E_{\Dscr_n}
\left[
R(\hat\theta_{\widehat H,M_{\widehat\phi}})
\right]
-
\E_{\Dscr_n}
\left[
R(\hat\theta_{H^*,M_{\phi^*}})
\right]
=
o(1/n)$.

$\hfill \square$

\section{Extension: Improvement Characterization for General Risk Functions}\label{app:generalization-improvement}

In this section, we extend the comparison of $Z(\hat\theta_{H,M})$ and $Z(\hat\theta_{EO})$ beyond
their expectations in the main body by considering $\E_{\Dscr_n}[g(Z(\hat\theta_{H,M}))]$ and
$\E_{\Dscr_n}[g(Z(\hat\theta_{EO}))]$ for a general risk function $g(\cdot)$.
{Throughout this section, we focus on side information that does not depend on the decision, i.e., $M(\theta;\xi) = M(\xi)$. In this case, $\nabla_{\theta}\mu_M(\theta^*) = \mathbf{0}$ and $\widetilde M(\theta^*;\xi) = M(\xi)$, so the
non-orthogonality condition in \Cref{thm:improvement-principle} reduces to
$\Gamma = \E_{\P}[\IFx(\xi) M(\xi)^{\top}] \neq \mathbf{0}$.}

\begin{assumption}[Conditions on $g(\cdot)$]\label{asp:riskfunction}
    The function $g:\mathbb{R}\to\mathbb{R}$ is nondecreasing and twice continuously
    differentiable with $\sup_{z}|g''(z)| < \infty$, and $g'(Z(\theta^*)) > 0$.
\end{assumption}

\begin{proposition}[First-Order Improvement for General Risk Functions]\label{prop:performance-risk-func}
Suppose Assumptions~\ref{asp:theta-star0},~\ref{asp:optimal-condition},~\ref{asp:regular-risk-rate}
and~\ref{asp:riskfunction} hold, and $M(\theta;\xi) = M(\xi)$ satisfies
$\E_{\P}[M(\xi)] = \mathbf{0}$ and $\E_{\P}[\|M(\xi)\|_2^4] < \infty$.
Then for any fixed $H$ with $\|H\| < \infty$,
\begin{equation}\label{eq:g-reduction}
\E_{\Dscr_n}[g(Z(\hat\theta_{H,M}))] - \E_{\Dscr_n}[g(Z(\hat\theta_{EO}))]
= g'(Z(\theta^*))\,\E_{\Dscr_n}\big[R(\hat\theta_{H,M}) - R(\hat\theta_{EO})\big]
+ o\Para{\frac{1}{n}}.
\end{equation}
Consequently, if $\Gamma \neq \mathbf{0}$, then $H^*$ in~\eqref{eq:optimal-close-h} satisfies
$\E_{\Dscr_n}[g(Z(\hat\theta_{H^*,M}))] - \E_{\Dscr_n}[g(Z(\hat\theta_{EO}))] = \Theta(1/n) < 0$,
i.e., $\hat\theta_{H^*,M}$ achieves a first-order improvement over $\hat\theta_{EO}$ under the
risk function $g$, and $H^*$ maximizes this improvement over all bounded $H$.
\end{proposition}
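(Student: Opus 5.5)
The plan is to Taylor-expand $g$ around $Z(\theta^*)$ separately for each of the two solutions, and then to reuse the expected-excess-risk comparison already carried out in the proof of \Cref{thm:improvement-principle}. For any data-driven $\hat\theta$, write $Z(\hat\theta) = Z(\theta^*) + R(\hat\theta)$. Since $g$ is twice continuously differentiable with $\sup_z |g''(z)| <\infty$ (\Cref{asp:riskfunction}), we have
\[
g(Z(\hat\theta)) = g(Z(\theta^*)) + g'(Z(\theta^*))\,R(\hat\theta) + \rho(\hat\theta), \qquad |\rho(\hat\theta)| \le \tfrac12 \sup_z|g''(z)|\, R(\hat\theta)^2 .
\]
Apply this once with $\hat\theta = \hat\theta_{H,M}$ and once with $\hat\theta = \hat\theta_{EO}$, subtract, and take expectations over $\Dscr_n$. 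The constant term cancels, and the linear terms give exactly the right-hand side of \eqref{eq:g-reduction}. It then remains to show $\E_{\Dscr_n}[R(\hat\theta)^2] = o(1/n)$ for both solutions; in fact we expect $O(n^{-2})$.

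\textbf{Main step: the remainder bound.} Near $\theta^*$, the bound $\sup\|\nabla^2_\theta Z\|<\infty$ from \Cref{asp:regular-risk-rate} gives $R(\theta) \le C\|\theta-\theta^*\|_2^2$. We have $\nabla Z(\theta^*)=0$, and the uniform Hessian bound makes this hold globally by the integral form of Taylor's theorem. Hence $R(\hat\theta)^2 \le C^2\|\hat\theta - \theta^*\|_2^4$, and it suffices to show $\E\|\sqrt n(\hat\theta-\theta^*)\|_2^4 = O(1)$.
\begin{itemize}
\item For $\hat\theta_{EO}$, this is exactly the $(4+\delta)$-moment bound in \Cref{asp:regular-risk-rate}.
\item For $\hat\theta_{H,M}$ with $M(\theta;\xi)=M(\xi)$: non-expansiveness of $\Pi_\Theta$ together with $\theta^*\in\Theta$ gives $\|\hat\theta_{H,M}-\theta^*\| \le \|\hat\theta_{EO}-\theta^*\| + \|H\|\,\|\frac1n\sum_i M(\xi_i)\|$. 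The second term is a centered i.i.d. average with $\E\|M\|^4<\infty$, so Rosenthal's (or Marcinkiewicz–Zygmund's) inequality yields $\E\|\frac{1}{\sqrt n}\sum_i M(\xi_i)\|^4 = O(1)$.
\end{itemize}
Combining these, $\E|\rho| = O(n^{-2}) = o(n^{-1})$ for both solutions, which establishes \eqref{eq:g-reduction}.

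\textbf{The consequence.} Because $M$ does not depend on $\theta$, we have $\nabla_\theta\mu_M(\theta^*)=\mathbf 0$, and the case $\mu_M(\theta^*)=\mathbf 0$ of the proof of \Cref{thm:improvement-principle} gives
\[
n\,\E\big[R(\hat\theta_{H,M})-R(\hat\theta_{EO})\big] \to \tfrac12\tr\!\big[I_\ell(\theta^*)(2H\Gamma^\top + H\Omega_M H^\top)\big].
\]
This is a convex quadratic in $H$. Its minimizer is $H^* = -\Gamma\Omega_M^{\dagger}$, which coincides with \eqref{eq:optimal-close-h} since $\widetilde M = M$ here. The minimal value is $-\tfrac12\tr[I_\ell(\theta^*)\Gamma\Omega_M^\dagger\Gamma^\top]$. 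To see this is strictly negative when $\Gamma\neq\mathbf 0$: by a Cauchy–Schwarz argument, the column space of $\Gamma^\top$ lies in the range of $\Omega_M$, so $\Gamma\Omega_M^\dagger\Gamma^\top \neq \mathbf 0$; and this matrix is positive semidefinite while $I_\ell(\theta^*)\succ 0$, so the trace is positive. Multiplying by $g'(Z(\theta^*))>0$ preserves both the sign and the minimizer. Therefore $H^*$ maximizes the improvement over all bounded $H$, and the gain is $\Theta(1/n)$.

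I expect the main obstacle to be upgrading the expected-risk expansion from convergence in distribution to convergence of expectations. This needs uniform integrability of $n R(\hat\theta_{H,M})$, which the same fourth-moment bound above supplies.
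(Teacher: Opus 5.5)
Your proposal is correct and follows essentially the same route as the paper. The paper likewise uses a Lagrange expansion of $g$ at $Z(\theta^*)$, then an $O(n^{-2})$ bound on the expected squared excess risk obtained from non-expansiveness of $\Pi_\Theta$, the Marcinkiewicz--Zygmund inequality for $\frac1n\sum_i M(\xi_i)$ and the $(4+\delta)$-moment condition in Assumption~\ref{asp:regular-risk-rate}, and finally the $\mu_M(\theta^*)=\mathbf 0$ case of Theorem~\ref{thm:improvement-principle} with $A=2\Gamma$ and $B=\Omega_M$. Three of your details tighten steps the paper only asserts: the global quadratic bound on $R$ from the uniform Hessian bound, the check that $\Gamma\Omega_M^{\dagger}\Gamma^\top\neq\mathbf 0$ whenever $\Gamma\neq\mathbf 0$, and the uniform-integrability remark.
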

The identity~\eqref{eq:g-reduction} shows that, given correct decision-independent side information, applying a smooth monotone risk function rescales the first-order comparison by the positive constant $g'(Z(\theta^*))$ but leaves its sign, its $\Theta(1/n)$ order, and the optimal adjustment matrix unchanged. Furthermore, the optimizer $H^*$ from \Cref{thm:opt-adjust} is invariant to the choice of $g$. Intuitively, both $\hat\theta_{H,M}$ and $\hat\theta_{EO}$ concentrate around the same point $\theta^*$, so the excess risks of both solutions are of order $O_p(1/n)$ and the curvature of $g$ enters only at order $n^{-2}$.

\textit{Proof of \Cref{prop:performance-risk-func}.}
Since $\E_{\P}[M(\xi)] = \mathbf{0}$, we have $\theta_{H,M}^* = \theta^*$. By the
non-expansiveness of $\Pi_{\Theta}$,
$\|\hat\theta_{H,M} - \theta^*\|_2 \le \|\hat\theta_{EO} - \theta^*\|_2
+ \|H\|\,\big\|\frac{1}{n}\sum_{i=1}^n M(\xi_i)\big\|_2$,
and the Marcinkiewicz--Zygmund inequality with $\E_{\P}[\|M(\xi)\|_2^4] < \infty$ gives
$\E[\|\frac{1}{n}\sum_i M(\xi_i)\|_2^4] = O(n^{-2})$. 

For $\theta \in \{\hat\theta_{EO}, \hat\theta_{H,M}\}$, combined with
\Cref{asp:regular-risk-rate}, we obtain
$\E_{\Dscr_n}[\|\theta - \theta^*\|_2^4] = O(n^{-2})$. 
By \Cref{asp:theta-star0}, $\nabla_{\theta} Z(\theta^*) = \mathbf{0}$, so a second-order Taylor
expansion of $Z$ at $\theta^*$ yields $0 \le R(\theta) \le C\|\theta - \theta^*\|_2^2$
for some constant $C < \infty$ in a neighborhood of $\theta^*$, and therefore
$\E_{\Dscr_n}[R(\theta)^2] = O(n^{-2})$ for both solutions.

Next, expand $g$ at $Z(\theta^*)$: for each $\theta$ as above, there exists $\zeta$ between
$Z(\theta^*)$ and $Z(\theta)$ such that
\[
g(Z(\theta)) = g(Z(\theta^*)) + g'(Z(\theta^*))\, R(\theta)
+ \frac{1}{2} g''(\zeta)\, R(\theta)^2.
\]
Since $\sup_z |g''(z)| < \infty$, the expectation of the last term is $O(n^{-2})$. Taking expectations in the two expansions and
differencing gives~\eqref{eq:g-reduction}.

Finally, since $\mu_M(\theta^*) = \mathbf{0}$ and $\nabla_{\theta}\mu_M(\theta^*) = \mathbf{0}$,
the analysis of \Cref{thm:improvement-principle} applies to the right-hand side
of~\eqref{eq:g-reduction} with $A = 2\Gamma$ and $B = \Omega_M$: if $\Gamma \neq \mathbf{0}$, the
choice $H^* = -\Gamma\,\Omega_M^{\dagger}$ in~\eqref{eq:optimal-close-h} attains
$\E_{\Dscr_n}[R(\hat\theta_{H^*,M}) - R(\hat\theta_{EO})] = \Theta(1/n) < 0$ and maximizes the
first-order improvement. Multiplying by $g'(Z(\theta^*)) > 0$ preserves both the order and the
sign, which completes the proof.

$\hfill \square$

\section{Additional Details in \Cref{sec:numeric}}\label{app:numerical0}
\subsection{Additional Details in \Cref{subsec:lr}}\label{app:numerical-lr}
\paragraph{Basic Simulation Setting.} For the problem setup in the main body, we fix the feature $X \equiv 1$ and $\theta^* = 1$ across problem instances while varying the distribution of $\epsilon$ with different parametric specifications. The noise term $\epsilon$ is generated i.i.d. across observations following
\begin{equation}\label{eq:noise-spec}
\epsilon_i \overset{i.i.d.}{\sim}
\begin{cases}
\operatorname{Laplace}(0,4),
    & \text{if \texttt{noise\_type = laplace}},\\
\mathcal{N}(0,4^2),
    & \text{if \texttt{noise\_type = normal}},\\
t_4(0,4),
    & \text{if \texttt{noise\_type = t}},
\end{cases}
\end{equation}
where $\operatorname{Laplace}(0,4)$ denotes a Laplace distribution with
location zero and scale parameter $4$, and $t_4(0,4)$ denotes a location-scale
Student's $t$ distribution with four degrees of freedom, location zero, and
scale parameter $4$. In \Cref{subsec:lr}, for each sample size $n$, we perform 200 independent replications. 

In the following, we describe the design choices and practical implementation of the \textbf{EO-Chi2} solution $\hat\theta_{\widehat H, M_1}$ and the \textbf{EO-CVaR} solution $\hat\theta_{\widehat H, M_{\widehat\phi}}$.

\paragraph{Design and Implementation Details for EO-Chi2.} In \textbf{EO-Chi2}, we use the perturbation function
\[
M_1(\theta;\xi)=X\big(Y-\theta^\top X\big)^3.
\]
This is correct side information under symmetric exogenous noise because
$\E[M_1(\theta^*;\xi)]=\E[X\epsilon^3]=\mathbf 0$. It is also the implementable form because, for any candidate $\theta$, it can be evaluated directly from the observed pair $(X,Y)$. In contrast, the expression $\epsilon^3-3\sigma^2\epsilon$ obtained in the univariate calculation in the proof of \Cref{coro:linear-symmetric} is not the primitive moment inserted into the perturbed estimator. Rather, it is the effective first-order fluctuation $\widetilde M_1(\theta^*;\xi)
=M_1(\theta^*;\xi)
+\nabla_\theta\mu_{M_1}(\theta^*)\IFx(\xi)$ that results from evaluating $M_1$ at the estimated EO solution. Indeed, let $\Sigma_X=\E[XX^\top]$. Then $\nabla_\theta M_1(\theta^*;\xi)=-3XX^\top\epsilon^2, \IFx(\xi)=\Sigma_X^{-1}X\epsilon$. Under homoskedastic noise independent of $X$, $\nabla_\theta\mu_{M_1}(\theta^*)=-3\sigma^2\Sigma_X, \widetilde M_1(\theta^*;\xi)
=X\big(\epsilon^3-3\sigma^2\epsilon\big)$. Thus the Hermite-type expression $\epsilon^3-3\sigma^2\epsilon$ described after \Cref{coro:linear-symmetric} is generated automatically by the linearization of the observable residual moment $M_1(\hat\theta_{EO};\xi)$. It should not be used as the primitive moment because neither the population residual $\epsilon=Y-\theta^{*\top}X$ nor $\sigma^2$ is observed.

We next describe its practical implementation. In the univariate mean-estimation case $X\equiv1$, denote:
\[
\hat\epsilon_i=Y_i-\hat\theta_{EO},
\qquad
\hat\sigma^2=\frac1n\sum_{j=1}^n\hat\epsilon_j^2,
\qquad
\widehat M_i=\hat\epsilon_i^3-3\hat\sigma^2\hat\epsilon_i.
\]
Here $\widehat\IFx(\xi_i)=\hat\epsilon_i$ and $\widehat M_i$ is the empirical counterpart of $\widetilde M_1(\theta^*;\xi_i)$. Applying \Cref{alg:analytical-estimate-1} gives
\[
\widehat H
=-\frac{\sum_{i=1}^n\hat\epsilon_i\widehat M_i}
        {\sum_{i=1}^n\widehat M_i^2},
\qquad
\hat\theta_{\widehat H,M_1}
=\hat\theta_{EO}+\frac{\widehat H}{n}\sum_{i=1}^n\hat\epsilon_i^3.
\]
Notice that $\widehat M_i$ is used to estimate the optimal adjustment coefficient $\widehat H$, whereas the final perturbation continues to use the primitive moment $M_1(\hat\theta_{EO};\xi_i)=\hat\epsilon_i^3$.

In the general multivariate case, denote:
\[
\widehat\Sigma_X=\frac1n\sum_{j=1}^nX_jX_j^\top,
\qquad
\hat\epsilon_i=Y_i-\hat\theta_{EO}^{\top}X_i,
\qquad
\widehat\IFx(\xi_i)=\widehat\Sigma_X^\dagger X_i\hat\epsilon_i.
\]
Because $M_1(\theta;\xi)=X(Y-\theta^\top X)^3$, its empirical average derivative is $\frac1n\sum_{j=1}^n\nabla_\theta M_1(\hat\theta_{EO};\xi_j)
=-\frac3n\sum_{j=1}^nX_jX_j^\top\hat\epsilon_j^2$. Consequently, following \Cref{alg:analytical-estimate-1}, we compute:
\begin{equation}\label{eq:eo-chi2-plus}
\widehat H
=-\left(\sum_{i=1}^n\widehat\IFx(\xi_i)\widehat M_i^\top\right)
\left(\sum_{i=1}^n\widehat M_i\widehat M_i^\top\right)^\dagger,
\qquad
\hat\theta_{\widehat H,M_1}
=\hat\theta_{EO}
+\frac{\widehat H}{n}\sum_{i=1}^nX_i\hat\epsilon_i^3,
\end{equation}
where $\widehat M_i:=X_i\hat\epsilon_i^3
-3\left(\frac1n\sum_{j=1}^nX_jX_j^\top\hat\epsilon_j^2\right)
\widehat\IFx(\xi_i)$. As in the univariate case, $\widehat M_i$ accounts for the first-order effect of estimating $\theta^*$ and is used only to compute $\widehat H$; the final directional perturbation is formed from the observable primitive moment $M_1(\hat\theta_{EO};\xi_i)=X_i\hat\epsilon_i^3$.

\paragraph{Design and Implementation Details for EO-CVaR.} In \textbf{EO-CVaR}, we use the following perturbation function:
\[
M_{\phi}(\theta;\xi)=\left(1 - \frac{1}{1-\phi}\cdot \mathbf{1}_{\{\epsilon^2 > q_{\phi}\}}\right)X\epsilon,
\]
where $\epsilon=Y-\theta^{*\top}X$ and $q_\phi$ is the $\phi$-quantile of $\epsilon^2$, which approximates the formula of $M_{\phi}(\xi)$ in Appendix~\ref{app:cvardro}. We highlight that $M_{\phi}(\theta;\xi)$ is a function independent of $\theta$. If we instead replaced $\epsilon$ by the residual $Y-\theta^\top X$ in $M_{\phi}(\theta;\xi)$, the indicator and its quantile threshold would introduce a pointwise nondifferentiable dependence on $\theta$. We therefore treat $M_\phi$ as independent of $\theta$ in the analytical construction, so that $\nabla_\theta M_\phi(\theta;\xi)=\mathbf 0$ and the augmented moment reduces to $\widetilde M_\phi(\theta;\xi)=M_\phi(\theta;\xi)$. Although the population expression involves the unobserved residual $\epsilon$, in practice we use its empirical counterpart $\hat\epsilon_i=Y_i-\hat\theta_{EO}^{\top}X_i$ and replace $q_\phi$ by the empirical $\phi$-quantile of $\{\hat\epsilon_i^2\}_{i\in[n]}$.

We next describe how to select the parameter $\phi \in [0, 1)$ that leads to the largest performance improvement following the principle developed in Appendix~\ref{subsec:comp-h-phi}. Recall $\IFx(\xi) = \E[XX^{\top}]^{-1} X\epsilon$. Applying~\eqref{eq:if-max-unconstrained-true} in \Cref{subsec:comp-h-phi} with the squared loss, we have:
\[\max_{\phi \in [0, 1)}\tr\Big(\E[XX^{\top}] B_{\phi}^{\top} A_{\phi}^{-1} B_{\phi}\Big).\]
For the optimization problem above, when \(X\equiv 1\), i.e., mean estimation, the objective reduces to $\max_{\phi\in[0,1)} \frac{B_\phi^2}{A_\phi}$. Let $\sigma^2:=\E[\epsilon^2]$ and $m_\phi:=\E[\epsilon^2\mid \epsilon^2>q_\phi]$. Then $B_\phi = \sigma^2 - m_\phi$ and $A_\phi= \sigma^2+\frac{2\phi-1}{1-\phi}m_\phi$.
Hence the optimization problem becomes
\[
\max_{\phi\in[0,1)}
\frac{(\sigma^2-m_\phi)^2}
{\sigma^2+\frac{2\phi-1}{1-\phi}m_\phi} .
\]
Equivalently, for $u \in (0,1)$, let \(q_u\) denote the \(u\)-quantile of \(\epsilon^2\). Then $\sigma^2=\int_0^1 q_u\,du$ and $m_\phi=\frac{1}{1-\phi}\int_\phi^1 q_u\,du$. The optimization problem becomes
\[
\max_{\phi\in[0,1)}
\frac{\left(\int_0^1 q_u\,du-\frac{1}{1-\phi}\int_\phi^1 q_u\,du\right)^2}
{\int_0^1 q_u\,du+\frac{2\phi-1}{(1-\phi)^2}\int_\phi^1 q_u\,du}.
\]
After algebraic manipulation, the first-order optimality condition is
\[
(\sigma^2-m_\phi)\left(2\phi\,m_\phi-(2\phi-1)q_\phi\right) = 0.
\]
Empirically, we find $\widehat\phi$ by solving the preceding first-order condition after replacing $q_{\phi}$, $m_{\phi}$, and $\sigma^2$ by $\widehat q_\phi
:=\operatorname{VaR}_{\phi}\big(\{\hat\epsilon_j^2\}_{j\in[n]}\big),\widehat m_\phi
:=\operatorname{CVaR}_{\phi}\big(\{\hat\epsilon_j^2\}_{j\in[n]}\big), \widehat\sigma^2
:=\operatorname{Var}\big(\{\hat\epsilon_j\}_{j\in[n]}\big)$, respectively. 

In the univariate mean-estimation case $X\equiv1$, define
\[
\widehat M_{\phi,i}
:=\left(1-\frac{1}{1-\phi}
\mathbf{1}_{\{\hat\epsilon_i^2>\widehat q_\phi\}}\right)\hat\epsilon_i,
\qquad
\widehat A_\phi:=\frac1n\sum_{i=1}^n\widehat M_{\phi,i}^2,
\qquad
\widehat B_\phi:=\frac1n\sum_{i=1}^n\hat\epsilon_i\widehat M_{\phi,i}.
\]
We then set
\[
\widehat H=-\widehat A_{\widehat\phi}^{-1}\widehat B_{\widehat\phi},
\qquad
\hat\theta_{\widehat H,M_{\widehat\phi}}
=\hat\theta_{EO}
+\frac{\widehat H}{n}\sum_{i=1}^n\widehat M_{\widehat\phi,i}.
\]

\paragraph{Multivariate Data-Generating Process Setting.} We also consider a multivariate data-generating process as follows. We draw the feature $X$ according to a 5-dimensional Gaussian distribution, i.e., $X \sim \Nscr(\mathbf{1}_{D_x}/D_x, I_{D_x\times D_x})$ with $D_x = 5$. We set $\theta^* = \mathbf{1}_{D_x}$. The noise $\epsilon$ is generated as in~\eqref{eq:noise-spec}.

Here, we focus on comparing \textbf{EO-Chi2} (computed in~\eqref{eq:eo-chi2-plus}) and $\chi^2$-EO+ (computed in~\eqref{eq:chi2-eo}) with the standard EO solution. We report the method performance under different noise specifications in \Cref{fig:synthetic_regret_lr_new} and find that our main conclusions in \Cref{subsec:lr} remain valid.

\begin{figure*}[!htb]
\centering
\subfloat{
    \includegraphics[width=0.6\textwidth]{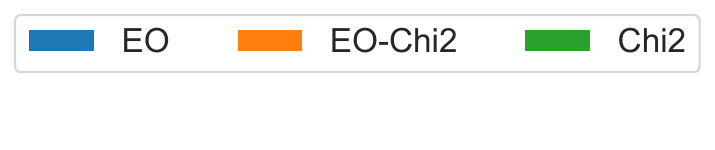}
}\\
\subfloat[Laplace\label{fig:lr_laplace2}]{
    \includegraphics[width=0.32\textwidth]{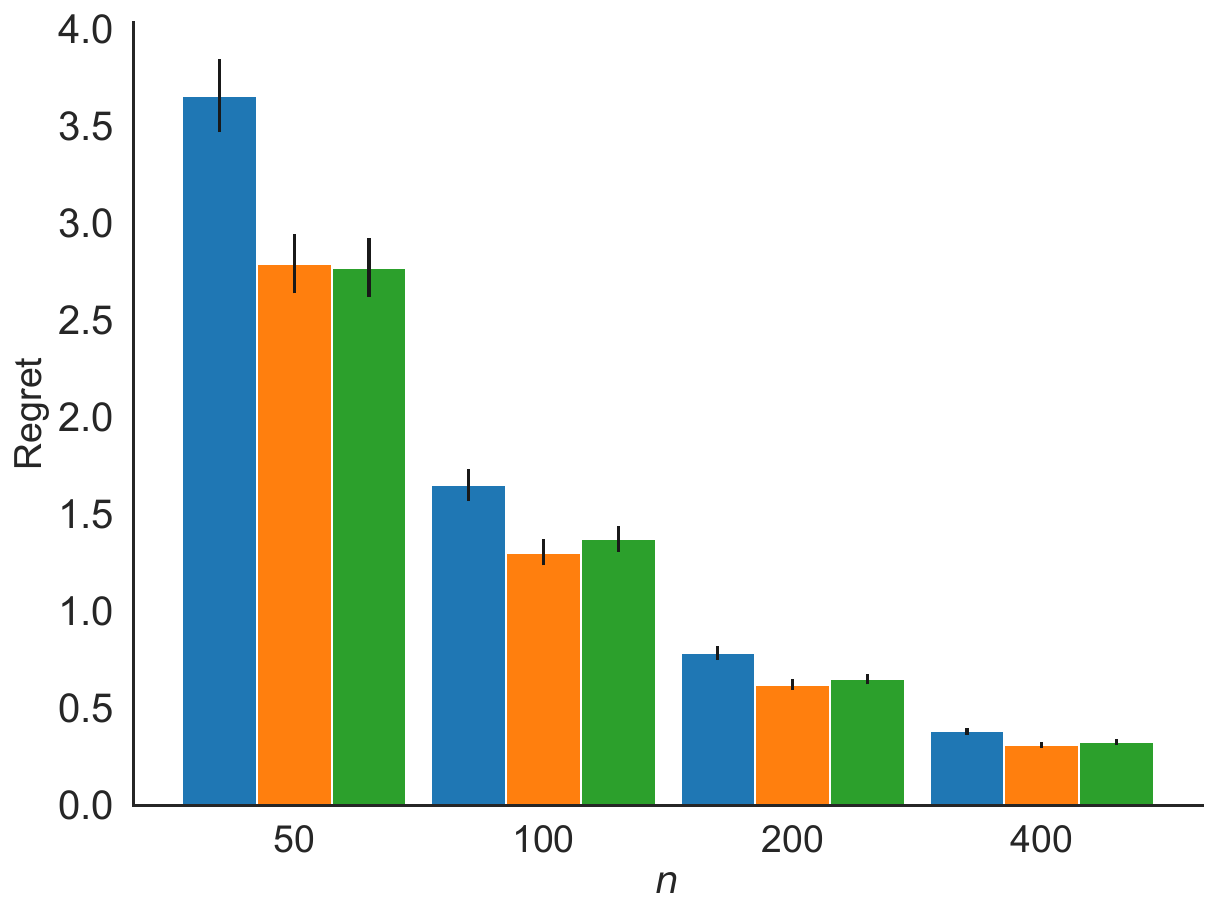}
}
\subfloat[Normal\label{fig:lr_normal2}]{
    \includegraphics[width=0.32\textwidth]{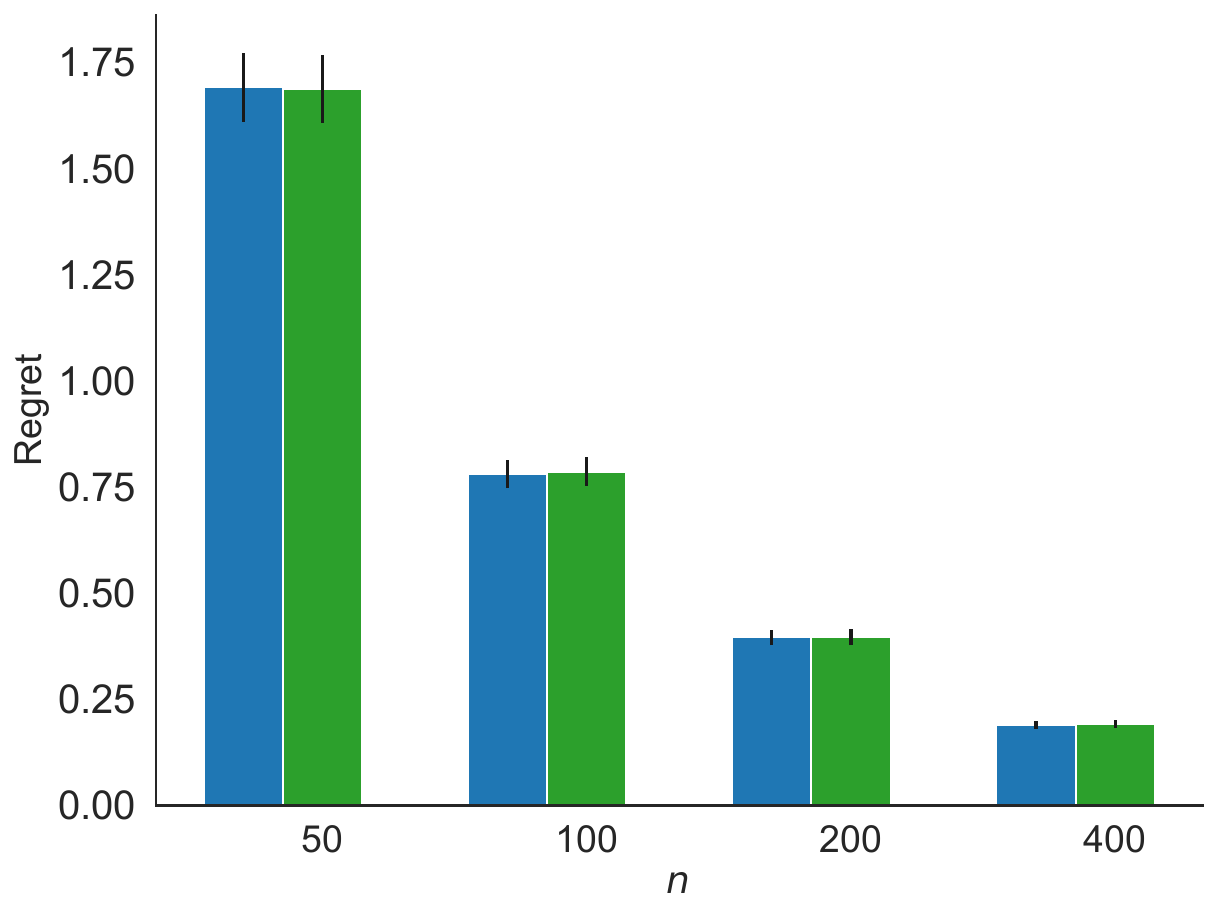}
}
\subfloat[$t$\label{fig:lr_t2}]{
    \includegraphics[width=0.32\textwidth]{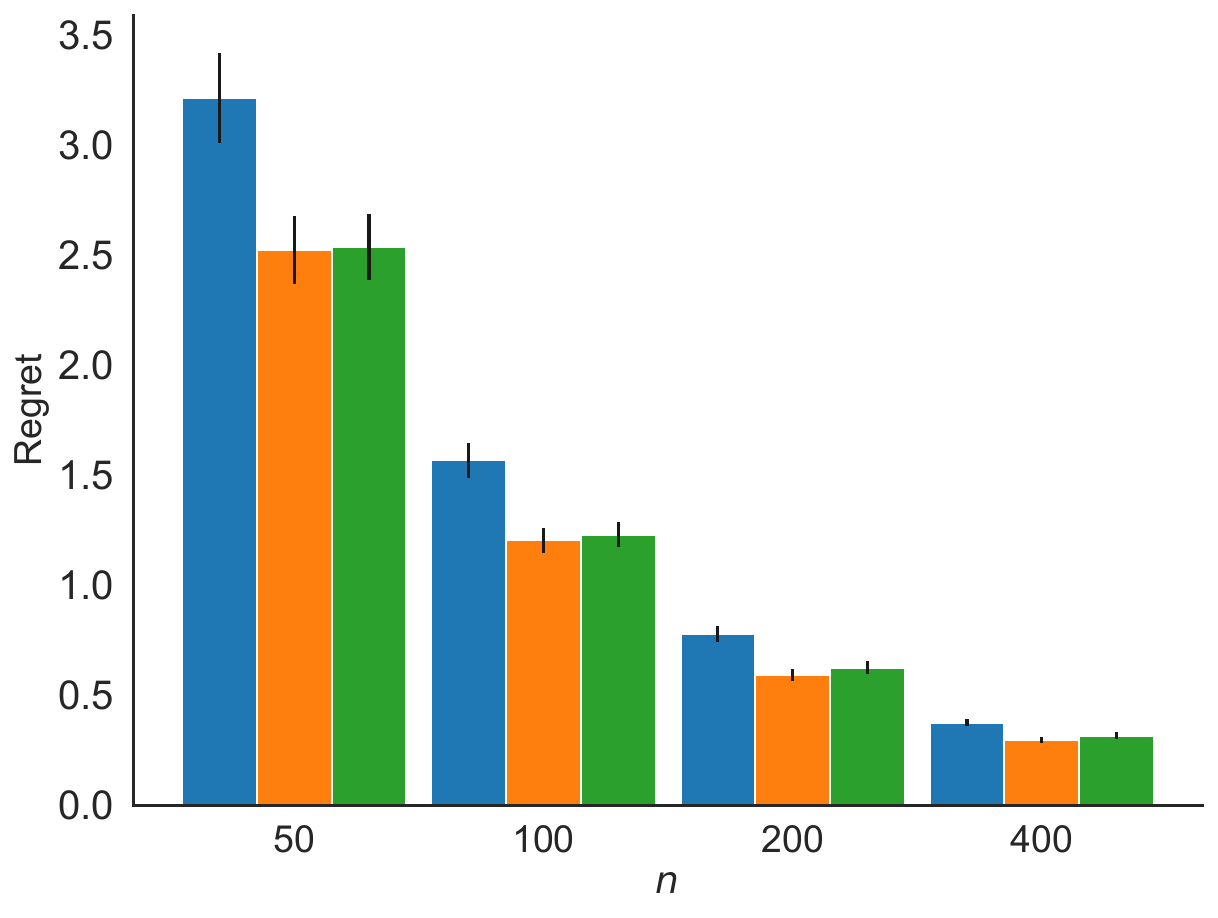}
}
\caption{Expected excess risk comparison among various methods in linear regression under different noise specifications, where the error bars denote the standard errors of the excess risk of each method.}
\label{fig:synthetic_regret_lr_new}
\end{figure*}

\subsection{Additional Details in \Cref{subsec:context-newsvendor}}\label{app:procedure}
We first describe how we construct the optimally perturbed weighted EO solution in the newsvendor problem.

\paragraph{Optimally Perturbed Weighted EO Solution.} For a new covariate $u$, let $\hat{\E}[\xi \mid u]$ denote the estimated conditional mean. Given the weighted empirical solution $\hat\theta_{EO}(u)$ and the corresponding weights ${w_{n,i}(u)}_{i\in[n]}$, we construct the perturbed solution using the analytical approach as follows.
\begin{enumerate}
    \item Estimate the conditional density of $\xi$ given $u$, denoted by $\hat f(\cdot \mid u)$, as an approximation of $\P_{\xi \mid u}$; 
    \item Compute the estimated influence function for each observation $i \in [n]$:
    \[\widehat{\IFx}_{n,u}(u_i, \xi_i) = -w_{n,i}(u)\Para{\frac{c-p}{p \hat f(\hat\theta_{EO}(u))}\mathbf{1}_{\{\xi_i > \hat\theta_{EO}(u)\}} +\frac{c}{p \hat f(\hat\theta_{EO}(u))}\mathbf{1}_{\{\xi_i \leq \hat\theta_{EO}(u)\}}};\]
    \item Compute $\widehat H$ by~\eqref{eq:comp-h-cso};
    \item Output the perturbed solution $\hat\theta_{EO}(u) + \frac{\widehat H}{n} \sum_{i \in [n]}w_{n,i}(u)(\xi_i - \hat\E[\xi|u]).$
\end{enumerate}


\paragraph{Weight and Hyperparameter Configurations.} We describe the weight specification of each nonparametric approach as follows:
\begin{itemize}
    \item kNNW: We set $w_{n,i}(u) = \mathbf{1}_{\{u_i~\text{is a }k_n\text{-NN of}~u\}}$ with the nearest neighbor size $k_n = \lceil~2\sqrt{n}~\rceil$;
\end{itemize}
For DTW and RFW: We use the codebase from \url{https://github.com/opimwue/ddop} with their default configurations, keeping the architecture of decision trees and random forests fixed throughout all experiments. 
\begin{itemize}
    \item DTW: We set \texttt{min\_samples\_split} = 10 and \texttt{min\_samples\_leaf} = 5;
    \item RFW: We set \texttt{n\_estimators} = 100, \texttt{min\_samples\_split} = 10 and \texttt{min\_samples\_leaf} = 5. 
\end{itemize}
 For the benchmark EO+ solutions, we consider the following hyperparameters:
\begin{itemize}
    \item \textbf{$\chi^2$-EO+}: We search 
    \[\lambda \in \{-1, -0.5, -0.1, -0.05, -0.01, -0.005, -0.001, 0.001, 0.005, 0.01, 0.05, 0.1, 0.5, 1\}\]
    and solve the $\chi^2$-DRO problem in \eqref{eq:chi2-eo} for positive $\lambda$. For negative values $\lambda$, we set \(\hat\theta_{\lambda}=2\hat\theta_{EO}-\hat\theta_{-\lambda}\);
    \item \textbf{Objective regularization (Obj-Reg)}: We search
    \[\lambda \in \{-5, -1, -0.5, -0.1, -0.05, -0.01, 0.01, 0.05, 0.1, 0.5, 1, 5\}\]
    and solve the regularized empirical optimization problem~\eqref{eq:reg-method} with $G(\theta)= \|\theta\|_2^2$ for positive $\lambda$. For negative values $\lambda$, we set \(\hat\theta_{\lambda}=2\hat\theta_{EO}-\hat\theta_{-\lambda}\).
\end{itemize}

\subsection{Contextual Newsvendor Studies in the Simulated Dataset}\label{app:newsvendor-simulate}
We consider a simulation environment for the contextual newsvendor problem. We set $c = 1$ and consider $p = 5, 20$. Under $\P$, the feature $u \in \R^5$ is sampled uniformly from $[0, 1]^{5}$, and $\xi \mid u$ is generated as $5 + B^{\top} u + \sin\|u\|_2 + \epsilon$, where $B \in \R^{5}$ is a fixed vector and $\epsilon \sim N(0, 1)$. We consider directionally perturbed weighted EO solutions with the following weight configurations:
\begin{enumerate}
    \item k-Nearest-Neighbor (kNN): $w_{n,i}(u) = \mathbf{1}_{\{u_i~\text{is a }k_n\text{-NN of}~u\}}$ with the number of nearest neighbors $k_n = \lceil~2\sqrt{n}~\rceil$;
    \item Kernel: $w_{n,i}(u) = \mathbf{1}_{\{\|u - u_i\|_2 \leq h_n\}}$ with $h_n = 1.5 n^{-\frac{1}{7}}$.
\end{enumerate}
The estimated conditional mean $\hat\E[\xi|u]$ is constructed in two ways:
\begin{enumerate}
    \item [$M_1$:] A noisy oracle estimator that outputs $\E[\xi|u] + \eta$, where $\eta \sim \Nscr(0, 25/n^2)$ and is independent of other randomness in the system.
    \item [$M_2$:] A random forest regressor, which is trained on $\{(u_i, \xi_i)\}_{i \in [m]}\sim \Q$. Here, $m = 10 n$, $\Q \neq \P$ but $\E_{\Q}[\xi|u] = \E_{\P}[\xi|u]$.
\end{enumerate}
We refer to the two optimally perturbed EO solutions with $M_1$ and $M_2$ as \textbf{EO-M1} and \textbf{EO-M2}, respectively. The two conditional mean estimators represent ``black-box'' AI oracles with a nearly correct mean or related data sources (including different warehouses and products). For completeness, we also compute the optimally perturbed EO solution with an exact conditional mean, i.e., $\hat\E[\xi|u] = \E_{\P}[\xi|u]$, and refer to the corresponding solution as \textbf{EO-}$M^*$.

We benchmark these optimally perturbed weighted EO solutions against the following standard weighted EO+ solutions:
\begin{itemize}
\item \textbf{\(\chi^2\)-EO+}. We search \(\lambda\in (-0.03,0.03]\) on a grid with step size $0.003$ and solve the $\chi^2$-DRO problem in \eqref{eq:chi2-eo} for positive $\lambda$. For negative values \(\lambda\in(-0.03,0)\), we set \(\hat\theta_{\lambda}=2\hat\theta_{EO}-\hat\theta_{-\lambda}\);
\item \textbf{CVaR-EO+.} We search \(\lambda\in(-1,1)\) with step size \(0.05\) and solve the CVaR-DRO problem for positive $\lambda$. For \(\lambda\in(-1,0)\), we compute \(\hat\theta_{\lambda}=2\hat\theta_{EO}-\hat\theta_{-\lambda}\).
\item \textbf{Obj-Reg.} We search \(\lambda\in(-0.3, 0.3)\) with step size \(0.03\) and solve the regularized empirical optimization problem in~\eqref{eq:reg-method} with $G(\theta)= \|\theta\|_2^2$ for positive $\lambda$. For \(\lambda\in(-0.3,0)\), we compute \(\hat\theta_{\lambda}=2\hat\theta_{EO}-\hat\theta_{-\lambda}\).
\end{itemize}

\paragraph{Evaluation.} We evaluate the performance of different methods on 10 randomly generated datasets $\Dscr_n$ for each sample size $n$. For each dataset $\Dscr_n$, the expected excess risk of the data-driven decision $\hat\theta(u)$ is approximated by averaging over 200 covariate points $\{\tilde u_i\}_{i \in [200]}$, i.e., $\frac{1}{200}\sum_{i \in [200]}\Para{\E_{\P_{\xi|\tilde u_i}}[\ell(\hat\theta(\tilde u_i);\xi)] - \min_{\theta \in \Theta}\E_{\P_{\xi|\tilde u_i}}[\ell(\theta;\xi)]}.$ 

\paragraph{Results.} We report the performance in \Cref{fig:synthetic_regret2} across weight configurations, noise distributions, and values of $p$. First, we find that both \textbf{EO-M1} and \textbf{EO-M2} solutions lead to statistically significant performance gains compared with the weighted EO solutions for both kNN and kernel weight estimators. In contrast, none of the standard EO+ solutions achieves comparable improvements. Second, there is little performance difference between the solutions using the noisy side information $M_1, M_2$ and the exact one $M^*$. This indicates that incorporating nearly correct side information with small noise does not lead to much performance degradation. We acknowledge that these findings are specific to the simulation design and are not intended to be universal.

\begin{figure*}[!htb]
\centering
\begin{subfigure}[t]{0.65\textwidth}
\centering
\includegraphics[width=\textwidth]{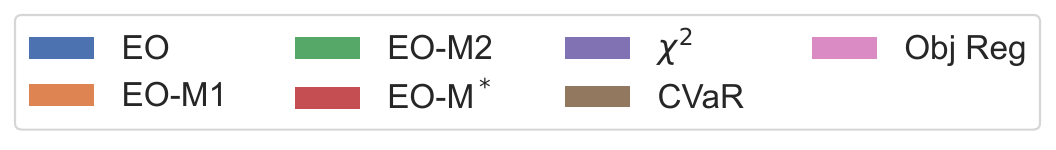}
\end{subfigure}
\begin{subfigure}[t]{0.36\textwidth}
\centering
\includegraphics[width=\textwidth]{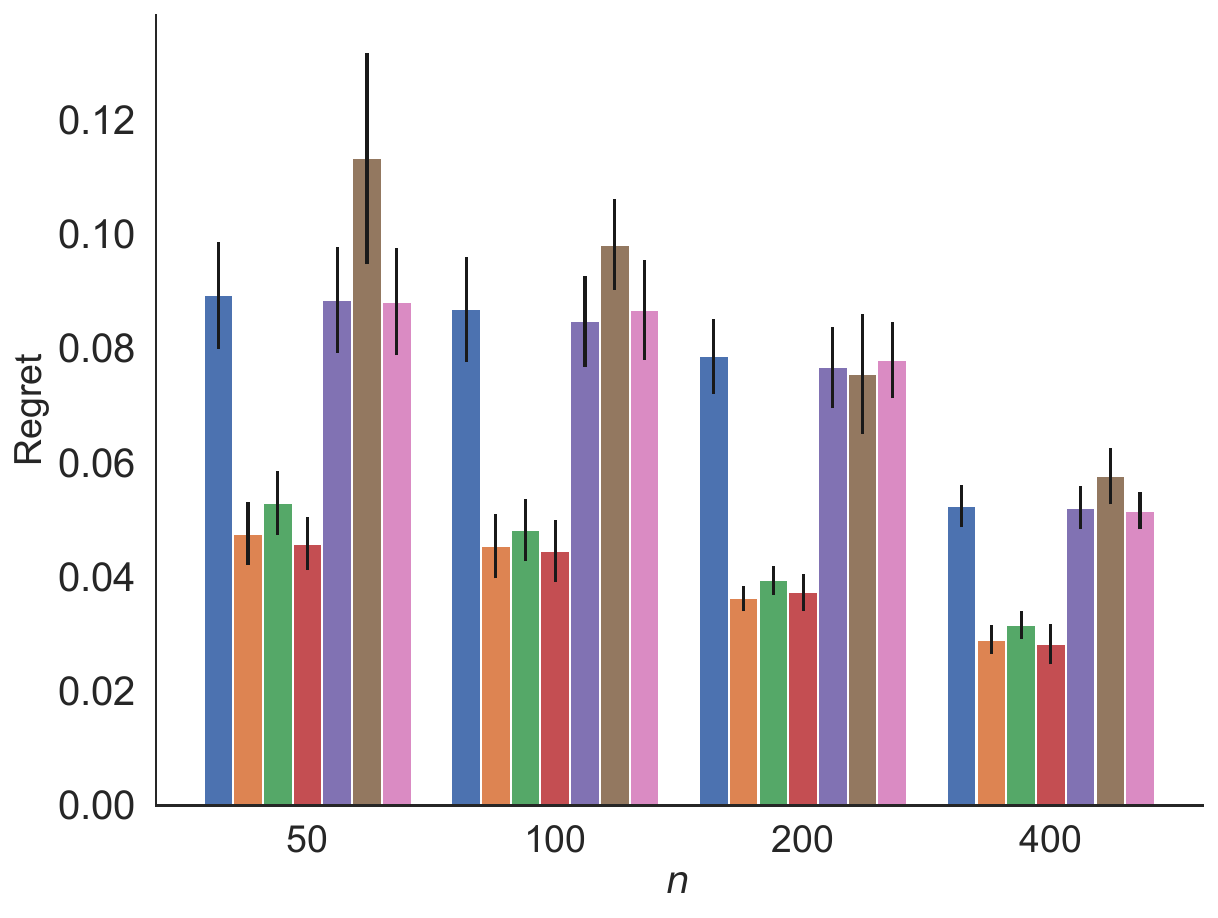}
\caption{kNN (Normal, 5)} %
\label{fig:knn_5_normal}
\end{subfigure}
\begin{subfigure}[t]{0.36\textwidth}
\centering
\includegraphics[width=\textwidth]{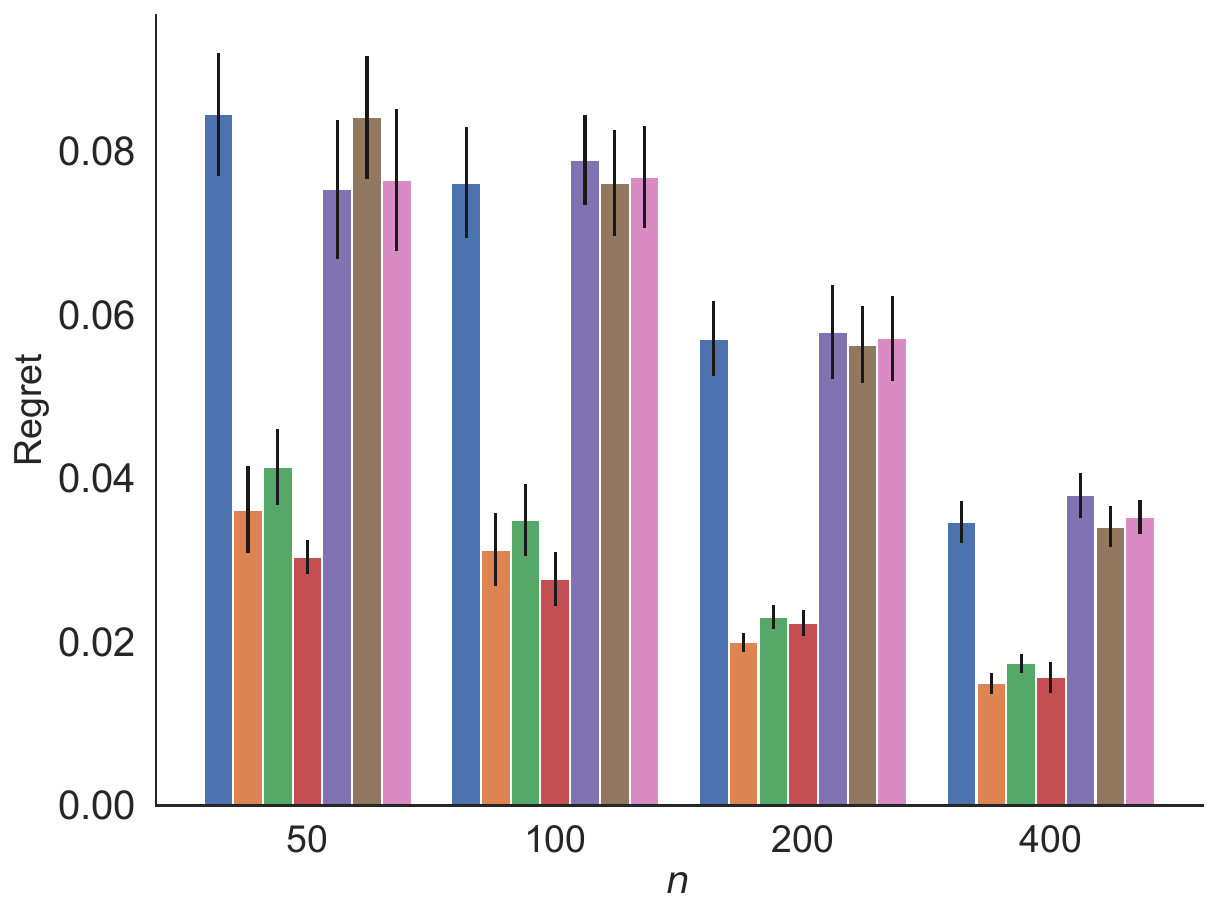}
\caption{Kernel (Normal, 5)}
\label{fig:kernel_5_normal}
\end{subfigure}
\begin{subfigure}[t]{0.36\textwidth}
\centering
\includegraphics[width=\textwidth]{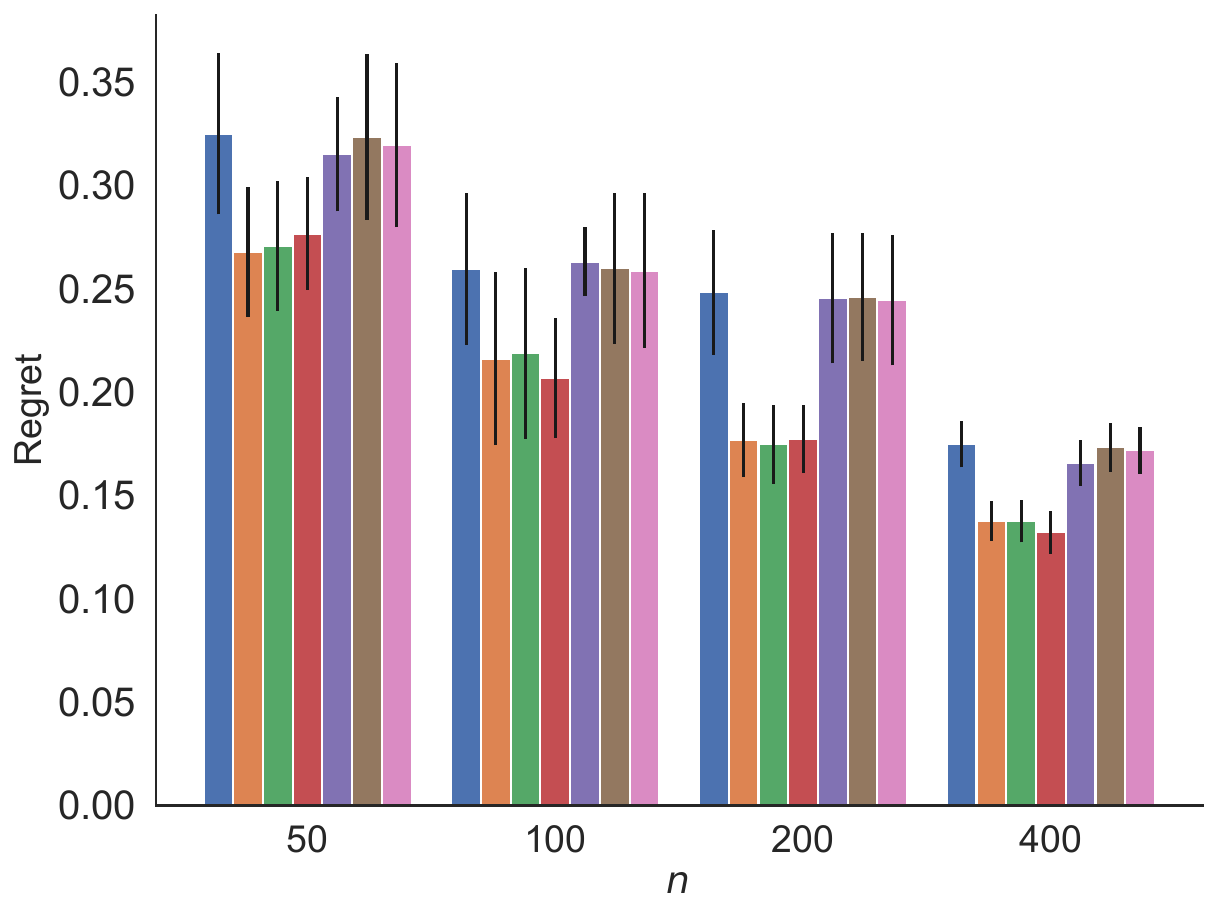}
\caption{kNN (Normal, 20)} %
\label{fig:knn_20_normal}
\end{subfigure}
\begin{subfigure}[t]{0.36\textwidth}
\centering
\includegraphics[width=\textwidth]{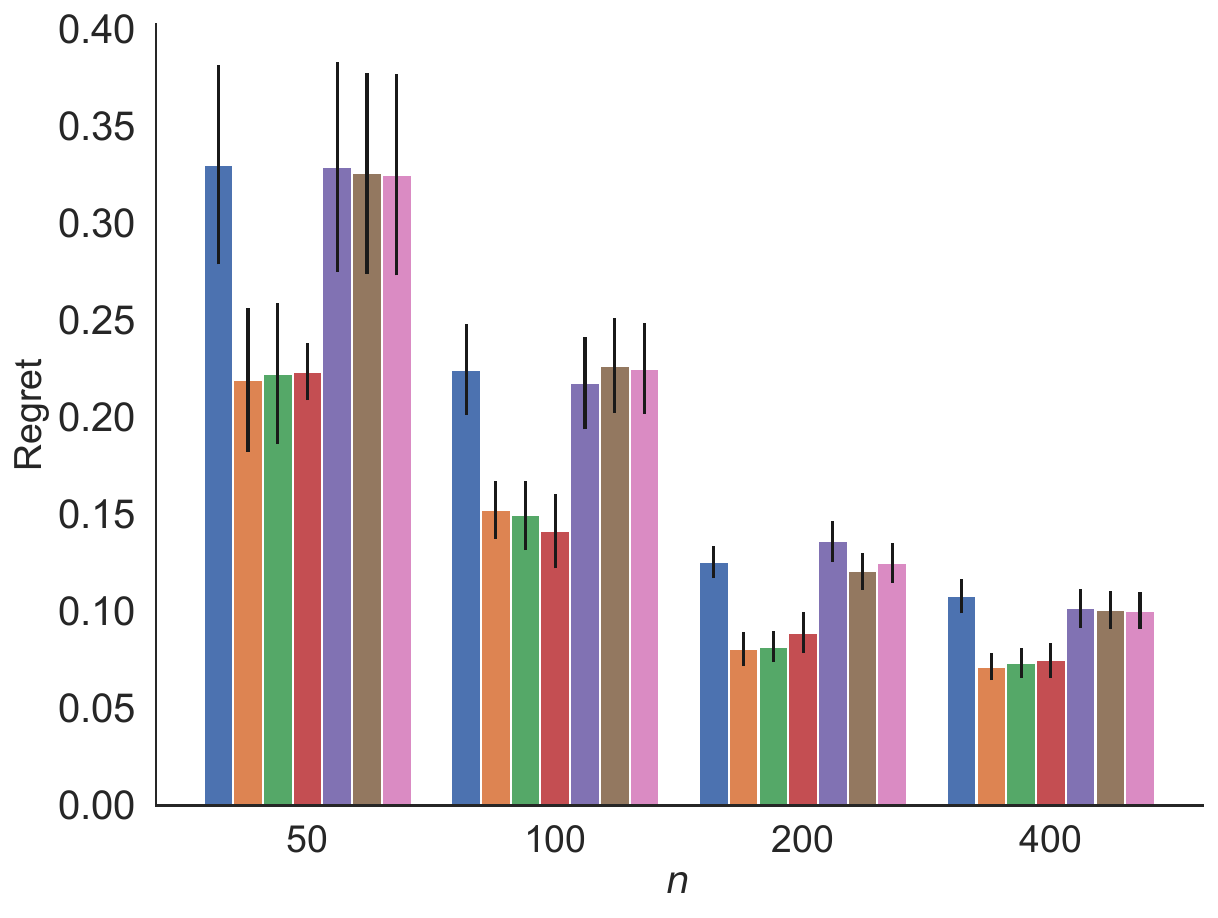}
\caption{Kernel (Normal, 20)}
\label{fig:kernel_20_normal}
\end{subfigure}
\begin{subfigure}[t]{0.36\textwidth}
\centering
\includegraphics[width=\textwidth]{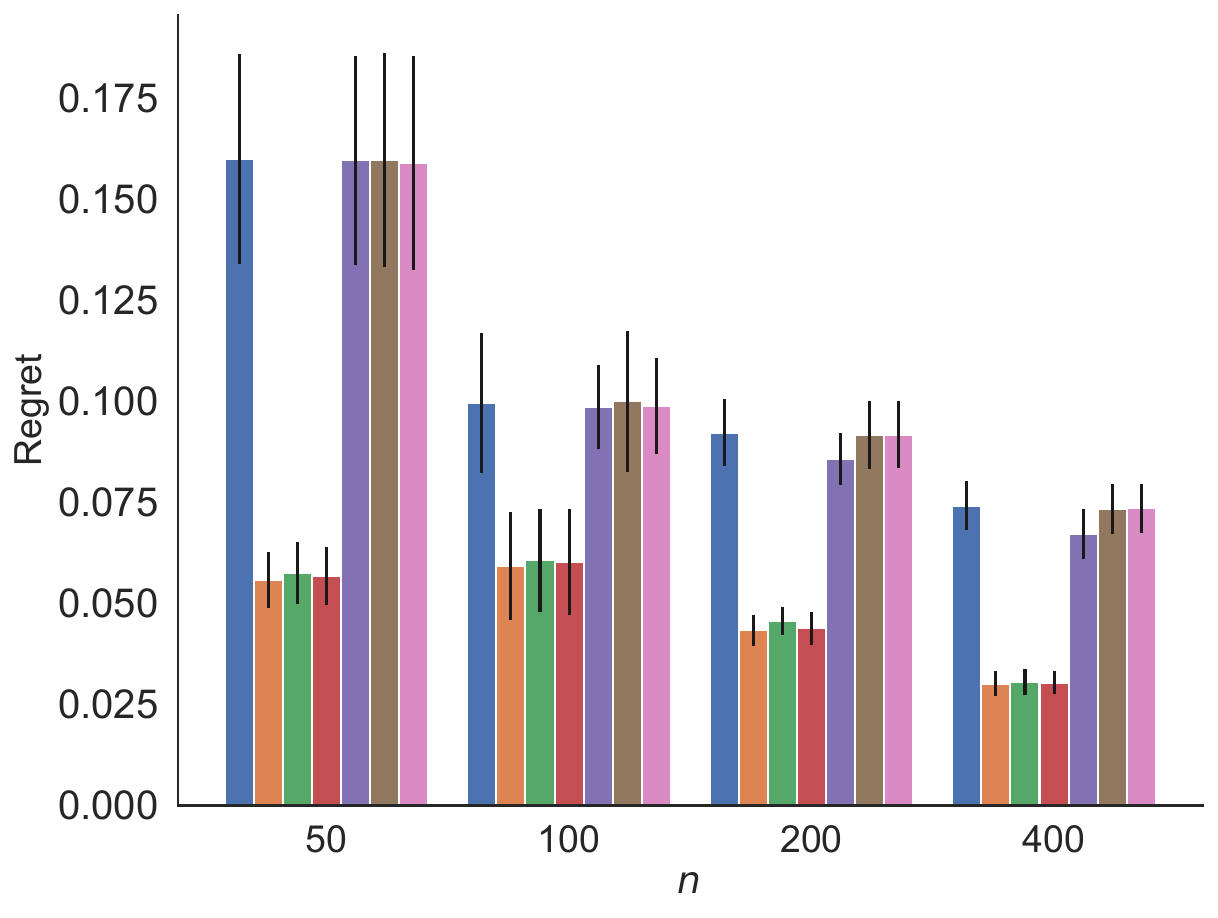}
\caption{kNN (Laplace, 5)} %
\label{fig:knn_5_laplace}
\end{subfigure}
\begin{subfigure}[t]{0.36\textwidth}
\centering
\includegraphics[width=\textwidth]{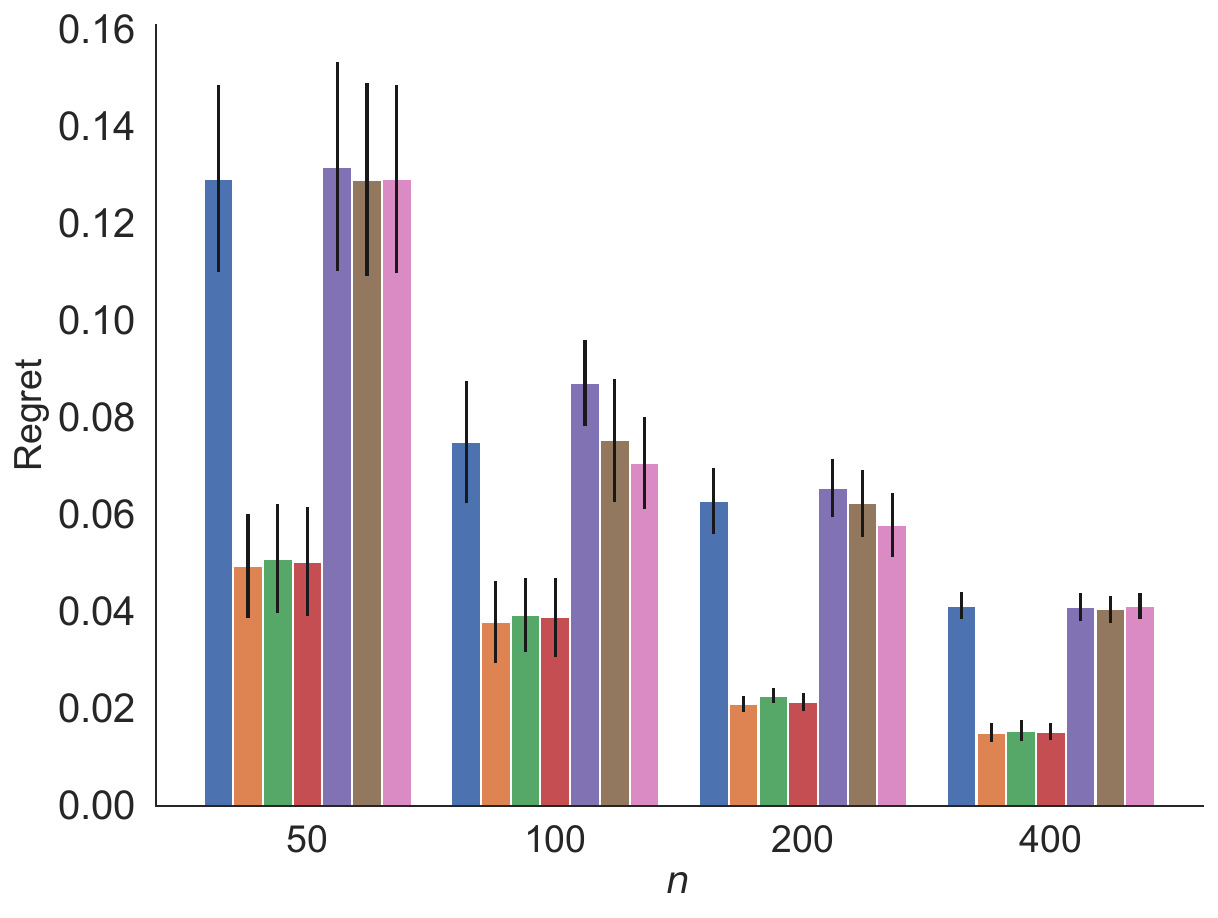}
\caption{Kernel (Laplace, 5)}
\label{fig:kernel_5_laplace}
\end{subfigure}
\begin{subfigure}[t]{0.36\textwidth}
\centering
\includegraphics[width=\textwidth]{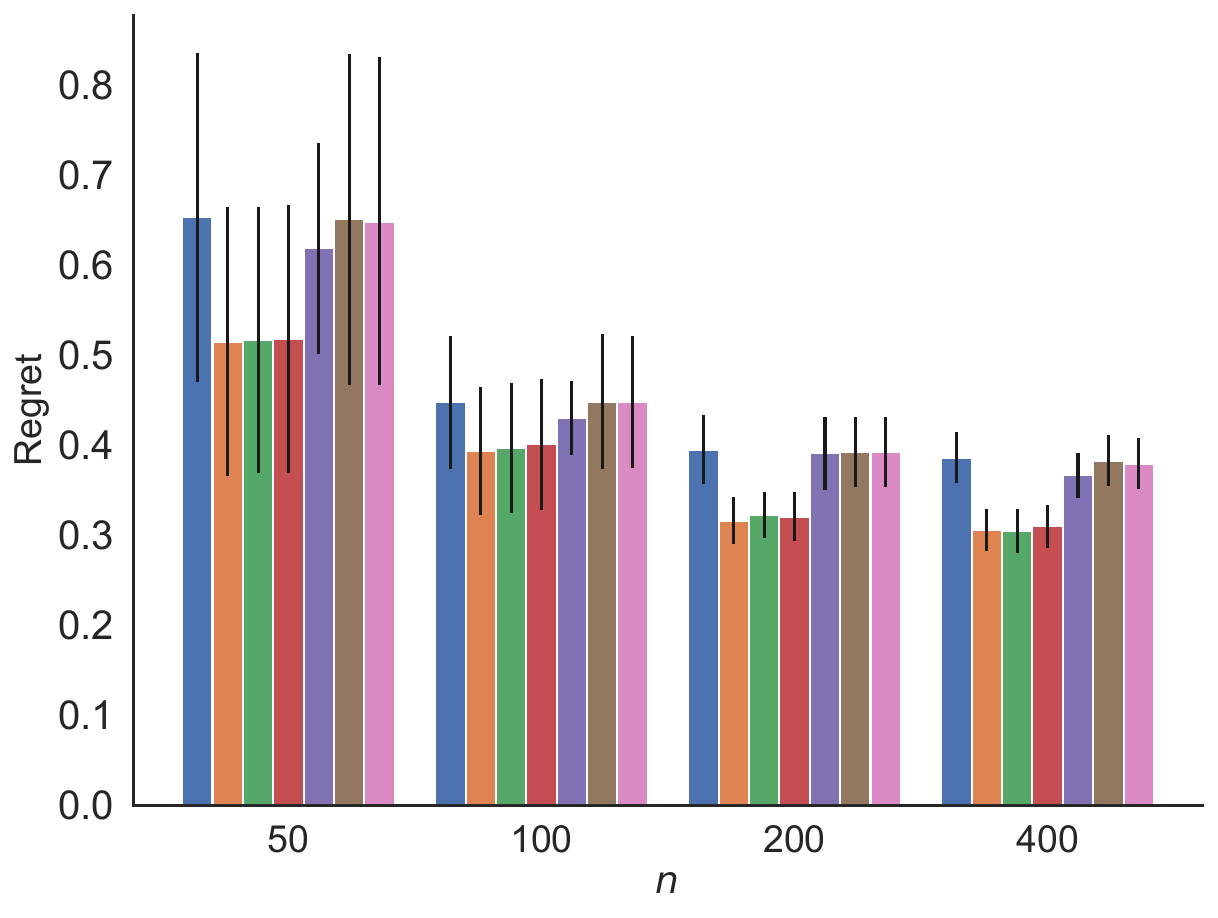}
\caption{kNN (Laplace, 20)} %
\label{fig:knn_20_laplace}
\end{subfigure}
\begin{subfigure}[t]{0.36\textwidth}
\centering
\includegraphics[width=\textwidth]{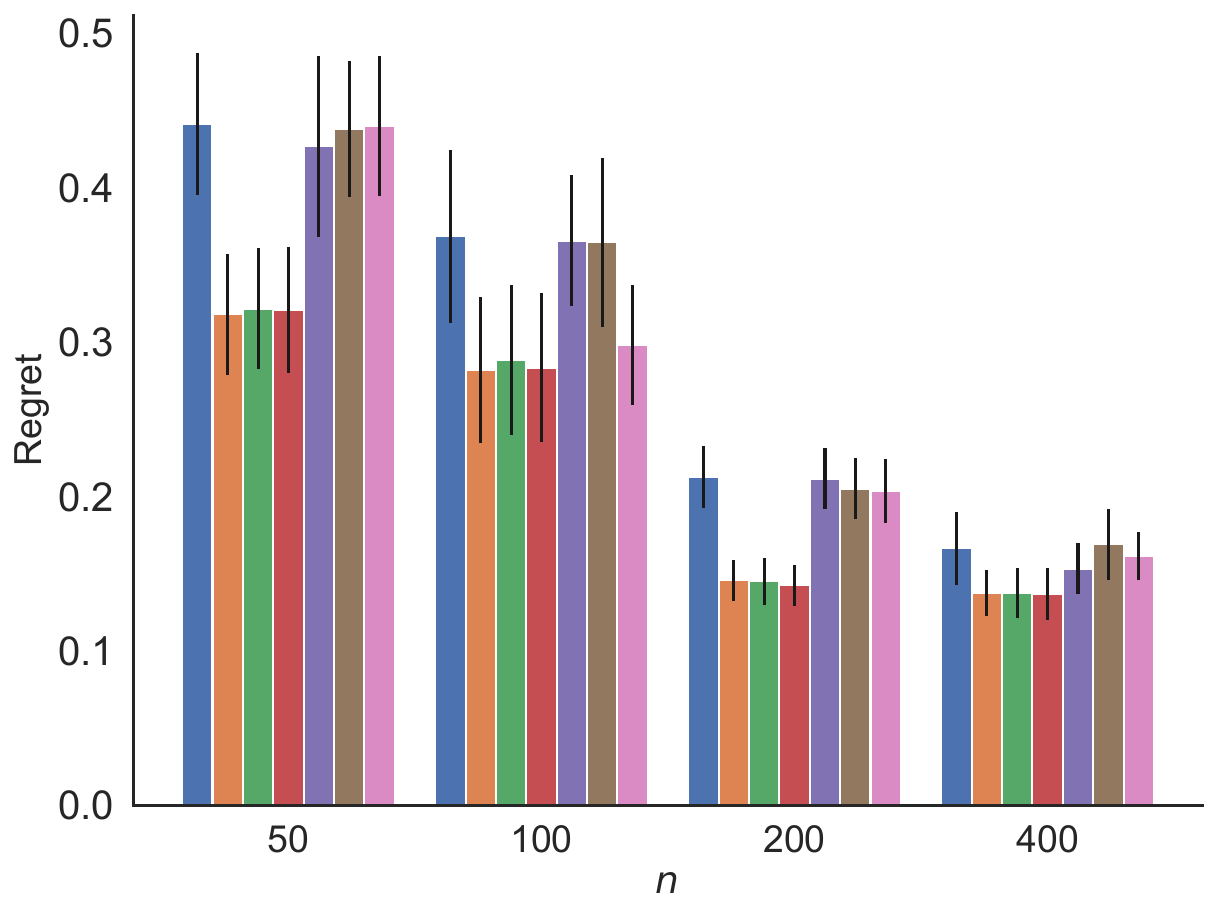}
\caption{Kernel (Laplace, 20)}
\label{fig:kernel_20_laplace}
\end{subfigure}
\caption{Expected regret comparison across noise specifications and values of $p$. Parentheses indicate the noise type $\epsilon$ and the value of $p$.} 
\label{fig:synthetic_regret2}
\end{figure*}

\paragraph{Further Comparison with Other Solutions Incorporating the Side Information.}
To demonstrate the superiority of our optimally perturbed weighted EO+ solutions, we consider two other methods that utilize conditional-mean side information and compare their theoretical and empirical performance. Here, we focus on the kNN weighted EO solution and denote the observed nearby samples as $\{\xi_i\}_{i \in [n_u]}$ (i.e., the historical sample with positive weight when solving the weighted EO solution).

\textbf{Benchmark A: Mean-shift correction.} This method corrects the empirical quantile by shifting it according to the difference between the sample mean and the known mean:
\[
\hat\theta_{1}
=
\hat\theta_{EO} - (\bar\xi - \E[\xi]),
\]
where $\bar\xi = \frac{1}{n_u}\sum_{i=1}^{n_u}\xi_i$ is the sample mean. Intuitively, if the sample mean $\bar\xi$ deviates from the true mean
$\E[\xi]$ due to sampling noise, the empirical quantile estimator inherits a similar shift, and this procedure compensates for this by aligning
the empirical mean with the known population mean. 

In fact, Benchmark A is a special case of the perturbed solution $\hat\theta_{H} = 
\hat\theta_{EO} + H(\bar\xi - \E[\xi])$. The optimal coefficient minimizing the asymptotic variance is $H^*
=
- \frac{\Cov(\IFx_\theta(\xi),\IFx_\mu(\xi))}
{\Var(\IFx_\mu(\xi))}$, where $\IFx_\theta$ and $\IFx_\mu$ denote the influence functions of the quantile estimator and the mean, respectively.

When $\epsilon \sim \Nscr(0,\sigma^2)$, $H^* = -1$, so the mean-shift estimator $\hat\theta_{1}$ is asymptotically optimal. When $\epsilon$ follows a Laplace distribution with mean $0$, letting $\tau = 1-\frac{c}{p}$ denote the optimal newsvendor quantile level, we have:
\[
H^*
=
\begin{cases}
\dfrac{\log(2\tau) - 1}{2}, & \tau \le \tfrac12, \\
\dfrac{\log(2(1-\tau)) - 1}{2}, & \tau \ge \tfrac12.
\end{cases}
\]

\textbf{Benchmark B: Reweighting samples to match the mean.}
This method computes the weighted empirical quantile
\[
\hat\theta_{2} =
\inf\left\{t:
\sum_{i=1}^{n_u} w_i \mathbf{1}\{\xi_i \le t\}
\ge
\tau
\right\},
\]
with weights $w=(w_1,\dots,w_{n_u})$ satisfying $\sum_{i=1}^{n_u} w_i = 1$, and $\sum_{i=1}^{n_u} w_i \xi_i = \E[\xi]$. This procedure adjusts the empirical distribution through moment matching so that the weighted sample mean equals the population mean. However, this procedure is a heuristic without theoretical guarantees.

\paragraph{Setup.} So far, we have demonstrated the optimality of our proposed method theoretically. Next, we report numerical comparisons in \Cref{tab:method_comp_info}. Settings 1--8 correspond to the configurations illustrated in $(a)$--$(h)$ in \Cref{fig:synthetic_regret2}. A-M1 and A-M2 denote Benchmark A with the conditional mean generated as M1 and M2, respectively; B-M1 and B-M2 denote Benchmark B with the conditional mean generated as M1 and M2, respectively.

\paragraph{Results.} In Settings 1--4, where the noise follows a normal distribution, A-M1 and A-M2 achieve costs similar to those of the corresponding perturbed solutions EO-M1 and EO-M2, respectively, and outperform them in some cases. In Settings 5--8, where the noise follows a Laplace distribution, their performance degrades, confirming the theoretical prediction that $H^* \neq -1$ in the non-Gaussian case. Across Settings 1--8, B-M1 and B-M2 consistently underperform the perturbed solutions EO-M1 and EO-M2, confirming that the reweighting procedure is suboptimal. Overall, these results demonstrate the optimality of our proposed method in both theory and practice. 
\begin{table}[!htb]
    \centering
    \caption{Comparison with other benchmarks in the simulated newsvendor problem. Values are reported as means, with standard errors following $\pm$.}
    \label{tab:method_comp_info}
    \resizebox{12cm}{!}{
    \begin{tabular}{cc|cc|cccc}
    \toprule
        Setting & $n$ & EO-M1 & EO-M2 & A-M1 & A-M2 & B-M1 & B-M2 \\
    \midrule
1 & 50
& 0.0362{\scriptsize $\pm 0.0053$}
& 0.0414{\scriptsize $\pm 0.0046$}
& 0.0347{\scriptsize $\pm 0.0058$}
& 0.0382{\scriptsize $\pm 0.0048$}
& 0.0420{\scriptsize $\pm 0.0050$}
& 0.0463{\scriptsize $\pm 0.0049$} \\
& 100
& 0.0313{\scriptsize $\pm 0.0045$}
& 0.0349{\scriptsize $\pm 0.0044$}
& 0.0263{\scriptsize $\pm 0.0036$}
& 0.0299{\scriptsize $\pm 0.0029$}
& 0.0312{\scriptsize $\pm 0.0039$}
& 0.0342{\scriptsize $\pm 0.0034$} \\
& 150
& 0.0200{\scriptsize $\pm 0.0011$}
& 0.0231{\scriptsize $\pm 0.0015$}
& 0.0160{\scriptsize $\pm 0.0010$}
& 0.0194{\scriptsize $\pm 0.0013$}
& 0.0210{\scriptsize $\pm 0.0013$}
& 0.0250{\scriptsize $\pm 0.0015$} \\
& 200
& 0.0150{\scriptsize $\pm 0.0013$}
& 0.0174{\scriptsize $\pm 0.0011$}
& 0.0137{\scriptsize $\pm 0.0016$}
& 0.0139{\scriptsize $\pm 0.0008$}
& 0.0171{\scriptsize $\pm 0.0013$}
& 0.0181{\scriptsize $\pm 0.0010$} \\
\midrule
2 & 50
& 0.0476{\scriptsize $\pm 0.0055$}
& 0.0530{\scriptsize $\pm 0.0055$}
& 0.0372{\scriptsize $\pm 0.0047$}
& 0.0522{\scriptsize $\pm 0.0061$}
& 0.0453{\scriptsize $\pm 0.0042$}
& 0.0580{\scriptsize $\pm 0.0054$} \\
& 100
& 0.0454{\scriptsize $\pm 0.0056$}
& 0.0482{\scriptsize $\pm 0.0054$}
& 0.0340{\scriptsize $\pm 0.0029$}
& 0.0442{\scriptsize $\pm 0.0040$}
& 0.0383{\scriptsize $\pm 0.0027$}
& 0.0489{\scriptsize $\pm 0.0048$} \\
& 150
& 0.0363{\scriptsize $\pm 0.0022$}
& 0.0395{\scriptsize $\pm 0.0025$}
& 0.0291{\scriptsize $\pm 0.0014$}
& 0.0382{\scriptsize $\pm 0.0023$}
& 0.0342{\scriptsize $\pm 0.0023$}
& 0.0425{\scriptsize $\pm 0.0022$} \\
& 200
& 0.0291{\scriptsize $\pm 0.0025$}
& 0.0316{\scriptsize $\pm 0.0024$}
& 0.0271{\scriptsize $\pm 0.0027$}
& 0.0305{\scriptsize $\pm 0.0020$}
& 0.0304{\scriptsize $\pm 0.0027$}
& 0.0350{\scriptsize $\pm 0.0022$} \\
\midrule
3 & 50
& 0.2193{\scriptsize $\pm 0.0369$}
& 0.2226{\scriptsize $\pm 0.0361$}
& 0.1931{\scriptsize $\pm 0.0310$}
& 0.2491{\scriptsize $\pm 0.0456$}
& 0.2068{\scriptsize $\pm 0.0310$}
& 0.2608{\scriptsize $\pm 0.0436$} \\
& 100
& 0.1522{\scriptsize $\pm 0.0148$}
& 0.1496{\scriptsize $\pm 0.0178$}
& 0.1271{\scriptsize $\pm 0.0174$}
& 0.1556{\scriptsize $\pm 0.0174$}
& 0.1327{\scriptsize $\pm 0.0187$}
& 0.1651{\scriptsize $\pm 0.0168$} \\
& 150
& 0.0807{\scriptsize $\pm 0.0088$}
& 0.0819{\scriptsize $\pm 0.0080$}
& 0.0670{\scriptsize $\pm 0.0076$}
& 0.0872{\scriptsize $\pm 0.0051$}
& 0.0735{\scriptsize $\pm 0.0060$}
& 0.0915{\scriptsize $\pm 0.0053$} \\
& 200
& 0.0716{\scriptsize $\pm 0.0069$}
& 0.0737{\scriptsize $\pm 0.0077$}
& 0.0689{\scriptsize $\pm 0.0078$}
& 0.0782{\scriptsize $\pm 0.0059$}
& 0.0738{\scriptsize $\pm 0.0066$}
& 0.0827{\scriptsize $\pm 0.0067$} \\
\midrule
4 & 50
& 0.2680{\scriptsize $\pm 0.0314$}
& 0.2708{\scriptsize $\pm 0.0314$}
& 0.1771{\scriptsize $\pm 0.0214$}
& 0.2483{\scriptsize $\pm 0.0195$}
& 0.1952{\scriptsize $\pm 0.0246$}
& 0.2685{\scriptsize $\pm 0.0201$} \\
& 100
& 0.2163{\scriptsize $\pm 0.0419$}
& 0.2188{\scriptsize $\pm 0.0414$}
& 0.1943{\scriptsize $\pm 0.0254$}
& 0.2384{\scriptsize $\pm 0.0310$}
& 0.1992{\scriptsize $\pm 0.0244$}
& 0.2492{\scriptsize $\pm 0.0313$} \\
& 150
& 0.1770{\scriptsize $\pm 0.0180$}
& 0.1749{\scriptsize $\pm 0.0190$}
& 0.1501{\scriptsize $\pm 0.0134$}
& 0.1703{\scriptsize $\pm 0.0170$}
& 0.1575{\scriptsize $\pm 0.0146$}
& 0.1825{\scriptsize $\pm 0.0167$} \\
& 200
& 0.1380{\scriptsize $\pm 0.0097$}
& 0.1376{\scriptsize $\pm 0.0102$}
& 0.1329{\scriptsize $\pm 0.0119$}
& 0.1561{\scriptsize $\pm 0.0083$}
& 0.1389{\scriptsize $\pm 0.0116$}
& 0.1612{\scriptsize $\pm 0.0119$} \\
\midrule
5 & 50
& 0.0494{\scriptsize $\pm 0.0107$}
& 0.0509{\scriptsize $\pm 0.0112$}
& 0.0538{\scriptsize $\pm 0.0126$}
& 0.0583{\scriptsize $\pm 0.0123$}
& 0.0557{\scriptsize $\pm 0.0124$}
& 0.0582{\scriptsize $\pm 0.0118$} \\
& 100
& 0.0378{\scriptsize $\pm 0.0084$}
& 0.0393{\scriptsize $\pm 0.0076$}
& 0.0386{\scriptsize $\pm 0.0094$}
& 0.0412{\scriptsize $\pm 0.0064$}
& 0.0398{\scriptsize $\pm 0.0087$}
& 0.0434{\scriptsize $\pm 0.0058$} \\
& 150
& 0.0209{\scriptsize $\pm 0.0016$}
& 0.0227{\scriptsize $\pm 0.0015$}
& 0.0199{\scriptsize $\pm 0.0021$}
& 0.0224{\scriptsize $\pm 0.0023$}
& 0.0224{\scriptsize $\pm 0.0018$}
& 0.0255{\scriptsize $\pm 0.0023$} \\
& 200
& 0.0151{\scriptsize $\pm 0.0020$}
& 0.0154{\scriptsize $\pm 0.0022$}
& 0.0163{\scriptsize $\pm 0.0019$}
& 0.0164{\scriptsize $\pm 0.0015$}
& 0.0169{\scriptsize $\pm 0.0021$}
& 0.0186{\scriptsize $\pm 0.0018$} \\
\midrule
6 & 50
& 0.0559{\scriptsize $\pm 0.0069$}
& 0.0575{\scriptsize $\pm 0.0076$}
& 0.0909{\scriptsize $\pm 0.0245$}
& 0.0683{\scriptsize $\pm 0.0126$}
& 0.0925{\scriptsize $\pm 0.0241$}
& 0.0695{\scriptsize $\pm 0.0133$} \\
& 100
& 0.0593{\scriptsize $\pm 0.0134$}
& 0.0607{\scriptsize $\pm 0.0128$}
& 0.0586{\scriptsize $\pm 0.0134$}
& 0.0646{\scriptsize $\pm 0.0072$}
& 0.0601{\scriptsize $\pm 0.0129$}
& 0.0679{\scriptsize $\pm 0.0070$} \\
& 150
& 0.0434{\scriptsize $\pm 0.0039$}
& 0.0457{\scriptsize $\pm 0.0036$}
& 0.0415{\scriptsize $\pm 0.0037$}
& 0.0432{\scriptsize $\pm 0.0035$}
& 0.0444{\scriptsize $\pm 0.0034$}
& 0.0464{\scriptsize $\pm 0.0031$} \\
& 200
& 0.0301{\scriptsize $\pm 0.0031$}
& 0.0306{\scriptsize $\pm 0.0032$}
& 0.0318{\scriptsize $\pm 0.0033$}
& 0.0360{\scriptsize $\pm 0.0033$}
& 0.0324{\scriptsize $\pm 0.0033$}
& 0.0371{\scriptsize $\pm 0.0036$} \\
\midrule
7 & 50
& 0.3699{\scriptsize $\pm 0.0418$}
& 0.3731{\scriptsize $\pm 0.0416$}
& 0.3859{\scriptsize $\pm 0.0472$}
& 0.5296{\scriptsize $\pm 0.0995$}
& 0.3928{\scriptsize $\pm 0.0434$}
& 0.5307{\scriptsize $\pm 0.1048$} \\
& 100
& 0.2086{\scriptsize $\pm 0.0275$}
& 0.2110{\scriptsize $\pm 0.0255$}
& 0.2160{\scriptsize $\pm 0.0279$}
& 0.2466{\scriptsize $\pm 0.0231$}
& 0.2168{\scriptsize $\pm 0.0257$}
& 0.2542{\scriptsize $\pm 0.0249$} \\
& 150
& 0.1611{\scriptsize $\pm 0.0189$}
& 0.1652{\scriptsize $\pm 0.0173$}
& 0.1679{\scriptsize $\pm 0.0213$}
& 0.1726{\scriptsize $\pm 0.0100$}
& 0.1726{\scriptsize $\pm 0.0199$}
& 0.1742{\scriptsize $\pm 0.0087$} \\
& 200
& 0.1116{\scriptsize $\pm 0.0111$}
& 0.1121{\scriptsize $\pm 0.0119$}
& 0.1157{\scriptsize $\pm 0.0108$}
& 0.1313{\scriptsize $\pm 0.0114$}
& 0.1135{\scriptsize $\pm 0.0139$}
& 0.1329{\scriptsize $\pm 0.0136$} \\
\midrule
8 & 50
& 0.5367{\scriptsize $\pm 0.0797$}
& 0.5359{\scriptsize $\pm 0.0803$}
& 0.5501{\scriptsize $\pm 0.0808$}
& 0.6498{\scriptsize $\pm 0.0881$}
& 0.5535{\scriptsize $\pm 0.0798$}
& 0.6505{\scriptsize $\pm 0.0906$} \\
& 100
& 0.3640{\scriptsize $\pm 0.0626$}
& 0.3680{\scriptsize $\pm 0.0594$}
& 0.3726{\scriptsize $\pm 0.0630$}
& 0.3947{\scriptsize $\pm 0.0428$}
& 0.3749{\scriptsize $\pm 0.0614$}
& 0.3988{\scriptsize $\pm 0.0427$} \\
& 150
& 0.2812{\scriptsize $\pm 0.0244$}
& 0.2896{\scriptsize $\pm 0.0235$}
& 0.2918{\scriptsize $\pm 0.0258$}
& 0.3091{\scriptsize $\pm 0.0257$}
& 0.2996{\scriptsize $\pm 0.0253$}
& 0.3206{\scriptsize $\pm 0.0234$} \\
& 200
& 0.2010{\scriptsize $\pm 0.0147$}
& 0.2015{\scriptsize $\pm 0.0146$}
& 0.2073{\scriptsize $\pm 0.0151$}
& 0.2546{\scriptsize $\pm 0.0141$}
& 0.2051{\scriptsize $\pm 0.0169$}
& 0.2580{\scriptsize $\pm 0.0168$} \\
    \bottomrule
    \end{tabular}}
\end{table}

\end{document}